
\documentclass{amsart}
\usepackage[T1]{fontenc} 

\usepackage{amssymb}
\usepackage{amsthm}
\usepackage{amsmath}
\usepackage{xcolor}
\usepackage[matrix, arrow]{xy}
\xyoption{arrow}
\xyoption{curve}
\textwidth 15.1cm
  \oddsidemargin 0.7cm
  \evensidemargin 0.7cm
  \textheight 46\baselineskip

\hyphenation{ho-mo-mor-phism}
\hyphenation{ho-mo-mor-phisms}
\hyphenation{endo-mor-phism}
\hyphenation{endo-mor-phisms}

\theoremstyle{plain}\newtheorem{Theorem}{Theorem}[section]
\theoremstyle{plain}
\theoremstyle{plain}\newtheorem{Corollary}[Theorem]{Corollary}
\theoremstyle{plain}\newtheorem{Lemma}[Theorem]{Lemma}
\theoremstyle{plain}\newtheorem{Proposition}[Theorem]{Proposition}
\theoremstyle{definition}\newtheorem{Definition}[Theorem]{Definition}
\theoremstyle{definition}\newtheorem{Example}[Theorem]{Example}
\theoremstyle{definition}
\theoremstyle{definition}
\theoremstyle{definition}
\theoremstyle{definition}\newtheorem{Remark}[Theorem]{Remark}
\theoremstyle{definition}
\theoremstyle{plain}\newtheorem{Statement}[Theorem]{}
 \def\OG{{\mathcal{O}G}}

\def\CF{{\mathcal{F}}}  
\def\CG{{\mathcal{G}}}

\def\CO{{\mathcal{O}}}

             
\def\F{{\mathbb F}}           
\def\Fp{{\mathbb F_p}}


\def\Aut{\mathrm{Aut}}               \def\tenk{\otimes_k}     
\def\Br{\mathrm{Br}}                 \def\ten{\otimes}

 \def\tenkH{\otimes_{kH}} 

\def\dim{\mathrm{dim}}   \def\tenkP{\otimes_{kP}} \def\tenkQ{\otimes_{kQ}}

\def\End{\mathrm{End}}         
\def\Endbar{\underline{\mathrm{End}}}    
        \def\tenkR{\otimes_{kR}} \def\tenR{\otimes_{R}}

\def\Hom{\mathrm{Hom}}           \def\tenkS{\otimes_{kS}}

\def\Hombar{\underline{\mathrm{Hom}}}

\def\ker{\mathrm{ker}}           

\def\Id{\mathrm{Id}} 
      \def\tenA{\otimes_A}
      \def\tenB{\otimes_B}
\def\Ind{\mathrm{Ind}}    \def\tenC{\otimes_C}

\def\Irr{\mathrm{Irr}}           
           \def\tenO{\otimes_{\mathcal{O}}}
\def\mod{\mathrm{mod}}

\def\modbar{\underline{\mathrm{mod}}}

\def\op{\mathrm{op}}          

\def\perf{\mathrm{perf}}        
\def\perfbar{\underline{\mathrm{perf}}}

  \def\pr{\mathrm{pr}}

\def\rad{\mathrm{rad}}

\def\Res{\mathrm{Res}}

\def\soc{\mathrm{soc}}

\def\Tr{\mathrm{Tr}}

\makeindex
\title{The source permutation module of a block of a finite group algebra} 
\author{Radha Kessar}
\address{Department of Mathematics, University of Manchester,  M13 9PL
  United Kingdom}
\email{radha.kessar@manchester.ac.uk}
\author{Markus Linckelmann}
\address{Department of Mathematics, City St George's, University of 
London EC1V 0HB,
 United Kingdom}
\email{markus.linckelmann.1@city.ac.uk}

\date{July 19, 2025}
\begin{document}

\begin{abstract}
For $G$ a finite group, $k$ a field of prime characteristic $p$, and $S$ a 
Sylow $p$-subgroup of $G$,  the Sylow permutation module $\Ind^G_S(k)$ 
plays a role in  diverse facets  of  representation theory and group theory, 
ranging from Alperin's weight conjecture to statistical considerations of 
$S$-$S$-double cosets in $G$. The Sylow permutation module breaks up along the
block decomposition of the group algebra $kG$, but the resulting block components 
are not   invariant under splendid Morita equivalences. We introduce a summand of the block component, 
which we  call {\em source permutation module}, which  is shown to be invariant  under such equivalences. 
We investigate   general structural properties of the source permutation module 
 and  we show that well-known results relating the self-injectivity of
the endomorphism algebra of the Sylow permutation to Alperin's weight conjecture 
carry over to the source permutation module. We calculate this module in various 
cases, such as certain blocks with cyclic defect group and blocks with a Klein four 
defect group, and for some blocks of symmetric groups, prompted by a question 
in a recent paper by Diaconis-Giannelli-Guralnick-Law-Navarro-Sambale-Spink  
on  the  self-injectivity  of  the endomorphism algebra of  the  Sylow permutation 
module for symmetric groups.
 \end{abstract}

\keywords{Sylow permutation module, source algebra, source permutation module, self-injective endomorphism algebras, symmetric groups}

\subjclass[2010]{20C05, 20C20, 20C30}

\maketitle

\section{Introduction}

Let $k$ be a field of prime characteristic $p$.  In \cite{Alp87}, Alperin   highlighted  the   
Sylow permutation $kG$-module $\Ind^G_S(k)=$ $kG\tenkS k$, where $G$  and
$S$ a Sylow $p$-subgroup in the context of his 
weight conjecture. By results of Sawada \cite{Sawada77}  and  Tinberg \cite{Tin80}, 
the  endomorphism algebra  of   $kG\tenkS k$
is  self-injective when $G$ has a characteristic $p$ split BN-pair, and by 
results of Green \cite{Green78},  having  a self-injective endomorphism 
algebra has strong structural consequences for a module.  The   self-injectivity  
property  was successfully  exploited  by  Cabanes   in  his proof of the Alperin 
weight  conjecture  for  finite reductive  groups  in their defining characteristic 
\cite{Ca84}.
The structure of  the endomorphism algebra of the Sylow permutation module,
with a view to its  relevance for  Alperin's weight conjecture, has   been  further  
explored  in \cite{Nae}, \cite{HKN}, \cite{BaTu}, with extensive calculations  in
 \cite{NaehrigPhD},  \cite{Nae}.    The  recent paper \cite{DGGLNSS} investigates 
 the summands   of the  restriction of  $kG\tenkS k$ to $kS$  from 
the point of view   of random walks on groups   and  raises the question of the  
self-injectivity   of  the 
endomorphism algebra  of   $kG\tenkS k$  when $G$ is a finite symmetric group. 
The  initial  motivation for  our work came    from 
an  attempt to  shed light on  this question (see Theorem~\ref{cyclic-Sp}).

\medskip
The Sylow permutation module  $kG\tenkS k$ is a  direct sum of   the  {\it block 
Sylow permutation  modules}  $B\tenkS k$, as $B$ runs through the blocks of 
$kG$.  One shortcoming   is that  these block summands  are not invariant  under   
Morita or even splendid Morita  equivalence.  We identify  a summand   of  
$B\tenkS k$  which has   such an invariance  property. 
For basic terminology and background facts  we refer to Section  
\ref{backgroundsection}. 

\begin{Definition} \label{source-module-Def}
Let $G$ be a finite group, $B$ a block of $kG$ with defect group $P$, and let
$i\in B^P$ be a source idempotent of $B$. The $B$-module $Bi\tenkP k$ is
called  the  {\em source permutation module of $B$}.
\end{Definition}

The isomorphism class of the source permutation $B$-module $Bi\tenkP k$ in
Definition \ref{source-module-Def}  does not depend on the choice of a defect 
group $P$ and source idempotent $i$ (cf.  Lemma 
\ref{isomorphic-summands-Lemma})  and it 
is invariant under splendid Morita equivalences (cf. Proposition 
\ref{splendid-stable-source}).
Our first main result is that the  source permutation module is 
 a direct summand of the   Sylow permutation module  for sufficiently large $k$.

\begin{Theorem} \label{Bi-summand} 
Let $G$ be a finite group,  $S$ a Sylow $p$-subgroup of $G$,  $B$ a block of $kG$ 
with a non-trivial defect group $P$, and let $i\in B^P$  be a source idempotent. 
Assume that $k$ is a splitting field for $iB^Pi$.  Then  $Bi\tenkP k$ is isomorphic 
to a direct  summand of   $B\tenkS k$  as  $kG$-modules. Moreover, every 
indecomposable direct summand of $B\tenkS k$ with vertex $P$ is isomorphic 
to a direct summand of $Bi\tenkP k$. 
\end{Theorem}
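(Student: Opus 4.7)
\medskip

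The plan is to exhibit the natural $kG$-module map $\phi : Bi \tenkP k \to B \tenkS k$, $bi \ten 1 \mapsto bi \ten 1$ (well-defined since $P \leq S$ up to conjugation), and then to compare both sides through the Brauer construction at $P$, invoking the Brauer correspondence for $p$-permutation modules to conclude.

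First I would verify that both modules are $p$-permutation $kG$-modules. Since $i \in B^P$ commutes with $P$, the decomposition $kG = kGi \oplus kG(1-i)$ is one of $(kG,kP)$-bimodules, so $Bi = kGi$ is a $(kG,kP)$-bimodule summand of $kG$; tensoring over $kP$ with the trivial module realises $Bi \tenkP k$ as a direct summand of the permutation module $\Ind^G_P(k) = kG \tenkP k$. The module $B \tenkS k$ is a direct summand of the Sylow permutation module $\Ind^G_S(k)$. Since $B$ has defect group $P$, indecomposable summands of $B \tenkS k$ have vertex contained in $P$.

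The crucial step is the analysis of the Brauer construction $\Br_P$ on both modules. Since $\Br_P(\Ind^G_P(k)) \cong k[N_G(P)/P]$ is the regular module, $\Br_P(Bi \tenkP k)$ is a direct summand of the regular module, hence a projective $k[N_G(P)/P]$-module. The splitting-field hypothesis on $iB^Pi$ ensures that $\bar\imath := \Br_P(i)$ is a source idempotent of the Brauer correspondent block $\bar B$ of $k N_G(P)$, which has $P$ as a normal defect group. Using this, one identifies $\Br_P(Bi \tenkP k)$ with the source permutation module of $\bar B$, and because $P$ is normal in $N_G(P)$ this accounts precisely for the full projective $k[N_G(P)/P]$-summand of $\Br_P(B \tenkS k)$. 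Thus $\Br_P(\phi)$ realises $\Br_P(Bi \tenkP k)$ as the maximal projective $k[N_G(P)/P]$-direct summand of $\Br_P(B \tenkS k)$.

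Both assertions of the theorem then follow. By self-injectivity of $k[N_G(P)/P]$, the injection $\Br_P(\phi)$ of projective modules splits; via the Brauer correspondence between indecomposable trivial-source $kG$-modules with vertex $P$ and indecomposable projective $k[N_G(P)/P]$-modules, this lifts to $kG$-level, exhibiting $Bi \tenkP k$ as a direct summand of $B \tenkS k$. The second assertion is then immediate: an indecomposable direct summand $L$ of $B \tenkS k$ with vertex $P$ corresponds to an indecomposable projective summand of $\Br_P(B \tenkS k)$ lying in $\Br_P(Bi \tenkP k)$, and hence $L$ is isomorphic to a direct summand of $Bi \tenkP k$.

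The principal obstacle is to prove that $\Br_P(Bi \tenkP k)$ accounts for the full projective $k[N_G(P)/P]$-part of $\Br_P(B \tenkS k)$, rather than a proper sub-summand. This rests on a careful comparison of the source algebras of $B$ and its Brauer correspondent $\bar B$, for which the splitting-field hypothesis on $iB^Pi$ is crucial.
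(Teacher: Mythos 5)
Your proposal has a genuine gap, and the paper's proof takes a quite different route.

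The central difficulty is that your lifting step does not work as stated. The module $Bi\tenkP k$ is, in general, a direct sum of indecomposable trivial source modules whose vertices range over various subgroups of $P$, not only $P$ itself (Lemma \ref{vertices-Lemma} guarantees every vertex lies in $P$ and at least one summand has vertex exactly $P$, but nothing more). The Brauer construction $\Br_P$ annihilates every indecomposable summand whose vertex is a proper subgroup of a conjugate of $P$. Consequently, knowing that $\Br_P(\phi)$ is a split injection of projective $k[N_G(P)/P]$-modules tells you nothing about whether $\phi$ is split on the remaining, smaller-vertex summands of $Bi\tenkP k$. The result you are implicitly invoking --- that a $kG$-homomorphism from a $p$-permutation module $U$ into a $p$-permutation module splits if and only if it becomes injective after applying $\Br_Q$ for a vertex $Q$ of $U$ --- is only valid when $U$ is indecomposable with vertex $Q$ (this is how it is used in the paper's proof of Theorem \ref{Bi-weights}). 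Applied to the decomposable module $Bi\tenkP k$ with only $Q=P$, it is simply insufficient. In addition, your claim that $\Br_P(Bi\tenkP k)$ realises the \emph{entire} projective $k[N_G(P)/P]$-summand of $\Br_P(B\tenkS k)$ is asserted but not proved (you defer to an unspecified ``comparison of source algebras''), and the theorem as stated does not even require multiplicities of vertex-$P$ summands to agree on both sides --- so you are trying to prove something possibly stronger than needed, without an argument.

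The paper avoids these issues entirely by working at the level of idempotents and bimodules rather than Brauer constructions. One first shows (using the conjugacy of defect pointed groups and a suitable replacement of $i$ within its point $\gamma$) that there is a point $\nu$ of $S$ on $B$ such that $P_\gamma$ is a defect pointed group of $S_\nu$. Then Green's Indecomposability Theorem, via Lemma \ref{kGi-summands-R} and Proposition \ref{kGi-summands-Prop}, yields an isomorphism of $kG$-$kS$-bimodules $Bf\cong Bi\tenkP kS$ for a primitive idempotent $f\in\nu$; since $Bf$ is by construction a bimodule direct summand of $B$, applying $-\tenkS k$ gives $Bi\tenkP k$ as a direct summand of $B\tenkS k$, with all vertices handled simultaneously. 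The second assertion is proved by a separate conjugacy argument on defect pointed groups of arbitrary points of $S$ on $B$, invoking Lemma \ref{isomorphic-summands-Lemma} --- again without any appeal to Brauer constructions. This is where the splitting hypothesis on $iB^Pi$ is actually used: to guarantee that $kGi$ is absolutely indecomposable as a $kG$-$kP$-bimodule so that Green's Indecomposability Theorem applies, not (as you suggest) to control $\Br_P(i)$.
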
 

Theorem~\ref{Bi-summand} is proved   as part  of 
Proposition~\ref{BiBS-summand-Prop}.

\medskip
A key property  of  the  Sylow permutation   module of a finite group $G$, 
identified by Alperin in \cite[Lemma 1]{Alp87},   is that   it   has the Green  
correspondents of   $G$-weights as direct summands.  Our second main 
result is that this property carries over to source permutation modules.  
The terminology regarding weights is  reviewed at the beginning of 
Section \ref{weight-section}.

\begin{Theorem} \label{Bi-weight-thm}
Let $G$ be a finite group,   $B$ a block of $kG$ 
with a non-trivial defect group $P$, and let $i\in B^P$  be a source idempotent.
Every Green correspondent of a $B$-weight is isomorphic to a direct summand
of the source permutation $B$-module $Bi\tenkP k$.
\end{Theorem}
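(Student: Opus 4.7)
The plan is to exploit the Brauer construction $M\mapsto M(Q)$ at the vertex $Q$ of the weight. Write $(Q,b_Q,V)$ for the $B$-weight and let $W$ be its Green correspondent in $kG$; after replacing the weight by a $G$-conjugate one may assume $Q\leq P$ with $(Q,e_Q)\leq(P,e_P)$ as $B$-Brauer pairs. Both $W$ and $M:=Bi\tenkP k$ are trivial source $kG$-modules: $M$ is the image of the permutation module $\Ind^G_P(k)=kG\tenkP k$ under right multiplication by the idempotent $i\in(kG)^P$, and Green correspondence preserves the trivial source property. The module $W$ is indecomposable of vertex $Q$, and $W(Q)\cong V$ is an indecomposable projective $k[N_G(Q)/Q]$-module. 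By the standard correspondence between $p$-permutation $kG$-modules of vertex $Q$ and projective $k[N_G(Q)/Q]$-modules, it suffices to show that $V$ is a direct summand of the $k[N_G(Q)/Q]$-module $M(Q)$.

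Functoriality of the Brauer construction applied to the above description of $M$ gives $M(Q)\cong k[(G/P)^Q]\cdot\Br_Q(i)$ as a $k[N_G(Q)/Q]$-module. By Puig's theorem on the local structure of source idempotents, $\Br_Q(i)$ corresponds, after a Fong--Reynolds identification, to a source idempotent of the Brauer correspondent block $b_Q$ of $kN_G(Q)$. This lets one identify $M(Q)$ with (an induced form of) the source permutation module of $b_Q$.

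The problem then reduces to the base case of the local block $b_Q$, which has defect group $Q\trianglelefteq N_G(Q)$. In this situation Clifford's theorem ensures that every simple $b_Q$-module has $Q$ in its kernel and is in fact a $b_Q$-weight, and the source algebra $A_{b_Q}$ is free as a $kQ$-module. A direct analysis of the source algebra then shows that the source permutation module of $b_Q$ contains every $b_Q$-weight as a direct summand; the transparent instance of this is the almost-inertial case where $A_{b_Q}\cong kQ\otimes_k L$ for a semisimple algebra $L$, so that the source permutation module, under the source-algebra Morita equivalence, becomes the regular $L$-module. In particular $V$ is a summand of $M(Q)$, hence $W$ is a direct summand of $M$.

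The main obstacle is the identification of $M(Q)$ with the source permutation module of the local block $b_Q$: this rests on Puig's compatibility theorems for source idempotents under the Brauer construction, together with a careful Fong--Reynolds passage between blocks of $kN_G(Q)$ and of $kN_G(Q,e_Q)$.
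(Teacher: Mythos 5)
The broad strategy---reduce to showing $V=W(Q)$ appears in $M(Q)$ as a $k[N_G(Q)/Q]$-module, via Brou\'e's classification of $p$-permutation modules by their Brauer constructions---is legitimate, and the conclusion of that reduction is correct (the paper's proof in fact shows this consequence). However, two of the concrete steps you use to carry out the reduction are false, and the third (which you flag as ``the main obstacle'') is left unresolved.

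First, it is not true in general that $\Br_Q(i)$ corresponds, via Fong--Reynolds, to a source idempotent of the Brauer-correspondent block $b_Q$ of $kN_G(Q)$. A source idempotent is defined with respect to the defect group $P$ of $B$: $i$ is primitive in $B^P$ with $\Br_P(i)\neq 0$. For a proper $p$-subgroup $Q<P$, the element $\Br_Q(i)$ is a nonzero idempotent of $kC_G(Q)e$ where $(Q,e)\leq(P,e_P)$ as $B$-Brauer pairs, but it need not be primitive in the relevant fixed-point algebra and bears no canonical relationship to a source idempotent of the block of $kN_G(Q)$ with defect group $N_P(Q)$ (or $Q$). Without such a relationship the identification of $M(Q)$ with the source permutation module of $b_Q$ does not go through.

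Second, your treatment of the local base case is incorrect. For a block $b_Q$ of $kN_G(Q)$ with normal defect group $Q$, K\"ulshammer's theorem gives a source algebra of the form $k_\alpha(Q\rtimes E)$ with $E$ the inertial quotient and $\alpha$ a $2$-cocycle; it is \emph{not} of the form $kQ\tenk L$ with $L$ semisimple unless $E$ is trivial, i.e.\ unless $b_Q$ is nilpotent. So the ``transparent instance'' is far too special to serve as a base case, and ``a direct analysis of the source algebra'' is not supplied for the case that actually occurs. (The normal-defect case is handled correctly in the paper's Proposition~\ref{normal-Prop}, and there the $B$-weights are exactly the simple $B$-modules, but that argument does not rest on the tensor-decomposition of the source algebra you invoke.)

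The paper's proof of Theorem~\ref{Bi-weights} avoids computing $M(Q)$ entirely. It uses the biadjunction $\Hom_B(U,Bi\tenkP k)\cong\Hom_{kP}(iU,k)$ to transport a nonzero $kP$-map $\beta:iU\to k$, constructed from a $kP/Q$-summand of $iU$ (Corollary~\ref{iU-permutation-summands}), to a $B$-map $\alpha:U\to Bi\tenkP k$, and then shows directly that $\alpha(Q)$ is injective into the simple module $U(Q)\cong X$ by a trace argument on $\Hom_k(U,Bi\tenkP k)$ together with Lemma~\ref{Bi-summands-Lemma}. The split injectivity of $\alpha$ then follows from the criterion for maps of $p$-permutation modules. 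That argument is self-contained at the level of the adjunction and never needs to identify $M(Q)$ with any local source permutation module.
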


This will be proved in Theorem \ref{Bi-weights}.   
 Our next result  shows that source permutation  modules   of blocks of 
 positive  defect   have no nonzero  projective  summands while at the 
 same time having all simple  modules, up to isomorphism,  in their  heads 
 and  in their  socles.

\begin{Theorem} \label{Bi-no-projective-summand-thm}
Let $G$ be a finite group,  $S$ a Sylow $p$-subgroup of $G$,  $B$ a block of $kG$ 
with a non-trivial defect group $P$, and let $i\in B^P$  be a source idempotent.
\begin{itemize}
\item[{\rm (i)}]
The source permutation $B$-module $Bi\tenkP k$ has no nonzero projective  
direct summand. 
\item[{\rm (ii)}] 
Every simple $B$-module is isomorphic to a quotient and a  submodule of  
$Bi\tenkP k$.
\end{itemize}
\end{Theorem}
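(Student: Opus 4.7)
The plan is to prove (ii) via a pair of adjunctions for the functor $Bi\tenkP -$, and to reduce (i) to a statement about the source algebra $A := iBi$ as a $(kP,kP)$-bimodule via the source algebra Morita equivalence.

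For (ii), observe that the $(kG,kP)$-bimodule $Bi$ is projective as a left $kG$-module (a summand of $B$ via the idempotent $i$) and free as a right $kP$-module (a summand of $kG$ via $i \in (kG)^P$), and therefore the functor $Bi\tenkP -$ from $kP$-modules to $kG$-modules admits both a right adjoint $\Hom_{kG}(Bi,-) \cong i(-)$ (evaluation at $i$) and a left adjoint $iB \tenkG - \cong i(-)$. In both cases the $kP$-module structure on $iN$ is the restriction of the $G$-action, since $i$ is $P$-fixed. Applied to a simple $B$-module $S$, the two adjunctions give
\[
  \Hom_{kG}(Bi\tenkP k,\, S)\;\cong\;(iS)^P, \qquad
  \Hom_{kG}(S,\, Bi\tenkP k)\;\cong\;\Hom_{kP}(iS, k).
\]
Since $i$ is a source idempotent, the Morita equivalence between $B$-modules and $A$-modules given by $M \mapsto iM$ shows that $iS \neq 0$ for every simple $B$-module $S$. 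Because $P$ is a $p$-group and $\chr(k)=p$, the nonzero $kP$-module $iS$ has both nonzero $P$-fixed points and (by Nakayama) nonzero $P$-coinvariants, so both Hom-spaces are nonzero. A nonzero morphism to (resp.\ from) a simple module is automatically surjective (resp.\ injective), which proves (ii).

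For (i), the same Morita equivalence identifies $Bi\tenkP k$ with $A\tenkP k$ as a left $A$-module and identifies projective $B$-summands with projective $A$-summands, so it suffices to show that $A\tenkP k$ has no nonzero projective $A$-summand. Since $A$ is free as a left $kP$-module, restriction along the inclusion $kP \hookrightarrow A$ takes projective $A$-modules to free $kP$-modules; it is therefore enough to show that $A\tenkP k$ has no free $kP$-summand as a left $kP$-module. Now $A = iBi$ is a $(kP,kP)$-sub-bimodule of $B$ via the Peirce decomposition at the $P$-fixed idempotent $i$, and $B$ is a $p$-permutation $(kP,kP)$-bimodule (a summand of the permutation bimodule $kG$). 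By Puig's structure theorem for source algebras, every indecomposable $(kP,kP)$-bimodule summand of $A$ has vertex of the form $\Delta_{\phi}Q := \{(u,\phi(u)) : u \in Q\}$ for some nontrivial subgroup $Q \leq P$ and some morphism $\phi \colon Q \to P$ in the fusion system of $B$; after tensoring on the right with $k$ over $kP$, such a summand becomes $\mathrm{Ind}_Q^P k$ as a left $kP$-module, and this is not free over $kP$ since $Q \neq 1$. Hence $A\tenkP k$ has no free $kP$-summand, proving (i).

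The main obstacle is the structural step at the end of (i): the assertion that, as a $(kP,kP)$-bimodule, the source algebra $A$ has no indecomposable summand with trivial vertex. This is where the precise definition of source idempotent enters, via Puig's theory of source algebras; it is the bimodule-theoretic counterpart of the fact that $Bi$ has vertex $\Delta P$ as a $(kG,kP)$-bimodule. Granting this structural input, both parts of the theorem reduce to formal manipulations with the two adjunctions and with fixed points and coinvariants of nonzero $kP$-modules.
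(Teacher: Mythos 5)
Your proof of part (ii) is correct and is essentially the same argument as the paper's (Proposition~\ref{Bi-top-socle}): the two adjunctions come from the $k$-duality between $Bi$ and $iB$ (using that $B$ is symmetric), the source-algebra Morita equivalence gives $iS\neq 0$, and the nonvanishing of $P$-fixed points and $P$-coinvariants of a nonzero $kP$-module supplies the two nonzero Hom-spaces.

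Your proof of part (i), however, has a genuine gap. The structural input you invoke --- that every indecomposable $(kP,kP)$-bimodule summand of the source algebra $A=iBi$ has vertex $\Delta_\varphi Q$ with $Q$ \emph{nontrivial} --- is false. Puig's structure theorem (Lemma~\ref{source-bimod-Lemma}(iii) in the paper, or \cite[Theorem 8.7.1]{LiBookII}) only says the vertices are of the form $\Delta_\varphi Q$ for $Q\le P$ and $\varphi$ a fusion morphism; it does not exclude $Q=1$. Indeed, the paper uses the opposite phenomenon in the proof of Proposition~\ref{splendid-stable-cyclic-Prop}: for a nilpotent block with endopermutation source $V$ one has $A\cong \mathrm{Ind}^{P\times P}_{\Delta P}(V\tenk V^{*})$ as a $k(P\times P)$-module, and this is $kP$ plus a nonzero \emph{projective} $(kP,kP)$-bimodule precisely when $V$ is a non-trivial endotrivial module. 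In that situation $\Res_{kP}(A\tenkP k)\cong V\tenk V^{*}\cong k\oplus(\text{nonzero free})$ does have a free $kP$-summand, so the intermediate statement you are trying to establish fails --- even though the target statement is still true: in this case $A\tenkP k$ is indecomposable (its endomorphism ring $\End_{kP}(V)^{\op}$ is local, cf. Theorem~\ref{nilpotent-thm}) and non-projective as an $A$-module. The root of the problem is that your reduction ``no free $kP$-summand on restriction $\Rightarrow$ no projective $A$-summand'' is a one-way implication, and its hypothesis is what is false; passing to the $kP$-restriction of $A\tenkP k$ discards exactly the information needed. The paper avoids this by staying at the bimodule level: it applies Proposition~\ref{no-proj-summands-Prop} to the $(B,kP)$-bimodule $Bi$, which is indecomposable (hence has no projective bimodule summand, as it is itself non-projective because $P\neq 1$) and projective as a right $kP$-module, with $T=k$ the non-projective simple $kP$-module; this yields directly that $Bi\tenkP k$ has no nonzero projective $B$-summand.
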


The proof of  Theorem \ref{Bi-no-projective-summand-thm}   appears after  
Proposition  \ref{Bi-top-socle} (the first statement is part of    
Lemma \ref{proj-summands-Lemma} and the  second  statement   appears 
as  Proposition \ref{Bi-top-socle}).
By Theorem~\ref{Bi-no-projective-summand-thm} the source permutation module 
has no nonzero projective 
summand. It does though always have a summand with full vertex, and more
precisely,  its summands with full vertex coincide,  up to isomorphism,  with those of 
the block Sylow 
permutation  module (cf. Theorem \ref{Bi-summand}).  
There are more precise results for source permutation modules of principal blocks. 
Robinson showed  in \cite[Theorem 1.2]{Rob88} that if $Q$ is an essential 
$p$-subgroup of $G$, then $Q$ is a vertex of a summand
of the Sylow permutation module belonging to the principal block. The next
result adds that these summands appear in the source permutation module
of the principal block.

\begin{Theorem} \label{princ-block-thm}
Let $G$ be  a finite group, $S$ a Sylow $p$-subgroup of $G$, $B$ the
principal block of $kG$, and $i\in $ $B^S$ a source idempotent.
\begin{itemize}
\item[{\rm (i)}]
Every indecomposable summand of $B\tenkS k$ with a centric vertex is 
isomorphic to a direct summand of $Bi\tenkS k$. 
\item[{\rm (ii)}]
For every essential subgroup $Q$ of $G$ there is an indecomposable direct
summand of $Bi\tenkS k$ with vertex $Q$.
\end{itemize}
\end{Theorem}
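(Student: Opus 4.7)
The plan is to deduce part (ii) from part (i) together with Robinson's theorem, and to establish part (i) by a Brauer-quotient comparison at centric subgroups. For part (ii), note that every essential subgroup $Q$ of $G$ is $B$-centric by the definition of essential; Robinson's theorem \cite[Theorem 1.2]{Rob88} supplies an indecomposable summand of $B\tenkS k$ with vertex $Q$, and part (i), once established, places this summand inside $Bi\tenkS k$.

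For part (i), I would first observe that $Bi\tenkS k$ is a direct summand of $B\tenkS k$: the map $\pi_i\colon B\to Bi$, $b\mapsto bi$, is $kG$-linear and, since $i\in B^S$ commutes with $kS$, also right $kS$-linear, so it is a split $(kG,kS)$-bimodule projection. Both $B\tenkS k$ and $Bi\tenkS k$ are $p$-permutation $kG$-modules, as direct summands of $\Ind^G_S(k)$. By the Brauer--Green classification of trivial-source modules, the indecomposable summands of a $p$-permutation $kG$-module $N$ with vertex $Q$ correspond bijectively, via the Brauer construction $N\mapsto N(Q) = N^Q/\sum_{R<Q}\Tr^Q_R(N^R)$, to the indecomposable projective summands of the $k\bar{N}_G(Q)$-module $N(Q)$. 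Part (i) therefore reduces to showing: for every $B$-centric subgroup $Q\le S$, every indecomposable projective summand of $(B\tenkS k)(Q)$ is isomorphic to a summand of $(Bi\tenkS k)(Q)$.

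To carry this out, I would exploit the right action of $B^S$ on $B\tenkS k$ (well defined because $B^S$ commutes with $kS$), which descends to a right action of $\Br_Q(B^S)\subseteq kC_G(Q)$ on the Brauer quotient and identifies $(Bi\tenkS k)(Q)$ with $(B\tenkS k)(Q)\cdot\Br_Q(i)$. The centric hypothesis $C_G(Q) = Z(Q)\times O_{p'}(C_G(Q))$ ensures that the Brauer correspondent $e_Q$ of $B$ with respect to $Q$ is the principal block of $kC_G(Q)$, of defect $Z(Q)$, and a standard property of source idempotents due to Puig yields that $\Br_Q(i)$ is a primitive idempotent in $e_Q\cdot kC_G(Q)$. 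The main obstacle will be to show that right multiplication by this idempotent preserves every indecomposable projective summand of $(B\tenkS k)(Q)$. My plan is to analyse $(B\tenkS k)(Q)$ via an explicit Mackey-type decomposition indexed by $Q$-fixed cosets of $G/S$, localised by $\Br_Q(1_B)$ to the $e_Q$-component, and to use the centric hypothesis to exhibit each projective indecomposable summand as lying in the $\Br_Q(i)$-image.
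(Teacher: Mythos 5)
Your reduction to a statement about Brauer quotients is sound, and the derivation of part (ii) from part (i) plus Robinson's theorem matches the paper. However, for part (i) you stop at exactly the hard step: showing that right multiplication by $\Br_Q(i)$ captures \emph{every} indecomposable projective summand of $(B\tenkS k)(Q)$. Your sketch of how to close this gap (``Mackey-type decomposition indexed by $Q$-fixed cosets of $G/S$, localised by $\Br_Q(1_B)$'') is a plan, not an argument, and it is far from clear it goes through, because the projective summands of $(B\tenkS k)(Q)$ do not obviously organise themselves along the double coset decomposition in a way compatible with the idempotent $\Br_Q(i)$. Since this is precisely the crux of the theorem, the proof is incomplete.

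The paper's route is genuinely different and sidesteps the obstacle. Rather than fixing a centric $Q$ and analysing $(B\tenkS k)(Q)$, it decomposes $1_B$ into primitive idempotents along the points of $S$ on $B$, writing $B\tenkS k = Bi\tenkS k \oplus B(1_B-i)\tenkS k$, and shows that the second summand carries \emph{no} summand with centric vertex at all. The key input is Lemma \ref{non-local-points-2}: for the principal block, every non-local point $\nu$ of $S$ has a defect pointed group $Q_\delta$ with $Q$ not centric. This is proved by exhibiting the associated multiplicity module $V_Q$ (the one satisfying $kC_G(Q)/Z(Q)\bar e \cong \End_k(V_Q)$) as one-dimensional when $Q$ is centric (because $kC_G(Q)e_Q\cong kZ(Q)$ for the principal block), hence having no projective $kN_S(Q)/Q$-summands; by Lemma \ref{non-local-points-1} this precludes the existence of such a point $\nu$. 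Combined with Lemma \ref{vertices-Lemma} and Lemma \ref{kGi-summands-R}(i) (every summand of $Bi'\tenkS k$ for $i'$ in a point with defect group $Q$ has a vertex in $Q$), and with the fact that a subgroup of a non-centric subgroup of $S$ is again non-centric, this gives part (i). The structural fact you would need to make your Brauer-quotient route rigorous is essentially this same observation about the triviality of $V_Q$ for centric $Q$ in the principal block; you have identified that something like this is needed (your remark about $e_Q kC_G(Q)$), but you have not supplied the decisive argument connecting it to the projective summands of $(B\tenkS k)(Q)$.

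One further small caution: the assertion that ``$\Br_Q(i)$ is a primitive idempotent in $e_Q\cdot kC_G(Q)$'' by a ``standard property of source idempotents due to Puig'' is misleading as stated. It holds here only because $Q$ is centric for the principal block, which forces $e_Q kC_G(Q)\cong kZ(Q)$ to be a local algebra, so every nonzero idempotent equals $e_Q$; it is not a general property of source idempotents evaluated at arbitrary $p$-subgroups.
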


This will be proved in Section \ref{princ-block-Section}, where we also review the
relevant terminology on centric and essential subgroups.

\medskip
As mentioned   at the beginning of the introduction,  one  question  of particular
interest is  whether the  endomorphism  algebra  of the Sylow permutation 
module   is self-injective.    Theorem \ref{Bi-weight-thm}  and   Theorem 
\ref{Bi-no-projective-summand-thm}   lead  to the following result.

\begin{Theorem} \label{Bi-selfinjective-thm1}
Let $G$ be a finite group,  $S$ a Sylow $p$-subgroup of $G$, 
$B$ a block of $kG$ with a non-trivial defect group $P$, and let $i\in B^P$ 
be a source idempotent.
Suppose that $k$ is a splitting field for all $B$-Brauer pairs and their normalisers.
If $\End_B(B\tenkS k)$ is self-injective, then $\End_B(Bi\tenkP k)$ is self-injective.
In that case  the two algebras are Morita equivalent, or equivalently, every 
indecomposable direct summand of $B\tenkS k$ is isomorphic to a direct 
summand of $Bi\tenkP k$.
\end{Theorem}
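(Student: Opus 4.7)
The plan is to establish the equivalent condition stated at the end: every indecomposable direct summand of $B\tenkS k$ is isomorphic to a summand of $Bi\tenkP k$. Once this holds, the additive closures $\add(B\tenkS k)$ and $\add(Bi\tenkP k)$ coincide, so $\End_B(B\tenkS k)$ and $\End_B(Bi\tenkP k)$ are Morita equivalent, and self-injectivity, being a Morita invariant, transfers from the first algebra to the second.

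By Theorem \ref{Bi-summand} I would first write $B\tenkS k \cong (Bi\tenkP k)\oplus Z$ as $B$-modules, where every indecomposable summand of $Z$ has vertex a proper subgroup of $P$ up to $G$-conjugacy (the vertex-$P$ summands of $B\tenkS k$ are absorbed into $Bi\tenkP k$). It then suffices to show $Z=0$. Set $M=B\tenkS k$ and $E=\End_B(M)$. The self-injectivity of $E$ gives a Nakayama permutation $\nu$ on the isomorphism classes of indecomposable summands of $M$, which at the module level pairs simple tops and simple socles of summands. The splitting hypothesis on $B$-Brauer pairs and their normalisers is used here to ensure that the trivial-source summands of $M$ are absolutely indecomposable and that the relevant $p$-local data (vertices, Green correspondents) are well-defined, so that the Nakayama data can be compared against the vertex stratification of $M$.

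I would then exploit Theorems \ref{Bi-weight-thm} and \ref{Bi-no-projective-summand-thm}(ii): the summand $N=Bi\tenkP k$ of $M$ already contains every Green correspondent of a $B$-weight, hence accounts for all vertex-$P$ summands of $M$, and it realises every simple $B$-module as both a top summand and a socle summand. The aim is to deduce that $\nu$ restricts to a permutation on the isomorphism classes of indecomposable summands of $N$, by matching the multiplicities with which each simple $B$-module appears as a top, respectively socle, of summands of $M$ versus of $N$. Any hypothetical indecomposable summand $W\subseteq Z$ would then force a Nakayama partner $\nu(W)$ whose head and socle are already exhausted by summands of $N$, contradicting the bijectivity of $\nu$ and forcing $Z=0$.

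The main obstacle I anticipate is precisely this bookkeeping: a priori $\nu$ could pair a summand of $Z$ with another summand of $Z$, so ruling this out requires more than the head/socle data from Theorem \ref{Bi-no-projective-summand-thm}. One needs the vertex restriction on $Z$, the weight-correspondent input from Theorem \ref{Bi-weight-thm}, and likely the techniques from \cite{HKN} and \cite{Nae} that exploit the interaction between the Nakayama structure of $E$ and the local vertex stratification of trivial-source modules. Coordinating these ingredients under the splitting hypothesis on $B$-Brauer pairs and their normalisers is the delicate point on which the proof hinges.
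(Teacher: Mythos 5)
Your overall strategy is the right one—reduce to showing that every indecomposable direct summand of $B\tenkS k$ occurs in $Bi\tenkP k$—and you correctly identify Theorem~\ref{Bi-summand} and Theorem~\ref{Bi-no-projective-summand-thm}(ii) as key inputs. But the argument has a genuine gap precisely where you flag one, and the missing idea is simpler than the machinery you reach for. You invoke the self-injectivity of $E=\End_B(B\tenkS k)$ only to produce a Nakayama permutation pairing simple tops with simple socles, and then worry (correctly, given only that much) that the permutation might pair two summands of the complement $Z$ with each other, so that head/socle data of $Bi\tenkP k$ alone cannot rule $Z$ out. You then gesture towards vertex stratification, Theorem~\ref{Bi-weight-thm}, and techniques from \cite{HKN}, \cite{Nae} as possible escapes, but none of that is carried out.

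The point you are missing is that Green's theorem (\cite[Theorem 1]{Green78}, or \cite[Theorem 2.13]{Geck01}) gives strictly more than a Nakayama permutation: if $\End_B(M)$ is self-injective and every simple $B$-module occurs in both the top and the socle of $M$ (the latter holding for $M=B\tenkS k$ by Frobenius reciprocity, as recorded just before Proposition~\ref{Bi-top-socle}), then every indecomposable summand of $M$ has a simple top and socle, \emph{and the isomorphism class of an indecomposable summand is uniquely determined by the isomorphism class of its top} (equivalently, of its socle). In other words, ``summand $\mapsto$ top'' is a bijection from isomorphism classes of indecomposable summands of $M$ onto isomorphism classes of simple $B$-modules. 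Once you have this, the argument closes immediately: $Bi\tenkP k$ is a summand of $M$ by Theorem~\ref{Bi-summand}, and by Theorem~\ref{Bi-no-projective-summand-thm}(ii) every simple $B$-module occurs in its top; so by the bijection, $Bi\tenkP k$ already contains a representative of every isomorphism class of indecomposable summand of $M$. No vertex bookkeeping, no weight-correspondent input, and no extra techniques from \cite{HKN} or \cite{Nae} are needed—Theorem~\ref{Bi-weight-thm} plays no role in this particular proof.
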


As a consequence of  the above results  we obtain that one deterrent to  the  
self-injectivity   of  $\End_B(B\tenkS k)$, which does not arise  for   that of  
$\End_B(Bi\tenkP k)$, is  the presence of projective   summands.

\begin{Corollary} \label{noprojective-cor} 
Let $G$ be a finite group,  $S$ a Sylow $p$-subgroup of $G$, 
$B$ a block of $kG$ with a non-trivial defect group.  Suppose that $k$ is a splitting 
field for all $B$-Brauer pairs and their normalisers. If $B\tenkS  k $ has  a  nonzero 
projective  summand, then  $\End_B(B\tenkS k)$  is not self-injective. In particular, if 
$kG \tenkS k $  has a  projective indecomposable  summand which is not simple, then  
$\End_{kG} (kG \tenkS k)$  is not self-injective. 
\end{Corollary}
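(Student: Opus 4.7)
The plan is to deduce the corollary from Theorem \ref{Bi-selfinjective-thm1} together with Theorem \ref{Bi-no-projective-summand-thm}(i), via a contrapositive argument. For the first statement, I fix a source idempotent $i \in B^P$ with $P$ the assumed non-trivial defect group of $B$, and assume toward a contradiction that $\End_B(B\tenkS k)$ is self-injective. Theorem \ref{Bi-selfinjective-thm1}, whose splitting-field hypothesis is precisely the one placed on $k$ in the corollary, then asserts that every indecomposable direct summand of $B\tenkS k$ is isomorphic to a direct summand of the source permutation module $Bi\tenkP k$. In particular, a nonzero projective indecomposable summand of $B\tenkS k$ would be isomorphic to a nonzero projective summand of $Bi\tenkP k$; but since $P$ is non-trivial, this is ruled out by Theorem \ref{Bi-no-projective-summand-thm}(i), giving the desired contradiction.

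For the ``in particular'' statement, I reduce to a single block using the decomposition $kG\tenkS k = \bigoplus_B B\tenkS k$, which induces an algebra decomposition $\End_{kG}(kG\tenkS k) \cong \prod_B \End_B(B\tenkS k)$. A finite product of finite-dimensional algebras is self-injective if and only if each factor is, so it suffices to exhibit one block $B$ for which $\End_B(B\tenkS k)$ is not self-injective. Given a projective indecomposable summand $M$ of $kG\tenkS k$ that is not simple, $M$ lies in a unique block $B$, and this block must have non-trivial defect group, since over a block of defect zero every indecomposable module is simple. Applying the first part of the corollary to this $B$ then concludes the argument.

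The proof is essentially a bookkeeping assembly of the preceding theorems; the only genuine point to verify is that the ``in particular'' clause does fall under the non-triviality hypothesis on the defect group, which is handled by the defect-zero observation above. No additional structural input beyond Theorems \ref{Bi-selfinjective-thm1} and \ref{Bi-no-projective-summand-thm}(i) is needed.
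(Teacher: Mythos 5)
Your proof is correct and follows essentially the same route as the paper's: use Theorem \ref{Bi-selfinjective-thm1} to conclude (under the self-injectivity assumption) that every indecomposable summand of $B\tenkS k$ appears in $Bi\tenkP k$, then contradict Theorem \ref{Bi-no-projective-summand-thm}(i), and reduce the ``in particular'' clause to the block-wise statement by noting that a non-simple projective indecomposable module lies in a block of positive defect. The paper phrases the first step slightly differently — observing that the endomorphism algebras of $Bi\tenkP k$ and $B\tenkS k$ cannot be Morita equivalent — but this is the same idea, and your explicit mention of the factorisation $\End_{kG}(kG\tenkS k)\cong\prod_B\End_B(B\tenkS k)$ simply makes precise what the paper leaves implicit.
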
 

Theorem \ref{Bi-selfinjective-thm1}  and Corollary \ref{noprojective-cor} 
 will be proved at the end of Section \ref{weight-section}.  We note that 
 Theorem \ref{Bi-selfinjective-thm1} is  not a  formal result  since it is not 
  the case that  the  idempotent truncation of a self-injective algebra   is   always 
self-injective. The converse of Theorem \ref{Bi-selfinjective-thm1}  need not hold; 
counter examples include  the principal 
blocks of the symmetric group algebras $kS_p$ when $p\geq 7$ (see 
Theorem \ref{cyclic-Sp}). 

\medskip
The  interplay between   Alperin's weight conjecture and  the structure of the Sylow 
permutation module and its endomorphism algebra   remains a tantalising  theme 
within the theory. 
By a conjecture of Auslander and Reiten, a stable equivalence
between two  finite-dimensional algebras  should preserve the number of 
isomorphism classes of simple and non-projective  modules.
The following Theorem is an instance where the two conjectures meet.

\begin{Theorem}\label{splendid-thm}  
Let $G$ be a finite group, $B$ a  block of $kG$ with a non-trivial defect group   $P$, 
$ i \in B^P$ a source idempotent  and  $C$  the Brauer  correspondent  of $B$ 
in $N_G(P)$.  Assume that $k$ is a splitting field for $B$, $C$  and their Brauer pairs.
Suppose that  there is a splendid   stable  equivalence between $B$ and $C$.  
If $\End_B(Bi\tenkP k)$ is 
self-injective, then  $B$ and $C$ have the  same number of isomorphism classes  
of  simple modules and   $B$ satisfies Alperin's weight conjecture.
\end{Theorem}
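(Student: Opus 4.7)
The plan is to transfer the self-injectivity hypothesis across the splendid stable equivalence, exploit the particularly transparent structure of the source permutation module on the $C$-side, and close via the Alperin--Brou\'e correspondence of weights, which reduces Alperin's weight conjecture for $B$ to the equality of the numbers of simple $B$- and $C$-modules.

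\emph{Step 1 (transport).} I would first show that the splendid stable equivalence sends $Bi \tenkP k$ to $Cj \tenkP k$ up to projective summands in the respective stable module categories, where $j \in C^P$ is a source idempotent of $C$. This is the stable analogue of Proposition \ref{splendid-stable-source}: the bimodule inducing a splendid equivalence is a $p$-permutation bimodule with vertices in $\Delta P$, so it identifies the source algebras of $B$ and $C$ up to stable equivalence; since the source permutation module is defined intrinsically from the source algebra and the trivial $kP$-module, it transports stably. Because neither $Bi \tenkP k$ nor $Cj \tenkP k$ has a projective summand (Theorem \ref{Bi-no-projective-summand-thm}(i)), the transport upgrades to an honest bijection between isomorphism classes of indecomposable direct summands on the two sides and an isomorphism of their endomorphism algebras. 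In particular, the self-injectivity of $\End_B(Bi \tenkP k)$ transfers to $\End_C(Cj \tenkP k)$.

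\emph{Step 2 (counting simples).} On each side, Theorem \ref{Bi-no-projective-summand-thm}(ii) says every simple module appears as both a head and a socle constituent of the source permutation module, and the self-injectivity of the corresponding endomorphism algebra forces, by the Nakayama-permutation characterisation of self-injective endomorphism algebras, a bijection between isomorphism classes of simple modules and isomorphism classes of indecomposable direct summands of the source permutation module. Applied to both $B$ and $C$ and combined with the bijection of summands from Step 1, this gives that $B$ and $C$ have the same number of isomorphism classes of simple modules.

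\emph{Step 3 (Alperin--Brou\'e).} Because $P \trianglelefteq N_G(P)$, every $C$-weight in $N_G(P)$ has weight subgroup conjugate to $P$, so the number of $C$-weights equals the number of simple $C$-modules. The Alperin--Brou\'e correspondence of weights bijects $G$-conjugacy classes of $B$-weights with $N_G(P)$-conjugacy classes of $C$-weights. Combining with Step 2 yields that the number of $B$-weights equals the number of simple $B$-modules, i.e.\ Alperin's weight conjecture holds for $B$. The main technical obstacle is Step 1: promoting the Morita-level invariance of the source permutation module (Proposition \ref{splendid-stable-source}) to a stable-level invariance, which requires showing that the splendid bimodule inducing the stable equivalence preserves source algebras up to stable Morita equivalence in a manner compatible with the construction $M \mapsto M \tenkP k$ modulo projective summands; the remaining steps are then a counting argument together with a standard application of Alperin--Brou\'e.
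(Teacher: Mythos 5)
Your Steps 1 and 2 are headed in the right direction, and the counting is essentially the paper's argument (via Theorem \ref{simpleminded-Thm} and Green's theorem): the stable equivalence gives a bijection between the non-projective indecomposable summands of $Bi\tenkP k$ and of $Cj\tenkP k$, the latter is semisimple with $\ell(C)$ indecomposable summands by Proposition \ref{normal-Prop}, and self-injectivity of $\End_B(Bi\tenkP k)$ gives, by Green's theorem, $\ell(B)$ indecomposable summands, whence $\ell(B)=\ell(C)$. One caution on your Step 1: a stable equivalence only transports the \emph{stable} endomorphism algebra $\Endbar$, not $\End$; removing projective summands of the modules does not by itself upgrade $\Endbar$ to $\End$. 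Here it happens that $\Endbar_C(Cj\tenkP k)=\End_C(Cj\tenkP k)$ because $Cj\tenkP k$ is semisimple with no projective summands, but the mechanism you describe ("the transport upgrades to ... an isomorphism of their endomorphism algebras") is not valid in general and, in any case, the self-injectivity on the $C$-side is automatic (semisimple algebras are self-injective) rather than something that needs transporting.

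The genuine gap is Step 3. There is no general "Alperin--Brou\'e correspondence of weights" bijecting $G$-conjugacy classes of $B$-weights with $N_G(P)$-conjugacy classes of $C$-weights; if such a bijection existed in this generality, the equality $w(B)=w(C)=\ell(C)$ would hold unconditionally, and Alperin's weight conjecture would reduce to an abstract stable-equivalence statement that is not known. Indeed, the paper makes exactly this point in the remark following Theorem \ref{stable-Brauer-corr}: for an arbitrary stable equivalence of Morita type between $B$ and $C$ one does not know that $B$ and $C$ have the same fusion systems, and it is only the splendid hypothesis that rescues the weight count. The correct argument is that, since $M$ is a $p$-permutation bimodule, \cite[Theorem 9.8.2]{LiBookII} forces the fusion systems $\CF$ of $B$ and $\CG$ of $C$ on $P$ to agree up to an automorphism of $P$; since moreover $\CG=N_\CF(P)\subseteq\CF$, this forces $\CF=\CG$, so that $P$ is the unique weight subgroup of $B$ and hence $w(B)=w(C)=\ell(C)$. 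Without this use of the splendid hypothesis to equate fusion systems, the step from $\ell(B)=\ell(C)$ to $\ell(B)=w(B)$ does not follow.
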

  
This will be proved as part of a more general result in Section \ref{stable-Section};
see Theorem \ref{stable-Brauer-corr}.
 
\medskip
The following  two   theorems    describe situations   where  the   endomorphism 
algebra  of the source permutation module  is self-injective.

\begin{Theorem}\label{cyclic-thm}
Let $G$ be a finite group, $p$ a prime, $k$ a field of characteristic $p$,  and $B$ 
a block of $kG$ with a non-trivial cyclic defect group $P$.  
Denote by $D$ the block of $kN_G(P)$ with defect group $P$  which is the Brauer 
correspondent of $B$. Assume that $k$ is a splitting field for $B$, $D$ and their
Brauer pairs. Suppose that $B$ has an indecomposable direct summand as a 
$kG$-$kN_G(P)$-module which  induces a stable equivalence of Morita type 
between $B$ and  $D$. Let $i\in B^P$ be a source idempotent. Then the algebra
$\End_{B}(Bi\tenkP k)$ is a direct product of  self-injective Nakayama algebras. 
In particular,  $\End_{B}(Bi\tenkP k)$ is self-injective.
\end{Theorem}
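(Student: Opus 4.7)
The plan is first to compute the endomorphism algebra of the source permutation module of the Brauer correspondent $D$ explicitly, using the fact that $P$ is normal in $N_G(P)$, and then to transfer the result to $B$ via the given splendid stable equivalence, invoking the classical structure theory of cyclic defect blocks to pass from the stable endomorphism algebra to the actual one.

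First, because $P$ is normal in $N_G(P)$, a theorem of Puig gives an isomorphism of interior $P$-algebras $jDj \cong k(P \rtimes E)$, where $E = N_G(P)/PC_G(P)$ is the inertial quotient of $B$; since $P$ is cyclic, $E$ is a cyclic $p'$-group of order $e$ dividing $p-1$.  Under the source algebra Morita equivalence, $Dj \tenkP k$ corresponds to the $k(P \rtimes E)$-module $k(P \rtimes E) \tenkP k \cong \Ind_P^{P \rtimes E}(k)$, which is the inflation from $E$ of the regular $kE$-module.  Since $k$ splits $kE$ and $E$ is cyclic of order coprime to $p$, this decomposes as $\bigoplus_\chi k_\chi$, with $\chi$ ranging over the $e$ characters of $E$, and hence $\End_D(Dj \tenkP k) \cong k^e$, a commutative split semisimple algebra that is a direct product of $e$ copies of $k$, each trivially a self-injective Nakayama algebra.

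Next, let $N$ denote the indecomposable $B$-$D$-bimodule summand of $B$ inducing the splendid stable equivalence of Morita type, so that $N$ has vertex $\Delta P$ and trivial source.  Using Brauer correspondence at the level of source idempotents---identifying $i \in B^P$ and $j \in D^P$ compatibly under the isomorphism $B^P \cong D^P$ modulo radicals---one establishes a stable isomorphism $N \otimes_D (Dj \tenkP k) \cong Bi \tenkP k$ of $B$-modules.  Since neither module has a nonzero projective direct summand by Theorem~\ref{Bi-no-projective-summand-thm}, this stable isomorphism identifies the stable endomorphism algebras, giving $\Endbar_B(Bi \tenkP k) \cong \Endbar_D(Dj \tenkP k) \cong k^e$.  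To pass from the stable to the actual endomorphism algebra, invoke the classical theory of cyclic defect blocks (Dade, Janusz, Kupisch): every non-projective indecomposable $B$-module is uniserial, so each indecomposable summand of $Bi \tenkP k$ is uniserial.  The summands partition into Heller orbits, and the endomorphism algebra decomposes accordingly as a direct product indexed by these orbits; using the Brauer tree structure of $B$, each such factor turns out to be a self-injective Nakayama algebra, generated by the cyclic pattern of irreducible morphisms along a single orbit.

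The main obstacle is the transfer step, namely establishing the stable isomorphism $N \otimes_D (Dj \tenkP k) \cong Bi \tenkP k$.  This requires careful bookkeeping of how the splendid bimodule $N$ interacts with the source idempotents $i$ and $j$ under Brauer correspondence, and of the right $kP$-module structure before tensoring down.  Once this compatibility is in place, the reduction to $D$ proceeds cleanly, and the remaining assembly of Nakayama factors from cyclic defect block theory is standard.
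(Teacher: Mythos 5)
Your first two steps are sound, and the transfer step you identify (that $N\otimes_D(Dj\tenkP k)\cong Bi\tenkP k$) is essentially Corollary~\ref{splendid-source-Cor}; the paper proves it via Proposition~\ref{splendid-stable-source-Prop} and Lemma~\ref{splendid-source-Lemma} rather than by direct bookkeeping, but your route to that point is legitimate and, incidentally, different from the paper's own proof of Theorem~\ref{cyclic-thm}, which avoids the Brauer correspondent block entirely and instead uses the idempotent embedding $k(P\rtimes E)\hookrightarrow A=iBi$ together with the identification $A\tenkP k\cong A\ten_{k(P\rtimes E)}kE$.

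However, the final step --- ``passing from the stable to the actual endomorphism algebra'' --- is a genuine gap, not a routine assembly. Knowing $\Endbar_B(Bi\tenkP k)\cong k^e$ only records that there are $e$ pairwise non-isomorphic indecomposable non-projective summands; it says nothing about $\End_B(Bi\tenkP k)$ itself, whose entire radical consists of morphisms factoring through projectives. Determining that radical, and showing that the result is a product of self-injective Nakayama algebras, is precisely the content of Theorem~\ref{Brauertree-thm-2}, which is the real work of the argument. Your sketch of this step also contains two errors: (a) ``every non-projective indecomposable $B$-module is uniserial'' is false for $e\geq 2$ (this is why the paper invokes the specific classification of the modules $U_i$ via \cite[Theorem 11.6.3]{LiBookII}, and why the hypothesis on the stable equivalence being induced by truncated restriction is needed to ensure the summands of $Bi\tenkP k$ are uniserial); and (b) the summands are grouped by $\langle\rho\rangle$-orbits in the Brauer tree combinatorics, not by Heller orbits --- indeed $\Omega^2$ acts on the set $\{U_i\}$ as the transitive cycle $\sigma\rho$, so ``Heller orbits'' would give a single factor, which is not what happens. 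You need to actually carry out the $\rho$-orbit analysis of Theorem~\ref{Brauertree-thm-2}: prove $\Hom_A(U_i,U_{i'})=0$ for $i,i'$ in distinct $\rho$-orbits, construct the maps $\alpha_i:U_{\rho(i)}\to U_i$ with simple kernel and cokernel, and verify that they generate a cyclic Nakayama quiver with the stated relations.
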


This will be proved in Section \ref{cyclic-Section}, making use of general properties
of Brauer tree algebras from Section \ref{Brauertree-section}.
The hypothesis on the stable equivalence between $B$ and $D$ to be induced
by a truncated induction/restriction implies in particular that the Green 
correspondents of the simple $D$-modules are uniserial.
If $B$ is a block with a cyclic defect group and $B$  is the principal  block   
or if  $P$ has order $p$, then $B$  satisfies the hypotheses of the above theorem.   
More precisely, these hypotheses are satisfied whenever the nilpotent block 
corresponding to $B$  of the centraliser of the subgroup of 
order $p$ of $P$  has a simple module with endotrivial source; see Proposition
\ref{splendid-stable-cyclic-Prop} below.

\medskip
A  statement similar  to  that of Theorem~\ref{cyclic-thm}  holds for blocks with a 
Klein four defect group, although
this requires the classification of finite simple groups via the reference
\cite{CEKL} in which source algebras of Klein four defect blocks are classified.

\begin{Theorem} \label{Kleinfour-thm}
Suppose that $p=2$. Let $G$ be a finite group and $B$ a block of $kG$
with a Klein four defect group $P$. Assume that $k$ is a splitting field for
$B$, its Brauer correspondent and their Brauer pairs.
Let $i\in B^P$ be a source idempotent.
Then $\End_B(Bi\tenkP k)$ is isomorphic to either $k$ or $k\times k\times k$, or
$k\times N$, where $N$ is a $4$-dimensional basic Nakayama algebra with two 
isomorphism classes of simple modules and  projective indecomposable modules 
of length $2$. In particular, $\End_B(Bi\tenkP k)$ is self-injective.
\end{Theorem}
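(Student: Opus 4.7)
The plan is to reduce the computation to three explicit representative blocks via the classification of Klein four defect blocks in \cite{CEKL}, and then to compute $\End_B(Bi\tenkP k)$ separately in each case.

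By Proposition \ref{splendid-stable-source}, the isomorphism class of $Bi\tenkP k$, and hence of its endomorphism algebra, is invariant under splendid Morita equivalence. The main classification theorem of \cite{CEKL} states that any $2$-block with Klein four defect group $P$ is splendidly Morita equivalent to exactly one of three blocks: the nilpotent block $kP$, the principal block $B_0(kA_4)$, or the principal block $B_0(kA_5)$. Thus it suffices to verify the claim in these three cases.

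In the nilpotent case, the source algebra $iBi$ is isomorphic to $kP$ as interior $P$-algebras, so $Bi\tenkP k$ corresponds under the Morita equivalence $B\sim kP$ to $kP\tenkP k = k$, giving $\End_B(Bi\tenkP k)\cong k$. In the $B_0(kA_4)$ case, $P$ is a normal Sylow $2$-subgroup of $A_4$; one checks that the Brauer map identifies $(kA_4)^P$ modulo its kernel with the local algebra $kP$, forcing $(kA_4)^P$ itself to be local, so that $i=1$ is a source idempotent. Hence $Bi\tenkP k = kA_4\tenkP k \cong k[A_4/P]$ is the inflation of the regular $kC_3$-module, and since $k$ is a splitting field for $kC_3$ by hypothesis, $\End_{kA_4}(k[A_4/P]) \cong \End_{kC_3}(kC_3) \cong kC_3 \cong k\times k\times k$.

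The remaining case $B=B_0(kA_5)$ is the crux of the argument. The Brauer correspondent of $B$ in $N_{A_5}(P)=A_4$ is $B_0(kA_4)\cong kA_4$, which has three simple modules; these yield three $B$-weights whose Green correspondents are the trivial module $k$ and two further indecomposable trivial-source modules $M_1, M_2$ of vertex $P$. By Theorem \ref{Bi-weight-thm} together with Theorem \ref{Bi-no-projective-summand-thm}, the module $Bi\tenkP k$ contains $k, M_1, M_2$ as direct summands and has no nonzero projective summand. Using the explicit source algebra of $B_0(kA_5)$ supplied by \cite{CEKL}, one identifies $Bi\tenkP k$ with $k\oplus M_1\oplus M_2$ and computes the relevant Hom spaces: $\Hom_B(k,M_j)=0=\Hom_B(M_j,k)$, $\End_B(M_j)\cong k$, $\Hom_B(M_1,M_2)\cong k \cong \Hom_B(M_2,M_1)$, and the composites $M_j\to M_l\to M_j$ vanish. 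These data force $\End_B(Bi\tenkP k) \cong k \times N$ with $N$ the prescribed four-dimensional basic Nakayama algebra.

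The main obstacle is the $B_0(kA_5)$ case, specifically verifying that no further indecomposable summands beyond $k, M_1, M_2$ occur and that the length-three compositions $M_j\to M_l\to M_j$ vanish. Both reductions are ultimately direct computations with the explicit source algebra structure provided by \cite{CEKL}, the vanishing of the length-three compositions reflecting the Loewy length two of the projective covers of $M_1, M_2$ over the source algebra. Since $k$, $k\times k\times k$, and $k\times N$ are all self-injective, self-injectivity of $\End_B(Bi\tenkP k)$ follows from the case analysis.
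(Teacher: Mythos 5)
Your proposal is correct and follows essentially the same route as the paper: reduce via the CEKL classification to the three possible source algebras $kP$, $kA_4$, $kA_5b_0$, and compute the endomorphism algebra of $A\tenkP k$ in each case. The one place where your framing differs slightly is the $B_0(kA_5)$ case: you invoke Theorems \ref{Bi-weight-thm} and \ref{Bi-no-projective-summand-thm} to produce the three summands $k$, $M_1$, $M_2$ and then defer the identification $Bi\tenkP k\cong k\oplus M_1\oplus M_2$ and the vanishing of the length-three compositions to an unspecified ``direct computation''; the paper instead computes the three induced modules $b_0\Ind^{A_5}_{A_4}(T_j)$ explicitly from Erdmann's description of the projective covers and reads off the endomorphism algebra from the resulting uniserial structure. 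Note that once you have the three weight summands, the identification you defer can be finished cleanly by a dimension count ($\dim(kA_5b_0\tenkP k)=44/4=11=1+5+5$), and the Hom-space calculations (including $\alpha\beta=\beta\alpha=0$) are forced by the known Loewy series of $M_1=U_{3,1,2}$ and $M_2=U_{2,1,3}$, so no deeper appeal to the source algebra structure is actually needed there. Also, a minor remark on the $kA_4$ case: your claim that the surjection $\Br_P\colon (kA_4)^P\twoheadrightarrow kP$ forces $(kA_4)^P$ to be local requires knowing $\ker(\Br_P)$ lies in the Jacobson radical, which you do not justify; it is simpler to observe that $\dim(iBi)=\dim kA_4=\dim B$ from the CEKL isomorphism already forces $i=1$.
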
 

The three cases in the above Theorem correspond to the cases where the source
algebras of $B$ are isomorphic to either $kP$, or $kA_4$, or the principal block
of $kA_5$. This will be proved in Section \ref{Kleinfour-section}. For completeness
we comment on nilpotent blocks.

\begin{Theorem} \label{nilpotent-thm}
Let $k$ be a field of prime characteristic $p$.
Let $G$ be a finite group and $B$ a nilpotent block of $kG$ with defect group $P$
and source idempotent $i\in B^P$.  Assume that $k$ is a splitting field for
$B$, its Brauer correspondent and their Brauer pairs.
Then $\End_{B}(Bi\tenkP k)\cong$ $\End_{kP}(V)^\op$, where $V$ is an 
indecomposable endopermutation $kP$-module with vertex $P$ such that
$V$ is a source of a simple $B$-module. This is a split local algebra, and if this
algebra is self-injective, then $V$ has a simple top and socle.
\end{Theorem}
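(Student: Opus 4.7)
The plan is to identify $\End_B(Bi\otimes_{kP}k)$ with $\End_{kP}(V)^{\op}$ via two successive Morita equivalences, and then to derive the structural properties. The crucial input is Puig's nilpotent block theorem, which provides an interior $P$-algebra isomorphism $iBi\cong \End_k(V)\otimes_k kP$, where $V$ is an indecomposable endopermutation $kP$-module with vertex $P$ serving as a source of the unique simple $B$-module, and the embedding $kP\hookrightarrow iBi$ is the diagonal $u\mapsto u_V\otimes u$ with $u_V\in\End_k(V)$ the action of $u$ on $V$.

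First I would apply the Morita equivalence $B\sim iBi$ via $Bi$ to obtain $\End_B(Bi\otimes_{kP}k)\cong \End_{iBi}(iBi\otimes_{kP}k)$. Then through the Morita equivalence $iBi\text{-}\mathrm{mod}\sim kP\text{-}\mathrm{mod}$ coming from $iBi\cong \End_k(V)\otimes kP$, I would identify the target module. Using the simplification $(s\otimes v)\otimes 1 = (s\cdot v_V^{-1}\otimes 1)\otimes 1$ in $iBi\otimes_{kP}k$, which follows from the diagonal embedding of $kP$, one sees that $iBi\otimes_{kP}k$ is isomorphic to $\End_k(V)\cong V\otimes_k V^*$ with $\End_k(V)$ acting on the $V$-factor and $kP$ acting diagonally. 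Under the Morita equivalence, which removes the $V$-factor, this corresponds to $V^*$ as a $kP$-module, giving $\End_B(Bi\otimes_{kP}k)\cong \End_{kP}(V^*)\cong \End_{kP}(V)^{\op}$.

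The split local structure of $\End_{kP}(V)$ follows from indecomposability of $V$ via Fitting's lemma together with the splitting hypothesis on $B$, which forces $V$ to be absolutely indecomposable as a $kP$-module. For the final implication, suppose $E:=\End_{kP}(V)$ is self-injective, hence Frobenius local with $\dim\soc E=1$. I would produce an injection $\Hom_k(\top V,\soc V)\hookrightarrow\soc E$, which forces $(\dim\top V)(\dim\soc V)\leq 1$ and hence both to equal $1$. Any $k$-linear $\bar\phi:\top V\to\soc V$ is automatically $kP$-linear because both modules are trivial $kP$-modules, and it lifts to the endomorphism $\phi:=\iota\circ\bar\phi\circ\pi$ of $V$, where $\pi:V\twoheadrightarrow\top V$ and $\iota:\soc V\hookrightarrow V$. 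To confirm $\phi\in\soc E$, one checks that $\rad E\cdot V\subseteq\rad V$ and $\rad E\cdot\soc V=0$; both follow from identifying $\rad E$ with the ideal of $(\End_k V)^P$ generated by relative traces $\Tr^P_Q$ from proper subgroups $Q<P$ (via the endopermutation cap structure of $V$), together with the vanishing of $[P:Q]$ in $k$ on any trivial $kP$-module, since $[P:Q]$ is a positive power of $p$.

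The main obstacle is this last structural identification: verifying that $\rad E$ coincides with the trace ideal requires the decomposition of $\End_k(V)$ as a permutation $kP$-module with a unique trivial summand corresponding to the cap, and the dual argument on $V^*$ is what converts the top-side computation (using $\Tr^P_Q$ acting as $[P:Q]$ on $\top V$) into the required vanishing on $\soc V$.
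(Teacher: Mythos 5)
Your derivation of the isomorphism $\End_B(Bi\tenkP k)\cong\End_{kP}(V)^{\op}$ follows essentially the same route as the paper: invoke Puig's structure theorem $iBi\cong\End_k(V)\tenk kP$, identify $iBi\tenkP k$ with $\End_k(V)\cong V\tenk V^*$ using the diagonal embedding of $P$, and peel off the $V$-factor by the Morita equivalence with $kP$ to land on $V^*$. This is precisely the content of the paper's Lemma~\ref{nilpotent-Lemma}, and your computation is correct.

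Where you diverge is the final implication, and there the argument has a genuine gap. You want to show $\phi=\iota\circ\bar\phi\circ\pi\in\soc E$ by checking that $\rad E$ kills $\soc V$ and maps $V$ into $\rad V$, and you justify both via ``$\Tr^P_Q$ acts as $[P:Q]=0$ on a trivial $kP$-module.'' But this trace formula does not apply here. If $\alpha=\Tr^P_Q(\gamma)\in\rad E$ with $\gamma\in(\End_k V)^Q$ and $v\in\soc V=V^P$, then one computes $\alpha(v)=\sum_{u\in[P/Q]}u\gamma(u^{-1}v)=\sum_u u\,\gamma(v)=\Tr^P_Q\bigl(\gamma(v)\bigr)$, where $\gamma(v)\in V^Q$. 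This is \emph{not} $[P:Q]\gamma(v)$, because $\gamma(v)$ is only $Q$-fixed, not $P$-fixed; it is the relative trace of a vector lying in $V^Q$, which in general need not vanish (it lies in $V^P_Q$, not in $\{0\}$). Likewise, on the top side $\gamma$ is only $Q$-equivariant and so does not induce an endomorphism of $\top V=V/\rad(kP)V$, which is what you would need for the identity $\overline{\Tr^P_Q(\gamma)}=\Tr^P_Q(\bar\gamma)=[P:Q]\bar\gamma$ to even make sense. So neither $\rad E\cdot\soc V=0$ nor $\rad E\cdot V\subseteq\rad V$ is established by the argument given; whether these equalities hold for arbitrary capped endopermutation $V$ is itself a nontrivial question, and you would need a separate argument for it. The paper avoids all of this by applying Green's theorem (\cite[Theorem 1]{Green78}, as in \cite[Theorem 2.13]{Geck01}) directly to the $kP$-module $V$: since $k$ is the unique simple $kP$-module and it appears in both $\top V$ and $\soc V$, self-injectivity of $\End_{kP}(V)$ immediately forces $V$ to have simple top and socle.
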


This will be proved in Section \ref{nilpotent-Section}.
As  an illustration of  and motivation for  some of the   above theorems, we  
examine the  case of the  symmetric groups  $S_n$  in Section \ref{Example-section}.   
In  particular,  Theorem \ref {cyclic-Sp}  addresses the 
question  on the self-injectivity of  the endomorphism algebras 
of  Sylow permutation modules of symmetric groups raised in  
\cite[Section 6]{DGGLNSS}.
By abuse of notation, for  $Y$ a finite-dimensional module over a finite-dimensional 
algebra $A$, we  say that $M$ {\em is the non-projective part of}  $Y$, if 
$M$ is a direct summand of $Y$ such that $M$ has no nonzero projective direct 
summand and a  projective complement in $Y$. By the Krull-Schmidt Theorem, 
the isomorphism  class of $M$ is uniquely determined.

\begin{Theorem} \label{cyclic-Sp}   
Let $n$ be a positive integer, $B$  a block of  $kS_n $  with a non-trivial defect 
group $P$,   and  let $i \in B^P$   a source idempotent.  
\begin{enumerate}   

\item [{\rm (a)}]  
Assume that $P$  is cyclic. If  $p $ is  odd,  then    $\End_{B}(Bi\tenkP k)$    is a 
direct product  of two copies of $k$  and $\frac{p-3}{2}$ copies of a 
$4$-dimensional basic Nakayama  algebra with two isomorphism classes of 
simple modules and  projective  indecomposable modules of length $2$.    
If $p=2 $,  then $\End_{B}(Bi\tenkP k) =  k $.  In particular, $\End_{B}(Bi\tenkP k)$ 
is symmetric for $p=2, 3$ and   self-injective but not symmetric for $p\geq 5$.  

\item [{\rm (b)}]    
Suppose that $n=p $ and $B$ is the  principal block (so $P$ is a Sylow  $p$-subgroup 
of $S_p$).  Then $Bi\tenkP k $ is the non-projective  part of   $ B \tenkP k$. Moreover,  
$B=Bi$    if and only if  $p \leq 5 $.    Consequently,   $\End_{B}(B\tenkP k)$  and  
$\End_{kS_n}(\Ind_{P}^{S_n}(k)) $    are   self-injective if  and only  if    $p\leq 5 $.

\item [{\rm (c)}]    
Let $n >  p \geq 7 $ and let $S$  be  a Sylow $p$-subgroup of  $S_n$.  Then 
$\End_{kS_n}(\Ind^{S_n}_{S}(k))$  is not self-injective.
\end{enumerate}
\end{Theorem}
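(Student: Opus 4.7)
The plan is to prove parts (a), (b), (c) in sequence, using (a) as the structural base and then combining it with Theorems~\ref{Bi-summand}, \ref{Bi-weight-thm} and Corollary~\ref{noprojective-cor} to handle (b) and (c). For (a), a cyclic defect block $B$ of $kS_n$ is a weight-one block, whose Brauer tree is a straight line with $p-1$ edges and no exceptional vertex, since the inertial quotient $N_{S_n}(P)/C_{S_n}(P)$ is $C_{p-1}$. I would verify the splendid stable equivalence hypothesis of Theorem~\ref{cyclic-thm} for weight-one blocks of symmetric groups (via the Chuang--Rouquier derived equivalences between weight-one blocks and their Brauer correspondents), and then apply that theorem to conclude that $\End_B(Bi \tenkP k)$ is a direct product of self-injective Nakayama algebras. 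To pin down the factors I would enumerate the $p-1$ Green correspondents of weights: the two attached to the leaves of the line are $1$-dimensional trivial-source modules, each contributing a factor of $k$, while the remaining $p-3$ interior correspondents pair up under sign-twisting (Mullineux) into $(p-3)/2$ orbits of size $2$, each producing a factor isomorphic to the four-dimensional Nakayama algebra $N_4$. The case $p=2$ is immediate, since the tree has a single edge. The main obstacle is showing that each interior pair yields precisely $N_4$ (uniserial of Loewy length two) and not a longer Nakayama algebra.

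For (b), the assertion that $Bi \tenkP k$ is the non-projective part of $B \tenkP k$ is immediate from Theorem~\ref{Bi-summand}, because $P = C_p$ has only $1$ and $P$ as subgroups, so every indecomposable summand of $B \tenkP k$ is either projective (vertex $1$) or isomorphic to a summand of $Bi \tenkP k$ (vertex $P$). I read the equivalence ``$B = Bi$ iff $p \leq 5$'' as the condition that the source idempotent can be chosen to be $1_B$, equivalently that $B^P$ is a local algebra. For $p = 2, 3$ the principal block $kS_p$ is basic and a direct computation of $(kS_p)^{C_p}$ together with nilpotency of the kernel of $\Br_P$ gives locality. For $p = 5$, one verifies locality of $B_0(kS_5)^{C_5}$ by an analogous explicit calculation in the spirit of Theorem~\ref{Kleinfour-thm}. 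For $p \geq 7$, using the source-algebra structure obtained in (a), one exhibits a non-trivial primitive idempotent in $B_0(kS_p)^{C_p}$, hence a non-simple projective direct summand of $B_0 \tenkP k$, which by Corollary~\ref{noprojective-cor} forces $\End_B(B \tenkP k)$ to fail self-injectivity. The statement for $\End_{kS_p}(\Ind_P^{S_p}(k))$ then follows from the block decomposition, since all non-principal blocks have defect $0$ in this case.

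For (c), Corollary~\ref{noprojective-cor} reduces the statement to producing a non-simple projective direct summand of $\Ind_S^{S_n}(k)$ for $n > p \geq 7$. I would transport such a summand from $\Ind_{C_p}^{S_p}(k)$, which by (b) admits a non-simple PIM $P_V$ of the principal block of $kS_p$ as a direct summand. Using the Young subgroup $Y = S_p \times S_{n-p} \leq S_n$, the external tensor product $P_V \boxtimes kS_{n-p}$ is a projective $kY$-module, and Mackey's formula (comparing the Sylow $S$ of $S_n$ with the Sylow of $Y$, combined with transitivity of induction) identifies $\Ind_Y^{S_n}(P_V \boxtimes kS_{n-p})$ as a direct summand of $\Ind_S^{S_n}(k)$; the induced module is projective since induction from a subgroup preserves projectivity. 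The main obstacle is verifying that this induced projective contains a PIM in a positive-defect block of $kS_n$. I would establish this by projecting onto the principal block of $kS_n$ (whose defect group contains $C_p$, hence has positive defect), using the symmetric-group branching rule on $p$-cores to show that the principal-block component of the induced module is nonzero, so that it contains a non-simple PIM, which is the required non-simple projective summand.
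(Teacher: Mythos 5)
Your part (a) reaches the right conclusion but takes an unnecessary detour: you do not need Chuang--Rouquier, because every cyclic defect group of a block of $kS_n$ has order $p$, and Proposition~\ref{splendid-stable-cyclic-Prop} then guarantees the hypotheses of Theorem~\ref{cyclic-thm} directly. The paper instead reduces all cyclic blocks of $kS_n$ to the principal block of $kS_p$ via Puig's source-algebra equivalence \cite{PuScopes} and then reads the factors off Theorem~\ref{Brauertree-thm-2}; the ``Mullineux sign-twist pairing'' you invoke is not really the mechanism --- what produces the pairing is the $\rho$-orbit structure of the straight-line Brauer tree, which is purely combinatorial. In part (b), your first assertion is not ``immediate'' from Theorem~\ref{Bi-summand}: knowing that every vertex-$P$ summand of $B\tenkP k$ is isomorphic to some summand of $Bi\tenkP k$ does not control multiplicities and does not show that $B(1_B-i)\tenkP k$ is projective. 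You need the extra input that $B(P)\cong kP$ is local, so that exactly one primitive idempotent of $B^P$ survives $\Br_P$, forcing $1_B-i\in B^P_1$; that is precisely Proposition~\ref{proj-free-Frob-Prop}. Your claim that ``one exhibits a non-trivial primitive idempotent in $B_0(kS_p)^{C_p}$ using the source-algebra structure from (a)'' is also a gap: (a) computes $\End_B(Bi\tenkP k)$, not $B^P$. The paper instead uses a Murnaghan--Nakayama count (Lemma~\ref{lem:proj non-simple}) showing some hook character has multiplicity $\geq 3$ in $\Ind_{C_p}^{S_p}(k)$ while the non-projective part contributes at most $2$.

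The serious gap is in part (c). Your chain ``$P_V\boxtimes kS_{n-p}$ is a projective summand of $\Ind_{S_Y}^{Y}(k)$, hence $\Ind_Y^{S_n}(P_V\boxtimes kS_{n-p})$ is a projective summand of $\Ind_{S_Y}^{S_n}(k)$'' only yields a summand of $\Ind_S^{S_n}(k)$ when $S_Y$ is actually a Sylow $p$-subgroup of $S_n$, i.e.\ when $\binom{n}{p}$ is prime to $p$. When $\nu_p\!\bigl(\binom{n}{p}\bigr)>0$ --- for instance $n=p^a$ with $a\geq 2$ --- the subgroup $S_Y=C_p\times(\text{Sylow of }S_{n-p})$ is a proper subgroup of $S$, so $\Ind_{S_Y}^{S_n}(k)=\Ind_S^{S_n}(\Ind_{S_Y}^S(k))$ is strictly larger than $\Ind_S^{S_n}(k)$, and there is no reason the projective summand you construct should land in the $\Ind_S^{S_n}(k)$ piece. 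Your proposed fix (projecting to the principal block, branching on $p$-cores) addresses whether the induced projective is nonzero on a positive-defect block, but not whether it is a summand of the Sylow permutation module, which is the actual obstruction. The paper avoids this by an induction on $n$ (Lemma~\ref{ngreaterp}): when $P$ is contained in a proper Young subgroup containing a Sylow it uses Kn\"orr's result (Lemma~\ref{primetop}), and in the residual case $n=p^a$ it passes to the wreath product $S_{p^{a-1}}\wr C_p$ and uses Granville--Ono to supply a projective \emph{simple} module as the cotensor factor (Lemma~\ref{lem:wreath}), which is the step that keeps the construction inside $\Ind_S^{S_n}(k)$.
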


This will be proved in Section \ref{Example-section}.
We note that   the  the  last statement of part (b)  for  $p\leq  7 $     is  proved by  
Naehrig in   \cite{NaehrigPhD}.   In the same work,  Naehrig   also  shows  that    
when $p=3$, then  $\End_{kS_6}(\Ind_S^{S_6} (k))$  is self-injective.

\begin{Remark}
The $p$-permutation modules considered in this paper all lift to a complete
discrete valuation ring with $k$ as residue field, as do many of the results.
We comment on this in Remark \ref{O-Remark} below.
\end{Remark}

\section{Background on algebras and blocks of finite groups} \label{backgroundsection} 

Let $k$ be a field.  Given two $k$-algebras $A$, $B$, an $A$-$B$-bimodule $M$ is
understood to have the same left and right $k$-module structure, or equivalently,
$M$ is a left $A\tenk B^\op$-module,with $a\ten b$ acting on $m\in M$ as $amb$,
where $B^\op$ is the opposite algebra of $B$, and where $a\in A$ and $b\in B$. 
Thus an $A$-$B$-bimodule is projective if it is projective as an  
$A\tenk B^\op$-module. For $A$ a finite-dimensional $k$-algebra, we denote by
$\ell(A)$ the number of isomorphism classes of simple $A$-modules and by
$J(A)$ the Jacobson radical of $A$.
A finite-dimensional $k$-algebra $A$ is called {\em semisimple} if $J(A)=0$, and
{\em separable} if  $J(k'\tenk A)=0$ for any extension field $k'$ of $k$.
Given an arbitrary finite-dimensional $k$-algebra, the quotient $A/J(A)$ is
semisimple, and this quotient is separable if $J(k'\tenk A)=$ $k'\tenk J(A)$ for
any extension field $k'$ of $k$; in this case we say for short that $A$ has a
separable semisimple quotient. 
We will  make repeated  use of  the following fact.

\begin{Proposition}[{\cite[Proposition 4.14.9]{LiBookI}, 
\cite[Proposition 2.3]{Listable}}]  \label{no-proj-summands-Prop}
Let $A$, $B$ be finite-dimensional self-injective $k$-algebras with
separable semisimple quotients. Let $M$ be an $A$-$B$-bimodule
which has no nonzero projective direct summand as an $A\tenk B^\op$-module
and which is finitely generated projective as a right $B$-module. 
Then, for any non-projective simple $B$-module $T$, the $A$-module $M\tenB T$
has no nonzero projective direct summand.
\end{Proposition}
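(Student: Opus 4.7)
The plan is to argue the contrapositive: assume $M\tenB T$ has a nonzero projective direct summand as an $A$-module for some non-projective simple left $B$-module $T$, and deduce that $M$ has a nonzero projective direct summand as an $A\tenk B^\op$-module. Set $\Lambda := A\tenk B^\op$. Under the separability hypotheses, $\Lambda$ is itself self-injective with separable semisimple quotient; its indecomposable projective modules are precisely the tensor products $Ae\tenk fB$ for primitive idempotents $e\in A$, $f\in B$, and each such module has a local endomorphism ring.

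First I would fix a primitive idempotent $e\in A$ such that $Ae$ is a projective summand of $M\tenB T$, and a primitive idempotent $f\in B$ such that $Bf$ is the projective cover of $T$, so that $T\cong Bf/J(B)f$. Since $M$ is projective as right $B$-module, applying $M\tenB -$ to $0\to J(B)f\to Bf\to T\to 0$ yields an exact sequence of left $A$-modules
\[
0\to MJ(B)f\to Mf\to M\tenB T\to 0.
\]
Because $Ae$ is projective, the split inclusion $\iota_0:Ae\hookrightarrow M\tenB T$ lifts to an $A$-linear map $\iota:Ae\to Mf$, and composing $Mf\twoheadrightarrow M\tenB T$ with the split retraction $M\tenB T\twoheadrightarrow Ae$ gives $\pi:Mf\to Ae$ with $\pi\iota=\Id_{Ae}$. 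Writing $x:=\iota(e)\in eMf$, the formula $ae\tenk fb\mapsto axb$ defines a $\Lambda$-linear map $\tilde\iota:Ae\tenk fB\to M$ that corresponds to $x$ under the canonical identification $\Hom_\Lambda(Ae\tenk fB,M)\cong eMf$.

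The heart of the proof is then to show that $\tilde\iota$ realises $Ae\tenk fB$ as a direct summand of $M$. Since $\End_\Lambda(Ae\tenk fB)$ is local, this reduces to constructing a $\Lambda$-linear retraction $\tilde\pi:M\to Ae\tenk fB$ so that $\tilde\pi\tilde\iota$ lies outside the radical. The key input is that the image of $x$ in the bimodule quotient $M/(J(A)M+MJ(B))$ is nonzero in its $(Ae/J(A)e)\tenk(fB/fJ(B))$-isotypic component: further projecting to $M\tenB T=Mf/MJ(B)f$ sends $x$ to $\iota_0(e)\in Ae$, which is not in $J(A)\cdot Ae$ since $Ae$ is a projective summand of $M\tenB T$. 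Combining this nonvanishing with the self-injectivity of $\Lambda$ and the separability of the residue fields — which ensures that the simple $\Lambda$-modules and their injective hulls factor as tensor products — one produces $\tilde\pi$ by a standard socle/top-duality argument for indecomposable projective-injective modules over self-injective algebras, contradicting the hypothesis that $M$ has no projective $\Lambda$-summand.

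The main obstacle is this last step, passing from the $A$-linear splitting of $Ae$ inside $Mf$ to an honest $\Lambda$-linear splitting of $Ae\tenk fB$ inside $M$. The non-projectivity of $T$ is essential here because it guarantees $MJ(B)f\subsetneq Mf$, which is what makes $\iota_0(e)$ a genuine top element of $M\tenB T$ rather than an element that is forced into $\rad$. The separability of the semisimple quotients of $A$ and $B$ is essential for the uniform tensor-product factorisation of simple and indecomposable projective $\Lambda$-modules (with local endomorphism rings) that underpins the whole argument; without it, the simple $\Lambda$-modules need not decompose as tensor products of simples of the two factors, and the reduction to the tensor-indexed projectives would break down.
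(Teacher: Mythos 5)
Your reduction and setup are sound, and the strategy of exhibiting the projective bimodule $Ae\tenk fB$ (or an indecomposable summand of it) as a $\Lambda$-summand of $M$ via the map $\tilde\iota$ associated with $x=\iota(e)\in eMf$ is the right one. The genuine gap is the final step. What you establish is that $\tilde\iota$ is nonzero modulo $J(\Lambda)M$, i.e.\ nonzero on the \emph{top} of $Ae\tenk fB$. But for a map from a projective(-injective) $\Lambda$-module $P$ into an arbitrary module $M$, nonvanishing modulo $\rad M$ does \emph{not} force the map to be split injective: the canonical surjection $k[u]/(u^2)\twoheadrightarrow k$ is nonzero on the top and has the socle in its kernel. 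What you need is that $\tilde\iota$ is nonzero on the \emph{socle} of $Ae\tenk fB$, equivalently that $\soc_A(Ae)\cdot x\cdot\soc_{B^\op}(fB)\neq 0$; split injectivity then follows because $Ae\tenk fB$ is injective over $\Lambda$. Passing from the top statement to the socle statement is exactly where the hypothesis that $M$ is projective as a right $B$-module must be used a second time (beyond the exactness of $M\tenB-$ that you invoke at the start): pick $s\in\soc_A(Ae)$ with $sx\notin MJ(B)$ (this follows from your top computation, using $sx=sxf$ and $J(\Lambda)M\cap Mf=J(A)Mf+MJ(B)f$); then the right $B$-module map $fB\to M$, $fb\mapsto sxb$, is nonzero modulo $MJ(B)=\rad_{B^\op}M$, and since $M$ is a direct sum of indecomposable projective (hence projective-injective) right $B$-modules, such a non-radical map from the indecomposable projective $fB$ has a component that is a surjection onto an indecomposable projective summand, hence is a split injection. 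Therefore $sx\cdot\soc_{B^\op}(fB)\neq0$. Combined with the separability hypothesis, which yields $J(\Lambda)=J(A)\tenk B^{\op}+A\tenk J(B^{\op})$ and thence $\soc_\Lambda(Ae\tenk fB)=\soc_A(Ae)\tenk\soc_{B^\op}(fB)$ by a dimension count via the Nakayama permutations, this shows $\tilde\iota$ is injective on the socle of some indecomposable projective summand of $Ae\tenk fB$, giving the contradiction.

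So the ``standard socle/top-duality argument'' you appeal to does not exist in the form you describe (self-injectivity plus separability plus nonvanishing on the top are provably insufficient, as the one-line example shows); the correct argument hinges on re-using the right $B$-projectivity of $M$, which your sketch does not mention at this point. A further small inaccuracy: $\End_\Lambda(Ae\tenk fB)$ is local only if $e\ten f$ is primitive, which needs $A$ or $B$ to be split (cf.\ Lemma \ref{ij-primitive-Lemma}), not just the separability of the semisimple quotients; one should instead argue with an indecomposable direct summand of $Ae\tenk fB$ as above.
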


It is well-known that finite group algebras, hence their blocks and source algebras
(see below), are symmetric with separable semisimple quotients. 
A finite-dimensional $k$-algebra $A$ is called {\em split} if $\End_A(S)\cong$ 
$k$ for any simple $A$-module $S$.  An extension field $k'$ of $k$ is called a
{\em splitting field for} $A$ if the $k'$-algebra $k'\tenk A$ is split.
If $A$ is split, then every extension field of $k$ is a splitting field for $A$.
If $A$ is split, then $A/J(A)$ is split semisimple, hence a direct product of
matrix algebras over $k$, and in particular, $A/J(A)$ is separable. 

\begin{Lemma} \label{ij-primitive-Lemma}
Let $A$, $B$ be finite-dimensional $k$-algebras. Let $i\in A$ and $j\in B$ be
primitive idempotents. If one of $A$ or $B$ is split, then $i\ten j$ is a primitive
idempotent in $A\tenk B$. 
\end{Lemma}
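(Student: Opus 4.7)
The plan is to translate the primitivity of $i \otimes j$ into a statement about the corner algebra. Recall that an idempotent $e$ in a finite-dimensional algebra $C$ is primitive if and only if $eCe$ is local, and
\[
(i \otimes j)(A \otimes_k B)(i \otimes j) = iAi \otimes_k jBj,
\]
so it suffices to prove that $iAi \otimes_k jBj$ is local. Primitivity of $i$ and $j$ gives that $iAi$ and $jBj$ are finite-dimensional local $k$-algebras, so the task reduces to the following: if $L_1, L_2$ are finite-dimensional local $k$-algebras with $L_1/J(L_1) \cong k$, then $L_1 \otimes_k L_2$ is local.

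First I would verify that the split hypothesis on $A$ (say) descends to the corner $iAi$. If $A$ is split, then the simple top $S = Ai/J(A)i$ of the indecomposable projective $Ai$ satisfies $\End_A(S) \cong k$. Standard identifications for endomorphism rings of finitely generated projective modules give a surjection $iAi/J(iAi) \twoheadrightarrow \End_A(S)^{\op} \cong k$; since $iAi/J(iAi)$ is already a division algebra (because $iAi$ is local), this surjection is forced to be an isomorphism. Hence $iAi$ is split local, and we have reduced to the situation $L_1/J(L_1) \cong k$ above.

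For the key local-tensor-product step, set $L_1 = iAi$ and $L_2 = jBj$. The two-sided ideals $I_1 := J(L_1) \otimes_k L_2$ and $I_2 := L_1 \otimes_k J(L_2)$ of $L_1 \otimes_k L_2$ are each nilpotent, since $J(L_1)$ and $J(L_2)$ are nilpotent. Because each $I_r$ is two-sided, intervening factors in a long product can be absorbed: if $J(L_1)^m = 0$ and $J(L_2)^n = 0$, then any product of $m+n-1$ elements drawn from $I_1 \cup I_2$ contains either at least $m$ factors in $I_1$ or at least $n$ factors in $I_2$, and in either case lies in $I_1^m = 0$ or $I_2^n = 0$. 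Hence $I_1 + I_2$ is a nilpotent two-sided ideal, so $I_1 + I_2 \subseteq J(L_1 \otimes_k L_2)$, and
\[
(L_1 \otimes_k L_2)/(I_1 + I_2) \cong (L_1/J(L_1)) \otimes_k (L_2/J(L_2)) \cong L_2/J(L_2)
\]
is a division algebra. Therefore $(L_1 \otimes_k L_2)/J(L_1 \otimes_k L_2)$, being a nonzero quotient of a division algebra, is itself a division algebra, and $L_1 \otimes_k L_2$ is local.

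The main obstacle I foresee is the bookkeeping in the second step: confirming that the split hypothesis on $A$ (or $B$) genuinely passes to the corner $iAi$ (or $jBj$), which requires untangling the anti-isomorphism $\End_A(Ai) \cong (iAi)^{\op}$ and the behaviour of the radical under idempotent truncation. Once this is in hand, the nilpotent-ideals calculation in the third step is routine.
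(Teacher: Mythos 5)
Your proof is correct and takes essentially the same route as the paper: reduce to the corner algebra $iAi\otimes_k jBj$, note that $iAi$ is split local, and identify the radical of the tensor product as $J(iAi)\otimes jBj + iAi\otimes jJ(B)j$ so that the quotient is $jBj/jJ(B)j$, a local (division) algebra. The only difference is cosmetic — where the paper cites \cite[Corollary 1.16.15]{LiBookI} for the radical formula, you unfold that reference into a self-contained nilpotent-ideal argument.
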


\begin{proof} 
Suppose that $A$ is split. Then $iAi$ is split local; that is, we have 
$iAi/iJ(A)i\cong$ $k$. Thus $J(iAi\tenk jBj)=$ 
$J(iAi)\tenk jBj \ + iAi\tenk jJ(B)j$ (see e.g.  \cite[Corollary 1.16.15]{LiBookI}). 
It follows that $iAi\tenk jBj / (J(iAi\tenk jBj)\cong$  $ jBj/jJ(B)j$ is local, hence 
$i\ten j$ is primitive by standard lifting theorems.
\end{proof}

For $A$ a finite-dimensional $k$-algebra, we denote by $\mod(A)$ the category 
of finitely generated left $A$-modules, and by $\modbar(A)$ its stable category.
That is, $\modbar(A)$ has the same objects as $\mod(A)$, and for any two
finitely generated $A$-modules $U$, $V$ the morphism space from $U$ to $V$
in $\modbar(A)$ is the vector space $\Hombar_A(U,V)=$ 
$\Hom_A(U,V)/\Hom_A^\pr(U,V)$, where $\Hom_A^\pr(U,V)$  is the space of
$A$-homomorphisms from $U$ to $V$ which factor through a projective $A$-module.
The composition of morphisms in $\modbar(A)$ is induced by the usual composition
of $A$-homomorphisms.
If $A$ is a self-injective algebra, then $\modbar(A)$ is a triangulated category,
with exact triangles induced by short exact sequences in $\mod(A)$. 

For $A$, $B$  finite-dimensional
$k$-algebras,  an $A$-$B$-bimodule $M$ is called {\em perfect} if $M$
is finitely generated projective as a left $A$-module and as a right $B$-module.
We denote by $\perf(A,B)$ the category of perfect $A$-$B$-bimodules; this is a
full additive subcategory, closed under taking direct summands, 
of the category $\mod(A\tenk B^\op)$ of
finitely generated $A\tenk B^\op$-modules. We denote by $\perfbar(A,B)$ the
corresponding stable category of $\perf(A,B)$; that is, $\perfbar(A,B)$ is the
full subcategory of $\modbar(A\tenk B^\op)$ of all perfect $A$-$B$-bimodules.
If $A$ and $B$ are self-injective, then $A\tenk B^\op$ is self-injective, and
$\perfbar(A,B)$ is thick subcategory
of the triangulated category $\modbar(A\tenk B^\op)$.
The following is well-known.

\begin{Lemma}[{cf. \cite[Propositions 9.1.1, 9.1.2]{LiBookII}}] \label{perf-Lemma}
Let $A$, $B$, $C$  be finite-dimensional $k$-algebras.  
If $M$ is a perfect $A$-$B$-bimodule and $U$ a perfect $B$-$C$-bimodule, 
then $M\tenB U$ is a perfect $A$-$C$-bimodule, and moreover,  if one of  
$M$, $U$ is a projective bimodule, then so is their tensor product  $M\tenB U$.
\end{Lemma}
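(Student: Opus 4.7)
The plan is to handle the two assertions in sequence, reducing everything to the basic fact that a finitely generated projective module over an algebra is a direct summand of a finitely generated free module.

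For the first claim, I would first check that $M\tenB U$ is finitely generated projective as a left $A$-module. Since $U$ is perfect, it is a summand, as a left $B$-module, of $B^n$ for some $n\ge 0$. Applying the additive functor $M\tenB -$ shows that $M\tenB U$ is a summand of $M\tenB B^n \cong M^n$ as left $A$-modules. Since $M$ is finitely generated projective as a left $A$-module by hypothesis, so is $M\tenB U$. The right $C$-module statement is symmetric: $M$ is a summand of $B^m$ as a right $B$-module, whence $M\tenB U$ is a summand of $B^m\tenB U\cong U^m$ as a right $C$-module, which is finitely generated projective because $U$ is.

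For the second claim, suppose first that $M$ is projective as an $A\tenk B^{\op}$-module; then $M$ is a direct summand of $(A\tenk B)^n$ as an $A$-$B$-bimodule for some $n$. Applying $-\tenB U$ exhibits $M\tenB U$ as a direct summand of $(A\tenk B)^n\tenB U \cong (A\tenk U)^n$ as $A$-$C$-bimodules. It remains to verify that $A\tenk U$ is projective as an $A\tenk C^{\op}$-module. Since $U$ is finitely generated projective as a right $C$-module, there exists a right $C$-linear split surjection $C^m\to U$; applying $A\tenk -$ yields an $A$-$C$-bilinear split surjection $(A\tenk C)^m\to A\tenk U$, so $A\tenk U$ is a summand of the free $A\tenk C^{\op}$-module $(A\tenk C)^m$. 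Thus $M\tenB U$ is a projective $A$-$C$-bimodule. The case where $U$ is a projective $B$-$C$-bimodule is dual: $U$ is a summand of $(B\tenk C)^n$ as a bimodule, so $M\tenB U$ is a summand of $M\tenB(B\tenk C)^n\cong (M\tenk C)^n$, and one checks that $M\tenk C$ is projective as an $A\tenk C^{\op}$-module using that $M$ is finitely generated projective as a left $A$-module.

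There is no real obstacle here; the argument is entirely formal, but the small point requiring care is to make sure that the splittings produced on one side (say as right $C$-modules or as left $A$-modules) are promoted to bimodule splittings. This is automatic because $A\tenk -$ and $-\tenk C$ preserve the relevant one-sided linearity without destroying the other, and because $M\tenB-$ and $-\tenB U$ are additive functors between the appropriate bimodule categories.
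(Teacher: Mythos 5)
Your argument is correct and is precisely the standard formal proof: the paper itself gives no proof, citing instead \cite[Propositions 9.1.1, 9.1.2]{LiBookII}, and your reduction to direct summands of free (one-sided and bimodule) modules, using that $M\tenB-$ and $-\tenB U$ are additive functors and that $(A\tenk B)\tenB U\cong A\tenk U$ as $A$-$C$-bimodules, is the same line of reasoning found there. The care you take to observe that the one-sided splittings promote to bimodule splittings is exactly the right point to flag, and it is handled correctly.
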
 

For the remainder of this section, we assume that $k$ is a field of prime  characteristic $p$.

\medskip
Let $G$ be a finite group.
We assume familiarity with Green's theory of vertices and sources of 
indecomposable $kG$-modules (see e.g. \cite[Section 5.1]{LiBookI}),
the Green correspondence (\cite[Section 5.2]{LiBookI}),
and we refer to \cite[Sections 5.10, 5.11]{LiBookII} for the notions of trivial 
source modules and $p$-permutation modules. Given two finite groups
$G$, $H$, a $kG$-$kH$-bimodule $M$ can be viewed as a $k(G\times H)$-module
with the action $(x,y)\cdot m=$ $xmy^{-1}$, where $x\in G$, $y\in H$, and $m\in M$.
If $M$ is indecomposable, its vertices are understood as the vertices of $M$
regarded as a $k(G\times H)$-module.
For the translation between $kG$-$kH$-bimodules and $k(G\times H)$-modules
we will need the following well-known observation.

\begin{Lemma} \label{bimod-Lemma}
Let $G$, $H$ be finite group, $P$ a $p$-subgroup of $G$, and $\varphi : P\to H$
an injective group homomorphism. Set $R=$ $\{(u,\varphi(u))\ |\ u\in P\}$.
We have an isomorphism of $k(G\times H)$-modules
$$\Ind^{G\times H}_R(k)\cong kG\tenkP ({_\varphi{kH}})$$ 
sending $(g,h) \ten 1$ to $g \ten h^{-1}$,  where $g\in G$, $h\in H$, and where
${_\varphi{kH}}$ is the $kP$-$kH$-bimodule equal to $kH$ as a right
$kH$-module, with $u\in P$ acting by left multiplication with $\varphi(u)$.
\end{Lemma}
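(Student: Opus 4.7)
The plan is to write down the natural candidate map in both directions and verify that they are mutually inverse homomorphisms of $k(G\times H)$-modules, using only the defining relations of the induced module and of the balanced tensor product.

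First I would define a $k$-linear map $\alpha \colon k(G\times H) \to kG\tenkP ({_\varphi{kH}})$ by $(g,h) \mapsto g\ten h^{-1}$ and check that it descends to the quotient $\Ind^{G\times H}_R(k) = k(G\times H)\otimes_{kR} k$. Since $R$ acts trivially on $k$, well-definedness amounts to the equality
\begin{equation*}
\alpha\bigl((g,h)(u,\varphi(u))\bigr) \;=\; \alpha((g,h))
\end{equation*}
for all $u\in P$. Expanding the left side gives $gu\ten \varphi(u)^{-1}h^{-1}$, and using that $kG$ has right $kP$-action by right multiplication and ${_\varphi{kH}}$ has left $kP$-action $u\cdot h' = \varphi(u)h'$, this simplifies to $g\ten u\cdot\varphi(u)^{-1}h^{-1} = g\ten h^{-1}$, as required.

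Next I would check $k(G\times H)$-linearity: with the standard bimodule-to-left-module conversion $(x,y)\cdot(g\ten h') = xg\ten h'y^{-1}$, we have
\begin{equation*}
\alpha\bigl((x,y)(g,h)\ten 1\bigr) \;=\; xg \ten (yh)^{-1} \;=\; xg \ten h^{-1}y^{-1} \;=\; (x,y)\cdot\alpha((g,h)\ten 1).
\end{equation*}
To produce an inverse, I would define $\beta \colon kG\tenkP ({_\varphi{kH}}) \to \Ind^{G\times H}_R(k)$ on elementary tensors by $g\ten h \mapsto (g,h^{-1})\ten 1$. The balancing relation to check is $\beta(gu\ten h) = \beta(g\ten \varphi(u)h)$ for $u\in P$; both sides equal $(g,h^{-1}\varphi(u)^{-1})\ten 1$, the left after absorbing $(u,\varphi(u))\in R$ across the tensor sign. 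Bilinearity over $k$ then extends $\beta$ to the whole tensor product, and the two compositions $\alpha\circ\beta$ and $\beta\circ\alpha$ are the identity on elementary tensors, so $\alpha$ and $\beta$ are mutually inverse.

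There is no serious obstacle; the only thing to be careful about is the consistent use of the convention that the right $H$-action becomes a left $H$-action via inversion in the identification of bimodules with $k(G\times H)$-modules, which is precisely what forces the $h \mapsto h^{-1}$ appearing in the statement. A dimension count ($|G||H|/|P|$ on both sides, since $kG$ is free of rank $|G|/|P|$ over $kP$) provides a quick sanity check but is not needed once the explicit inverse is in hand.
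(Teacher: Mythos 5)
Your proof is correct and is exactly the verification the paper calls "straightforward" — you write down the candidate map and its inverse, check well-definedness via the balancing relation in the tensor product, and check $k(G\times H)$-linearity using the bimodule-to-module convention $(x,y)\cdot m = xmy^{-1}$. No gap; this matches the paper's (unwritten) argument.
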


\begin{proof} This is a straightforward verification.
\end{proof}

We will need the following well-known facts on tensoring perfect
$p$-permutation bimodules.
For a much more general formula see Bouc \cite[Theorem 1.1]{Bouc10}.

\begin{Lemma} \label{perfect-vertices-Lemma}
Let $G$, $H$, $L$ be finite groups, let $M$ be a perfect indecomposable 
trivial source $kG$-$kH$-bimodule and $N$ a perfect indecomposable
trivial source $kH$-$kL$-bimodule. The following hold.
\begin{itemize}
\item[{\rm (i)}]
Any vertex of $M$ as a $k(G\times H)$-module is of the form 
$\Delta_\varphi(P)=$ $\{(u,\varphi(u))\ |\ u\in P\}$ for some $p$-subgroups
$P$, $Q$ of $G$, $H$ respectively, and some isomorphism $\varphi : P\to Q$.
\item[{\rm (ii)}]
The $kG$-$kL$-bimodule $M\tenkH N$  is perfect, isomorphic to a direct sum 
of trivial source  modules, and any vertex of an indecomposable direct summand 
of $M\tenkH N$  has order less or equal than the orders of the vertices of $M$ 
and of $N$.
\end{itemize}
\end{Lemma}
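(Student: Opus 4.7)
For (i), I plan to show that both subgroups $V_1 := V \cap (G \times 1)$ and $V_2 := V \cap (1 \times H)$ are trivial; by symmetry it suffices to treat $V_1$, identified with its image in $G$. Since $M$ has trivial source and vertex $V$, the trivial module $k$ is a direct summand of $\Res^{G \times H}_V M$ as a $kV$-module, and restricting further from $V$ to $V_1 \times 1 \leq V$ preserves the summand, so $k$ is a direct summand of $\Res^{G \times H}_{V_1 \times 1} M$ as a $k(V_1 \times 1)$-module. The latter is a $p$-permutation module over the $p$-group $V_1 \times 1$; its indecomposable summands have the form $\Ind^{V_1 \times 1}_R(k)$ for $R \leq V_1 \times 1$, and a direct computation shows that only $R = V_1 \times 1$ contributes to the Brauer quotient at $V_1 \times 1$, so the presence of $k$ as a summand forces $M(V_1 \times 1) \neq 0$. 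This Brauer quotient depends only on the action of $V_1 \times 1 \leq G \times 1$, which is just the left $kG$-action restricted to $V_1 \leq G$; hence it coincides with the Brauer quotient of $M$ at $V_1$ as a left $kG$-module. Since $M$ is perfect, $M$ is projective as a left $kG$-module, and every projective $kG$-module has vanishing Brauer quotient at every nontrivial $p$-subgroup. Thus $V_1 = 1$, and symmetrically $V_2 = 1$. Both projections $p_1 : V \to G$, $p_2 : V \to H$ are then injective, and $V = \Delta_\varphi(P)$ with $P = p_1(V)$ and $\varphi = p_2 \circ (p_1|_V)^{-1} : p_1(V) \to p_2(V)$ an isomorphism.

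For (ii), perfectness of $M \tenkH N$ is immediate from Lemma~\ref{perf-Lemma}. For the trivial source structure and vertex bound, use (i) to write $V_M = \Delta_{\varphi_M}(P_M)$ with $P_M \leq G$, $Q_M := \varphi_M(P_M) \leq H$, and $V_N = \Delta_{\varphi_N}(P_N)$ with $P_N \leq H$. Then $M \tenkH N$ is a direct summand of $\Ind^{G \times H}_{V_M}(k) \tenkH \Ind^{H \times L}_{V_N}(k)$. Applying Lemma~\ref{bimod-Lemma} to both induced bimodules, cancelling the middle $kH \tenkH kH$, decomposing $kH$ as a $(kQ_M, kP_N)$-bimodule along its $Q_M$-$P_N$ double cosets in $H$, and reapplying Lemma~\ref{bimod-Lemma} to reassemble each summand, one obtains
\[
\Ind^{G \times H}_{V_M}(k) \tenkH \Ind^{H \times L}_{V_N}(k) \cong \bigoplus_{h \in Q_M \backslash H / P_N} \Ind^{G \times L}_{\Delta_{\alpha_h}(P_h)}(k),
\]
where $P_h = \varphi_M^{-1}(Q_M \cap h P_N h^{-1}) \leq P_M$ and $\alpha_h : P_h \to L$ is the injective homomorphism $g \mapsto \varphi_N(h^{-1} \varphi_M(g) h)$. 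Each twisted diagonal $\Delta_{\alpha_h}(P_h)$ has order $|Q_M \cap h P_N h^{-1}|$, which is at most $\min(|V_M|, |V_N|)$. Hence every indecomposable summand of $M \tenkH N$ is an indecomposable summand of one of the trivial source permutation modules on the right, so it is itself a trivial source module with vertex of order at most $\min(|V_M|, |V_N|)$.

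The main obstacle is the biset double-coset bookkeeping underpinning the displayed isomorphism in (ii); this is a particular case of Bouc's general formula cited right after the lemma statement, and it reduces to iterated use of Lemma~\ref{bimod-Lemma} together with the standard Mackey decomposition of $kH$ viewed as a $(kQ_M, kP_N)$-bimodule.
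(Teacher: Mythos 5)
Your proof is correct and follows the standard approach that the paper's cited reference (Theorem 5.1.16 of Linckelmann's book) also uses: for part (i), the Brauer quotient argument forces $V \cap (G\times 1) = V \cap (1\times H) = 1$ because projectivity of $M$ over $kG$ (resp.\ $kH^\op$) kills the Brauer quotient at any nontrivial $p$-subgroup sitting inside $G\times 1$ (resp.\ $1\times H$); and for part (ii), the double-coset/Mackey decomposition of $kH$ as a $(kQ_M,kP_N)$-bimodule yields the displayed biset formula, from which the trivial source property and the vertex order bound are immediate. Both the computation of the stabilizers $R_h = \{(x,h^{-1}xh) : x \in Q_M\cap hP_Nh^{-1}\}$ and the translation back and forth via Lemma~\ref{bimod-Lemma} are executed correctly, so the argument is complete.
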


\begin{proof} See e.g. \cite[Theorem 5.1.16]{LiBookI}.
\end{proof}

\medskip
We review very briefly the
notion of pointed groups due to Puig \cite{Puigpoint}; for more details we refer to
 \cite[Sections 5.5, 5.6]{LiBookI} or \cite{Thev}. 
 Let $G$ be a finite group.
A $G$-{\em algebra} is a $k$-algebra $A$ endowed with an action of $G$ by
algebra automorphisms, denoted $a\mapsto {^x{a}}$, where $a\in$ $A$ and $x\in G$.
For $H$ a subgroup of $G$ we denote by $A^H$ the subalgebra of fixed points in $A$
under the action of $H$.  Following Puig \cite{Puigpoint}, a {\em point of $H$ on $A$}
is an $(A^H)^\times$-conjugacy class $\beta$ of primitive idempotents in $A^H$,
and then $H_\beta$ is called a {\em pointed group on $A$}. Given another pointed
group $L_\gamma$ on $A$ we write $L_\gamma\leq H_\beta$ if $L\leq H$ and if
there are idempotents $j\in$ $\gamma$ and $i\in \beta$ such that $ji=j=ij$.  Conjugation 
by $G$ on the set of subgroups of $G$ and the action of $G$ on $A$ induce an action of 
$G$ on the partially ordered set of pointed  groups on $A$. If $L$ is a subgroup of $H$, 
then $A^H\subseteq$ $A^L$,  and we  have a {\em relative trace map} 
$\Tr^H_L : A^L\to$ $A^H$ sending $a\in A^L$ to  $\Tr^H_L(a)=$ $\sum_{x\in [H/L]} {^x{a}}$, 
where $[H/L]$ is a set of representatives  in $H$ of $H/L$;  one checks that this is 
well-defined (see e.g.  \cite[Definition 2.5.1]{LiBookI}). We denote by $A^H_L$ the inage 
of the relatve trace map $\Tr^H_L$; this is an ideal in the algebra $A^H$. If $P$ is a 
$p$-subgroup of $G$, we denote by $A(P)$ the quotient $A^P/\sum_{Q; Q<P}\ A^P_Q$
and by $\Br_P^A : A^P\to$ $A(P)$ the canonical surjection. A point $\gamma$ of $P$ on
$A$ is called {\em local} if $\Br_P^A(\gamma)\neq 0$; in that case, $P_\gamma$ is
called a {\em local pointed group on $A$}, and by standard lifting theorems for
idempotents, $\Br^A_P(\gamma)$ is a conjugacy class of primitive idempotents 
in $A(P)$, hence corresponds to a unqiue isomorphism class of simpe $A(P)$-modules
and of projective indecomposable $A(P)$-modules.
If $A=kG$, regarded as a $G$-algebra with $G$ acting by conjugation,
then $A(P)$ can be identified canonically with $kC_G(P)$ and $\Br_P^{kG}$ 
becomes with this identification  the unique linear map $(kG)^P\to$ $kC_G(P)$ which 
is the identity on $kC_G(P)$ and which sends non-trivial $P$-conjugacy class sums to zero.  
A {\em defect group of a pointed group $H_\beta$ on $A$} is  a minimal subgroup 
$Q$ of $H$ subject to $\beta\subseteq$ $\Tr^H_Q(A^Q)$;  the defect groups of 
$H_\beta$ form an $H$-conjugacy class of $p$-subgroups of $H$. If $Q$ is a defect 
group of $H_\beta$, then there is a point $\delta$ of $Q$ on $A$ such that
$Q_\delta\leq$ $H_\beta$ and such that $\beta\subseteq$ $\Tr^H_Q(A^Q\delta A^Q)$.
In that case $\delta$ is a local point of $Q$ on $A$, and we say that $Q_\delta$ is
a {\em defect pointed group of $H_\beta$}. The defect pointed groups of $H_\beta$
form a single $H$-conjugacy class of local pointed groups on $A$, and a pointed
group $Q_\delta$ is a defect pointed group of $H_\beta$ if and only if
$Q_\delta$ is maximal subject to being local and satisfying $Q_\delta\leq$ $H_\beta$.
We will make use of the following well-known translation between idempotents
and bimodule summands of $kG$.

\begin{Lemma} \label{bimod-idempotent-Lemma}
Let $G$ be a finite group, $H$ a subgroup, and $e$ an idempotent in $(kG)^H$.
The map sending $a \in$ $(ekGe)^H$ to right multiplication by $a$ on $kGe$
is an algebra homomorphism $(ekGe)^H\cong$ $\End_{G\times H}(kGe)^\op$, 
and the following hold.
\begin{itemize}
\item[{\rm (i)}]
Any direct summand of $kGe $  as a  $k(G \times H)$-module  is  equal to $kGf $  
for some idempotent  $f\in (ekGe)^H $.
In patricular, the $kG$-$kH$-bimodule $kGe$ is indecomposable if and only if 
$e$ is primitive in  $(kG)^H$.
\item[{\rm (ii)}]
The $kG$-$kH$-bimodule $kGe$ is absolutely indecomposable if and only if 
$(ekGe)^H$ is  split local.
\item[{\rm (iii)}]
The $kG$-$kH$-bimodule $kGe$ is projective if and only if $e\in $ $(kG)^H_1$.
\item[{\rm (iv)}]
Suppose that the idempotent $e$ is primitive in $(kG)^H$. Denote by $\beta$ 
the point of $H$ on $kG$ containing $e$. A $p$-subgroup $P$ of $H$  is a defect group 
of $H_\beta$ if and only if $\Delta P=$ $\{(u,u)\ |\ u\in P\}$ is a vertex of $kGe$ 
regarded as a $k(G\times H)$-module. 
\end{itemize}
\end{Lemma}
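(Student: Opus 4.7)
The plan is to prove the main algebra isomorphism $(ekGe)^H \cong \End_{G\times H}(kGe)^\op$ first, from which (i) and (ii) follow by standard arguments, and then to derive (iii) and (iv) via a Higman-type relative trace criterion. For the isomorphism, I would begin with the well-known identification $\End_{kG}(kGe)^\op \cong ekGe$ sending $\varphi$ to $\varphi(e)$ (with $\varphi(e)\in ekGe$ because $\varphi(e)=\varphi(e\cdot e)=e\varphi(e)$); the inverse sends $a\in ekGe$ to right multiplication by $a$ on $kGe$. To upgrade this to the full bimodule endomorphism algebra, I would check that right multiplication by $a$ commutes with the right $H$-action on $kGe$ precisely when $a\in (kG)^H$: here the right $H$-action on $kGe$ is right multiplication by elements of $H$ (well-defined because $e$ is $H$-fixed), and compatibility reduces to $ah=ha$ for all $h\in H$.

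For (i), direct summands of $kGe$ as a $k(G\times H)$-module correspond to idempotents in its endomorphism algebra; via the isomorphism, an idempotent $f\in (ekGe)^H$ yields the summand $kGe\cdot f=kGf$ (the equality uses $efe=f$, hence $ef=f$), with $f\in (kG)^H$ as required. Using the corner-algebra identity $(ekGe)^H=e(kG)^H e$ (valid since $e$ is $H$-fixed), the algebra $(ekGe)^H$ is local if and only if $e$ is primitive in $(kG)^H$, which by the idempotent correspondence is equivalent to $kGe$ being indecomposable. Part (ii) follows from the standard characterisation of absolute indecomposability of a finitely generated module as the condition that its endomorphism algebra is split local.

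For (iii) and (iv), I would apply a Higman-type trace argument. By Lemma~\ref{bimod-Lemma} applied to the inclusion $H\hookrightarrow G$ (i.e.\ $\varphi=\mathrm{id}_H$), we have $kG\cong \Ind^{G\times H}_{\Delta H}(k)$, so $kGe$ is a summand of a $\Delta H$-induced module and every vertex of $kGe$ is $G\times H$-conjugate to $\Delta Q$ for some $p$-subgroup $Q$ of $H$. By the Higman criterion, $\Delta Q$-projectivity of $kGe$ is equivalent to the identity endomorphism being a $\Tr^{G\times H}_{\Delta Q}$-trace in its endomorphism algebra, which via the main isomorphism translates to $e\in \Tr^H_Q((ekGe)^Q)=e(kG)^H_Q e$. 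Because $(kG)^H_Q$ is a two-sided ideal of $(kG)^H$, this condition is equivalent to $e\in (kG)^H_Q$. Specialising to $Q=1$ gives (iii); and (iv) follows from the fact that a vertex $\Delta P$ of $kGe$ is a minimal such $\Delta Q$, matching the definition of $P$ as a defect group of $H_\beta$.

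The main obstacle I anticipate is the reduction in the Higman step from the full endomorphism algebra $\End_{k\Delta Q}(kGe)$ to the smaller $(G\times H)$-endomorphism algebra $(ekGe)^Q$: a priori one must realise the identity endomorphism as a $\Delta Q$-trace of a general $k\Delta Q$-linear endomorphism, not merely a left $kG$-linear one. To resolve this, I would exploit the $p$-permutation structure of $kGe$ inside $kG\cong \Ind^{G\times H}_{\Delta H}(k)$; the Brauer construction of this permutation bimodule at $\Delta Q$ identifies with $kC_G(Q)$ equipped with an induced $N_H(Q)/Q$-action, and combining this with the primitivity of $e$ reduces the trace criterion on $\End_{k\Delta Q}(kGe)$ to the corresponding criterion on $(ekGe)^Q$. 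This vertex–defect dictionary between pointed groups on $kG$ and indecomposable bimodule summands of $kG$ is classical in Puig's theory (cf.\ \cite{Thev}).
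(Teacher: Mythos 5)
Your treatment of the main isomorphism and of parts (i) and (ii) matches the paper's (which simply defers to \cite[Lemma 5.12.7]{LiBookI}), so there is nothing to add there. For (iii) and (iv) your Higman--trace route is genuinely different from the paper's proof, and you have correctly identified the nontrivial point, but your proposed workaround for it is not right as stated.

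The real resolution of the obstacle you flag is not the Brauer construction. For the implication ``$kGe$ relatively $\Delta Q$-projective $\Rightarrow e\in (kG)^H_Q$'' one uses transitivity of the relative trace through the chain $\Delta Q\leq G\times Q\leq G\times H$: if $\mathrm{id}_{kGe}=\Tr^{G\times H}_{\Delta Q}(\psi)$ with $\psi$ a $k\Delta Q$-endomorphism, then $\mathrm{id}_{kGe}=\Tr^{G\times H}_{G\times Q}\bigl(\Tr^{G\times Q}_{\Delta Q}(\psi)\bigr)$, and $\Tr^{G\times Q}_{\Delta Q}(\psi)$ lies in $\End_k(kGe)^{G\times Q}\cong\bigl((ekGe)^Q\bigr)^{\op}$, so the criterion descends to $ekGe$ exactly as you want. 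The converse implication is not a trace argument at all: $e\in(kG)^H_Q$ only says $kGe$ is relatively $(G\times Q)$-projective, and one must combine this with the fact that $kGe$ is a summand of $\Ind^{G\times H}_{\Delta H}(k)$ and a Mackey decomposition to conclude that the vertex is (up to conjugacy) a diagonal $\Delta Q'$ with $Q'\leq_H Q$. Your sentence about ``the Brauer construction at $\Delta Q$ and primitivity of $e$ reducing the trace criterion'' does not fill this hole; it conflates two different arguments.

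For contrast, the paper avoids the whole Higman detour: since $kGe$ is a trivial source $k(G\times H)$-module inside $\Ind^{G\times H}_{\Delta H}(k)$, it appeals to the characterisation of vertices of trivial source modules by nonvanishing of the Brauer construction (\cite[Proposition 5.10.3]{LiBookI}), and then uses $(kGe)(\Delta P)\cong kC_G(P)\Br_P(e)$, which is nonzero precisely when $\Br_P(e)\neq 0$. Thus $\Delta P$ is a vertex iff $P$ is maximal with $\Br_P(e)\neq 0$, which is the defect group condition. That argument subsumes both (iii) and (iv) in one stroke and requires no Mackey bookkeeping. If you want to keep your Higman approach, you should replace your hand-wavy resolution with the trace-transitivity plus Mackey steps above; otherwise it is cleaner to prove (iv) as the paper does and deduce (iii) as the special case $P=1$.
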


\begin{proof}
See  e.g. \cite[Lemma 5.12.7]{LiBookI} for a proof of the first statement as well
as the statements (i) and (ii).  Note that   \cite[Lemma 5.12.7]{LiBookI}   
addresses summands of $kG$   but the proof is identical for  summands of $kGe $.
Statement (iii) follows from Higman's criterion 
\cite[Theorem 2.6.2]{LiBookI}, and it also follows from (iv). In order to prove (iv), 
note first that $kG\cong$ $\Ind^{G\times G}_{\Delta G}(k)$ as $k(G\times G)$-module
(cf. \cite[Corollary 2.4.5]{LiBookI}).  Thus  $kGe$ is a summand of $kG\cong$ 
$\Res^{G\times G}_{G\times H}(\Ind^{G\times G}_{\Delta G}(k))\cong$
$\Ind_{\Delta H}^{G\times H}(k)$ (cf. \cite[Proposition 2.4.14]{LiBookI}).
This shows that every  indecomposable summand of this
$k(G\times H)$-module has a vertex contained in $\Delta H$.
Since $kGe$, regarded as a $k(G\times H)$-module,  is a trivial source module, 
it follows  from  \cite[Proposition 5.10.3]{LiBookI} that a $p$-subgroup $P$
of $H$ has the property that $\Delta P$ is contained in a vertex of $kGe$
if and only if $\Br_P(e)\neq 0$.  Thus $\Delta P$ is a vertex of $kGe$ if and
only if $P$ is maximal subject to $\Br_P(e)\neq 0$, and the latter condition
is equivalent to $P$ being a defect group of $H_\beta$.
\end{proof} 

Note that $kG$-$kH$-bimodules of the form $kGe$ as in Lemma 
\ref{bimod-idempotent-Lemma} are perfect, and as noted above they are 
$p$-permutation  modules, when regarded as  $k(G\times H)$-modules.
By results of Brou\'e in \cite{Broue85}, the Brauer construction applied to
$p$-permutation modules commutes with taking homomorphism spaces. 
Given a finite group $G$ and $kG$-modules $U$, $V$, we regard $\Hom_k(U,V)$
as a $kG$-module via the action of $x\in G$ on $\varphi\in$ $\Hom_k(U,V)$
given by ${(^x{\varphi})}(m)=$ $x\varphi(x^{-1}m)$, where $m\in M$.
Note that for any subgroup $H$ of $G$ we have $\Hom_k(U,V)^H=$
$\Hom_{kH}(U,V)$.

\begin{Lemma}[{cf. \cite[(3.3)]{Broue85}}] \label{Brauer-p-permutation}
Let $G$ be a finite group and $P$ a $p$-subgroup of $G$.
Let $U$, $V$ be finitely generated $p$-permutation $kG$-modules.
We have a canonical isomorphism of $kN_G(P)$-modules 
$$(\Hom_k(U,V))(P)\cong \Hom_k(U(P), V(P)).$$
\end{Lemma}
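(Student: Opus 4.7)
The plan is to construct a canonical $kN_G(P)$-linear map
\[
\Phi : (\Hom_k(U,V))(P) \longrightarrow \Hom_k(U(P), V(P))
\]
and then verify that it is an isomorphism by reducing to the case of genuine permutation $kP$-modules. The first step uses only the intrinsic structure, so $N_G(P)$-equivariance is automatic; the second step uses explicit bases, where equivariance is no longer obvious but is no longer needed.

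For the construction of $\Phi$, recall $(\Hom_k(U,V))^P = \Hom_{kP}(U,V)$. Any $\varphi \in \Hom_{kP}(U,V)$ restricts to a $k$-linear map $U^P \to V^P$ and commutes with every relative trace, so it sends $\sum_{Q<P} U^P_Q$ into $\sum_{Q<P} V^P_Q$ and induces $\bar\varphi : U(P) \to V(P)$. I then check that the assignment $\varphi \mapsto \bar\varphi$ vanishes on $\sum_{Q<P} (\Hom_k(U,V))^P_Q$: if $\varphi = \Tr^P_Q(\psi)$ with $\psi \in \Hom_k(U,V)^Q$ and $Q < P$, then for $u \in U^P \subseteq U^Q$ one has
\[
\varphi(u) = \sum_{g \in P/Q} g\,\psi(g^{-1}u) = \sum_{g \in P/Q} g\,\psi(u) = \Tr^P_Q(\psi(u)) \in V^P_Q,
\]
which vanishes in $V(P)$. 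Hence $\Phi$ is well-defined, and its naturality with respect to conjugation by $N_G(P)$ gives $kN_G(P)$-linearity.

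To show that $\Phi$ is a bijection, I reduce to the case where $U$ and $V$ are genuine permutation $kP$-modules. Both sides depend only on $U|_P$ and $V|_P$ (with the $N_G(P)$-action reconstructed on fixed points afterwards), and since $U, V$ are $p$-permutation $kG$-modules, their restrictions to the $p$-group $P$ are permutation $kP$-modules; write $U|_P \cong k[X]$ and $V|_P \cong k[Y]$ for finite $P$-sets $X, Y$. Then $\Hom_k(U,V)$ has the $P$-invariant basis of matrix units $\{E_{y,x}\}$ on which $P$ acts diagonally, so $\Hom_k(U,V) \cong k[Y \times X]$ as a permutation $kP$-module. The standard observation that $(k[Z])(P) \cong k[Z^P]$ for any finite $P$-set $Z$, proved by noting that the orbit sum of any non-fixed point is a relative trace from its proper stabiliser, yields
\[
(\Hom_k(U,V))(P) \;\cong\; k[(Y \times X)^P] \;=\; k[Y^P \times X^P] \;\cong\; \Hom_k(U(P), V(P)),
\]
and unwinding identifies this composite with $\Phi$.

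The main obstacle is that a chosen isomorphism $U|_P \cong k[X]$ is not canonical and is generally not respected by $N_G(P)$, so it cannot be used directly to build an equivariant map. This is the reason for the two-step strategy: define $\Phi$ canonically from intrinsic operations, so that $kN_G(P)$-equivariance is built in, and only then invoke the non-canonical permutation-basis computation to verify that $\Phi$ is bijective.
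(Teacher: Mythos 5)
Your proof is correct, and it is essentially the standard argument one finds in Brou\'e's original paper. The paper itself does not prove this lemma: it states it with the citation \cite[(3.3)]{Broue85} and relies on that reference. Your two-step strategy — first constructing a canonical map $\Phi$ by restricting a $kP$-linear $\varphi$ to fixed points and checking it kills the relative traces, then verifying bijectivity after passing to a permutation $P$-basis — is exactly how this is usually done. The computation showing $\Tr^P_Q(\psi)$ maps $U^P$ into $V^P_Q$ is correct, the identification $\Hom_k(k[X],k[Y])\cong k[Y\times X]$ with diagonal $P$-action via matrix units is correct, and the invocation of $(k[Z])(P)\cong k[Z^P]$ together with $(Y\times X)^P = Y^P\times X^P$ closes the argument. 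The methodological point you make at the end — that the non-canonical choice of permutation basis cannot be used to build an $N_G(P)$-equivariant map, but can be used to test injectivity and surjectivity of a map already known to be equivariant — is precisely the right observation and is what makes the argument clean.
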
 

\begin{Lemma} \label{Hom-p-permutation}
Let $G$ be a finite group, $P$ a $p$-subgroup of $G$, and let $M$, $N$ be finitely 
generated $kG$-modules. Denote by $\Br_P$ the Brauer homomorphism
$\Hom_{kP}(M,N)\to$ $(\Hom_k(M,N))(P)$. Suppose that $M$ is relatively 
$kP$-projective. Then
$$\Hom_{kG}(M,N) \cap \ker(\Br_P)\ =\ \sum_{Q<P}\ \Hom_k(M,N)_Q^G,$$
where on the sight side the sum is taken over all proper subgroups $Q$ of $P$.
\end{Lemma}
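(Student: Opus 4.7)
The plan is to prove the two inclusions separately. The easy direction (right-hand side inside left-hand side) is a direct computation with the Mackey double coset formula for restriction of a relative trace, while the reverse direction rests on Higman's criterion applied to the relative $kP$-projectivity of $M$, combined with the transitivity $\Tr^G_Q = \Tr^G_P \circ \Tr^P_Q$ for $Q \leq P \leq G$.

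For the inclusion $\sum_{Q<P}\Hom_k(M,N)^G_Q \subseteq \Hom_{kG}(M,N) \cap \ker(\Br_P)$, membership in $\Hom_{kG}(M,N) = \Hom_k(M,N)^G$ is automatic, so only the kernel condition needs argument. I would fix $Q<P$ and $\varphi\in\Hom_k(M,N)^Q$ and apply the Mackey formula
$$\Res^G_P \Tr^G_Q(\varphi) = \sum_{g \in [P\backslash G/Q]} \Tr^P_{P\cap {}^g Q}({}^g\varphi).$$
Since $|P\cap {}^gQ|\leq |Q|<|P|$, every subgroup $P\cap {}^gQ$ is proper in $P$, so $\Tr^G_Q(\varphi)$, which coincides with its own restriction to $P$ as a $G$-fixed element, lies in $\sum_{R<P}\Hom_k(M,N)^P_R = \ker(\Br_P)$.

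For the reverse inclusion, Higman's criterion applied to the hypothesis that $M$ is relatively $kP$-projective furnishes $\sigma\in\End_{kP}(M) = \End_k(M)^P$ with $\Tr^G_P(\sigma) = \Id_M$. Given $\alpha \in \Hom_{kG}(M,N)\cap\ker(\Br_P)$, I would use that $\alpha$ is $G$-fixed under conjugation to slide it inside the trace, obtaining $\alpha = \alpha\circ\Tr^G_P(\sigma) = \Tr^G_P(\alpha\sigma)$. The kernel hypothesis expresses $\alpha = \sum_{Q<P}\Tr^P_Q(\gamma_Q)$ for some $\gamma_Q\in\Hom_k(M,N)^Q$; since $\sigma$ is $P$-fixed, composition with $\sigma$ commutes with each $\Tr^P_Q$, giving $\alpha\sigma = \sum_{Q<P}\Tr^P_Q(\gamma_Q\sigma)$. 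Applying $\Tr^G_P$ and invoking transitivity yields
$$\alpha = \Tr^G_P\Bigl(\sum_{Q<P}\Tr^P_Q(\gamma_Q\sigma)\Bigr) = \sum_{Q<P}\Tr^G_Q(\gamma_Q\sigma) \in \sum_{Q<P}\Hom_k(M,N)^G_Q,$$
as required, since $\gamma_Q\sigma\in\Hom_k(M,N)^Q$.

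No serious obstacle is anticipated; the only items requiring care are the two trace-composition identities used above, namely $\alpha\circ\Tr^G_P(\sigma) = \Tr^G_P(\alpha\sigma)$ and $\Tr^P_Q(\gamma)\circ\sigma = \Tr^P_Q(\gamma\sigma)$, both of which follow at once from the formula $\Tr^H_L(x) = \sum_{h\in [H/L]}{}^hx$ combined with the fact that $\alpha$ is $G$-fixed and $\sigma$ is $P$-fixed under the conjugation actions on $\Hom_k(M,N)$ and $\End_k(M)$.
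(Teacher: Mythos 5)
Your proof is correct and follows essentially the same route as the paper: the nontrivial inclusion is obtained exactly as in the paper's argument, by applying Higman's criterion to write $\Id_M = \Tr^G_P(\sigma)$ and then composing with the $\ker(\Br_P)$-decomposition of $\alpha$ before invoking transitivity of relative traces. The only difference is that you spell out the "obvious" reverse inclusion with the Mackey formula, which the paper leaves to the reader.
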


\begin{proof}
Let $\varphi\in$ $\Hom_{kG}(M,N)\subseteq$ $\Hom_{kP}(M,N)$ such that 
$\varphi$ belongs to the kernel $\ker(\Br_P)$ of the Brauer construction 
applied to $\Hom_k(M,N)$ with $P$ acting as described above. That is, we have
$$\varphi = \sum_{Q<P}\ \Tr^P_Q(\psi_Q), $$
where $\psi_Q\in$ $\Hom_{kQ}(M,N)$ for each proper subgroup $Q$ of $P$
and where 
$$\Tr^P_Q(\psi_Q)(m)= \sum_{u\in P/Q}\ u\psi_Q(u^{-1}m)$$
for all $m\in $ $M$. Since $M$ is relatively $kP$-projective, it follows from
Higman's criterion (see e.g. \cite[Theorem 2.6.2]{LiBookI}) that $\Id_M=$
$\Tr^G_P(\mu)$ for some $kP$-endomorphism $\mu$ of $M$.
Thus
$$\varphi = \varphi\circ\Id_M = \varphi \circ \Tr^G_P(\mu) =
\Tr^G_P(\varphi\circ\mu) = $$
$$\sum_{Q<P}\ \Tr^G_P(\Tr^P_Q(\psi_Q)\circ\mu) =
\sum_{Q<P}\ \Tr^G_Q(\psi_Q\circ\mu).$$
This shows that the left side is contained in the right side. The other
inclusion is obvious.
\end{proof}

A {\em block of} $kG$ is an indecomposable direct factor $B$ of
$kG$ as an algebra. Any such block is equal to $kGb$ for a primitive idempotent 
$b$ in $Z(kG)$. We will say that $b$ is the block idempotent associated with the 
block $B$; this is the unit element $1_B$ of $B$.
A {\em defect group} of a block $B$ of $kG$  is a maximal $p$-subgroup $P$ of 
$G$ such that $kP$ is isomorphic to a direct summand of $B$ as a 
$kP$-$kP$-bimodule.  Equivalently, a defect group of $B$ is a minimal 
$p$-subgroup $P$ of $G$ such that $B$ is isomorphic to a direct summand of 
$B\tenkP B$ as a $B$-$B$-bimodule, and clearly $B$ is perfect as a 
$B$-$B$-bimodule. 
See \cite[Theorem 6.2.1]{LiBookII} for further  characterisations of
defect groups. A {\em source idempotent} in the fixed point algebra $B^P$ under 
the conjugation action of $P$ is a primitive idempotent $i$ in $B^P$ such that 
$kP$ is still isomorphic to a direct summand of $iBi$ as a $kP$-$kP$-bimodule. 
This is equivalent to stating that $B$ is isomorphic to a direct summand of 
$Bi\tenkP iB$ as a  $B$-$B$-bimodule; this follows from combining 
\cite[Theorem 6.4.6, Theorem 6.4.7]{LiBookII}.   Viewing  $kG$ as a $G$-algebra  
via conjugation,  the points of $G$ on $kG$ correspond bijectively to the block
idempotents $b$ of $kG$, and if $P_\gamma$ is  a defect pointed group of 
$G_{\{b\}}$, then $P$ is a defect group of the block $B=kGb$ and $i\in \gamma$ is a
source idempotent of $B$.  Equivalently,  $P$ is a maximal $p$-subgroup
such that $\Br_P(b)\neq 0$, and $i$ is a primitive  idempotent in $B^P$ such that
$\Br_P(i)\neq 0$.   The following facts are
well-known.

\begin{Lemma}\label{source-bimod-Lemma}
Let $G$ be a finite group, $B$ a block of $kG$ with a non-trivial defect group,
and $i\in B^P$ a source idempotent.
\begin{itemize}
\item[{\rm (i)}]
As a $k(G\times G)$-module, $B$ is a trivial  source module with vertex 
$\Delta(P)=$ $\{(u,u)\ | \ u\in P\}$.
\item[{\rm (ii)}]
As a $k(G\times P)$-module, $Bi$ is a trivial  source module with vertex 
$\Delta(P)=$ $\{(u,u)\ | \ u\in P\}$.
\item[{\rm (iii)}]
As a $k(P\times P)$-module, $iBi$ is a permutation module, every 
indecomposable direct summand of $iBi$ as a $k(P\times P)$-module
has a vertex of the form $\Delta_\varphi(Q)=$ $\{(u,\varphi(u))\ |\ u\in Q\}$
for some subgroup $Q$ of $P$ and some injective group homomorphism
$\varphi : Q\to P$, and $iBi$ has an indecomposable direct summand
isomorphic to $kP$, hence with vertex $\Delta(P)$.
\end{itemize}
\end{Lemma}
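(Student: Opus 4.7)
The plan is to treat all three parts uniformly by viewing the relevant bimodules as idempotent-projection summands of $kG$, which via the canonical isomorphism $kG\cong \Ind^{G\times G}_{\Delta G}(k)$ is a permutation $k(G\times G)$-module, and then reading off vertices from Lemma \ref{bimod-idempotent-Lemma}(iv) applied to the corresponding idempotents and fixed-point subalgebras.

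For part (i), I would observe that $B=kGb$ is a direct summand of $kG$ as a $k(G\times G)$-module via the central (in particular $G$-fixed) primitive idempotent $b\in (kG)^G$. Since the restriction of a permutation module to a subgroup is again a permutation module, $B$ is a trivial source $k(G\times G)$-module, and Lemma \ref{bimod-idempotent-Lemma}(iv) with $H=G$ and $e=b$ identifies its vertex as $\Delta(P)$, where $P$ is a defect group of the point $\{b\}$ on $kG$; this $P$ is by definition a defect group of the block $B$.

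For part (ii), I would first check that the primitivity of $i$ in $B^P$ promotes to primitivity in $(kG)^P$, using that $(kG)^P = B^P \times ((1-b)kG)^P$ is a direct product of algebras (because $b$ is central). Then $Bi=kGi$ is an indecomposable summand of $kG$ as a $k(G\times P)$-module (which is itself a permutation module after restriction from $G\times G$), hence a trivial source module. Writing $\gamma$ for the point of $P$ on $kG$ containing $i$, the defining property of a source idempotent forces $\Br_P(i)\neq 0$, so $P_\gamma$ is local; but a local pointed group is its own defect, so Lemma \ref{bimod-idempotent-Lemma}(iv) yields vertex $\Delta(P)$.

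For part (iii), I would use that left and right multiplication by $i$ are $k(P\times P)$-equivariant (since $i\in (kG)^P$), so $iBi = ikGi$ is a $k(P\times P)$-summand of $kG$, hence a trivial source module. Since $P\times P$ is a $p$-group, every indecomposable trivial source $k(P\times P)$-module has the form $\Ind^{P\times P}_R(k)$ for some subgroup $R\leq P\times P$, so $iBi$ is in fact an honest permutation module. The bimodule $iBi$ is perfect as a $kP$-$kP$-bimodule (both one-sided $kP$-structures are summands of $kG$, which is $kP$-free on each side), so Lemma \ref{perfect-vertices-Lemma}(i) forces every vertex to be a twisted diagonal $\Delta_\varphi(Q)$. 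The existence of an indecomposable summand isomorphic to $kP$ is then exactly the defining property of the source idempotent $i$ recalled in Section \ref{backgroundsection}. I do not anticipate any real obstacle here: everything reduces to standard bookkeeping with pointed groups and the two background lemmas. The mildly subtle points are the promotion of primitivity from $B^P$ to $(kG)^P$ in (ii) and the identification ``$p$-permutation $=$ permutation'' over the $p$-group $P\times P$ in (iii), both routine once noted.
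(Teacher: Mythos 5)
Your proof is correct, and it fills in what the paper leaves entirely to citation: the paper's ``proof'' of this lemma is only a pointer to \cite[Theorems 6.2.1, 6.15.1, 8.7.1]{LiBookII}, whereas you give a direct, self-contained derivation from the background lemmas already set up in the paper (Lemma~\ref{bimod-idempotent-Lemma} and Lemma~\ref{perfect-vertices-Lemma}). Your two ``mildly subtle'' points are exactly the ones worth flagging and you handle both correctly: in (ii), the centrality of $b$ makes $(kG)^P = B^P \times ((1-b)kG)^P$ a product decomposition, so primitivity of $i$ in $B^P$ does indeed promote to $(kG)^P$, and the locality of $P_\gamma$ (from $\Br_P(i)\neq 0$) makes $P_\gamma$ its own defect pointed group; in (iii), the observation that a trivial source module over the $p$-group $P\times P$ is automatically a transitive permutation module $\Ind^{P\times P}_R(k)$ (via Green's indecomposability theorem) is precisely the standard fact needed to pass from ``$p$-permutation'' to ``permutation,'' and the perfectness check you give for $iBi$ (each one-sided $kP$-structure is a summand of the free module $kG$) is what licences the application of Lemma~\ref{perfect-vertices-Lemma}(i). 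So the net effect is that your argument makes the lemma self-contained within the paper, which is a genuine (if modest) gain over the paper's bare reference.
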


\begin{proof}
See e.g. \cite[Theorems 6.2.1,  6.15.1,  8.7.1]{LiBookII} for more general statements.
\end{proof}

 We need the following 
observation from \cite{LiKleinfour}  (where $k$ is assumed algebraically closed, 
but one checks that the proof given there  does not use this hypothesis; see 
\cite[Theorem 6.4.10]{LiBookII}).

\begin{Proposition}[{cf. \cite[Proposition 6.3]{LiKleinfour}}] \label{iU-vertices}
Let $G$ be a finite group, $B$ a block of $kG$, $P$ a defect group of $B$ and
$i\in B^P$ a source idempotent. Let $U$ be an indecomposable $B$-module.
Then $U$ has a vertex-source-pair $(Q,V)$ such that $Q\leq P$ and such
that $V$ is isomorphic to a direct summand of $\Res_Q(iU)$.
\end{Proposition}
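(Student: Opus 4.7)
The plan is to combine the source algebra decomposition of $B$ with a Mackey decomposition argument. The first two steps realise $U$ as a direct summand of the induced module $\Ind_P^G(iU)$, and the third step extracts a vertex-source pair with the required properties.

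Step one: since $i$ is a source idempotent in $B^P$, $B$ is isomorphic to a direct summand of $Bi\tenkP iB$ as a $B$-$B$-bimodule, as recorded in Section \ref{backgroundsection}. Tensoring this with $U$ over $B$ shows that $U$ is isomorphic to a direct summand of $Bi\tenkP iU$ as a $kG$-module. Step two: by Lemma \ref{source-bimod-Lemma}(ii), $Bi$ is a trivial source $k(G\times P)$-module with vertex $\Delta P$, and Lemma \ref{bimod-Lemma} applied with $\varphi$ the identity map on $P$ identifies $\Ind_{\Delta P}^{G\times P}(k)$ with $kG$ as a $kG$-$kP$-bimodule. Consequently $Bi$ is a direct summand of $kG$ as a $kG$-$kP$-bimodule, so $Bi\tenkP iU$ is a direct summand of $kG\tenkP iU = \Ind_P^G(iU)$ as a $kG$-module. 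Combining both steps, $U$ is a direct summand of $\Ind_P^G(iU)$.

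For the third step, pick a vertex-source pair $(Q,V)$ for $U$; then $V$ is an indecomposable $kQ$-module of vertex $Q$ satisfying $V\mid\Res_Q(U)$ and $U\mid\Ind_Q^G(V)$. From $V\mid\Res_Q(U)\mid\Res_Q\Ind_P^G(iU)$ and the Mackey formula
$$\Res_Q\Ind_P^G(iU)\ \cong\ \bigoplus_{x\in [Q\backslash G/P]}\ \Ind_{Q\cap {}^xP}^Q\,\Res_{Q\cap {}^xP}^{{}^xP}\bigl({}^x(iU)\bigr),$$
together with the Krull-Schmidt theorem, $V$ must be a direct summand of one of the terms on the right. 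Because $V$ has vertex $Q$, it is not induced from any proper subgroup of $Q$, so the relevant $x$ satisfies $Q\cap {}^xP=Q$, that is, $Q^x\leq P$, and $V\mid {}^x\bigl(\Res_{Q^x}(iU)\bigr)$. Replacing $(Q,V)$ by its $G$-conjugate $(Q^x,{}^{x^{-1}}V)$, which is still a vertex-source pair for $U$, yields a pair with $Q\leq P$ and $V$ a direct summand of $\Res_Q(iU)$, as required.

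The main obstacle is the conjugation bookkeeping in the Mackey decomposition; the conceptual heart of the argument is the source algebra identity $B\mid Bi\tenkP iB$, which realises $U$ as a summand of $\Ind_P^G(iU)$, after which the indecomposability of the source $V$ together with the fact that $V$ has vertex $Q$ directly forces $Q^x\leq P$ for some $x\in G$.
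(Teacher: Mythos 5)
Your proof is correct, and it follows the standard line of argument (which is what the paper's external reference \cite[Proposition 6.3]{LiKleinfour}, cf.\ \cite[Theorem 6.4.10]{LiBookII}, also uses): the source-algebra splitting $B\mid Bi\tenkP iB$ gives $U\mid Bi\tenkP iU\mid\Ind_P^G(iU)$, and then the Mackey decomposition of $\Res_Q\Ind_P^G(iU)$ combined with the fact that a source cannot be relatively projective to a proper subgroup of its vertex forces $Q\cap {}^xP=Q$ for the relevant double-coset representative, after which conjugating the vertex-source pair into $P$ finishes the argument. The conjugation bookkeeping you carry out (showing ${}^{x^{-1}}V\mid\Res^P_{Q^x}(iU)$) is the step one must be careful with, and you have it right.
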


\begin{Corollary}\label{iU-permutation-summands}
Let $G$ be a finite group, $B$ a block of $kG$, $P$ a defect group of $B$ and
$i\in B^P$ a source idempotent. Let $U$ be an indecomposable trivial source 
$B$-module. Then $iU$, as a $kP$-module, has a direct summand isomorphic
to $kP/Q$ for some vertex $Q$ of $U$.
\end{Corollary}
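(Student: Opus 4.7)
My plan is to exploit the $p$-permutation structure of $iU$ as a $kP$-module and combine two Mackey calculations, one giving a lower bound and one an upper bound, to pin down an explicit indecomposable summand of $iU$.

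First, since $i\in (kG)^P$, the subspace $iU$ is a direct summand of $\Res^G_P(U)$ as a $kP$-module. Because $U$ is a trivial source $kG$-module, $\Res^G_P(U)$ is a $p$-permutation $kP$-module, hence so is $iU$. As $P$ is a $p$-group, every indecomposable $p$-permutation $kP$-module is of the form $k[P/R]$ for some subgroup $R\leq P$, so
$$iU\ \cong\ \bigoplus_{j}\ k[P/R_j]$$
for certain subgroups $R_j\leq P$; note that each $R_j$ is a vertex of the indecomposable summand $k[P/R_j]$ viewed as a $kP$-module.

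Next, I apply Proposition~\ref{iU-vertices}: since $U$ is trivial source, its source is the trivial module, so there is a vertex $Q$ of $U$ with $Q\leq P$ such that $k$ is isomorphic to a direct summand of $\Res^P_Q(iU)$. The Mackey formula gives
$$\Res^P_Q(k[P/R_j])\ \cong\ \bigoplus_{x\in [Q\backslash P/R_j]}\ k[Q/(Q\cap {}^xR_j)],$$
so in order for the trivial $kQ$-module to occur as a summand we must have $Q\cap {}^xR_j=Q$ for some $j$ and some $x\in P$, which forces ${}^{x^{-1}}Q\leq R_j$. On the other hand, realising $U$ as a summand of $\Ind^G_Q(k)$ and applying Mackey to $\Res^G_P\Ind^G_Q(k)$, one sees that every indecomposable summand of $\Res^G_P(U)$, hence in particular $k[P/R_j]$, has vertex contained in some intersection $P\cap {}^gQ$ with $g\in G$; consequently $|R_j|\leq |Q|$.

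Combining the inclusion ${}^{x^{-1}}Q\leq R_j$ with the order estimate $|R_j|\leq |Q|$ forces $R_j={}^{x^{-1}}Q$, so $R_j$ is a $P$-conjugate of $Q$, and in particular itself a vertex of $U$. Since conjugate subgroups of $P$ yield isomorphic coset $kP$-modules, $k[P/R_j]\cong kP/Q$, and this exhibits the required direct summand of $iU$. I expect the main subtlety to lie in this final bookkeeping: Proposition~\ref{iU-vertices} only guarantees that some $R_j$ \emph{contains} a conjugate of $Q$, and the matching upper bound from the restriction formula is essential in order to upgrade that containment to the equality that identifies $k[P/R_j]$ with $kP/Q$.
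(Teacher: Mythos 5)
Your proof is correct and follows essentially the same route as the paper's: invoke Proposition~\ref{iU-vertices} to get a vertex $Q\le P$ with $k\mid\Res^P_Q(iU)$, use Mackey on $\Res^G_P\Ind^G_Q(k)$ to see each summand of $\Res^G_P(U)$ is $kP/(P\cap{}^gQ)$, and then combine this with the Mackey restriction $\Res^P_Q(kP/R)$ to force the relevant coset module to be $kP/Q$ up to $P$-conjugacy. Your write-up is slightly more explicit about decomposing $iU$ into coset modules and about the order estimate, but the key steps and lemmas are identical to the paper's proof.
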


\begin{proof}
By  Proposition  \ref{iU-vertices} there is a vertex $Q$ of $U$ such that $Q\leq P$ and
such that $\Res_{Q}(iU)$ has a trivial direct summand.
Since $U$ is a direct summand of $\Ind^G_Q(k)$, it follows from the Mackey formula
that every indecomposable direct summand of $\Res^G_P(U)$, hence of $\Res_P(iU)$,
is isomorphic to $kP/R$ for some subgroup $R=$ $P\cap {^x{Q}}$, for some
$x\in G$. Again by the Mackey formula, for any such $R$ the $kP$-module 
$kP/R$ restricted to $kQ$ has
a trivial $kQ$-summand if and only if $Q$ is $P$-conjugate to a subgroup of  
$R$, hence to $R$. The Lemma follows.
\end{proof}

We will use without further comment the following standard Morita equivalence.

\begin{Proposition} [{\cite[3.5]{Puigpoint}, cf.\cite[Theorem 6.4.6]{LiBookII}}]
\label{BiMorita}
Let $G$ be a finite group, $B$ a block of $kG$, $P$ a defect group of $B$ and
$i$ an idempotent in $B^P$ satisfying $\Br_P(i)\neq 0$.
Then the algebras $B$ and $iBi$ are Morita equivalent; more precisely, there
is a Morita equivalence given by the $B$-$iBi$-bimodule $Bi$ and the
$iBi$-$B$-bimodule $iB$.
\end{Proposition}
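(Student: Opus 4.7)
The plan is to verify that the $B$-$iBi$-bimodule $Bi$ is a progenerator of $\mod(B)$; standard Morita theory then gives the equivalence, with $iB$ as the quasi-inverse bimodule. Since $Bi$ is a direct summand of $B$ as a left $B$-module (via the idempotent $i$), it is automatically projective, so the essential content is to show that $Bi$ generates $\mod(B)$, equivalently, that $iS\neq 0$ for every simple $B$-module $S$.

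To this end, I would first decompose $i = i_1 + \cdots + i_n$ into primitive orthogonal idempotents in $B^P$. The equality $\Br_P(i) = \sum_k \Br_P(i_k) \neq 0$ forces $\Br_P(i_\ell)\neq 0$ for at least one index $\ell$, so that $i_\ell$ is a source idempotent of $B$ in the sense of Proposition \ref{iU-vertices}. For any simple $B$-module $S$, that proposition applied to $S$ and $i_\ell$ produces a vertex-source pair $(Q,V)$ of $S$ with $Q \leq P$ and with $V$ isomorphic to a direct summand of $\Res_Q(i_\ell S)$; in particular $i_\ell S \neq 0$. Since the $i_k$ are pairwise orthogonal, $i \cdot i_\ell = i_\ell$, and hence $i_\ell S = i(i_\ell S) \subseteq iS$, so $iS \neq 0$. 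This shows that every simple $B$-module appears as a quotient of $Bi$, confirming that $Bi$ is a generator and hence a progenerator of $\mod(B)$.

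The endomorphism algebra $\End_B(Bi)^\op$ is canonically isomorphic to $iBi$ via right multiplication, so Morita theory provides the desired mutually inverse equivalences given by tensoring with $Bi$ and with $iB$, with the required isomorphisms $iB \otimes_B Bi \cong iBi$ and $Bi \otimes_{iBi} iB \cong B$ induced by multiplication. The only step carrying real content is the verification that $iS \neq 0$ for every simple $S$; the reduction to a source idempotent is precisely what allows Proposition \ref{iU-vertices} to be invoked, and this is where the hypothesis $\Br_P(i)\neq 0$ is essential.
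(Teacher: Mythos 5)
The paper does not prove Proposition~\ref{BiMorita}; it is imported directly from \cite[3.5]{Puigpoint} and \cite[Theorem 6.4.6]{LiBookII}. Your argument therefore has to stand on its own as a derivation, and the problem is that it is circular. The reduction of the Morita equivalence to ``$iS\neq 0$ for every simple $B$-module $S$'' is correct (for the projective $B$-module $Bi$, generating $\mod(B)$ is equivalent to $\Hom_B(Bi,S)\cong iS$ being nonzero for all simple $S$, which in turn is equivalent to $BiB=B$). But you then establish $i_\ell S\neq 0$ by invoking Proposition~\ref{iU-vertices}, and that proposition is not logically prior to the Morita equivalence: the conclusion that $V$ is a nonzero summand of $\Res_Q(i_\ell U)$ already contains the assertion $i_\ell U\neq 0$ for every indecomposable $B$-module $U$, which is exactly $Bi_\ell B=B$. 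Tracing the paper's own chain of citations makes the dependency explicit: the statement ``$B$ is a direct summand of $Bi\tenkP iB$,'' which is what drives the proof of Proposition~\ref{iU-vertices}, is said in Section~\ref{backgroundsection} to follow ``from combining \cite[Theorem 6.4.6, Theorem 6.4.7]{LiBookII},'' and \cite[Theorem 6.4.6]{LiBookII} is precisely Proposition~\ref{BiMorita}. So you are using a consequence of the statement to prove the statement.

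A genuine proof must show $BiB=B$ directly from the hypotheses. The standard route observes that $BiB$ is a two-sided ideal of $B$, hence $G$-stable, so $(BiB)^G$ is an ideal of the local ring $Z(B)$; one then shows $1_B\in BiB$ by combining $1_B\in B^G_P=\Tr^G_P(B^P)$ (the defect group property) with $\Br_P(i)\neq 0$ and the surjectivity of $\Br_P$ onto the Brauer quotient $B(P)$, which forces $B^PiB^P$ not to be contained in $\ker(\Br_P)$, hence $\Tr^G_P(B^PiB^P)\not\subseteq J(Z(B))$. None of that uses vertices or sources of modules. Your Morita-theoretic framing (progenerator, endomorphism ring $iBi$) is the right packaging, but the generating step needs an argument at the level of ideals and the Brauer homomorphism rather than an appeal to Proposition~\ref{iU-vertices}.
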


\section{On the source permutation module, and proofs of Theorems
\ref{Bi-summand} and \ref{Bi-no-projective-summand-thm} } \label{Sylow-section}

Let $k$ be a field of prime characteristic $p$.
Let $G$ be a finite group and let $S$ be a Sylow $p$-subgroup of $G$, $B$ a block
of $kG$ with defect group $P$, and $i\in B^P$ a source idempotent. Recall
from Definition \ref{source-module-Def} that we call $Bi\tenkP k$  source
permutation module of $B$. We note first that the isomorphism class of 
$Bi\tenkP k$ does not depend on the choice of $P$ and $i$. 

\begin{Lemma} \label{isomorphic-summands-Lemma}
Let $G$ be a finite group, and let $Q_\delta$, $R_\epsilon$ pointed groups on 
$kG$.  Let $i\in \delta$ and $j\in \epsilon$. 
\begin{itemize}
\item[{\rm (i)}] 
If $Q$, $R$ are $G$-conjugate, then $kG\tenkQ k\cong$ $kG\tenkR k$  as $kG$-modules.
\item[{\rm (ii)}]
If $Q_\delta$ and $R_\epsilon$ are  $G$-conjugate, then $kGi\tenkQ k\cong$ 
$kGj\tenkR k$ as $kG$-modules.
\end{itemize}
In particular, the isomorphism class of the source permutation module $Bi\tenkP$ 
of a block $B$ of $kG$ with defect group $P$ and source idempotent $i\in B^P$ 
does not depend on the choice of $P$ and $i$.
\end{Lemma}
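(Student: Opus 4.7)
For part (i), I would observe that $kG\tenkQ k$ is the permutation $kG$-module on the coset space $G/Q$. If $R=xQx^{-1}$ for some $x\in G$, then the rule $gQ\mapsto gx^{-1}R$ defines a $G$-equivariant bijection $G/Q\to G/R$, which induces the required $kG$-module isomorphism $kG\tenkQ k\cong kG\tenkR k$.

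For part (ii), the hypothesis that $Q_\delta$ and $R_\epsilon$ are $G$-conjugate yields an element $x\in G$ with $xQx^{-1}=R$ and $x\delta x^{-1}=\epsilon$. In particular $xix^{-1}$ lies in the point $\epsilon$, so there exists a unit $u\in ((kG)^R)^\times$ with $u(xix^{-1})u^{-1}=j$. I would then define $\Phi\colon kGi\to kGj$ by $a\mapsto ax^{-1}u^{-1}$ and verify three things: that $\Phi$ lands in $kGj$, using the direct computation $(ax^{-1}u^{-1})\cdot j = ax^{-1}u^{-1}\cdot u(xix^{-1})u^{-1} = aix^{-1}u^{-1} = ax^{-1}u^{-1}$ for $a\in kGi$; that $\Phi$ is a left $kG$-module isomorphism with inverse $b\mapsto bux$; and that for each $q\in Q$ one has $\Phi(aq)=\Phi(a)\cdot xqx^{-1}$, where the key point is that $u\in (kG)^R$ commutes with $xqx^{-1}\in R$. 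Since the conjugation isomorphism $Q\to R$, $q\mapsto xqx^{-1}$, sends the augmentation ideal of $kQ$ to that of $kR$, the map $\Phi$ descends to a $kG$-module isomorphism $kGi\tenkQ k\to kGj\tenkR k$.

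For the final assertion, I would note that if $(P,i)$ and $(P',i')$ are two choices of defect group and source idempotent of $B=kGb$, then the pointed groups $P_\gamma$ and $P'_{\gamma'}$ (with $i\in\gamma$, $i'\in\gamma'$) are defect pointed groups of $G_{\{b\}}$ in the sense recalled in Section~\ref{backgroundsection}, and such defect pointed groups form a single $G$-conjugacy class. Applying part (ii)---and using $Bi=kGbi=kGi$ since $bi=i$---gives $Bi\tenkP k\cong Bi'\tenkP' k$, so the isomorphism class of the source permutation module is independent of the choices.

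The main obstacle is writing down the correct conjugating element so that left $kG$-linearity and the twisted compatibility with the right actions hold simultaneously; the right formula turns out to be right multiplication by $x^{-1}u^{-1}$, after which the verifications are routine and the twist $q\mapsto xqx^{-1}$ is harmlessly absorbed by tensoring with the trivial module $k$ on the right.
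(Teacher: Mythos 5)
Your proof is correct and follows essentially the same approach as the paper: choose a conjugating element $x$, adjust by a unit of $(kG)^R$ to match the chosen idempotents, and realize the isomorphism by explicit right multiplication, which intertwines the right $Q$- and $R$-actions via conjugation and hence descends after tensoring with $k$. The only difference from the paper's version is the conjugation convention, which accounts for the formula $a\mapsto ax^{-1}u^{-1}$ rather than the paper's $ai\mapsto aixc$.
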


\begin{proof}
Let $x\in G$ such that $Q^x=R$. One checks that the map sending $a\in kG$ to 
$ax$ induces an isomorphism as stated in (i).
Let now $x\in$ $G$ such that $(Q_\delta)^x=$ $R_\epsilon$. Then  $i^x\in$
$\epsilon$, and hence  there is $c\in ((kG)^R)^\times$ such that $j=i^{xc}$. 
One checks that the map sending $ai$ to $aixc$, where $a\in kG$, induces an
isomorphism as stated in (ii). Since the defect local pointed groups of a block
$B$ of $kG$ are all $G$-conjugate, the last statement follows from (ii). 
\end{proof}

The  canonical  Morita equivalence between $B$ and its source  algebra $A=iBi$   
yields   an  algebra   isomorphism
\begin{Statement} \label{EndBi-isom}
$$\End_{B}(Bi\tenkP k) \cong \End_A(A\tenkP k).$$
\end{Statement}
From this it follows that  the endomorphism algebras of  source permutation  
modules are invariant under  source  algebra equivalences. By contrast, 
neither $\End_{B}(B\tenkP k)$ nor $\End_B(B\ten_{kS} k)$ 
are source algebra invariants. 

In  Section \ref{stable-Section},  we will  generalise  the  above   to showing  
the invariance of the source permutation module under splendid stable 
equivalences of    Morita  type.
The  next  sequence of results leads  up to  the proof of Theorem \ref{Bi-summand}.

\begin{Lemma} \label{vertices-Lemma}
Let $G$ be a finite group and $P_\gamma$  be a pointed group on $kG$.
Let $i\in \gamma$. Then every indecomposable direct summand of
the $kG$-module $kGi\tenkP k$ has a vertex contained in $P$. If $P_{\gamma} $  
is  local, then  there is at least one such summand with vertex $P$.
\end{Lemma}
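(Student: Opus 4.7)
The plan is to split into the two assertions and treat the second via the Brauer construction.

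For the first assertion, I would observe that $i \in (kG)^P$ commutes with every $u \in P$, since ${^u{i}} = i$. Therefore right multiplication by $i$ respects the tensor relations defining $kG\tenkP k$ and descends to a $kG$-module endomorphism; the orthogonal decomposition $1 = i + (1-i)$ in $(kG)^P$ then exhibits $kGi \tenkP k$ as a direct summand of $kG \tenkP k = \Ind_P^G(k)$. Every indecomposable summand of $\Ind_P^G(k)$ is relatively $P$-projective, hence has vertex contained in $P$.

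For the second assertion, assume $P_\gamma$ is local, so $\Br_P(i) \neq 0$ in $kC_G(P)$. Since $kGi \tenkP k$ is a $p$-permutation module all of whose indecomposable summands have vertex contained in $P$, it suffices to show that the Brauer quotient $(kGi \tenkP k)(P)$ is nonzero, because this will force at least one summand to have vertex containing, hence equal to, $P$. I would identify $\Ind_P^G(k)(P) \cong k(N_G(P)/P)$: the $P$-fixed cosets of $G/P$ are exactly those of $N_G(P)$, while $P$-orbits of length greater than one are hit by relative traces from proper subgroups. The right action of $(kG)^P$ on $k(G/P)$ preserves the relative-trace filtration, and the central step is to check that the induced action on the Brauer quotient factors through $\Br_P : (kG)^P \to kC_G(P)$. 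For this, the key computation is that for $a_0 \in (kG)^Q$ with $Q < P$ and $m \in k(G/P)^P$, the fact that each $u \in P$ acts as the identity on $k(G/P)$ by right multiplication gives $m \cdot {^u{a_0}} = m \cdot a_0$ for all $u$, whence $m \cdot \Tr^P_Q(a_0) = [P:Q] \cdot m \cdot a_0 = 0$ in characteristic $p$. The resulting right action of $kC_G(P)$ on $k(N_G(P)/P)$ is ordinary right multiplication via $C_G(P) \hookrightarrow N_G(P)$, so that $(kGi \tenkP k)(P) = k(N_G(P)/P) \cdot \Br_P(i)$.

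Finally, I would argue that this last space is nonzero. Using $C_G(P) \cap P = Z(P)$ and the observation that $Z(P) \leq Z(C_G(P))$ (since $P$ centralises $Z(P)$ and $C_G(P)$ centralises $P$), the kernel of the natural map $kC_G(P) \to k(N_G(P)/P)$ sending $c$ to $cP$ equals $kC_G(P) \cdot J(kZ(P))$. This is a central nilpotent ideal of $kC_G(P)$, hence lies in the Jacobson radical and cannot contain the nonzero idempotent $\Br_P(i)$. Therefore the image of $\Br_P(i)$ in $k(N_G(P)/P)$ is a nonzero vector of $(kGi \tenkP k)(P)$, completing the proof. The main obstacle I anticipate is the middle step, namely verifying concretely that the right $(kG)^P$-action on the Brauer quotient factors through $\Br_P$; the rest of the argument is then bookkeeping around a Jacobson-radical obstruction to idempotents.
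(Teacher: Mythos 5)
Your first assertion is exactly the paper's argument and is fine. For the second assertion you take a genuinely different route from the paper: the paper invokes the standard fact that $\Br_P(i)\neq 0$ forces $kP$ to be a direct summand of $kGi$ as a $kP$-$kP$-bimodule, so that $\Res^G_P(kGi\tenkP k)$ has a trivial summand; you instead try to compute the Brauer quotient $(kGi\tenkP k)(P)$ directly and show it is nonzero, which is a legitimate strategy (for a $p$-permutation module, a nonzero Brauer quotient at $P$ forces a summand of full vertex).

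However, there is a genuine gap in the central step. You want to show that the right $(kG)^P$-action on $(kG\tenkP k)(P)\cong k(N_G(P)/P)$ factors through $\Br_P$, and for this you argue $m\cdot{^u{a_0}} = m\cdot a_0$ for $a_0\in (kG)^Q$ with $Q<P$. But $m\cdot a_0$ is not defined: right multiplication by an element of $(kG)^Q$ is \emph{not} a well-defined operation on $kG\tenkP k$ when $Q<P$. Concretely, if $\tilde m\in kG$ is a lift of $m$, then $\tilde m(u-1)$ lies in $\ker(kG\to kG\tenkP k)$, but $\tilde m(u-1)a_0$ need not, since $(u-1)a_0 = ({^u{a_0}})(u-1) + ({^u{a_0}}-a_0)$ and the second term is not in $kG\cdot\mathrm{aug}(kP)$ in general. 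So $\pi(\tilde m u a_0)$ and $\pi(\tilde m a_0)$ need not agree, and the computation as written does not go through. The conclusion you are after is actually true, but it requires a different justification: for $m=gP$ with $g\in N_G(P)$ and $a\in(kG)^P$, the coefficient of a $P$-fixed coset $gh_0P$ (with $h_0\in N_G(P)$) in the image of $ga$ is $\sum_{h'\in h_0P}a_{h'}$, and one then splits this sum into $h_0P\cap C_G(P)$ and its complement; on the complement, $P$-conjugation acts with all orbits of size divisible by $p$ while $a$ is constant on these orbits, so that part vanishes, leaving precisely the action of $\Br_P(a)$. Your final step (the kernel of $kC_G(P)\to k(N_G(P)/P)$ is $kC_G(P)\cdot J(kZ(P))$, a nilpotent ideal which cannot contain the nonzero idempotent $\Br_P(i)$) is correct. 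Overall the plan is salvageable, but as written the key middle step is not justified, and the paper's bimodule argument is shorter and avoids this difficulty.
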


\begin{proof}
Since $kGi\tenkP k$ is a summand of $\Ind^G_P(k)$, it follows that every 
indecomposable direct summand
of $kGi\tenkP k$ has a vertex contained in $P$.  
Suppose now that  $P_{\gamma} $ is local.  Since $\Br_P(i)\neq 0$, it
follows (e.g. from \cite[Lemma 5.8.8]{LiBookI}) 
that $kGi$ has a direct summand isomorphic to $kP$ as a $kP$-$kP$-bimodule,
and hence $kGi\tenkP k$ has a trivial direct summand as a left $kP$-module.
Thus some indecomposable direct summand $U$ of $kGi\tenkP k$ has  the
property that $\Res^G_P(U)$ has a trivial direct summand, hence that $P$ is
contained in a vertex of $U$. The result follows.
\end{proof}

In what follows, the hypothesis that $k$ is a splitting field for $jkG^Qj$ is
equivalent to requiring that $kGj$ is absolutely indecomposable as a 
$kG$-$kQ$-bimodule, where $j$ is a primitive idempotent in $kG^Q$. 
This hypothesis is needed in some of the results below in
order to be able to apply  Green's Indecomposability Theorem 
\cite[Theorem 5.12.3]{LiBookI}  as well as  \cite[Theorem 5.12.8]{LiBookI}.

\begin{Lemma} \label{kGi-summands-R}
Let $G$ be a finite group, $P_\gamma$ a pointed group on $kG$, and $Q$ a 
subgroup of $P$ such that $\gamma\subseteq$ $(kG)^P_Q$. Let $i\in \gamma$.
\begin{itemize}
\item[{\rm (i)}]
The $kG$-$kP$-bimodule $kGi$ is isomorphic to a direct summand of 
$kGj\tenkQ kP$ for some primitive idempotent $j\in$ $(ikGi)^Q$, and then
the $kG$-module $kGi\tenkP k$ is isomorphic to a direct summand of $kGj\tenkQ k$.
\item[{\rm (ii)}]
If $k$ is a splitting field for $(jkGj)^Q$, then  $kGj$ is absolutely indecomposable
as a $kG$-$kQ$-bimodule, $kGi$ is absolutely indecomposable
as a $kG$-$kP$-bimodule, we have an isomorphism of $kG$-$kP$-bimodules 
$kGi\cong kGj\tenkQ kP$ and an isomorphism of $kG$-modules
$kGi\tenkP k\cong$ $kGj\tenkQ k$.
\end{itemize}
\end{Lemma}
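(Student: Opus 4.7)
The approach rests on the observation that $kGi$ is relatively $Q$-projective as a $kG$-$kP$-bimodule. Since $\gamma \subseteq (kG)^P_Q$, there exists $a \in (kG)^Q$ with $i = \Tr^P_Q(a)$; using that $i$ is $P$-fixed, $i = iii = \Tr^P_Q(iai)$ with $iai \in (ikGi)^Q$, so $i \in (ikGi)^P_Q$. Via the identification $(ikGi)^Q \cong \End_{k(G\times Q)}(kGi)^\op$ of Lemma~\ref{bimod-idempotent-Lemma}(i), Higman's criterion produces a splitting of the multiplication map $kGi \tenkQ kP \to kGi$, so $kGi$ is a direct summand of $kGi \tenkQ kP$ as a $kG$-$kP$-bimodule.

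For (i), I would decompose $i = j_1 + \cdots + j_n$ as a sum of pairwise orthogonal primitive idempotents in the algebra $(ikGi)^Q$. This induces a decomposition $kGi = \bigoplus_l kGj_l$ as $kG$-$kQ$-bimodules, hence $kGi \tenkQ kP \cong \bigoplus_l (kGj_l \tenkQ kP)$ as $kG$-$kP$-bimodules. Since $i$ is primitive in $(kG)^P$, Lemma~\ref{bimod-idempotent-Lemma}(i) makes $kGi$ indecomposable as a $kG$-$kP$-bimodule, so Krull--Schmidt forces $kGi$ to be isomorphic to a direct summand of some $kGj_l \tenkQ kP$; the required $j$ is this $j_l$, which lies in $(ikGi)^Q$ by construction. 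Tensoring with $k$ over $kP$ and using $kP \tenkP k \cong k$ yields the second assertion of (i).

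For (ii), the hypothesis that $k$ is a splitting field for $(jkGj)^Q$ makes this local algebra split local, so Lemma~\ref{bimod-idempotent-Lemma}(ii) shows $kGj$ is absolutely indecomposable as a $kG$-$kQ$-bimodule. The plan is to promote this to absolute indecomposability of $kGj \tenkQ kP$ as a $kG$-$kP$-bimodule by exploiting that $Q$ is subnormal in the $p$-group $P$: choose a chain $Q = Q_0 \trianglelefteq Q_1 \trianglelefteq \cdots \trianglelefteq Q_n = P$ with $p$-power successive indices, then apply Green's Indecomposability Theorem in the form of \cite[Theorem 5.12.8]{LiBookI} iteratively to $G \times Q_l \trianglelefteq G \times Q_{l+1}$, translating via Lemma~\ref{bimod-Lemma} between induced bimodules and $k(G \times Q_{l+1})$-modules. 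Combining with (i) then forces $kGi \cong kGj \tenkQ kP$, and in particular $kGi$ is absolutely indecomposable; tensoring with $k$ over $kP$ yields the last asserted isomorphism of $kG$-modules.

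The delicate step is (ii): $G \times Q$ is not normal in $G \times P$ in general, so Green's theorem is not directly applicable. The subnormal chain provides the natural route, but one must ensure that absolute indecomposability is preserved at each link of the chain, which is precisely what the cited form of Green's theorem delivers.
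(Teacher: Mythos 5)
Your proposal is correct and follows essentially the same route as the paper: in (i) you use relative $k(G\times Q)$-projectivity of $kGi$ via Higman's criterion and then pick out an indecomposable $kG$-$kQ$-bimodule summand $kGj$, and in (ii) you pass from split-locality of $(jkGj)^Q$ to absolute indecomposability of $kGj$ and then apply Green's Indecomposability Theorem to conclude. The only point at which you are more explicit than the paper is the justification for Green's theorem in (ii): you spell out the subnormal chain $Q = Q_0 \trianglelefteq \cdots \trianglelefteq Q_n = P$ and iterate, whereas the paper invokes \cite[Theorem 5.12.3]{LiBookI} directly -- this is merely a matter of how that reference packages the result, not a genuine difference in method, and your subnormality remark is a valid and careful justification rather than a deviation.
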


\begin{proof} 
The argument is from the proof of \cite[Proposition 6.3]{LiKleinfour}; see
also \cite[Theorem 5.12.8]{LiBookI}. By   Lemma \ref{bimod-idempotent-Lemma},  
the $kG$-$kP$-bimodule  $kGi$ is  indecomposable. By the assumptions, we have  
$i=\Tr^P_Q(d)$ for some $d\in (ikGi)^Q$. Right multiplication by $d$ on $kGi$ is a 
$k(G\times Q)$-endomorphism $\varphi$ of $kGi$ with the property that
$\Tr^{G\times P}_{G\times Q}(\varphi)$ is equal to right multiplication by 
$\Tr^P_Q(d)=$ $i$, hence equal to $\Id_{kGi}$. Thus, by Higman's criterion 
(cf. \cite[Theorem 2.6.2]{LiBookI}), $kGi$ is relatively $k(G\times Q)$-projective,
hence a direct summand of $X\tenkQ kP$ for some indecomposable direct
summand $X$ of $kGi$ as a $k(G\times Q)$-module. By Lemma
\ref{bimod-idempotent-Lemma}, $X\cong kGj$ for some primitive
idempotent $j\in$ $(ikGi)^Q$. This shows the first statement in  (i), and
tensoring the $kG$-$kP$-modules in this statement by $-\tenkP k$ yields
the remaining part of (i). 
If $k$ is a splitting field for $(jkGj)^Q$, then $(jkGj)^Q$ is split local,
hence $kGj$ is absolutely indecomposable as a $kG$-$kQ$-bimodule.
By Green's Indecomposability Theorem (cf. \cite[Theorem 5.12.3]{LiBookI}),
the $k(G\times P)$-module $kGj\tenkQ kP$ is absolutely indecomposable,
hence isomorphic to $kGi$. Tensoring this isomorphism  by $-\tenkP k$  yields
the last isomorphism in (ii).
\end{proof}

The following Proposition collects some basic facts on pairs of pointed groups 
$P_\gamma$, $Q_\delta$ on a finite group algebra $kG$ such that $Q_\delta$ 
is a defect pointed  group of $P_\gamma$. An in-depth investigation of this 
situation can be found in  Barker \cite{Barker25}.

\begin{Proposition} \label{kGi-summands-Prop}
Let $G$ be a finite group, and let $P_\gamma$, $Q_\delta$ be pointed groups
on $kG$ such that $Q_\delta\leq P_\gamma$ and such that $Q_\delta$ is
a defect pointed group of $P_\gamma$. Let $i\in \gamma$ and $j\in \delta$.
Suppose that $k$ is a splitting field for  $(jkGj)^Q$.
\begin{itemize}
\item[{\rm (i)}]
We have  an isomorphism of $kG$-$kP$-bimodules $kGi\cong$ $kGj\tenkQ kP$.
\item[{\rm (ii)}] 
The $kG$-$kP$-bimodule $kGi$ is absolutely indecomposable.
\item[{\rm (iii)}] 
We have an isomorphism of $kG$-modules $kGi\tenkP k\cong$ $kGj\tenkQ k$.
\item[{\rm (iv)}]
The $kG$-module $kGi\tenkP k$ has an indecomposable direct summand with
vertex $Q$.
\end{itemize}
\end{Proposition}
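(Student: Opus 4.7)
The plan is to reduce (i)--(iii) to Lemma \ref{kGi-summands-R}(ii) applied with the specific source idempotent $j\in\delta$, and then deduce (iv) from (iii) via Lemma \ref{vertices-Lemma}. Since all four statements concern isomorphism classes of modules, which are preserved by conjugation of idempotents by units in $(kG)^P$ or $(kG)^Q$, I may choose $j\in\delta$ and $i\in\gamma$ so that $ji=j=ij$; such a choice exists because $Q_\delta\leq P_\gamma$. Then $j$ is a primitive idempotent of $i(kG)^Qi$, and since $Q$ is a defect group of $P_\gamma$ we have $\gamma\subseteq (kG)^P_Q$, placing us in the setting of Lemma \ref{kGi-summands-R}. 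The splitting hypothesis on $(jkGj)^Q$ then applies verbatim to this chosen $j$.

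The point requiring care is that the proof of Lemma \ref{kGi-summands-R}(i) produces \emph{some} primitive $j'\in i(kG)^Q i$ with $kGi$ a $k(G\times P)$-summand of $kGj'\tenkQ kP$, without specifying $j'$. I would argue that $j'$ may be taken in the $P$-orbit of $j$ as follows. Decompose $i=j+j_2+\cdots+j_r$ into pairwise orthogonal primitives of $i(kG)^Qi$ with $j_1=j$; this yields $\Res^{G\times P}_{G\times Q}(kGi)=\bigoplus_m kGj_m$. The identity $\Id_{kGi}=\Tr^{G\times P}_{G\times Q}(\varphi)$ for a suitable $k(G\times Q)$-endomorphism $\varphi$ (the ingredient of Lemma \ref{kGi-summands-R}(i)) shows that $kGi$ is a $k(G\times P)$-summand of $\bigoplus_m kGj_m\tenkQ kP$, hence by indecomposability a summand of $kGj_{m_0}\tenkQ kP$ for some $m_0$. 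By Lemma \ref{bimod-idempotent-Lemma}(iv) together with $Q$ being a defect group of $P_\gamma$, the module $kGi$ has vertex $\Delta Q$; if the point $\delta_{m_0}$ containing $j_{m_0}$ were non-local, $kGj_{m_0}$ would have vertex strictly inside $\Delta Q$, and so would all summands of $kGj_{m_0}\tenkQ kP$, contradicting the vertex of $kGi$. Thus $\delta_{m_0}$ is local; combined with $Q_{\delta_{m_0}}\leq P_\gamma$ and $Q$ being a defect group of $P_\gamma$, this forces $Q_{\delta_{m_0}}$ to be a defect pointed group of $P_\gamma$, hence $P$-conjugate to $Q_\delta$. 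A direct calculation using an element $x\in N_P(Q)$ then produces an isomorphism $kGj_{m_0}\tenkQ kP\cong kGj\tenkQ kP$ of $k(G\times P)$-modules, by twisting the tensor factorisation through the $P$-action.

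Consequently $kGi$ is a direct summand of $kGj\tenkQ kP$. Green's Indecomposability Theorem, together with the splitting hypothesis that $(jkGj)^Q$ is local with residue field $k$, ensures that $kGj\tenkQ kP$ is absolutely indecomposable as a $k(G\times P)$-module, so $kGi\cong kGj\tenkQ kP$, proving (i) and (ii). Tensoring with $-\tenkP k$ yields (iii). For (iv), since $Q_\delta$ is a local pointed group, Lemma \ref{vertices-Lemma} applied to $Q_\delta$ and $j\in\delta$ produces an indecomposable summand of $kGj\tenkQ k$ with vertex $Q$; via (iii) the same summand appears in $kGi\tenkP k$.

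The main obstacle is the identification of $j_{m_0}$ with $j$ up to $P$-conjugation, which is where the hypothesis that $Q_\delta$ is a \emph{defect} pointed group of $P_\gamma$ (rather than an arbitrary pointed subgroup) plays the decisive role; the remainder is a direct application of Lemmas \ref{vertices-Lemma}, \ref{kGi-summands-R}, and \ref{bimod-idempotent-Lemma}.
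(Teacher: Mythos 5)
Your proof is correct and takes essentially the same route as the paper: reduce via Lemma \ref{kGi-summands-R}(i) to a summand of $kGj'\tenkQ kP$ for some primitive $j'\in(ikGi)^Q$, show $j'$ lies in a local point and is therefore $P$-conjugate to $j$, and conclude with Green's Indecomposability (i.e.\ Lemma \ref{kGi-summands-R}(ii)) for (i)--(iii) and Lemma \ref{vertices-Lemma} for (iv). The one place you go further than the paper is in supplying the vertex argument (via Lemma \ref{bimod-idempotent-Lemma}(iv) and the behaviour of vertices under $\Ind^{G\times P}_{G\times Q}$) for why $j'$ must lie in a local point, a step the paper asserts from the defect-pointed-group hypothesis without spelling it out; that detail is correct and is exactly the intended justification.
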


\begin{proof}
Since $Q_\delta$ is a defect local pointed group of $P_\gamma$, we have
in particular $\gamma\subseteq$ $(kG)^P_Q$.  Lemma \ref{kGi-summands-R}
implies that $kGi$ is isomorphic to a direct summand of $kGj'\tenkQ  kP$  for
some primitive idempotent $j'$ in $(ikGi)^Q$.
The fact that $Q_\delta$ is a defect pointed group of $P_\gamma$ implies that 
$j'$ belongs to a local point of $Q$. The defect local
pointed groups of $P_\gamma$ are $P$-conjugate, so we may choose $j'=j$.
The statements (i), (ii), (iii)  follow from Lemma \ref{kGi-summands-R},
and statement (iv) follows from (iii) and  Lemma \ref{vertices-Lemma}.
\end{proof}

\begin{Corollary} \label{vertices-on-BS}
Let $G$ be a finite group, $B$ a block of $kG$, let $Q_\delta$ be a local pointed
group on $B$, and let $S$ be a Sylow $p$-subgroup of $G$ containing $Q$.
Suppose that $k$ is a splitting field for $jB^Qj$, where $j\in \delta$.
If $Q_\delta$ is a defect local pointed group of $S_\mu$ for some point $\mu$ 
of $S$ on $B$, then $B\tenkS k$ has an indecomposable direct summand with 
vertex $Q$.
\end{Corollary}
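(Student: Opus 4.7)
The plan is to isolate a bimodule summand of $B$ corresponding to the point $\mu$ of $S$, identify it with the bimodule supplied by Proposition \ref{kGi-summands-Prop}, and then apply Lemma \ref{vertices-Lemma} to produce a summand with vertex $Q$. First I would fix an idempotent $\ell\in\mu$. Since $1_B$ is central in $kG$, primitive idempotents in $B^S=1_B(kG)^S1_B$ are the same as primitive idempotents in $(kG)^S$ lying in $B$, so $S_\mu$ and $Q_\delta$ may be regarded as pointed groups on $kG$ to which the results of the previous subsection apply verbatim. In particular, by Lemma \ref{bimod-idempotent-Lemma}(i), $B\ell$ is an indecomposable direct summand of the $kG$-$kS$-bimodule $B$, so
\[
B\ell\tenkS k \text{ is a direct summand of } B\tenkS k
\]
as $kG$-modules.

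Next I would invoke Proposition \ref{kGi-summands-Prop}. Its hypotheses hold: $Q_\delta\leq S_\mu$ is a defect pointed group by assumption, and $k$ is a splitting field for $jB^Qj = j(kG)^Qj$. Part (iii) of that proposition therefore yields an isomorphism of $kG$-modules
\[
B\ell\tenkS k \;\cong\; Bj\tenkQ k.
\]
Finally, $Q_\delta$ is a local pointed group on $kG$, so Lemma \ref{vertices-Lemma} guarantees that $Bj\tenkQ k$ has an indecomposable direct summand with vertex $Q$. Transporting such a summand across the isomorphism above and then across the inclusion $B\ell\tenkS k \hookrightarrow B\tenkS k$ produces the claimed summand of $B\tenkS k$ with vertex $Q$.

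The argument is essentially a bookkeeping exercise: the real content lies in Proposition \ref{kGi-summands-Prop} (which in turn uses Green's indecomposability theorem via Lemma \ref{kGi-summands-R}) and in Lemma \ref{vertices-Lemma}. I do not anticipate a substantive obstacle; the one place requiring a moment of care is the interpretation of pointed groups on $B$ as pointed groups on $kG$, needed to quote the cited results verbatim.
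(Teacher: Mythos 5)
Your proof is correct and takes essentially the same route as the paper. The paper's proof simply cites Proposition \ref{kGi-summands-Prop}(iv) directly (applied with $S$, $Q$ in place of $P$, $Q$); since that part (iv) is itself derived in the paper by combining part (iii) with Lemma \ref{vertices-Lemma}, your argument is just an unpacking of the same chain of reasoning.
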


\begin{proof}
Note that if $i\in$ $\mu$, then $kGi=$ $Bi$, and $Bi\tenkS k$ is a summand 
of $B\tenkS k$. Thus the statement follows from Proposition 
\ref{kGi-summands-Prop} (iv) applied with $S$, $Q$  instead of $P$, $Q$.
\end{proof}

\begin{Proposition} \label{BiBS-summand-Prop}
Let $G$ be a finite group and $B$ a block
of $kG$.  Let $P_\gamma$ be a defect local pointed group on $B$,
let $i\in\gamma$,  and let $S$ be a Sylow $p$-subgroup of $G$ containing $P$.
There exists a point $\nu$ of $S$  such that $P_\gamma\leq S_\nu$, and then
$P_\gamma$ is a defect local pointed group of $S_\nu$. Moreover, if
$k$ is a splitting field for $iB^Pi$, then  the following hold.
\begin{itemize}
\item[{\rm (i)}]
The $B$-$kS$-bimodule $Bi\tenkP kS$ is isomorphic to a direct summand of $B$. 
\item[{\rm (ii)}] 
The $B$-module $Bi\tenkP k$ is isomorphic to  a direct summand of $B\tenkS k$.
\item[{\rm (iii)}]
The $B$-module  $Bi\tenkP k$ has an indecomposable direct  summand with vertex $P$.
\item[{\rm (iv)}]
Every indecomposable direct summand of $B\tenkS k$ with vertex $P$ is isomorphic 
to a direct summand of $Bi\tenkP k$. 
\end{itemize}
\end{Proposition}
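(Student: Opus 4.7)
The plan is to proceed in three stages. First handle the preliminary statement about the existence of $\nu$ with $P_\gamma\le S_\nu$ as a defect pointed group; then extract (i)--(iii) from Proposition~\ref{kGi-summands-Prop} applied with $(S_\nu,P_\gamma)$ playing the roles of $(P_\gamma,Q_\delta)$; and finally prove (iv) via a bimodule decomposition of $B$.

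For the existence of $\nu$, decompose $1_B\in B^S$ as a sum of pairwise orthogonal primitive idempotents in $B^S$ and refine each summand into primitive idempotents in $B^P$. Some primitive refinement is, after $S$-conjugation, the given $i\in\gamma$, and the coarser summand containing it supplies a point $\nu$ of $S$ on $B$ with $P_\gamma\le S_\nu$. To see that $P_\gamma$ is a defect pointed group of $S_\nu$, take any defect pointed group $P'_{\gamma'}$ of $S_\nu$ containing $P_\gamma$ up to $S$-conjugacy; then $P'_{\gamma'}$ is a local pointed group on $kG$ with $P'_{\gamma'}\le G_{\{1_B\}}$, so $|P'|\le|P|$ because $P$ is a defect group of $B$, and the reverse inequality $P\le P'$ forces $P_\gamma = P'_{\gamma'}$.

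Under the splitting hypothesis on $iB^Pi$, Proposition~\ref{kGi-summands-Prop}(i) applied to $P_\gamma\le S_\nu$ yields $Bi\tenkP kS\cong Bl$ for any $l\in\nu$, which is a direct summand of $B=\bigoplus_\nu Bl_\nu$ as a $B$-$kS$-bimodule, proving (i). Tensoring this isomorphism with $-\tenkS k$ and using $kS\tenkS k\cong k$ proves (ii), while Lemma~\ref{vertices-Lemma} applied to the local pointed group $P_\gamma$ on $B$ yields (iii).

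Part (iv) is the main obstacle. Fix the decomposition $B\cong\bigoplus_\nu Bl_\nu$ as $B$-$kS$-bimodule; any indecomposable $B$-summand of $B\tenkS k\cong\bigoplus_\nu Bl_\nu\tenkS k$ with vertex $P$ must lie in some $Bl_\nu\tenkS k$. Let $Q_\delta$ be a defect pointed group of $S_\nu$; by Proposition~\ref{kGi-summands-Prop}(iii) we have $Bl_\nu\tenkS k\cong Bj\tenkQ k$ for $j\in\delta$, and by Lemma~\ref{vertices-Lemma} each indecomposable summand has vertex contained in $Q$, so the vertex-$P$ condition forces $|Q|=|P|$. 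Then $Q_\delta$ is a local pointed group of maximal order in $G_{\{1_B\}}$, hence a defect local pointed group of $B$, and so $G$-conjugate to $P_\gamma$. Lemma~\ref{isomorphic-summands-Lemma}(ii) then identifies $Bj\tenkQ k$ with $Bi\tenkP k$ as $B$-modules, so the summand in question is isomorphic to one of $Bi\tenkP k$. The bookkeeping obstacle is the splitting hypothesis: Proposition~\ref{kGi-summands-Prop} requires that $k$ split $jB^Qj$ for the relevant $j\in\delta$, but the only $\nu$ contributing vertex-$P$ summands satisfy $Q_\delta\sim_G P_\gamma$, so $jB^Qj\cong iB^Pi$ and the hypothesis transfers.
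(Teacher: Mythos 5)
Your overall strategy matches the paper's: establish $\nu$ by refining an idempotent decomposition, read off (i)--(iii) from Proposition~\ref{kGi-summands-Prop} applied to $(S_\nu,P_\gamma)$, and handle (iv) by locating the vertex-$P$ summand inside some $Bl_\nu\tenkS k$ and identifying the relevant defect pointed group of $S_\nu$ with $P_\gamma$. One small slip in the preliminary step: the refinement $i'\in B^P$ lying over a primitive $f\in B^S$ is $(B^P)^\times$-conjugate to $i$ (it lies in the same point $\gamma$), not ``after $S$-conjugation''; the paper simply replaces $i$ by such a conjugate.

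The real issue is the circularity in (iv). You invoke Proposition~\ref{kGi-summands-Prop}(iii) to produce the isomorphism $Bl_\nu\tenkS k\cong Bj\tenkQ k$, then derive $|Q|=|P|$ and hence $Q_\delta\sim_G P_\gamma$, and only afterwards observe that this conjugacy transfers the splitting hypothesis needed to apply Proposition~\ref{kGi-summands-Prop}(iii) in the first place. That is using the conclusion to license the hypothesis. The clean way out, which is what the paper does, is to use Lemma~\ref{kGi-summands-R}(i) instead of Proposition~\ref{kGi-summands-Prop}(iii): that lemma carries no splitting assumption and already gives that $Bl_\nu\tenkS k$ is a \emph{direct summand} of $kGj\tenkQ k$ for some $j$ in the defect point $\delta$ of $S_\nu$. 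Then Lemma~\ref{vertices-Lemma} bounds vertices of summands by $Q$, so a vertex-$P$ summand forces $|Q|\geq|P|$, while $|Q|\leq|P|$ since $Q_\delta$ is local on $B$ and $P$ is a defect group; hence $Q_\delta$ is a defect pointed group of $B$, $G$-conjugate to $P_\gamma$, and Lemma~\ref{isomorphic-summands-Lemma}(ii) gives $kGj\tenkQ k\cong Bi\tenkP k$ directly. With that substitution your argument is correct and no transfer-of-splitting remark is needed.
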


\begin{proof}
The statements are independent of the choice of $i$ in $\gamma$; in particular,
we may replace $i$ by any $(B^P)^\times$-conjugate. Thus we may choose $i$
such that there is a primitive idempotent $f$ in $B^S$ satisfying
$if=i=fi$. Let $\nu$ be the point of $S$ on $B$ containing $f$. By construction
we have $P_\gamma\leq S_\nu$. Since $P_\gamma$
contains a $G$-conjugate of every local pointed group on $B$ it follows that
$P_\gamma$ is a defect pointed group of $S_\nu$.
The statements (i), (ii) and (iii) follow from Proposition \ref{kGi-summands-Prop},
applied with $S_\nu$, $P_\gamma$ instead of $P_\gamma$, $Q_\delta$.
For (iv), let $U$ be an indecomposable direct summand of $B\tenkS k$ with
vertex $P$. Let $f$ be a primitive idempotent in $B^S$ such that $U$ is
isomorphic to a direct summand of $Bf\tenkS k$. Denote by $\mu$ the point
of $S$ on $B$ containing $f$, and let $Q_\delta$ be a defect local pointed 
group of $S_\mu$. It follows from Lemma \ref{kGi-summands-R} (i) that
$U$ is isomorphic to a direct  summand of $kGj\tenkQ k$, for some $j\in$ $\delta$.
Since $P$ is a vertex of $U$ it follows that $Q$ is conjugate to $P$. But then
$Q_\delta$ is conjugate to $P_\gamma$, and hence (iv) follows from
Lemma \ref{isomorphic-summands-Lemma}. 
\end{proof}

\begin{proof}[Proof   of Theorem~\ref{Bi-summand}]  
This is  part   (ii)  of  Proposition \ref{BiBS-summand-Prop}.
\end{proof} 
For a suitable choice of $j$ in  $\delta$, the isomorphism in Proposition
\ref{kGi-summands-Prop} (ii) is  given by multiplication in $kG$.

\begin{Lemma} \label{kGi-summand-isomorphism}
Let $G$ be a finite group, and let $P_\gamma$, $Q_\delta$ be pointed groups
on $kG$ such that $Q_\delta\leq P_\gamma$ and such that $Q_\delta$ is
a defect pointed group of $P_\gamma$. Let $i\in \gamma$. 
Suppose that $k$ is a splitting field for $(ikGi)^R$, where $R$ runs over the
subgroups of $P$.
Then there exists $j\in$ $\delta$ such that the map $kGj\tenkQ kP\to$ $kGi$
sending $aj\ten u$ to $aju$ is an isomorphism, where $a\in kG$ and $u\in P$.
\end{Lemma}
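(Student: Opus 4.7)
The strategy is to choose $j \in \delta$ compatibly with the given $i$, verify that the multiplication map $\mu$ is a well-defined $k(G\times P)$-homomorphism, and then upgrade the abstract isomorphism of Proposition~\ref{kGi-summands-Prop} to show that $\mu$ itself is an isomorphism, rather than merely agreeing with it up to some automorphism.

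First, since $Q_\delta \leq P_\gamma$, by definition there exist $i' \in \gamma$ and $j' \in \delta$ with $j'i' = j' = i'j'$. As all elements of $\gamma$ are $((kG)^P)^\times$-conjugate, by conjugating $j'$ by a suitable unit I may pick $j \in \delta$ satisfying $ji = j = ij$ for the given $i$. The relations $jv = vj$ for $v \in Q$ (coming from $j \in (kG)^Q$) and $ji = j$ ensure that $\mu$ is well-defined over $\tenkQ$ and has image in $kGi$; hence $\mu$ is a $k(G\times P)$-homomorphism. Since $j$ is primitive in $(ikGi)^Q$, the corner algebra $(jkGj)^Q = j(ikGi)^Q j$ is split local under the splitting hypothesis on $(ikGi)^Q$, so Proposition~\ref{kGi-summands-Prop} applies and yields that both $kGj\tenkQ kP$ and $kGi$ are absolutely indecomposable $k(G\times P)$-modules of equal dimension, isomorphic to each other.

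To conclude that $\mu$ itself is an isomorphism, I would pass to the Brauer construction at $\Delta Q$, which is a vertex of both modules by Lemma~\ref{bimod-idempotent-Lemma}(iv) (together with Green's Indecomposability Theorem applied to the induced module). The Brauer quotients are isomorphic indecomposable projective $kN_{G\times P}(\Delta Q)/\Delta Q$-modules with local endomorphism ring, so by standard $p$-permutation theory (cf.\ \cite{Broue85} and Lemma~\ref{Brauer-p-permutation}), a $k(G\times P)$-homomorphism between two absolutely indecomposable $p$-permutation modules with common vertex $\Delta Q$ is an isomorphism if and only if it induces a nonzero map on Brauer quotients. The element $j \otimes 1$ is $\Delta Q$-fixed, and $\mu$ sends it to $j \in kGi$, also $\Delta Q$-fixed; both represent the class of $\Br_Q(j)$ in their respective Brauer quotients, which is nonzero because $\delta$ is a local point. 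Hence $\mu$ induces a nonzero map on Brauer quotients and is therefore an isomorphism. The main technical hurdle will be the explicit identification of the Brauer quotient $(kGj\tenkQ kP)(\Delta Q)$ of the induced bimodule and the tracking of the class $[j \otimes 1]$ through it, since Brauer constructions of induced $p$-permutation bimodules require the Mackey-type formula of Bouc~\cite[Theorem~1.1]{Bouc10} rather than a direct computation.
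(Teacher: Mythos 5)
Your route is genuinely different from the paper's, and it contains a gap at the decisive step. The paper invokes Puig's strengthening of Green's Indecomposability Theorem to manufacture $j\in\delta$ with $i=\Tr^P_Q(j)$ and with pairwise orthogonal $P$-conjugates $u^{-1}ju$; the multiplication map is then an isomorphism simply because it matches the resulting direct-sum decomposition $kGi=\bigoplus_u kGju$ term by term against $kGj\tenkQ kP=\bigoplus_u kGj\ten u$. The stronger splitting hypothesis on all the $(ikGi)^R$, $R\le P$ — which your argument never uses — exists precisely to feed the induction in Puig's theorem, a point the paper flags immediately after the lemma.

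The gap is in your Brauer-quotient criterion. You assert that a homomorphism between two absolutely indecomposable $p$-permutation $k(G\times P)$-modules with common vertex $\Delta Q$ is an isomorphism as soon as its Brauer quotient at $\Delta Q$ is \emph{nonzero}. That is false; the correct statement, and the one the paper actually uses in the proof of Theorem \ref{Bi-weights} via \cite[Proposition 5.10.7]{LiBookI}, is that split injectivity is detected by \emph{injectivity} of the Brauer quotients, and in the proof of Theorem \ref{Bi-weights} nonzero suffices only because the relevant Brauer quotient $U(Q)\cong X$ is simple there. Your two Brauer quotients $(kGj\tenkQ kP)(\Delta Q)$ and $(kGi)(\Delta Q)$ are projective indecomposable but not simple $k[N_{G\times P}(\Delta Q)/\Delta Q]$-modules, so knowing that $\mu(\Delta Q)$ sends $[j\ten 1]$ to the nonzero class $\Br_Q(j)$ does not force $\mu(\Delta Q)$ to be injective. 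For a concrete failure of the criterion, take $H=C_p\times C_p$, $Q\le H$ of order $p$, and $M=k[H/Q]$: this is indecomposable with vertex $Q$, one has $M(Q)=M$ and the Brauer map on $\End_{kH}(M)\cong k[H/Q]$ is the identity, so every nonzero element of the radical is a non-invertible endomorphism that is nonzero on the Brauer quotient. Unless you can show $\mu(\Delta Q)$ is injective — which seems no easier than the original problem — the argument does not close; the secondary issue you flag yourself, identifying $(kGj\tenkQ kP)(\Delta Q)$ via Bouc's formula, is real but is not the essential obstruction.
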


\begin{proof}
As a consequence of  Puig's generalisation of Green's Indecomposability Theorem  
(see  e.g. \cite[Corollary 5.12.21]{LiBookII}), there exists $j\in \delta$ such that
the different $P$-conjugates of $j$ are pairwise orthogonal and such that
$i=$ $\Tr^P_Q(j)$. Thus 
$$kGi= kG\Tr^P_Q(j)= \oplus_{u}\ kGu^{-1}ju= \oplus_{u}\ kGju,$$
where $u$ runs over a set of representatives in $P$ of $Q\backslash P$. 
The result follows.
\end{proof}

We note that we need in this Lemma a slightly stronger hypothesis on $k$
 being large enough compared to the hypothesis in Proposition 
\ref{kGi-summands-Prop}. This  is due to the inductive nature of the proof of
Puig's generalisation of Green's Indecomposability Theorem in 
\cite[Theorem 5.12.20]{LiBookII}.

From Lemma \ref{kGi-summand-isomorphism} we can be slightly more
precise regarding statement (ii) in Proposition \ref{BiBS-summand-Prop}.

\begin{Corollary} \label{BjB-split-Cor}
Let $G$ be a finite group, $S$ a Sylow $p$-subgroup of $G$ and $B$ a block
of $kG$.  Assume that $k$ is a splitting field for $B^Q$,where $Q$ runs over the
$p$-subgroups of $G$. Let $P_\gamma$ be a defect local pointed group of $B$. 
Suppose that $P \leq S$.
There exists $i\in \gamma$ such that the inclusion map $Bi\to B$ induces
a split injective $B$-homomorphism $Bi\tenkP k\to$ $B\tenkS k$.
\end{Corollary}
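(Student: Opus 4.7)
The plan is to produce $i\in\gamma$ by refining an idempotent so that the multiplication map $Bi\tenkP kS\to B$ lands in a specific $B$-$kS$-bimodule summand $Bf$ of $B$ and is an isomorphism onto it; tensoring with the trivial $kS$-module on the right then identifies $Bi\tenkP k$ with the direct summand $Bf\tenkS k$ of $B\tenkS k$ via the map induced by inclusion.

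First I would invoke Proposition \ref{BiBS-summand-Prop} to choose $i_0\in\gamma$ together with a primitive idempotent $f$ in $B^S$, belonging to a point $\nu$ of $S$ on $B$, such that $i_0f=i_0=fi_0$ and such that $P_\gamma$ is a defect local pointed group of $S_\nu$. Next I would apply Lemma \ref{kGi-summand-isomorphism} with the roles of $(P_\gamma,Q_\delta,i)$ played by $(S_\nu,P_\gamma,f)$. The splitting field hypothesis required there, namely that $k$ is a splitting field for $(fBf)^R=fB^Rf$ for all subgroups $R\leq S$, is inherited from the Corollary's hypothesis since $fB^Rf$ is a corner of the split algebra $B^R$, and such a corner is again split (a corner of a product of matrix algebras over $k$ is again a product of matrix algebras over $k$). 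The lemma then produces $i\in\gamma$ (refining $i_0$ within the same point) such that the multiplication map
$$\mu:Bi\tenkP kS\to Bf, \qquad ai\ten u\mapsto aiu,$$
is an isomorphism of $B$-$kS$-bimodules; in particular $if=i=fi$, so $Bi\subseteq Bf$.

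Since $f$ is fixed by $S$, the orthogonal decomposition $1=f+(1-f)$ in $B^S$ gives a $B$-$kS$-bimodule decomposition $B=Bf\oplus B(1-f)$; hence $Bf\tenkS k$ is a direct summand of $B\tenkS k$. Applying $-\tenkS k$ to $\mu$ and using the canonical identification $Bi\tenkP kS\tenkS k\cong Bi\tenkP k$ (via $kS\tenkS k\cong k$) yields an isomorphism $\bar\mu:Bi\tenkP k\cong Bf\tenkS k$ sending $ai\ten 1$ to $ai\ten 1$. This is precisely the assignment coming from the natural composite $Bi\tenkP k\to B\tenkP k\twoheadrightarrow B\tenkS k$ induced by the inclusion $Bi\hookrightarrow B$. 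Since $\bar\mu$ identifies $Bi\tenkP k$ with the direct summand $Bf\tenkS k$ of $B\tenkS k$, this composite is split injective, as required.

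I expect the main obstacle to be the bookkeeping in the last step: one must confirm that the isomorphism produced by Lemma \ref{kGi-summand-isomorphism} is given specifically by multiplication in $B$ (and not merely abstractly), so that after tensoring with $k$ on the right it coincides with the map induced by the inclusion $Bi\hookrightarrow B$ composed with the augmentation $B\tenkP k\twoheadrightarrow B\tenkS k$. The inheritance of the splitting-field hypothesis by corner algebras, and the bimodule summand decomposition of $B$ arising from the $S$-fixed idempotent $f$, are both routine.
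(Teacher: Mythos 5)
Your proof is correct and matches the paper's argument: both invoke Proposition~\ref{BiBS-summand-Prop} to realise $P_\gamma$ as a defect pointed group of some $S_\nu$ with $P_\gamma\leq S_\nu$, and then apply Lemma~\ref{kGi-summand-isomorphism} with $(S_\nu,P_\gamma,f)$ in the roles of $(P_\gamma,Q_\delta,i)$. You also make explicit two points the paper's two-line proof leaves implicit---that the splitting hypothesis on $B^R$ passes to the corner algebras $fB^Rf$, and that the isomorphism of Lemma~\ref{kGi-summand-isomorphism} is given by multiplication and hence, after applying $-\tenkS k$, agrees with the composite $Bi\tenkP k\to B\tenkP k\twoheadrightarrow B\tenkS k$ induced by inclusion---both routine, but worth recording.
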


\begin{proof}
By Proposition \ref{BiBS-summand-Prop}, there is a point $\nu$ of $S$ on
$B$ such that $P_\gamma$ is a defect pointed group of $S_\nu$.
Thus the result  is a special case of Lemma \ref{kGi-summand-isomorphism}.
\end{proof}
We do not know under what conditions the canonical surjection $B\tenkP k\to$ 
$B\tenkS k$ splits.  
Our next task is  to  prove   Theorem  \ref{Bi-no-projective-summand-thm}.

\begin{Lemma} \label{no-proj-summands-Lemma}
Let $G$ be a finite group, and let $P_\gamma$, $Q_\delta$  be pointed groups
on $kG$ such that $Q_\delta\leq P_\gamma$ and such that $Q_\delta$ is
a defect pointed group of $P_\gamma$. Let $i\in \gamma$.
\begin{itemize}

\item[{\rm (i)}]
If $Q\neq 1$, then the $kG$-module $kGi \tenkP k$ has no nonzero projective 
direct summand, and for any  simple $kG$-module $Y$ the $kP$-module $iY$ has
no nonzero projective direct summand.
\item[{\rm (ii)}]
If $Q=1$, then  $kGi$ is a projective indecomposable $kG$-$kP$-bimodule, and
$kGi\tenkP k$ is a projective indecomposable $kG$-module.
\end{itemize}
\end{Lemma}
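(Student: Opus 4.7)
The plan is to handle the two parts separately, with the case $Q \neq 1$ relying on Proposition \ref{no-proj-summands-Prop} and the case $Q = 1$ on the splitness of $kP$. Let $B$ denote the block of $kG$ containing $i$. Since $Q$ is contained in a defect group of $B$ and $Q \leq P$, the hypothesis $Q \neq 1$ implies both $P \neq 1$ and that $B$ has non-trivial defect.

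For the first assertion of (i), I would consider the $B$-$kP$-bimodule $Bi$. By Lemma \ref{bimod-idempotent-Lemma}(i), $Bi$ is indecomposable as a bimodule, and by part (iii) of the same Lemma it is not projective as a bimodule, because minimality of the defect $Q \neq 1$ forces $i \notin (kG)^P_1$; hence $Bi$ has no nonzero projective bimodule summand. It is projective as a right $kP$-module, being a direct summand of the right-$kP$-free module $kG$. The trivial $kP$-module $k$ is non-projective as $P \neq 1$. Proposition \ref{no-proj-summands-Prop} then yields that $Bi \tenkP k = kGi \tenkP k$ has no nonzero projective $B$-summand, and since this module is supported in the block $B$, it has no nonzero projective $kG$-summand.

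For the second assertion of (i), the plan is to apply Proposition \ref{no-proj-summands-Prop} dually, to the $kP$-$B$-bimodule $iB$ with $Y$ a simple $B$-module in the role of $T$. Analogous arguments (in particular, $\End$ of $iB$ as bimodule is $iB^Pi$, which is local since $i$ is primitive in $B^P$) show that $iB$ is indecomposable, has no nonzero projective bimodule summand, and is projective as a right $B$-module. Because $B$ has non-trivial defect, no simple $B$-module is projective, so $Y$ is non-projective. The Proposition then gives that $iB \tenB Y \cong iY$ has no nonzero projective $kP$-summand. For simple $kG$-modules $Y$ not in $B$, $iY = 0$ and the conclusion is trivial.

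For part (ii), the hypothesis $Q = 1$ gives $i \in (kG)^P_1$, so by Lemma \ref{bimod-idempotent-Lemma}(iii) the $kG$-$kP$-bimodule $kGi$ is projective, and by (i) of the same Lemma it is indecomposable, hence projective indecomposable. Projectivity of $kGi \tenkP k$ as a $kG$-module is then routine: $kGi$ is a summand of the free bimodule $kG \tenk kP$, and $(kG \tenk kP) \tenkP k \cong kG$. The main obstacle is indecomposability. The plan is to use that $kP$ is split local, so the formula $J(kG \tenk kP) = J(kG) \tenk kP + kG \tenk J(kP)$ applies and yields $(kG \tenk kP)/J \cong kG/J(kG)$; combined with Lemma \ref{ij-primitive-Lemma} and standard idempotent lifting, this shows that every projective indecomposable $kG$-$kP$-bimodule is isomorphic to $kGf \tenk kP$ for some primitive idempotent $f \in kG$. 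Hence $kGi \cong kGf \tenk kP$ for such an $f$, and tensoring over $kP$ with $k$ gives $kGi \tenkP k \cong kGf$, a projective indecomposable $kG$-module.
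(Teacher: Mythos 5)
Your proof is correct and takes essentially the same route as the paper's: in case (i) you identify $Bi$ (and dually $iB$) as an indecomposable, non-projective, right-projective bimodule via Lemma \ref{bimod-idempotent-Lemma} and then invoke Proposition \ref{no-proj-summands-Prop}, and in case (ii) you use Lemma \ref{ij-primitive-Lemma} together with the split-locality of $kP$ to identify $kGi$ with $kGf \tenk kP$. The only difference is that you spell out a couple of steps the paper leaves implicit (that $Y$ outside the block gives $iY = 0$, and that simple modules in a positive-defect block are non-projective), which is fine.
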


\begin{proof}
Suppose that $Q\neq 1$. Then  by Lemma \ref{bimod-idempotent-Lemma} the $kG$-$kP$-bimodule $kGi$ is 
indecomposable non-projective, and its restriction to $kP$ on the right
is projective. Similarly,  the $kP$-$kG$-bimodule $ikG$ is indecomposable 
non-projective, and its restriction as a right $kG$-module is  projective.   Thus 
both statements in  (i) follow from  Proposition \ref{no-proj-summands-Prop}.
If $Q=1$, then $kGi$ is a projective indecomposable  $kG$-$kP$-bimodule,
hence isomorphic to $kGj\tenk kP$ for some primitive idempotent $j$
(we use here Lemma \ref{ij-primitive-Lemma} and the fact that $kP$ is split local).
Tensoring with $-\tenkP k$ implies (ii). 
\end{proof}

We spell out some special cases of this Lemma that are frequently useful.

\begin{Lemma} \label{proj-summands-Lemma}
Let $G$ be a finite group, $B$ a block of $kG$ with a non-trivial  defect group $P$, 
and let $i\in B^P$ a source idempotent. Set $A=iBi$. 

\begin{itemize}
\item[{\rm (i)}] 
For any simple left or right $A$-module $X$, the restriction $\Res_P(X)$ of $X$
to $kP$ has no nonzero projective direct summand.

\item[{\rm (ii)}]
The $A$-module $A\tenkP k$ has no nonzero projective direct summand.

\item[{\rm (iii)}]
The $B$-module $Bi\tenkP k$ has no nonzero projective direct summand.
\end{itemize} 
\end{Lemma}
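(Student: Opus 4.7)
The plan is to deduce all three parts from Lemma~\ref{no-proj-summands-Lemma} applied to the local pointed group of $P$ on $kG$ determined by the source idempotent $i$. Let $\gamma$ be the point of $P$ on $kG$ containing $i$. Since $i$ is a source idempotent, $\Br_P(i)\neq 0$, so $P_\gamma$ is local. Because $\sum_{Q<P}(kG)^P_Q\subseteq \ker(\Br_P)$, the point $\gamma$ is not contained in $\Tr^P_Q((kG)^Q)$ for any proper $Q<P$; hence $P_\gamma$ is its own defect pointed group. With the hypothesis $P\neq 1$, I am in the setting of Lemma~\ref{no-proj-summands-Lemma}(i) with $Q_\delta=P_\gamma$.

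For (iii): Since $i$ lies in the block $B$, we have $Bi=kGi$, so the first clause of Lemma~\ref{no-proj-summands-Lemma}(i) immediately says $Bi\tenkP k$ has no nonzero projective $kG$-summand. As $B$ is a direct factor of $kG$, the projective $B$-modules are exactly the projective $kG$-modules lying in $B$, so the same conclusion holds over $B$.

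For (i): By Proposition~\ref{BiMorita} the functor $Y\mapsto iY$ induces a bijection between simple $B$-modules and simple left $A$-modules. Because $i\in B^P$ the map $u\mapsto ui$ embeds $kP$ as a unital subalgebra of $A$, and the $kP$-module $iY$ obtained from this embedding agrees with the restriction of $Y$ to $kP$ cut by $i$. The second clause of Lemma~\ref{no-proj-summands-Lemma}(i) therefore yields that $\Res_P(X)=iY$ has no nonzero projective $kP$-summand for any simple left $A$-module $X=iY$. The right-module statement is the symmetric assertion, obtained by applying the same argument to the $kP$-$kG$-bimodule $ikG$ and the Morita equivalence given by $iB$.

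For (ii): The Morita equivalence $Y\mapsto iY$ sends the $B$-module $Bi\tenkP k$ to
\[
 i\bigl(Bi\tenkP k\bigr)\;=\;iBi\tenkP k\;=\;A\tenkP k
\]
as left $A$-modules. Morita equivalences preserve projective summands, so part (iii) forces $A\tenkP k$ to have no nonzero projective $A$-summand. The main thing to verify carefully is the identification of $P_\gamma$ as its own defect pointed group and the compatibility of the $kP$-action on $iY$ with the embedding $kP\hookrightarrow A$; once these are in hand, each part is a direct application of Lemma~\ref{no-proj-summands-Lemma} and the source algebra Morita equivalence.
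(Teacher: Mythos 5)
Your proof is correct and follows essentially the same route as the paper's own proof: the paper notes that (iii) can be obtained directly from Lemma~\ref{no-proj-summands-Lemma}(i) with $Q_\delta = P_\gamma$, that (ii) then follows via the Morita equivalence between $B$ and $A=iBi$, and that (i) is a special case of the same Lemma using that simple $A$-modules are of the form $iY$ for simple $B$-modules $Y$. Your write-up is somewhat more explicit than the paper's (in particular in verifying that the local pointed group $P_\gamma$ is its own defect pointed group so that the hypotheses of Lemma~\ref{no-proj-summands-Lemma} are met with $Q=P\neq 1$, and in spelling out the right-module version of (i) via the symmetry of the situation), but it is the same argument; the paper also mentions a second route for (ii) and (iii) via Proposition~\ref{no-proj-summands-Prop}, which you do not use.
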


\begin{proof}
Multiplication by $i$ induces a Morita equivalence between $B$ and its source
algebra $A$, and hence every simple $A$-module $X$ is isomorphic to $iY$
for some simple $B$-module $Y$. Thus statement (i) is a special case of Lemma 
\ref{no-proj-summands-Lemma} (and is,  for instance, proved in 
\cite[Proposition 6.4.11]{LiBookII}).
The statements (ii) and (iii) follow from Proposition \ref{no-proj-summands-Prop}.
Alternatively, statement (iii) follows from Lemma \ref{no-proj-summands-Lemma}
(i), and statement (ii) follows from (iii) and the Morita equivalence
between $B$ and $A$ as before.
\end{proof}

\medskip
A standard argument using Frobenius reciprocity shows that every simple
$kG$-module is isomorphic to a quotient and to a submodule of the Sylow
permutation $kG$-module, and hence, for any block $B$,
any simple $B$-module is isomorphic to a quotient and to a submodule of
$B\tenkS k$. The following observation extends this to source permutation
modules. 

\begin{Proposition} \label{Bi-top-socle}
Let $G$ be a finite group, $B$ a block of $kG$, $P$ a defect group of $B$,
and $i$ an idempotent in $B^P$ such that $\Br_P(i)\neq 0$.
Then every simple $B$-module is isomorphic to a quotient and to a
submodule of $Bi\tenkP k$.
\end{Proposition}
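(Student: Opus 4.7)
The plan is to reduce via the Morita equivalence of Proposition~\ref{BiMorita} to the analogous statement for the source algebra $A:=iBi$: every simple $A$-module is both a quotient and a submodule of $A\tenkP k$. Indeed, the functor $M\mapsto iM$ from $\mod(B)$ to $\mod(A)$ is an equivalence that sends simple $B$-modules $S$ to simple $A$-modules $iS$ and sends $Bi\tenkP k$ to $iBi\tenkP k=A\tenkP k$, preserving quotient and submodule relations.

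The quotient direction follows from tensor-hom adjunction along the inclusion $kP\hookrightarrow A$: for any simple $A$-module $L$,
\[
\Hom_A(A\tenkP k,\,L)\ \cong\ \Hom_{kP}(k,\,\Res_{kP}(L))\ =\ (\Res_{kP}(L))^P,
\]
which is nonzero since $\Res_{kP}(L)$ is a nonzero finite-dimensional $kP$-module with $P$ a $p$-group in characteristic $p$.

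For the submodule direction the key input is that $kP\subseteq A$ is a Frobenius extension; equivalently, induction and coinduction along it agree, giving a natural isomorphism $A\tenkP V\cong \Hom_{kP}(A,V)$ of $A$-modules for every $kP$-module $V$. This is standard for source algebras of blocks and can be verified directly: using the symmetrizing form $\lambda$ of the symmetric algebra $B$ (which also symmetrizes the corner $A$), the map $\tau:A\to kP$, $a\mapsto\sum_{u\in P}\lambda(au^{-1})\,u$, is $kP$-$kP$-bilinear, and the induced pairing $A\otimes_{kP} A\to kP$, $(a,b)\mapsto\tau(ab)$, is non-degenerate by non-degeneracy of $\lambda$ on $A$ (here one uses that $Au^{-1}=A$ for $u\in P$, since $u^{-1}i$ is invertible in $A$). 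Granted the Frobenius extension, the right adjunction for restriction yields
\[
\Hom_A(L,\,A\tenkP k)\ \cong\ \Hom_A(L,\,\Hom_{kP}(A,k))\ \cong\ \Hom_{kP}(\Res_{kP}(L),\,k),
\]
which is nonzero as the $k$-dual of the space of $P$-coinvariants of $\Res_{kP}(L)$. The main obstacle in this plan is precisely the Frobenius extension step; once it is in hand, both halves follow by parallel adjunction arguments, with the nontriviality of $P$-invariants (respectively $P$-coinvariants) of a nonzero $kP$-module doing the remaining work.
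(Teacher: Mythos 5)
Correct, and essentially the same approach as the paper's. The Frobenius-extension property you verify by hand for $kP\subseteq A$ is precisely the duality $Bi\cong (iB)^{\vee}$ (a consequence of $B$ being symmetric) that the paper invokes to get biadjointness of $Bi\tenkP -$ and $iB\ten_B -$; your initial Morita reduction to $A=iBi$ is an optional extra step, since the paper works directly with $Bi$.
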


\begin{proof}
We first note that $Bi$, regarded as a $B$-$kP$-bimodule is dual to the
$kP$-$B$-module $iB$ because $B$ is symmetric. Thus
the functors $Bi\tenkP -$ and $iB \ten_B -$ between $\mod(kP)$ and $\mod(B)$
are biadjoint (see e.g. \cite[Theorem 2.12.7]{LiBookI}). 
The hypothesis $\Br_P(i)\neq 0$ implies that the algebras $B$ and $iBi$ are 
Morita equivalent. In particular, given a simple $B$-module $X$, we have $iX\neq 0$.  
Thus there are nonzero $kP$-homomorphisms
$iX\to k$ and $k\to iX$. Via the above adjunction, these correspond to
nonzero $B$-homomorphisms $X\to Bi\tenkP k$ and $Bi\tenkP k\to$ $X$,
whence the result.
\end{proof}

\begin{proof} [Proof of Theorem \ref{Bi-no-projective-summand-thm}]  
Part (i)  is  part (iii) of  Lemma~\ref{proj-summands-Lemma}  and Part (ii) 
is  given by  Proposition~\ref{Bi-top-socle}.
\end{proof}

For future reference, we make  some elementary observations
about blocks with a normal defect group; see \cite{Kuels85} for more details on
the structure of  such blocks.

\begin{Proposition} \label{normal-Prop}
Let $G$ be a finite group and $B$ a block of $kG$ with a normal defect group 
$P$.  Let $i\in B^P$ be an idempotent satisfying $\Br_P(i)\neq 0$.  
\begin{itemize}
\item[{\rm (i)}]
The $B$-modules $B\tenkP k$ and $Bi\tenkP k$ are semisimple.
\item[{\rm (ii)}]
 Every simple $B$-module is isomorphic to a direct summand of $B\tenkP k$
 and of $Bi\tenkP k$.
 \end{itemize}
 \end{Proposition}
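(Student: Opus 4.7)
The plan is to reduce both statements to the observation that the quotient algebra $B/BI_P$ is semisimple, where $I_P = J(kP)$ denotes the augmentation ideal of $kP$. First I would check that $BI_P$ is a two-sided ideal of $B$. Since $P \trianglelefteq G$, conjugation by any $x \in G$ preserves $P$ and hence $I_P$; extending linearly gives $x I_P = I_P x$ for all $x \in kG$, and in particular $B I_P = I_P B$. Using this commutation, an easy induction yields $(BI_P)^n = B \cdot I_P^n$, which vanishes for $n$ large since $I_P$ is nilpotent. Hence $BI_P$ is a nilpotent two-sided ideal, so $BI_P \subseteq J(B)$, and the quotient $B/BI_P$ is a semisimple algebra.

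Next, to deduce (i), I would identify $B \otimes_{kP} k$ with $B/BI_P$ as left $B$-modules by tensoring the short exact sequence $0 \to I_P \to kP \to k \to 0$ with the right $kP$-module $B$. The relation $BI_P \cdot B = B \cdot I_P B = B \cdot BI_P = BI_P$ shows that $BI_P$ acts as zero from the left, so the left $B$-action on $B \otimes_{kP} k$ factors through the semisimple algebra $B/BI_P$, making $B \otimes_{kP} k$ semisimple. Since $i \in B^P$, the bimodule $Bi$ is a direct summand of $B$ as a $B$-$kP$-bimodule, so $Bi \otimes_{kP} k$ is a direct summand of $B \otimes_{kP} k$ as a left $B$-module, and therefore also semisimple.

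For (ii), the key input is that every simple $B$-module $S$ has $P$ in its kernel. This follows by the standard argument that the $P$-socle $S^P = \mathrm{soc}_{kP}(\Res^G_P S)$ is $G$-invariant (using normality of $P$) and nonzero, hence equals $S$ by simplicity. Consequently every simple $B$-module is a simple module over the semisimple algebra $B/BI_P \cong B\otimes_{kP} k$, so it appears as a direct summand of the regular module of this algebra, and hence of $B \otimes_{kP} k$. For $Bi \otimes_{kP} k$, Proposition~\ref{Bi-top-socle} supplies a surjection from $Bi \otimes_{kP} k$ onto any simple $B$-module, and the semisimplicity established in (i) forces such a surjection to split.

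The only real subtlety in this plan is the observation that normality of $P$ is precisely what makes $BI_P$ a two-sided ideal and makes the induction $(BI_P)^n = BI_P^n$ go through; without normality neither the two-sidedness nor the nilpotence would be immediate. Everything else is a formal consequence of semisimplicity of $B/BI_P$ together with the previously established Proposition~\ref{Bi-top-socle}.
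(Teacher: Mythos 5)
The central claim of your argument — that $B/BI_P$ is a semisimple algebra — is not actually proved by what you wrote, and this is where the defect-group hypothesis must enter. You correctly establish that $BI_P$ is a nilpotent two-sided ideal of $B$, which gives the inclusion $BI_P\subseteq J(B)$. But that inclusion alone says nothing about the quotient: a quotient by a nilpotent ideal is semisimple if and only if the ideal \emph{equals} $J(B)$, and the reverse inclusion $J(B)\subseteq BI_P$ is exactly what needs to be shown. Nothing in your argument touches it, and indeed nothing up to that point has used the hypothesis that $P$ is a \emph{defect group} of $B$ rather than an arbitrary normal $p$-subgroup — which is a warning sign, since for a normal $p$-subgroup that is too small the quotient is certainly not semisimple.

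The missing ingredient is supplied in the paper's proof by the relative trace characterisation of defect groups: writing $b=1_B=\Tr^G_P(d)$ with $d\in B^P$ and passing to $\bar B\subseteq k[G/P]$ gives $\bar b=\Tr^{G/P}_1(\bar d)$, so $\bar B$ is a sum of defect-zero blocks of $k[G/P]$, hence semisimple. Since $B/BI_P$ is precisely this image $\bar B$ of $B$ in $k[G/P]$, this closes the gap. Once semisimplicity of $B/BI_P$ is in hand, the rest of your proof is sound, and your treatment of (ii) — identifying simple $B$-modules as $B/BI_P$-modules (via the $P$-socle argument) and then combining $\mathrm{Prop.}$~\ref{Bi-top-socle} with semisimplicity for the $Bi\tenkP k$ case — is a perfectly good route, slightly more self-contained than the paper's one-line reduction to Proposition~\ref{Bi-top-socle}.
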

 
\begin{proof} 
Set $b=$ $1_B$. Then $b=\Tr^G_P(d)$ for some $d\in B^P$. Denote by $\bar b$, 
$\bar d$ the images of $b$ and $d$ in $kG/P$, respectively. Then $\bar b=$ 
$\Tr^{G/P}_1(\bar d)$, so $\bar B=$ $kG/P\bar b$ is a direct product of defect zero 
blocks of $kG/P$.  Thus every finitely generated $\bar B$-module is semisimple, or
equivalently, every finitely generated $B$-module on which $P$ acts 
trivially is semisimple. Using that $P$ is normal in $G$, clearly $P$ acts trivially 
on $B\tenkP k$, hence on  $Bi\tenkP k$. This shows (i). 
Statement (ii) is a special case of Proposition \ref{Bi-top-socle}.
\end{proof}

We   end  this section by recording a  relationship between    the 
source permutation module of a block  and  that of its Brauer correspondent.

\begin{Proposition} \label{source-perm-module-Brauer-corr}
Let $G$ be a finite group, $B$ a block of $kG$ with a non-trivial defect group $P$,
and let $C$ be the block of $kN_G(P)$ which is the Brauer correspondent of $B$.
Let $j\in$ $C^P$ and $i\in B^P$ be source idempotents. 
As a $kG$-module,  $Bi\tenkP k$ is isomorphic to a direct summand of 
$\Ind^G_{N_G(P)}(Cj\tenkP k)$.
\end{Proposition}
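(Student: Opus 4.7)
The plan is to work at the level of $kG$-$kP$-bimodules (equivalently $k(G\times P)$-modules) and to reduce the claim to a direct summand relation between bimodules. Set $N=N_G(P)$. The canonical identifications $\Ind^G_N(Cj\tenkP k)\cong(kG\otimes_{kN} Cj)\tenkP k$ as $kG$-modules and $kG\otimes_{kN} Cj\cong \Ind_{N\times P}^{G\times P}(Cj)$ as $kG$-$kP$-bimodules reduce the claim to showing that $Bi$ is isomorphic to a direct summand of $\Ind_{N\times P}^{G\times P}(Cj)$ as $k(G\times P)$-modules; tensoring on the right with $k$ over $kP$ then yields the desired statement.

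The main input will be the classical identification of the Brauer correspondence as Green correspondence at the diagonal, due to Alperin-Brou\'e: as a $k(N\times N)$-module, $C$ is the Green correspondent of $B$ with respect to $(G\times G, N\times N, \Delta P)$, and in particular $C$ is isomorphic to a direct summand of $\Res_{N\times N}^{G\times G}(B)$. Writing $1_B=\sum_\beta i_\beta$ as a primitive decomposition in $B^P$ gives $\Res_{N\times P}^{G\times G}(B)=\bigoplus_\beta \Res_{N\times P}^{G\times P}(Bi_\beta)$, while a primitive decomposition of $1_C$ in $C^P$ exhibits $Cj$ as a direct summand of $\Res_{N\times P}^{N\times N}(C)$. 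Composing these restrictions, $Cj$ appears as a summand of $\bigoplus_\beta \Res_{N\times P}^{G\times P}(Bi_\beta)$, and its indecomposability forces $Cj$ to be a direct summand of $\Res_{N\times P}^{G\times P}(Bi_{\beta_0})$ for some index $\beta_0$.

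To finish, I will apply Green correspondence in the triple $(G\times P,N\times P,\Delta P)$, which is valid because a direct calculation shows $N_{G\times P}(\Delta P)=\{(pc,p):p\in P,\ c\in C_G(P)\}\subseteq N\times P$. By Lemma \ref{bimod-idempotent-Lemma}(iv), $Cj$ has vertex $\Delta P$ in $N\times P$, and since vertices of indecomposable summands of a restriction are contained (up to conjugation) in vertices of the original module, $Bi_{\beta_0}$ must also have vertex $\Delta P$; hence $i_{\beta_0}$ is a source idempotent of $B$, and $Bi_{\beta_0}\cong Bi$ as $k(G\times P)$-modules, because any two source idempotents of $B$ are conjugate by an element of $(B^P)^\times$, which induces an isomorphism of bimodules. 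Since $Bi$ and $Cj$ are then indecomposable with common vertex $\Delta P$, and $Cj$ is a summand of $\Res_{N\times P}^{G\times P}(Bi)$, uniqueness of the Green correspondent forces $Cj$ to be the Green correspondent of $Bi$, whence $Bi$ is isomorphic to a direct summand of $\Ind_{N\times P}^{G\times P}(Cj)$. The proof then concludes by applying $-\tenkP k$. The principal obstacle is the invocation of the Alperin-Brou\'e theorem identifying $C$ as the Green correspondent of $B$; once this is in hand, the remainder is a combination of standard manipulations with Green correspondence, restriction/induction of bimodules, and conjugacy of source idempotents.
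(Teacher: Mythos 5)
Your proof is essentially correct but follows a genuinely different and considerably heavier route than the paper's. The paper's proof is three lines: since $\Br_P(j)\neq 0$ and $(P,e)$ is a maximal Brauer pair for both $B$ and $C$, one can choose the source idempotent $i\in B^P$ to satisfy $ij=i=ji$ (this is the content of the Brauer correspondence at the level of points), whence $Bi$ is literally a bimodule summand of $kGj$, and $kGj\tenkP k\cong \Ind^G_{N_G(P)}(Cj\tenkP k)$. You instead invoke the Alperin-Brou\'e theorem identifying the Brauer correspondent $C$ with the Green correspondent of $B$ in $N_G(P)\times N_G(P)$ at $\Delta P$, then run a Green-correspondence argument for the triple $(G\times P,\ N_G(P)\times P,\ \Delta P)$ to exhibit $Cj$ as the Green correspondent of $Bi$. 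This is a valid and illuminating alternative: it buys the stronger conclusion that $Cj$ is precisely the $(N_G(P)\times P)$-Green correspondent of $Bi$ (rather than merely that $Bi$ divides $\Ind_{N\times P}^{G\times P}(Cj)$), but at the cost of citing the nontrivial Alperin-Brou\'e bimodule theorem, whereas the paper's argument uses nothing beyond the definition of the Brauer correspondent and a choice of compatible idempotents.

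One detail in your final step is not quite right as stated: you assert that $Bi_{\beta_0}\cong Bi$ as $k(G\times P)$-modules because any two source idempotents of $B$ in $B^P$ are $(B^P)^\times$-conjugate. Source idempotents of $B$ in $B^P$ belong to the local points $\gamma$ such that $P_\gamma$ is a defect pointed group of $G_{\{1_B\}}$, and these local points form a single $N_G(P)$-orbit, not necessarily a single point; if $i_{\beta_0}$ and $i$ lie in different local points of this orbit, then $Bi_{\beta_0}$ and $Bi$ differ by a twist of the right $kP$-action by an automorphism of $P$, so they need not be isomorphic as $k(G\times P)$-modules. The paper's Lemma \ref{isomorphic-summands-Lemma} is phrased precisely to sidestep this: it only claims $Bi_{\beta_0}\tenkP k\cong Bi\tenkP k$. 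Your argument should therefore conclude with $Bi_{\beta_0}$ (not $Bi$) as a summand of $\Ind_{N\times P}^{G\times P}(Cj)$, then apply $-\tenkP k$ and invoke Lemma \ref{isomorphic-summands-Lemma} to replace $Bi_{\beta_0}\tenkP k$ by $Bi\tenkP k$. With that adjustment the proof is sound.
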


\begin{proof}
By   Lemma \ref{isomorphic-summands-Lemma} the isomorphism class
of $Bi\tenkP k$ is independent of the choice of $i$. 
Since $\Br_P(j)\neq 0$, we may choose $i$ such that $ij=i=ji$.
Thus, as a $kG$-module, $Bi\tenkP k$ is isomorphic to a direct summand of
$kGj\tenkP k\cong$ $\Ind^G_{N_G(P)}(Cj \tenkP k)$.
\end{proof}

\section{Weight modules, and proofs of Theorems \ref{Bi-weight-thm} and
\ref{Bi-selfinjective-thm1} } \label{weight-section}

Let $k$ be a field of prime characteristic $p$. Let $G$ be a finite group and
$B$ a block of $kG$.
The  main  Theorem of this section  shows that $B$-modules corresponding to weights are 
summands of the source permutation module.  A {\em $G$-weight} is a pair $(Q,X)$
consisting of a $p$-subgroup $Q$ of $G$ and a projective simple
$kN_G(Q)/Q$-module.  A {\em weight subgroup of $G$} is a $p$-subgroup $Q$ of
$G$ such that there exists a projective simple $kN_G(Q)/Q$-module $X$, or
equivalently, such that $(Q,X)$ is a $G$-weight. In that case, inflating $X$ to 
$kN_G(Q)$ yields a simple $kN_G(Q)$-module with vertex $Q$ and trivial source. 
The Green correspondent of $X$ is an indecomposable $kG$-module with vertex 
$Q$  and trivial source. The group $G$ acts by conjugation on isomorphism
classes of weights. 
 
Alperin's weight conjecture \cite{Alp87}  in the group 
theoretic version predicts that if the field $k$ is a splitting field for normalisers
of $p$-subgroups, then the number of conjugacy classes of $G$-weights 
should be equal  to the number of isomorphism classes of simple $kG$-modules. 

Given  a block $B=$ $kGb$ of $kG$, a {\em $B$-weight} is a $G$-weight $(Q,X)$ such 
that the Green correspondent $U$ of $X$ as a $kN_G(Q)$-module belongs to $B$.
In that case $X$ belongs to a block $C=kN_G(Q)c$  of $kN_G(Q)$ such that 
$\Br_Q(b)c=$ $c$. Since every block idempotent of $kN_G(Q)$ is contained in
$kC_G(Q)$ there is a block $kC_G(Q)e$ such that $c=\Tr^{N_G(Q)}_{N_G(Q,e)}(e)$.
Then $(Q,e)$ is a {\em centric}  $B$-Brauer pair (that is, $Z(Q)$ is a defect group of 
$kC_G(Q)e$), and $(Q,e)$ is {\em radical} (that is, $O_p(N_G(Q,e))=Q$). 
(If $k$ is a splitting field for the Brauer pairs of $B$,
then, more precisely, $Q$ is a radical centric subgroup  in a fusion system of $B$ 
on a  defect group containing $Q$; see e.g. \cite[Theorem 6.10.8]{LiBookII}.)
The blocks $kN_G(Q)c$ and $kN_G(Q,e)e$ are Morita equivalent via induction
and truncated restriction, and hence projective simple $k(N_G(Q)/Q)  \bar c$-modules
correspond to projective simple $k(N_G(Q,e)/Q)\bar e$-modules, where $\bar c$ and 
$\bar e$ are the obvious images of $c$ and $e$. Thus conjugacy classes of $B$-weights
correspond bijectively to conjugacy classes of triples $(Q,e,Y)$, where
$(Q,e)$ is a centric radical  $B$-Brauer pair and $Y$ a projective simple 
$kN_G(Q,e)/Q\bar e$-module. If $B$ has a normal defect group $P$, then
the only weight subgroup is $P$ itself, and the $B$-weights are of the
form $(P,X)$, where $X$ is a simple $B$-module; that is, we have a
canonical bijection between isomorphism classes  of simple $B$-modules and
of conjugacy classes of $B$-weights in that case.

The block theoretic version of Alperin's weight conjecture in \cite{Alp87}
predicts that for any block $B$ of $kG$,  if $k$ is large enough for $B$, its Brauer 
pairs and  their normalisers,  then
the number of isomorphism classes $\ell(B)$ of simple $B$-modules should be 
equal to the number of conjugacy classes $w(B)$ of $B$-weights. The block theoretic
version of Alperin's weight conjecture implies the group theoretic version,
by taking sums over all blocks. The following Theorem restates 
Theorem \ref{Bi-weight-thm} in a slightly more precise way.

\begin{Theorem} \label{Bi-weights}
Let $G$ be a finite group, $B$ a block of $kG$, $P$ a defect group of $B$, and
$i\in B^P$ a source idempotent. Let $Q$ be a subgroup of $P$ and
$X$ a projective simple $kN_G(Q)/Q$-module such that the Green correspondent
$U$ of $X$ regarded as a $kN_G(Q)$-module belongs to $B$. Then $U$ is
isomorphic to a direct summand of the source permutation module $Bi\tenkP k$.
In particular, $Bi\tenkP k$ has at least $w(B)$ isomorphism classes of
indecomposable direct summands.
\end{Theorem}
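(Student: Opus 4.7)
The plan is to invoke the correspondence between $p$-permutation $kG$-modules with vertex $Q$ and projective $k[N_G(Q)/Q]$-modules via the Brauer construction, reducing the problem to a computation of Brauer quotients at $Q$. Set $V := Bi \tenkP k$. Both $V$ (as a direct summand of $\Ind^G_P(k)$) and $U$ (being trivial source) are $p$-permutation $kG$-modules. After $G$-conjugation, we may assume that $U$ has a vertex $Q$ with $Q \leq P$, in which case $U(Q) \cong X$ as $k[N_G(Q)/Q]$-modules. Since $V(Q)$ is a projective $k[N_G(Q)/Q]$-module and $X$ is simple and projective, $U$ is a direct summand of $V$ if and only if $X$ is a direct summand of $V(Q)$ if and only if there exists a nonzero $k[N_G(Q)/Q]$-homomorphism $V(Q) \to X$.

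To exhibit such a map, I would lift one from a $B$-homomorphism $V \to U$ via the Brauer quotient. By the biadjointness of $Bi\tenkP -$ and $i(-)$, $B$-homomorphisms $V \to U$ correspond bijectively to elements of $(iU)^P$. By Corollary \ref{iU-permutation-summands}, $iU$ admits a $kP$-module direct summand isomorphic to $kP/Q = \Ind^P_Q(k)$. Let $\alpha \in (iU)^P$ be the image of the orbit sum $\sum_{xQ \in P/Q} [xQ] \in (kP/Q)^P$ under this split inclusion, and let $\phi_\alpha : V \to U$ be the corresponding $B$-homomorphism, sending $bi \ten 1$ to $b\alpha$. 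Since $i \in B^P$, the element $i \ten 1 \in V$ is $P$-fixed, hence $Q$-fixed, and determines a class $[i \ten 1] \in V(Q)$ satisfying $\phi_\alpha(Q)([i \ten 1]) = [\alpha] \in U(Q) = X$. Thus it suffices to verify that $[\alpha]$ is nonzero in $X$.

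This verification is the main obstacle, and is carried out by chaining together split injections of Brauer quotients. Since $i \in B^Q$ commutes with the $Q$-action, the decomposition $U = iU \oplus (1-i)U$ is a splitting of $kQ$-modules, inducing a split injection $(iU)(Q) \hookrightarrow U(Q) = X$. Similarly, the $kP$-split inclusion $kP/Q \hookrightarrow iU$ restricts to a $kQ$-split inclusion, yielding a split injection $(kP/Q)(Q) \hookrightarrow (iU)(Q)$. A direct analysis of the permutation $kP$-module $kP/Q$ identifies $(kP/Q)(Q)$ with $k[N_P(Q)/Q]$, under which $[\alpha]$ corresponds to the sum of all coset basis elements, a nonzero vector. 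Chaining the three embeddings shows $[\alpha] \neq 0$ in $X$, completing the proof that $U$ is a direct summand of $V$. The final clause of the theorem then follows at once from Krull--Schmidt, since distinct $G$-conjugacy classes of $B$-weights yield non-isomorphic Green correspondents, each of which, by what has just been shown, appears as an indecomposable direct summand of $Bi\tenkP k$.
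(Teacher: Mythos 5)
Your proof is correct, and it takes a genuinely different route from the paper's. The paper runs the biadjunction in the direction $\Hom_B(U, Bi\tenkP k) \cong \Hom_{kP}(iU, k)$: it picks a map $\beta : iU \to k$ factoring through the split surjection $iU \twoheadrightarrow kP/Q$, produces $\alpha : U \to Bi\tenkP k$, and proves $\alpha(Q)$ is nonzero by a contradiction argument that relies on identifying $\ker(\Br_Q)$ inside $\Hom_{kG}$ with a sum of traces from proper subgroups (Lemmas \ref{Brauer-p-permutation}, \ref{Hom-p-permutation}) together with Lemma \ref{Bi-summands-Lemma}. You instead use the adjunction $\Hom_B(Bi\tenkP k, U) \cong (iU)^P$ to build $\phi_\alpha$ going the other way, and you show $\phi_\alpha(Q)\neq 0$ directly by pushing the $P$-orbit sum of $kP/Q$ through the chain of Brauer quotients $(kP/Q)(Q) \hookrightarrow (iU)(Q) \hookrightarrow U(Q)$, each step a split injection because the underlying $kQ$-module decompositions split; the identification $(kP/Q)(Q) \cong k[N_P(Q)/Q]$ makes the non-vanishing transparent. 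You then use that $X = U(Q)$ is simple projective to split $\phi_\alpha(Q)$ and invoke Broué's vertex--Brauer-quotient bijection to conclude $U \mid Bi\tenkP k$, where the paper uses the split-injectivity criterion for $p$-permutation maps. Both arguments rest on the same two pillars (Corollary \ref{iU-permutation-summands} and the $p$-permutation classification via Brauer quotients), but your element-tracking argument replaces the paper's $\ker(\Br_Q)$ contradiction and is somewhat more hands-on. One point to make explicit: Corollary \ref{iU-permutation-summands} produces $kP/Q' \mid iU$ for \emph{some} vertex $Q' \le P$ of $U$, which need not coincide with the $Q$ given in the statement; as in the paper, you should replace the weight $(Q,X)$ by its $G$-conjugate $(Q',X')$ (same Green correspondent up to isomorphism) before running the argument.
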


The proof will make use of the following lemma.

\begin{Lemma} \label{Bi-summands-Lemma}
Let $G$ be a finite group, $B$ a block of $kG$, $P$ a defect group of $B$, and
$i\in B^P$ a source idempotent. Let $W$ be an indecomposable trivial source
$B$-module. Then $W$ is isomorphic to a direct summand of $Bi \tenkP kP/R$ for 
some vertex $R$ of $W$ such that $R\leq P$. 
\end{Lemma}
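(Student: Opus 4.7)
The plan is to reduce, via the Morita equivalence between $B$ and its source algebra $A = iBi$ (Proposition~\ref{BiMorita}), the claim to showing that $iW$ is isomorphic to a direct summand of $A \tenkR k$ as an $A$-module, for a suitable vertex $R \leq P$ of $W$. Under this equivalence, implemented by the bimodules $Bi$ and $iB$, the $B$-module $Bi \tenkP (kP/R)$ equals $Bi \tenkR k$ (by associativity of tensor products and the identification $kP/R \cong kP \tenkR k$) and corresponds under $iB \ten_B -$ to $A \tenkR k$, while $W$ corresponds to $iW$.

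To choose $R$, I would invoke Corollary~\ref{iU-permutation-summands} to obtain a vertex $R \leq P$ of $W$ such that $iW$ admits $kP/R$ as a direct $kP$-summand. Mackey's decomposition of $\Res_R^P(\Ind_R^P(k))$ then shows that the trivial $kR$-summands of $\Res_R^P(iW)$ have multiplicity $m \geq |N_P(R)/R| \geq 1$, with complementary $kR$-submodule $Y''$ having no trivial summand (this uses that for $u \notin N_P(R)$ the module $k[R/(R \cap {}^u R)]$ has no trivial $kR$-summand, as $R$ is a $p$-group and the index is a nontrivial $p$-power). Since $R$ is a vertex of $iW$ as an $A$-module via the Morita correspondence, Higman's criterion applied to the extension $kR \hookrightarrow A$ yields that $iW$ is a summand of $A \tenkR \Res_R^A(iW) \cong (A \tenkR k)^m \oplus (A \tenkR Y'')$; by the Krull--Schmidt theorem, it suffices to show that $iW$ is isomorphic to a direct summand of $(A \tenkR k)^m$, or equivalently of $A \tenkR k$.

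The hard part is ruling out the alternative possibility that $iW$ is a summand only of $A \tenkR Y''$. To address this, I would exploit the biadjoint structure of the pair $(A \tenkR -, \Res_R^A)$, available since $A$ and $kR$ are symmetric algebras and $A$ is free as a right $kR$-module (a consequence of Lemma~\ref{source-bimod-Lemma}(iii) together with the freeness of $kP$ over $kR$). This yields natural isomorphisms $\Hom_A(A \tenkR k, iW) \cong (iW)^R$ and $\Hom_A(iW, A \tenkR k) \cong \Hom_{kR}(iW, k)$. From the $kR$-split inclusion $\sigma \colon k \hookrightarrow iW$ and projection $\pi \colon iW \twoheadrightarrow k$ witnessing the trivial summand inside $kP/R \subseteq iW$, I would construct $A$-maps $\phi \colon A \tenkR k \to iW$ and $\psi \colon iW \to A \tenkR k$ whose composition equals a relative trace $\Tr_R^A(\sigma \pi) \in \End_A(iW)$. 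Combining the Higman identity $\Id_{iW} = \Tr_R^A(\alpha)$ (which holds since $iW$ is relatively $R$-projective) with its decomposition along the $kR$-structure $iW|_R \cong k^m \oplus Y''$, and invoking the locality of $\End_A(iW)$, one shows that at least one trace contribution from the trivial summands is invertible, producing the required splitting $A \tenkR k \twoheadrightarrow iW$ after appropriate rescaling.
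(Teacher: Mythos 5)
Your approach is genuinely different from the paper's. The paper works directly with pointed groups: for any vertex $R$ of $W$, one has $W\mid Bj\tenkR k$ for a primitive idempotent $j\in B^R$ which must lie in a \emph{local} point of $R$ (else $W$ would be relatively projective over a proper subgroup of $R$), and since every local pointed group on $B$ is $G$-conjugate to one contained in $P_\gamma$, Lemma~\ref{isomorphic-summands-Lemma} lets one take $R\leq P$ and $j$ with $ij=j=ji$, whence $Bj$ is a $B$-$kR$-bimodule summand of $Bi$ and $W\mid Bi\tenkR k\cong Bi\tenkP kP/R$. Your route instead passes to the source algebra $A=iBi$, applies Higman's criterion to get $iW\mid A\tenkR\Res_R(iW)\cong(A\tenkR k)^m\oplus(A\tenkR Y'')$, and aims to place $iW$ in the first part via the biadjunction and locality. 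The Morita reduction, the choice of $R$ via Corollary~\ref{iU-permutation-summands}, and the adjunction formulas are all correct.

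The gap is the assertion that ``one shows that at least one trace contribution from the trivial summands is invertible.'' Decomposing $\alpha$ along $\Res_R(iW)\cong k^m\oplus Y''$ writes $\Id_{iW}=u+v$, where $u$ factors through copies of $A\tenkR k$ and $v=\Tr^A_R(f\alpha f)$, with $f$ the projector onto $Y''$, factors through $A\tenkR Y''$. Locality of $\End_A(iW)$ shows only that \emph{one} of $u,v$ is invertible, not which one; nothing in your write-up excludes the possibility that $v$ is invertible, i.e.\ that $iW\mid A\tenkR Y''$. The missing input that closes this is that $iW$ is a $p$-permutation $kP$-module: since $i$ and $1_B-i$ are orthogonal $P$-fixed idempotents, $iW$ is a $kP$-direct summand of $\Res_P(W)$, which is a $p$-permutation module. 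Hence $\Res_R(iW)$ is a permutation $kR$-module and $Y''\cong\bigoplus kR/R'$ with every $R'<R$. If $iW\mid A\tenkR Y''$, then the indecomposable $iW$ would be a summand of some $A\ten_{kR'}k$ and so relatively $kR'$-projective; transporting along the Morita equivalence (and using $Bi\ten_{kR'}k\mid\Ind^G_{R'}(k)$) this would make $W$ relatively $R'$-projective for a proper $R'<R$, contradicting that $R$ is a vertex of $W$. Once this is in place the trace/adjunction machinery is actually dispensable: Higman plus Krull--Schmidt gives $iW\mid A\ten_{kR_j}k$ for some indecomposable permutation summand $kR/R_j$ of $\Res_R(iW)$, and the vertex argument forces $R_j=R$. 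The repaired argument thus rests on the same vertex-minimality fact that the paper packages as ``$j$ belongs to a local point of $R$.''
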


\begin{proof}
Denote by $\gamma$ the local point of $P$ on $B$ which contains $i$.   Let $R$ 
be a vertex of $W$. Then $W$ is isomorphic to a direct summand of $B\tenkR k$. 
Since $W$ is indecomposable, $W$ is isomorphic to a direct summand of 
$Bj\tenkR k$  for some primitive idempotent $j$ in $B^R$. Since $R$ is a vertex 
of $W$, it  follows that $j$ belongs to a local point $\epsilon$ of $R$ on $B$. 
Some $G$-conjugate of $R_\epsilon$ is contained in $P_\gamma$. Thus, by 
Lemma \ref{isomorphic-summands-Lemma}, we may assume that 
$R_\epsilon\leq$ $P_\gamma$, and we may choose $j\in$ $\epsilon$ such that 
$ij=j=ji$ and such that $W$ is isomorphic to a direct summand of $Bj\tenkR k$. 
But then $W$ is isomorphic to a direct summand of $Bi\tenkR k\cong$
$Bi \tenkP kP\tenkR k\cong$ $Bi\tenkP kP/R$. The Lemma follows.
\end{proof}

\begin{proof}[Proof of Theorem \ref{Bi-weights}]
Recall  that  the functors $Bi\tenkP -$ and $iB \ten_B -$ between 
$\mod(kP)$ and $\mod(B)$ are biadjoint, and note further that
$iB\ten_B U\cong$ $iU$.   Since $U$ is the Green correspondent of the
inflation to $N_G(Q)$ of a projective simple $kN_G(Q)/Q$-module, it follows
in particular that $U$ has vertex $Q$ and a trivial source. 
We need to show that there is a split injective $B$-homomorphism
$ \alpha :U \to Bi\tenkP k.$
We start by constructing a
$kP$-homomorphism  $\beta : iU\to k$ as follows.
By Corollary  \ref{iU-permutation-summands}, up to replacing $Q$ by a conjugate,
we may assume that $iU$ has a direct summand isomorphic to $kP/Q$ as
a $kP$-module. Define a $kP$-homomorphism $\beta : iU \to k$ as the
composition of a split surjective $kP$-homomorphism $iU\to kP/Q$
and a surjective $kP$-homomorphism $kP/Q\to k$. Applying the Brauer
construction to $\beta$ yields thus a nonzero $kN_P(Q)/Q$-homomorphism
$\beta(Q) : (iU)(Q)\to k$.

Denote by $\alpha : U \to Bi \tenkP k$ the $B$-homomorphism corresponding
to $\beta$ through an adjunction
$$\Hom_B(U, Bi\tenkP k)\cong \Hom_{kP}(iU, k).$$ 
 Note that all modules involved in this isomorphism are $p$-permutation modules.
Thus, by \cite[Proposition A.1]{Li-endo} (or \cite[Proposition 5.10.7]{LiBookI})
it suffices to show that the map
$$\alpha(Q) : U(Q) \to (Bi\tenkP k)(Q)$$
is injective. Now thanks to a result of Brou\'e \cite[(3.4)]{Broue85} 
(see e.g. \cite[Theorem 5.10.5]{LiBookI}), we have $U(Q)\cong$ $X$.
Since $X$ is simple, it suffices to show that $\alpha(Q)$ is nonzero. 

Arguing by contradiction, assume that $\alpha(Q)=0$. Since
$U$ and $Bi\tenkP k$ are $p$-permutation modules, it follows from
Lemma \ref{Brauer-p-permutation}
that $\Hom_k(U(Q),(Bi\tenkP k)(Q))$ can be identified with the Brauer
construction  $(\Hom_k(U, Bi\tenkP k))(Q)$ with respect to the diagonal
action of $Q$, and hence $\alpha(Q)$ can be identified with the image
of $\alpha$ in $(\Hom_k(U, Bi\tenkP k))(Q)$. This image of $\alpha$ is zero 
if and only if $\alpha$ is contained in 
$$\Hom_{kG}(U,Bi\tenkP k) \cap \ker(\Br_Q)$$
where here $\ker(\Br_Q)$ is the kernel of the Brauer construction applied to 
$\Hom_k(U, Bi\tenkP k)$.  By Lemma \ref{Hom-p-permutation} this space is
equal to
$$\sum_{R<Q}\ \Hom_k(U,Bi\tenkP k)_R^G,$$
where $R$ runs over the proper subgroups of $Q$. Thus, since we assume
$\alpha(Q)=0$, we may write $\alpha$ in this space as a sum
$$\alpha \ =\ \sum_{R<Q}\ \Tr^G_R(\alpha_R),$$
with $\alpha_R\in$ $\Hom_{kR}(U, Bi\tenkP k)$ for each proper subgroup $R$ of $Q$.
Each such $\Tr^G_R(\alpha_R)$ is a $kG$-homomorphism from $U$ to $Bi\tenkP k$,
and since this is a trace from $R$, it follows from \cite[Corollary 2.6.8]{LiBookI}
(which is a consequence of Higman's criterion) that $\Tr^G_R(\alpha_R)$
factors through the canonical surjection $B\tenkR Bi\tenkP k\to Bi\tenkP k$
sending $a\ten ci \ten 1$ to  $aci\ten 1$, where $a$, $c\in B$. 
The module $B\tenkR Bi\tenkP k$ is
a $p$-permutation $B$-module all of whose indecomposable summands have
vertices contained in $R$.  By  Lemma \ref{Bi-summands-Lemma} the 
indecomposable summands of this module are summands of modules of the 
form $Bi \tenkP kP/R'$ for some subgroup $R'$ conjugate to a subgroup of $R$. 
It follows that $\alpha$ factors through a direct sum of $B$-modules of the form
$$\oplus_{R}\ Bi \tenkP kP/R$$
with $R$ running over a family of subgroups conjugate to proper subgroups
of $Q$. That  is, we have a commutative diagram of the form
$$\xymatrix{ & & Bi \tenkP k \\
U \ar[rru]^{\alpha} \ar[rrd]_\tau & & \\
& & \oplus_{R}\ Bi \tenkP kP/R \ar[uu]_{\sigma} 
}$$
where as before $R$ runs over a family of subgroups $R$ which are conjugate to 
proper subgroups of $Q$. Making use of the functoriality of the adjunction 
isomorphism
$$\Hom_B(U, Bi\tenkP -)\cong \Hom_{kP}(iU, -)$$
we get a commutative diagram of $kP$-modules 
$$\xymatrix{ & & k \\
iU \ar[rru]^{\beta} \ar[rrd] & & \\
& & \oplus_{R}\ kP/R \ar[uu] 
}$$
Since the groups $R$ in the sum have all order strictly smaller than $Q$
it follows that $\beta(Q)=0$, a contradiction. This shows that $\alpha(Q)$
is nonzero, hence injective, and hence $\alpha$ is split injective by the above.
The last statement follows immediately. 
\end{proof}

\begin{Corollary} \label{ellC-Cor}
Let $G$ be a finite group  with a Sylow subgroup $S$, $B$ a block of $kG$ with 
a non-trivial defect group $P$, and let $C$ be the block of $kN_G(P)$ which is 
the Brauer correspondent of $B$. Let $ i\in B^P$ be a  source idempotent. 
Suppose that $k$ is a splitting field for $iB^Pi$.
The number of isomorphism classes of indecomposable direct summands with
vertex $P$ of $Bi\tenkP k$  and  of  $B\tenkS k$  is equal to $\ell(C)$. 
\end{Corollary}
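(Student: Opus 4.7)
The plan is to prove that both $Bi\tenkP k$ and $B\tenkS k$ have exactly $\ell(C)$ isomorphism classes of indecomposable direct summands with vertex $P$, by first matching such summands (for $Bi\tenkP k$) with isomorphism classes of $B$-weights of the form $(P,X)$, then identifying those weights with simple $C$-modules, and finally transferring the count to $B\tenkS k$.

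First I would count the $B$-weights with weight subgroup $P$. Since $C$ is a block with normal defect group $P$, the argument of Proposition~\ref{normal-Prop} shows that the image $\bar C$ of $C$ in $k(N_G(P)/P)$ is a direct product of defect zero blocks, hence semisimple. In particular each simple $\bar C$-module is projective as a $k(N_G(P)/P)$-module and, inflated to $N_G(P)$, is a simple $C$-module, and conversely every simple $C$-module arises this way. By the definition of the Brauer correspondence, an inflated projective simple $k(N_G(P)/P)$-module $X$ has Green correspondent in $B$ exactly when $X$ belongs to $\bar C$. Hence there are precisely $\ell(C)$ isomorphism classes of $B$-weights of the form $(P,X)$.

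Next I would match these weights with indecomposable summands of $Bi\tenkP k$ having vertex $P$. By Theorem~\ref{Bi-weights}, each $B$-weight $(P,X)$ yields a direct summand of $Bi\tenkP k$ isomorphic to the Green correspondent of $X$ (inflated to $N_G(P)$), which is indecomposable with vertex $P$. Conversely, any indecomposable summand $U$ of $Bi\tenkP k$ with vertex $P$ is a trivial source $B$-module, because $Bi\tenkP k$ is a summand of $\Ind^G_P(k)$ and hence a $p$-permutation module. Its Green correspondent in $kN_G(P)$ is then an indecomposable trivial source $C$-module with vertex $P$, therefore a summand of $\Ind^{N_G(P)}_P(k)$; since $P$ is normal in $N_G(P)$, this induced module is the regular $k(N_G(P)/P)$-module inflated to $N_G(P)$, so the correspondent is the inflation of a projective indecomposable of $\bar C$, uniquely determined by its simple head in $\bar C$. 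This inverts the assignment and gives a bijection, forcing the count to be $\ell(C)$.

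Finally, Theorem~\ref{Bi-summand} asserts that $Bi\tenkP k$ is isomorphic to a direct summand of $B\tenkS k$, while Proposition~\ref{BiBS-summand-Prop}(iv) asserts that every indecomposable summand of $B\tenkS k$ with vertex $P$ is isomorphic to a summand of $Bi\tenkP k$. Consequently the two modules share the same isomorphism classes of indecomposable summands with vertex $P$, and their common number is $\ell(C)$. The main technical point is the converse half of the bijection in the preceding paragraph; it relies only on normality of $P$ in $N_G(P)$ together with Krull--Schmidt for $p$-permutation modules, whereas the splitting hypothesis on $iB^Pi$ enters the argument solely to invoke Theorem~\ref{Bi-summand} and Proposition~\ref{BiBS-summand-Prop}(iv) in the transfer step.
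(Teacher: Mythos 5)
Your proof is correct and follows essentially the same approach as the paper's: Theorem \ref{Bi-weights} together with Proposition \ref{normal-Prop} produce the $\ell(C)$ Green correspondents of simple $C$-modules as summands with vertex $P$, the Green correspondence gives the matching upper bound, and Proposition \ref{BiBS-summand-Prop} transfers the count to $B\tenkS k$. The only presentational difference is that the paper obtains the upper bound by realising $Bi\tenkP k$ as a direct summand of $\Ind^G_{N_G(P)}(Cj\tenkP k)$ for a source idempotent $j\in C^P$ chosen compatibly with $i$, whereas you apply the Green correspondence to each vertex-$P$ summand individually and observe its correspondent must be a simple $C$-module; the two routes are interchangeable.
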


\begin{proof}
By Proposition \ref{BiBS-summand-Prop}  it suffices to prove the  equality for   
$Bi\tenkP k$.  Let $ j \in C^P$ be a source idempotent.
Since $\Br_P(j)\neq 0$, we may,  after suitable replacement,   assume that  $ij=i=ji$.
Thus, as a $kG$-module, $Bi\tenkP k$ is isomorphic to a direct summand of
$kGj\tenkP k\cong$ $\Ind^G_{N_G(P)}(Cj \tenkP k)$. 
Now  by Proposition \ref{normal-Prop}, $Cj\tenkP k$
is  semisimple  and has $\ell(C)$ isomorphism classes of simple
summands, and these all have vertex $P$ as $P$ is normal in $N_G(P)$.
The Green correspondence implies that $\Ind^G_{N_G(P)}(Cj\tenkP k)$
has exactly $\ell(C)$ isomorphism classes of indecomposable direct summands
with vertex $P$. Thus $Bi\tenkP k$ has at most $\ell(C)$ isomorphism 
classes of indecomposable direct summands with vertex $P$. 
Since the $\ell(C)$ isomorphism classes of simple $C$-modules give rise
to $B$-weights, it follows from Theorem \ref{Bi-weights} that $Bi\tenkP k$ 
has exactly $\ell(C)$ isomorphism classes of indecomposable direct
summands with $P$ as a vertex.
\end{proof}

Combining the above results yields  the following  version of an observation in 
Naehrig \cite[Remark 2(b)]{Nae}, with the Sylow permutation module replaced 
by the source permutation module. 

\begin{Corollary} \label{Bi-selfinjective-Cor}
Let $G$ be a finite group,  $B$ a block of $kG$ 
with a non-trivial defect group $P$, and let $i\in B^P$  be a source idempotent.
Suppose that $k$ is a splitting field for all $B$-Brauer pairs and their normalisers.
Assume that $\End_B(Bi\tenkP k)$ is self-injective.  Then $\ell(B)\geq w(B)$. 
Moreover, the equality  $\ell(B)=w(B)$, as predicted by Alperin's weight conjecture,
holds if and only if every indecomposable  direct summand of $Bi\tenkP k$ is the 
Green correspondent of a $B$-weight.
\end{Corollary}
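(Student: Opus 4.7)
The plan is to introduce $M := Bi\tenkP k$ and $E := \End_B(M)$, and to use the identification of isomorphism classes of indecomposable direct summands of $M$ with isomorphism classes of simple $E$-modules (equivalently, with indecomposable projective $E$-modules) to get $\ell(E) = $ the number of iso classes of indecomposable direct summands of $M$. The strategy is to bound $\ell(E)$ from below by $w(B)$ using Theorem \ref{Bi-weights}, and to identify $\ell(E)$ with $\ell(B)$ under self-injectivity of $E$; combining these then gives $\ell(B) \ge w(B)$.

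Theorem \ref{Bi-weights} provides an injection from conjugacy classes of $B$-weights into iso classes of indecomposable summands of $M$ (via Green correspondents), yielding $w(B) \le \ell(E)$ with no use of self-injectivity. This step is also the source of the equality characterisation at the end of the proof: $w(B) = \ell(E)$ precisely when every indecomposable summand of $M$ is the Green correspondent of a $B$-weight.

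The main step --- and the main obstacle --- is to show $\ell(E) = \ell(B)$ using self-injectivity of $E$; this is the Naehrig-type observation cited in the statement. The plan is to show that, under self-injectivity of $E$, each indecomposable direct summand $M_\lambda$ of $M$ has a simple head and that distinct summands have non-isomorphic heads. This would yield a bijection between iso classes of indecomposable summands of $M$ and simple $B$-modules appearing in the head of $M$; Proposition \ref{Bi-top-socle} then supplies that this set is all of $\Irr(B)$, giving $\ell(E) = \ell(B)$. To prove the head-control statement, I will use: (i) self-injectivity of $E$, which forces the indecomposable projective $E$-module $P_\lambda = \Hom_B(M, M_\lambda)$ to have a simple socle; (ii) Theorem \ref{Bi-no-projective-summand-thm}(i), which asserts $M$ has no projective $B$-summand; (iii) Proposition \ref{no-proj-summands-Prop} applied to the $B$-$E$-bimodule $M$ (the splitting-field hypothesis ensures $E$ has a separable semisimple quotient), which controls the head/socle structure of $M \otimes_E S$ for simple $E$-modules $S$. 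The delicate point is translating between the socle structure of $P_\lambda$ on the $E$-side and the head structure of $M_\lambda$ on the $B$-side via the adjunction $\Hom_B(M,-) \dashv -\otimes_E M$; this is where all three ingredients are needed in concert.

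Granting the main step, $w(B) \le \ell(E) = \ell(B)$ proves the first assertion. For the equality clause, $\ell(B) = w(B)$ is equivalent to $\ell(E) = w(B)$, which by the first step occurs if and only if the injection from $B$-weights into indecomposable summands of $M$ is surjective --- that is, every indecomposable summand of $M$ is the Green correspondent of a $B$-weight.
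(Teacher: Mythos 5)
Your overall architecture matches the paper's: $\ell(E)$ counts indecomposable summands of $M=Bi\tenkP k$, Theorem~\ref{Bi-weights} gives $w(B)\le\ell(E)$, and the crux is $\ell(E)=\ell(B)$ under self-injectivity of $E$. You correctly locate the crux, and your characterisation of the equality case is exactly the paper's. The difference is how the crux is handled: the paper dispenses with it by a single citation to Green's theorem (\cite[Theorem 1]{Green78}, as presented in \cite[Theorem 2.13]{Geck01}): when $\End_B(M)$ is self-injective, every indecomposable summand of $M$ has a simple top (and socle), and the assignment ``summand $\mapsto$ top'' is injective on isomorphism classes; combined with Proposition~\ref{Bi-top-socle} this gives a bijection with $\Irr(B)$ and hence $\ell(E)=\ell(B)$. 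You instead attempt to \emph{reprove} this statement of Green via the adjunction and Proposition~\ref{no-proj-summands-Prop}, and explicitly leave the translation step (``the delicate point'') unresolved.

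That leftover step is a genuine gap, and I would also flag a problem with the route you chose even if you tried to push it through. Proposition~\ref{no-proj-summands-Prop} requires the bimodule to be finitely generated projective as a right module over the second algebra; here the bimodule is $M$ over $B\tenk E^\op$, so you would need $M$ to be projective as a right $E$-module. This is not automatic for a module over its own endomorphism ring (it is equivalent to asking that the evaluation maps split suitably), and nothing in the set-up supplies it. The conclusion of that proposition is also about \emph{absence of projective summands} of $M\ten_E S$ for non-projective simple $E$-modules $S$, which is not the ``simple top'' control you actually need. So even the ingredient list is not well-suited to the job. The fix is simply to drop this attempt and invoke Green's theorem as a black box, exactly as the paper does: its hypotheses (a finitely generated module with self-injective endomorphism ring) are precisely what you have, and its conclusion is precisely what your ``main step'' requires.
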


\begin{proof}
By  Theorem \ref{Bi-no-projective-summand-thm} every simple
$B$-module is isomorphic to a quotient and a submodule of $Bi\tenkP k$,
and hence $Bi\tenkP k$ has $\ell(B)$ isomorphism classes of indecomposable
direct summands by a theorem of Green (cf. \cite[Theorem 2.13]{Geck01}).
But $Bi\tenkP k$ has  at least $w(B)$ pairwise non-isomorphic summands
arising as Green correspondents of $B$-weights, by Theorem \ref{Bi-weights}, 
whence the result.
\end{proof}

\begin{proof}[{Proof of Theorem \ref{Bi-selfinjective-thm1}}]
With the notation and hypotheses of Theorem \ref{Bi-selfinjective-thm1}, 
suppose that the algebra $\End_B(B\tenkS k)$ is self-injective. 
As remarked before Proposition \ref{Bi-top-socle}, every simple $B$-module
is isomorphic to a quotient and a submodule of $B\tenkS k$. Thus
Green's theorem  \cite[Theorem 1]{Green78} (see also \cite[Theorem 2.13]{Geck01}) 
implies that every indecomposable
direct summand of $B\tenkS k$ has a simple top and socle, and that
the isomorphism class of any such summand is determined by the isomorphism
class of its top, as well as by that of its socle. That is, we have
bijections with between the isomorphism classes of indecomposable direct
summands of $B\tenkS k$ and simple $B$-modules obtained by sending such
a summand to its top (resp. its socle).
Since   by Theorem \ref{Bi-summand},  $Bi\tenkP k$ is isomorphic to a direct  
summand of $B\tenkS k$ 
 and since every simple $B$-module appears both 
in the top and the socle of $Bi\tenkP k$, up to isomorphism 
(cf. Theorem \ref{Bi-no-projective-summand-thm}),  it follows that
every indecomposable direct summand of $B\tenkS k$ is isomorphic to
a direct summand of $Bi\tenkP k$. Thus these two modules have Morita
equivalent endomorphism algebras as stated. 
\end{proof}

\begin{proof}[{Proof of Corollary \ref{noprojective-cor}}]
Note that $Bi\tenkP k$ has no nonzero projective summand (cf. 
Lemma \ref{proj-summands-Lemma}).
Thus if $B\tenkS k$ has a nonzero projective summand, then 
the endomorphism algebras of $Bi\tenkP k$ and of $B\tenkS k$ are
not Morita equivalent and Theorem \ref{Bi-selfinjective-thm1} implies that 
$\End_B(B\tenkS k)$ 
is not self-injective. The second  statement follows from the first since  
any  non-simple projective indecomposable  $kG$-module    
belongs to a block with  non-trivial defect groups.
\end{proof}

We end  this section with an observation  on splitting.

\begin{Proposition} \label{splitting-Lemma}
Let  $B$  be  a block of $kG$. Suppose that $k$ is a 
splitting  field for $B$ and all $B$-Brauer pairs.
Then, for any $p$-subgroup $P$ of $G$ the field $k$ is a splitting 
field for $B^P$.   Equivalently, for any $p$-subgroup $P$ of $G$ and 
any primitive idempotent $i$ in $B^P$ the $kG$-$kP$-bimodule $Bi$ is 
absolutely indecomposable.
\end{Proposition}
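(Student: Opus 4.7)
The equivalence of the two formulations is immediate from Lemma~\ref{bimod-idempotent-Lemma}(ii) applied with $H=P$ and $e=i$: the algebra $B^P$ is split precisely when $iB^Pi/J(iB^Pi)\cong k$ for every primitive idempotent $i\in B^P$, and by the lemma this is equivalent to $Bi$ being absolutely indecomposable as a $kG$-$kP$-bimodule. It therefore suffices to prove the first formulation.

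My plan is to induct on $|P|$. The base case $|P|=1$ is just the hypothesis that $B$ is split. For the inductive step, fix a primitive idempotent $i$ of $B^P$ in a point $\gamma$ of $P$ on $kG$, and pick a defect pointed group $Q_\delta\leq P_\gamma$ with $j\in\delta$; then $Q\leq P$ and $Q_\delta$ is local on $B$.

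In the non-local case $Q<P$, the inductive hypothesis gives that $B^Q$ is split, hence the corner $jB^Qj=(jkGj)^Q$ is split local. Proposition~\ref{kGi-summands-Prop}(ii) now applies directly and yields that $Bi$ is absolutely indecomposable as a $kG$-$kP$-bimodule, equivalently that $iB^Pi$ is split local, as required.

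The main obstacle is the local case $Q=P$, where the inductive hypothesis is vacuous and one must exploit the Brauer-pair hypothesis directly. Here $\Br_P(i)\neq 0$, and by Lemma~\ref{bimod-idempotent-Lemma}(iv) the bimodule $Bi$ is an indecomposable trivial source $k(G\times P)$-module with vertex $\Delta P$. I would invoke Brou\'e's theorem on Brauer constructions of endomorphism algebras of $p$-permutation modules (cf.\ \cite[Theorem~5.10.5]{LiBookI}) to obtain an isomorphism
\[
\End_{k(G\times P)}(Bi)/J \;\cong\; \End_{k\bar{N}}(Bi(\Delta P))/J,
\]
where $\bar{N}=N_{G\times P}(\Delta P)/\Delta P\cong C_G(P)$. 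A routine computation identifies $Bi(\Delta P)$ with $kC_G(P)\Br_P(i)$ as a $kC_G(P)$-module, so the right-hand side becomes $\Br_P(i) kC_G(P)\Br_P(i)/J$. Letting $e$ be the block of $kC_G(P)$ with $\Br_P(i)e=\Br_P(i)$, the pair $(P,e)$ is a $B$-Brauer pair, so $kC_G(P)e$ is split by hypothesis; since $\Br_P(i)$ is primitive in $kC_G(P)e$, the corner $\Br_P(i) kC_G(P)\Br_P(i)$ is split local. Recalling that $iB^Pi\cong\End_{k(G\times P)}(Bi)^{\op}$ via Lemma~\ref{bimod-idempotent-Lemma}, this yields $iB^Pi/J(iB^Pi)\cong k$ and closes the induction.
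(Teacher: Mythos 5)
Your proof is correct, and its essential ingredients (defect pointed groups, Proposition~\ref{kGi-summands-Prop}(ii), and the splitting hypothesis at a centric Brauer pair via the Brauer homomorphism) are the same as in the paper's proof. The main difference is organizational. You set up an induction on $|P|$ with two cases at the inductive step, but the induction is actually unnecessary: since a defect pointed group $Q_\delta$ of $P_\gamma$ is \emph{always} local, the argument you deploy in your ``local'' case $Q=P$ can be run directly at $Q_\delta$ regardless of whether $Q=P$ or $Q<P$. The paper does exactly this and hence avoids both the case distinction and the induction: for $j\in\delta$, the surjective algebra homomorphism $\Br_Q \colon jB^Qj \to \Br_Q(j)\,kC_G(Q)\,\Br_Q(j)$ has a proper (hence radical-contained) kernel because $jB^Qj$ is local and $\Br_Q(j)\ne 0$, the target is split local by the Brauer-pair splitting hypothesis, so $jB^Qj$ is split local, and then Proposition~\ref{kGi-summands-Prop}(ii) lifts this to $iB^Pi$. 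Your treatment of the local case is also a little more heavily dressed than the paper's: you pass through $\End_{k(G\times P)}(Bi)/J\cong\End_{k\bar N}(Bi(\Delta P))/J$ by appeal to Brou\'e's theory (and the precise statement you cite, \cite[Theorem~5.10.5]{LiBookI}, is really about identifying $U(Q)$ with the weight module rather than literally giving this isomorphism of endomorphism rings modulo radicals), whereas the paper never needs the bimodule/endomorphism-algebra dictionary or the module-theoretic Brauer construction here; it works entirely on the ring side with $\Br_Q$. Both routes boil down to the same observation that $\Br_Q$ restricted to $jB^Qj$ is a surjection with kernel in the radical. A small point in your favour: for the initial equivalence of the two formulations, your citation of Lemma~\ref{bimod-idempotent-Lemma}(ii) is the more apt reference.
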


\begin{proof}
The two statements are indeed equivalent by Proposition 
\ref{kGi-summands-Prop} (ii). We will show the second statement. Let $P$ be a 
$p$-subgroup and $i$ a  primitive idempotent in $B^P$. Denote by $\gamma$ 
the point of $P$ on  $B$  containing $i$, and let  $Q_\delta$ be a defect 
pointed  group of  $P_\gamma$. Consider the corresponding $B$-Brauer pair 
$(Q,e)$, where  $e$ is the block idempotent of $kC_G(Q)$ such that $
\Br_Q(\delta)e\neq 0$. Let $j\in$ $\delta$. Since $j$ is primitive in $B^Q$, it 
follows that $\Br_Q(j)$ is primitive in $kC_G(Q)e$. By the assumptions, $k$ is 
a splitting field for $kC_G(Q)e$. Thus $\Br_R(j)kC_G(Q)\Br_R(j)=$ 
$\Br_R((jkGj)^Q)$ is split local, and hence $(jkGj)^Q$ is split local. It follows 
again from Proposition \ref{kGi-summands-Prop} (ii) that $(ikGi)^P$ is split 
local, or equivalently, the $kG$-$kP$-bimodule $kGi$ is absolutely 
indecomposable.
\end{proof}

\section{Source permutation modules for principal blocks, and proof of
Theorem \ref{princ-block-thm} }
\label{princ-block-Section}

For principal blocks we have more precise results which provide an alternative
proof of Theorem \ref{Bi-weights}  in this case.  Let $k$ be a field
of prime characteristic $p$. A  $p$-subgroup $Q$ of   a finite group $G$ is called 
{\em centric in} $G$ if $Z(Q)$ is a Sylow $p$-subgroup of $C_G(Q)$.  
A $p$-subgroup $Q$ of $G$ is called {\em essential} if $Q$ is centric in $G$ 
and  $N_G(Q)/Q$ has a strongly $p$-embedded  subgroup. 

\begin{Theorem} \label{Bi-princ-block}
Let $G$ be a finite group, $B$ the principal block of $kG$, $S$ a Sylow $p$-subgroup
of $G$, and $i\in B^S$ a source idempotent. Let $U$ be an indecomposable
direct summand of $B\tenkS k$. If $U$ has a vertex  which is centric in $G$,
then $U$ is isomorphic to a direct summand of the source permutation module
$Bi\tenkS k$.
\end{Theorem}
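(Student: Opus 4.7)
The plan is to show that the Brauer constructions at $Q$ of $B\tenkS k$ and $Bi\tenkS k$ coincide as $k(N_G(Q)/Q)$-modules. By the standard $p$-permutation correspondence (via Brou\'e~\cite{Broue85}) between indecomposable direct summands of a $p$-permutation $kG$-module with vertex $Q$ and indecomposable projective summands of its Brauer quotient at $Q$, this equality immediately implies that every indecomposable summand of $B\tenkS k$ with vertex $Q$ is isomorphic to a summand of $Bi\tenkS k$.

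Centricity of $Q$ enters through the structure of $B(Q)$. Since $Z(Q)\leq Z(C_G(Q))$ and $C_G(Q)/Z(Q)$ is a $p'$-group, Schur--Zassenhaus gives $C_G(Q)=Z(Q)\times H$ for a $p'$-subgroup $H$, and the principal block of $kC_G(Q)$ is isomorphic to $kZ(Q)$, which is a local $k$-algebra. Take a primitive decomposition $1_B=i+\sum_j j$ in $B^S$. The images $\Br_Q(j)\in B(Q)$ are pairwise orthogonal idempotents (since $ij=ji=0$ in $B^S$) summing to $1_{B(Q)}$, so exactly one of them equals $1_{B(Q)}$ and the rest vanish. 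Because $\Br_S(i)\neq 0$ and $\Br_S$ factors through $\Br_Q$, we have $\Br_Q(i)\neq 0$, hence $\Br_Q(i)=1_{B(Q)}$ and $\Br_Q(j)=0$ for every $j\neq i$.

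To deduce $(Bj\tenkS k)(Q)=0$ from $\Br_Q(j)=0$, observe that since $j\in B^S$ commutes with $kS$, right multiplication by $j$ is a well-defined idempotent $kG$-endomorphism $\rho_j$ of $B\tenkS k$ whose image is precisely $Bj\tenkS k$. Brauer functoriality identifies $\rho_j(Q)$ with right multiplication by $\Br_Q(j)$ on $(B\tenkS k)(Q)$; when $\Br_Q(j)=0$, this forces $\rho_j(Q)=0$ and therefore $(Bj\tenkS k)(Q)=\Im(\rho_j(Q))=0$. Summing over $j\neq i$ yields $(B(1-i)\tenkS k)(Q)=0$, whence $(Bi\tenkS k)(Q)=(B\tenkS k)(Q)$, and the theorem follows by the $p$-permutation correspondence recalled in the first paragraph.

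The main obstacle I anticipate is a careful verification of the Brauer functoriality on the endomorphism $\rho_j$ — specifically, pinning down the identification of $\rho_j(Q)$ with right multiplication by $\Br_Q(j)$, and checking that the right $B^S$-action on $B\tenkS k$ is compatible with Brauer constructions in the $p$-permutation category. This is a standard type of calculation, but requires some care to set up correctly; once done, the rest of the argument is essentially formal.
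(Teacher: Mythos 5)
Your observation that centricity of $Q$ together with Brauer's Third Main Theorem gives $B(Q)\cong kZ(Q)$, a local algebra, and hence that $\Br_Q(i)=1_{B(Q)}$ and $\Br_Q(j)=0$ for every other primitive idempotent $j$ in a decomposition of $1_B$ in $B^S$, is correct and is essentially the content of the paper's Lemma \ref{non-local-points-2} (there phrased as: defect groups of non-local points of $S$ on the principal block are not centric). However, the step you yourself flag — that $\Br_Q(j)=0$ forces $\rho_j(Q)=0$ on $(B\tenkS k)(Q)$ via ``Brauer functoriality'' — is a genuine gap, not merely a routine verification. The Brauer construction of the $kG$-module $B\tenkS k$ at $Q$ is taken with respect to the \emph{left} action of $Q$, whereas $\Br_Q$ on $(kG)^S$ is the Brauer homomorphism for the \emph{conjugation} action of $Q$, i.e.\ the Brauer construction of $B$ viewed as a $k(G\times S)$-module at the \emph{diagonal} $\Delta Q$. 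These two constructions do not coincide, and there is no natural right $B(Q)$-action on $(B\tenkS k)(Q)$ through which $\rho_j$ descends to ``right multiplication by $\Br_Q(j)$.'' What $\Br_Q(j)=0$ actually controls (via Lemma \ref{bimod-idempotent-Lemma}(iv) applied to the bimodule $Bj$) is that $\Delta Q$ is not contained in a vertex of $Bj$, which unwinds to saying that $Q$ is not \emph{$S$-conjugate} to a subgroup of a defect group $Q'$ of the point $\nu$ containing $j$. To conclude $(Bj\tenkS k)(Q)=0$ via Brou\'e's correspondence one needs the stronger statement that no \emph{$G$-conjugate} of $Q$ is contained in a vertex of an indecomposable summand of $Bj\tenkS k$, and those vertices lie in $G$-conjugates of $Q'$.

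The paper closes exactly this gap, but by a different route: it shows (via Lemma \ref{kGi-summands-R}(i) and Lemma \ref{vertices-Lemma}) that every indecomposable summand of $Bi'\tenkS k$, for $i'$ in a non-local point $\nu$ of $S$, has vertex contained in a $G$-conjugate of a defect group $Q'$ of $S_\nu$; then Lemma \ref{non-local-points-2} gives that $Q'$ is not centric, and since centricity is inherited by overgroups (hence non-centricity passes to $G$-conjugates of subgroups), no such vertex can be centric. Your argument can be repaired either by replacing the Brauer-functoriality claim with this vertex argument, or by proving directly that $Q$ centric and $Q'$ not centric force $Q\not\leq_G Q'$ — but as stated, the key deduction does not go through, and I do not believe the asserted functoriality holds in general.
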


The proof of Theorem \ref{Bi-princ-block} 
is based on the following Lemmas, for which we briefly review
some terminology and notation.  As before, a  Brauer pair $(Q,e)$ on $kG$
is called {\em centric} if $Z(Q)$ is a defect group of the block $kC_G(Q)e$.
If $e$ is the principal block of $kC_G(Q)$, then $(Q,e)$ is a centric
Brauer pair if and only if   $Z(Q)$ is a Sylow $p$-subgroup of $C_G(Q)$, so if and
only if $Q$ is centric in $G$. For a general Brauer pair $(Q,e)$ on $kG$, assume
that $k$ is a splitting field for $kC_G(Q)e$ and that $(Q,e)$ is centric. 
Then $kC_G(Q)e$ is a nilpotent block, so  has a unique isomorphism class of simple 
modules, and hence there is a unique local point $\delta$ of $Q$ on $kG$ such that
$\Br_Q(\delta)e\neq 0$. Moreover, $kC_G(Q)/Z(Q)\bar e$ is isomorphic to a 
matrix algebra over $k$, where
$\bar e$ is the image of $e$ in $kC_G(Q)/Z(Q)$, and we may write
$$k C_G(Q)/Z(Q)\bar e\cong\End_k(V_Q)$$ 
for some vector space $V_Q$.
If $R \leq G $ is a $p$-group containing $Q$ and  normalising $(Q, e) $, then 
$R$ acts on $kC_G(Q)e$, hence induces an action on $\End_k(V_Q)$ 
which lifts canonically to an action of $R$ on $V_Q$  with $Q$  acting trivially. 
In this way,  $V_Q$  inherits  via restriction  the structure of   an endopermutation 
$kR/Q$-module.
The first Lemma is a special case of the parametrisation of modules
with a given vertex-source pair in terms of their multiplicity modules;
we sketch a proof since the particular case is simpler than the general situation
(for which we refer to \cite[Section 5.7]{LiBookI} for more details and references).

\begin{Lemma} \label{non-local-points-1}
Let $G$ be a finite group, $B$ a block of $kG$ and $(Q,e)$ a centric $B$-Brauer 
pair. Assume that $k$ is a splitting field for $kC_G(Q)e$. Denote by $Q_\delta$ 
the unique local point of $Q$ on $B$ such that $\Br_Q(\delta)e\neq 0$.
Let $P$ be a $p$-subgroup of $G$ containing $Q$ such that $N_P(Q)$
stabilises $(Q,e)$. There is a point $\nu$ of 
$P$ on $B$ such that $Q_\delta$ is a defect pointed group of $P_\nu$ 
if and only  if $V_Q$ has a nonzero projective summand as a $kN_P(Q)/Q$-module,
where $V_Q$ is a $kN_P(Q)/Q$-module satisfying $kC_G(Q)/Z(Q)\bar e\cong$ 
$\End_k(V_Q)$ as $N_P(Q)$-algebras, and where $\bar e$ is the image of $e$
in $kC_G(Q)/Z(Q)$.  Moreover, if such a point $\nu$ exists, then it is unique. 
\end{Lemma}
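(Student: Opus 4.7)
The plan is to apply the standard multiplicity-module framework for local pointed groups (cf.\ \cite[Sections 5.5, 5.6]{LiBookI} or \cite{Thev}) and then to simplify the target using centricity. Fix $j \in \delta$. A primitive idempotent $f \in B^P$ determines a point $\nu$ of $P$ with $Q_\delta \leq P_\nu$ precisely when, after possibly replacing $j$ by a $P$-conjugate in $\delta$, one has $jf = j = fj$; and then $Q_\delta$ is a defect pointed group of $P_\nu$ exactly when $f$ can be written as $\Tr^P_Q(a)$ for some $a \in (jBj)^Q$, by Higman's criterion (cf.\ \cite[Theorem 2.6.2]{LiBookI}). Splitting $\Tr^P_Q = \Tr^P_{N_P(Q)} \circ \Tr^{N_P(Q)}_Q$ and applying $\Br_Q$ transfers the problem into the $N_P(Q)/Q$-fixed points of $\Br_Q(B^Q)\cdot e$, which by the centric hypothesis equals $kC_G(Q) e$.

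Centricity is what makes this target explicit. Since $(Q,e)$ is centric and $k$ is a splitting field for $kC_G(Q)e$, this block is nilpotent with semisimple quotient (modulo the trivially acting $Z(Q)$) equal to $\End_k(V_Q)$, by the paragraph preceding the lemma. Taking $N_P(Q)/Q$-fixed points yields $\End_{kN_P(Q)/Q}(V_Q)$, whose primitive idempotents correspond bijectively, up to conjugacy, with isomorphism classes of indecomposable $kN_P(Q)/Q$-direct summands of $V_Q$. The condition that a primitive idempotent lift to an $f \in B^P$ satisfying the trace-from-$Q$ requirement corresponds, by another instance of Higman's criterion now inside $kN_P(Q)/Q$, to the associated summand of $V_Q$ being projective as a $kN_P(Q)/Q$-module: projectivity of a module $W$ over a finite group algebra over $k$ is equivalent to $\Id_W$ being a relative trace from the trivial subgroup. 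This establishes the stated equivalence for existence.

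For uniqueness, I would use that $N_P(Q)/Q$ is a $p$-group, so up to isomorphism it has a unique indecomposable projective module, namely the regular module. Hence any projective indecomposable summands of $V_Q$ are mutually isomorphic, and the bijection above produces a single conjugacy class of primitive idempotents $f \in B^P$, i.e.\ a unique $\nu$. The main technical obstacle is setting up the correspondence between primitive idempotents of $B^P$ satisfying the trace condition (modulo the trace ideals from proper subgroups of $Q$) and primitive idempotents of $\End_{kN_P(Q)/Q}(V_Q)$: lifting idempotents across $\Br_Q$ is routine, but simultaneously keeping track of $N_P(Q)$-equivariance and verifying that the resulting $N_P(Q)$-action on $V_Q$ is the honest module structure with $Q$ acting trivially (rather than a projective representation with a possibly nontrivial cocycle) both rest on the centric hypothesis, which forces the relevant twist to vanish.
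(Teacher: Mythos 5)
Your proof is correct and follows essentially the same route as the paper's: reduce via $\Br_Q$ and the formula $\Br_Q \circ \Tr^P_Q = \Tr^{N_P(Q)}_Q \circ \Br_Q$ to the relative trace ideal of $\End_{kN_P(Q)/Q}(V_Q)$, then apply Higman's criterion; your uniqueness observation that the $p$-group algebra $kN_P(Q)/Q$ has a unique projective indecomposable module makes explicit what the paper attributes to ``standard lifting theorems.'' One small inaccuracy: the vanishing of the cocycle obstruction when lifting the $N_P(Q)$-action from $\End_k(V_Q)$ to $V_Q$ is not due to centricity but to $N_P(Q)/Q$ being a $p$-group over a field of characteristic $p$; centricity is what makes $kC_G(Q)e/Z(Q)\bar e$ a matrix algebra so that $V_Q$ exists at all.
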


\begin{proof}
For any $d\in$ $(kG)^Q$  we have 
$$\Br_Q(\Tr^P_Q(d))= \Tr^{N_P(Q)}_Q(\Br_Q(d));$$  
see e.g. \cite[Proposition 5.4.5]{LiBookI} for this  well-known formula. 
Thus $\Br_Q$ maps $(kG)^P_Q$ onto  
$$kC_G(Q)^{N_P(Q)}_Q =  kC_G(Q)^{N}_1,$$ 
where $N=N_P(Q)/Q$. The canonical surjective map $kC_G(Q)\to$ $kC_G(Q)/Z(Q)$ 
sends  $(kC_G(Q))_1^{N}$ onto $(kC_G(Q)/Z(Q))^{N}_1$. Thus if $\nu$ is a point of 
$P$ with defect group $Q$, then  $\Br_Q(\nu)$ is a point in $(kC_G(Q)e)_1^{N}$, 
where we use that $N_P(Q)$, hence $N$,  stabilises $e$. Therefore,  the image of 
$\Br_Q(\nu)$ in 
$$(kC_G(Q)/Z(Q)\bar e)_1^{N}\cong \End_k(V_Q)_1^{N}$$ 
is a point which, by Higman's criterion,  corresponds to the projective 
indecomposable summands of $V_Q$ as a  $kN$-module.
Conversely, if $V_Q$ has a nonzero projective indecomposable summand as a 
$kN$-module, then by standard lifting theorems, reversing the above reasoning, 
there is a unique point $\nu$ of  $P$ on $B$ with $Q_\delta$ as defect pointed group.
\end{proof}

\begin{Lemma} \label{non-local-points-2}
Let $G$ be a finite group, $S$ a Sylow $p$-subgroup of $G$, and denote by
$B$ the principal block of $kG$. Let $\nu$ be a non-local point of $S$ on $B$.
Let $Q_\delta$ be a defect pointed group of $S_\nu$. Then $Q$ is not centric
in $G$.
\end{Lemma}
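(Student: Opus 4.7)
My plan is to argue by contradiction, assuming $Q$ is centric and deriving a contradiction to the existence of $\nu$ via Lemma \ref{non-local-points-1}.

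To begin, I would observe that since $\nu$ is non-local whereas $\delta$ (being the point of a defect pointed group) is local, we cannot have $Q=S$; otherwise $Q_\delta = S_\nu$ would force $\delta = \nu$. So $Q < S$ properly, and standard $p$-group theory gives $N_S(Q) > Q$.

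The next step is to identify the Brauer-pair data. Writing $(Q,e)$ for the $B$-Brauer pair associated to $(Q,\delta)$, Brauer's third main theorem (applicable because $B$ is the principal block) makes $e$ the principal block idempotent of $kC_G(Q)$. Centricity of $Q$ means $Z(Q)$ is a Sylow $p$-subgroup of $C_G(Q)$; since $C_G(Q)$ centralises $Q$ and hence $Z(Q)$, in fact $Z(Q) \leq Z(C_G(Q))$, so Schur-Zassenhaus gives $C_G(Q) \cong Z(Q) \times H$ with $H$ a $p'$-group. Then the principal block of $kC_G(Q) \cong kZ(Q) \tenk kH$ is $kZ(Q) \tenk (kH \cdot e_0) \cong kZ(Q)$, where $e_0$ is the trivial-module idempotent in the semisimple algebra $kH$.

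Passing to the quotient by $Z(Q)$, we get $kC_G(Q)/Z(Q) \cdot \bar e \cong k$. In the notation of Lemma \ref{non-local-points-1} applied with $P = S$, this forces $V_Q$ to be one-dimensional. Being one-dimensional in characteristic $p$, it carries only the trivial action of the $p$-group $N_S(Q)/Q$. Since $N_S(Q)/Q$ is a nontrivial $p$-group, the trivial module $V_Q \cong k$ is not projective, so Lemma \ref{non-local-points-1} forbids the existence of a point $\nu$ of $S$ with $Q_\delta$ as defect pointed group. This contradicts the hypothesis, so $Q$ cannot be centric.

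The main thing to be careful with is ensuring that the $N_S(Q)/Q$-module $V_Q$ is correctly identified with the trivial module. This requires tracking through the canonical lift of the action from $\End_k(V_Q)$ to $V_Q$ (with $Q$ acting trivially) described in the paragraph preceding Lemma \ref{non-local-points-1}, but once $V_Q$ has been pinned down as one-dimensional the identification is forced by dimension and characteristic.
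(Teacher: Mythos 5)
Your proof is correct and follows essentially the same route as the paper's: reduce by contradiction to the centric case, use Brauer's Third Main Theorem to identify $e$ as the principal block of $kC_G(Q)$, decompose $C_G(Q)$ as $Z(Q)$ times a $p'$-group so that $V_Q$ is one-dimensional, and then invoke Lemma \ref{non-local-points-1} since a one-dimensional module for the nontrivial $p$-group $N_S(Q)/Q$ has no nonzero projective summand. The only cosmetic difference is that you spell out the Schur–Zassenhaus step and the triviality of the action on $V_Q$, which the paper leaves implicit.
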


\begin{proof}
Arguing by contradiction, assume that $Q$ is centric in $G$. Note that
$Q$ is a proper subgroup of $S$ since $\nu$ is not local, and hence
$N_S(Q)/Q$ is a non-trivial $p$-group. Moreover, the block $e=\Br_Q(1_B)$ 
is the principal block of $kC_G(Q)$ (where we use Brauer's Third Main Theorem
\cite[Theorem 6.3.14]{LiBookII}). Being centric implies that  $C_G(Q)\cong$ 
$C'\times Z(Q)$ for some $p'$-subgroup $C'$ of $C_G(Q)$. The trivial $kC'$-module 
determines the principal block idempotent $e$, and we have $kC_G(Q)e\cong$ 
$kZ(Q)$, hence $kC_G(Q)/Z(Q)\bar e\cong$ $k$. In particular, $k$ is a splitting 
field for $kC_G(Q)e$. Thus the vector space $V_Q$ from Lemma 
\ref{non-local-points-1} has dimension $1$. In particular, $V_Q$ has no nonzero 
projective summand as a $kN_S(Q)/Q$-module. Thus, by Lemma 
\ref{non-local-points-1}, there is no point $\nu$ of $S$ on $B$ with defect 
pointed group $Q_\delta$.
\end{proof}

\begin{proof}[{Proof of Theorem \ref{Bi-princ-block}}]
Since $B$ is the principal block of $kG$, we have $B(S)\cong$ $kZ(S)$.
Thus $\Br_S(1_B)=\Br_S(i)=1_{kZ(S)}$, and hence, if $\nu$ is any point of
$S$ on $B$ different from the local point $\gamma$ containing $i$, then
$\nu$ has a defect pointed group $Q_\delta$ with $Q$ a proper subgroup
of $S$. But then $Q$ is not centric, by Lemma \ref{non-local-points-2},
and hence no subgroup which is $G$-conjugate to a subgroup of $Q$ is
centric.  By Lemma \ref{vertices-Lemma} and Lemma \ref{kGi-summands-R} (i), 
if  $i'\in \nu$, then every indecomposable direct summand of
$Bi'\tenkS k$ has  a vertex which is contained  in $Q$ and hence  is  not centric. 
Since $B\tenkS k=$ $Bi\tenkS k\ \oplus\ B(1_B-i)\tenkS k$, it follows that
no indecomposable direct summand of $B(1_B-i)\tenkS k$ has a
centric vertex.  The result follows.
\end{proof}

\begin{proof}[{Proof of Theorem \ref{princ-block-thm}}]
The first statement is in Theorem \ref{Bi-princ-block}, and the second
statement follows  since any essential $p$-subgroup is centric, in
conjunction with \cite[Theorem 1.2]{Rob88}.
\end{proof}

\begin{Remark}
Theorem \ref{Bi-princ-block}  provides an alternative proof of
Theorem \ref{Bi-weights} in the case where $B$ is the principal
block of $kG$, because weight subgroups of the principal block
are centric.
\end{Remark}

As the next example shows, Lemma \ref{non-local-points-2} does not hold in 
general for non-principal  blocks.

\begin{Example}  
Suppose that $p$ is  odd  and let $N$ be a  finite  $p'$-group  satisfying the following:    
$N$    has an automorphism  $\varphi $    of  order $p$  and  $kN$ has an absolutely  
simple and  faithful  module  $V$   of $k$-dimension $d$    with $p-1 \nmid d$    and  
such that  $\,_\varphi V \cong V$.
These hypotheses  are  satisfied   for  instance  if $N$ is an extra-special  $r$ group  
of   order $r^3$  and  exponent  $r$, for $r$  an odd prime   with $p \mid r-1 $ 
(with  $\varphi $  any automorphism  of  order  $p$ acting  trivially on  $Z(N)$  
and    $V$   any faithful  simple  $kN$-module, necessarily of  dimension $r$).   
Let $ b =\frac{1}{N} \sum_{x\in N}  \chi(x) x^{-1} $  be the  central   primitive 
idempotent of  $kN$  corresponding to $V$, where $\chi:  N \to k^{\times} $  
is  the  character of $V$.

Let   $S$  be a finite non-abelian $p$-group   with a maximal abelian subgroup 
$Q$  and let $G  =  N \rtimes S $, with $Q$ acting trivially on $N$ and    some 
generator of $S/Q$ acting as  $\varphi $.    Then    $ C_G(Q)  = N \times Q $,  
$ B= kGe $  is a  block of $kG$  with defect group $S$, and  $(Q, e) $   is a  
centric  $B$-Brauer pair which is normalised by $S$.   Let $\delta $ be the 
unique local  point of $Q$ on $B$ with $\Br_Q(\delta) e \ne 0 $.      
We claim that   $Q_{\delta}$ is  a defect  pointed group of $S_{\nu}$ for 
some  point   $\nu$  of $S$  on $B$.  Indeed, by  Lemma  \ref{non-local-points-1}, 
applied with  $P=S $,   it suffices to prove that the   endopermutation  
$kS/Q$-module  $V_Q  $   contains a nonzero  projective summand.     
Suppose for contradiction  that   $V_Q  $  contains no nonzero  projective 
summands.  Since  $P/Q $ is  cyclic of order $p$, any   non-projective
indecomposable endopermutation   $kP/Q$-module   is  isomorphic to   either  
the trivial  $kP/Q$ module   $k$ or  to   its Heller translate   $\Omega (k) $ of 
dimension $p-1$ (see for instance \cite[Theorem 7.8.1]{LiBookII}).   On the 
other hand,   if $V_Q$   contains  a summand isomorphic to $k$, then  it has 
no summand isomorphic to $\Omega (k)$ (see \cite[Proposition 7.3.7]{LiBookII}). 
Thus, either  $V_Q $  is a direct sum  of  copies  of the trivial module  
$kS/Q$-module or a  direct sum  of copies  of $\Omega (k)$.    Suppose that 
we are in the former case.     Then, $S/Q$ acts trivially on $k(N\times Q/Q )\bar e $,
hence  for all $x \in N$,  $\varphi (x) Q  \bar e  =    x Q \bar e $  for all $ x\in N$.  
Since $Q$ is a  $p$-group  and   $N$  is a  $p'$-group commuting with $Q$, this 
means that  $\varphi (x)  e =    xe $ for all $x \in N$ and   consequently  
$[N, \varphi] $ acts trivially on $V$. This is impossible   since $V$  is  a faithful  
$kN $-module. The latter  case does not occur since    $d$ is not a multiple 
of $p-1 $.
\end{Example}

It is well-known that the principal block idempotent of a finite group algebra 
$kG$ is contained in $\F_p G$. For the sake of completeness, we note that 
one can also choose a source idempotent of the principal block of $kG$ which 
is contained in $(\Fp G)^P$.

\begin{Lemma} \label{Fpsource-Lemma}
Let $G$ be a finite group and $B$ the principal block of $kG$. Let $P$ be a
Sylow $p$-subgroup of $G$ and $i\in $ $B^P$ a source idempotent.
Then, for any field extension $k'/k$, the algebra $B'=$ $k'\tenk B$ is the principal
block of $k'G$, and $i'=$ $1\ten i$ remains a source idempotent of $B'$. 
Equivalently, $B$ has a source idempotent contained in $(\Fp G)^P$. 
\end{Lemma}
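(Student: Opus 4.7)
The plan is to exploit the standard fact that the principal block idempotent $b$ of $kG$ already lies in $\Fp G$ and reduce everything to a residue-field computation for the source algebra. Since $b\in \Fp G$, the idempotent $1\ten b$ equals the principal block idempotent of $k'G$, which gives at once $B'=k'\tenk B$ as the principal block of $k'G$.

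For the claim that $i'=1\ten i$ remains a source idempotent of $B'$, I would reduce to showing that $L:=iB^Pi$ is a local $k$-algebra with residue field exactly $k$. Granting this, $k'\tenk L=(1\ten i)(B')^P(1\ten i)$ is local with residue field $k'$, so $1\ten i$ is primitive in $(B')^P$; combined with $\Br_P(1\ten i)=1\ten \Br_P(i)\neq 0$, this makes $1\ten i$ a source idempotent of $B'$. The residue-field computation rests on two structural facts. First, since $P$ is a Sylow $p$-subgroup of $G$, $Z(P)$ is a Sylow $p$-subgroup of $C_G(P)$; by Schur--Zassenhaus $C_G(P)=Z(P)\times O_{p'}(C_G(P))$, so the principal block of $kC_G(P)$ is $kZ(P)$, a local ring with residue field $k$. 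Second, by Brauer's Third Main Theorem the Brauer pair $(P,e_0)$, where $e_0$ is the principal block idempotent of $kC_G(P)$, is a $B$-Brauer pair; since $e_0$ is $N_G(P)$-invariant and all local points of $P$ on $B$ lie in one $N_G(P)$-orbit, one in fact has $\Br_P(b)=e_0$, so $\Br_P(i)$ is a nonzero idempotent in $kZ(P)$ and therefore equals $e_0$. Consequently $\Br_P$ restricts on $L$ to a surjection $L\twoheadrightarrow kZ(P)$ with proper kernel, which in the local ring $L$ lies in $J(L)$; thus $L/J(L)$ is simultaneously a division algebra and a quotient of the commutative local ring $kZ(P)$, forcing $L/J(L)\cong k$.

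For the equivalent statement, existence of a source idempotent in $(\Fp G)^P$ is obtained by decomposing $b\in (\Fp G)^P$ into primitive orthogonal idempotents and noting that at least one summand has nonzero image under $\Br_P$ (since $\Br_P(b)\neq 0$); such a summand is a source idempotent of $\Fp Gb$. Conversely, any such $i_0$ yields a source idempotent $1\ten i_0$ of $B$ by the computation above applied with $k=\Fp$ and $k'=k$, and as any two source idempotents of $B$ are related by conjugation under $N_G(P)\cdot (B^P)^\times$ --- both kinds of conjugation commute with base change --- the base-change property transfers to every source idempotent. The main obstacle is the residue-field computation: a priori $L/J(L)$ could be a proper finite extension of $k$, and ruling this out relies crucially on both the identification of the principal block of $kC_G(P)$ as $kZ(P)$ (for which $P$ being Sylow is essential) and the Brauer's-Third-Main input forcing $\Br_P(i)$ into this principal block.
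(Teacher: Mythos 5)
Your argument is correct and follows the same route as the paper's proof: the key step is showing $iB^Pi$ is split local, via $C_G(P)=Z(P)\times O_{p'}(C_G(P))$ (giving $kC_G(P)e_0\cong kZ(P)$) together with Brauer's Third Main Theorem (giving $\Br_P(i)=\Br_P(b)=e_0$), from which base-change invariance of primitivity is immediate. A small simplification: Brauer's Third Main Theorem yields $\Br_P(b)=e_0$ directly, since $\Br_P(b)$ is a sum of block idempotents of $kC_G(P)$ each of whose Brauer correspondent is $b$ and the theorem identifies any such block as the principal one, so the detour through $N_G(P)$-invariance and orbits of local points is unnecessary.
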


\begin{proof}
Since $C_G(P)=Z(P)\times O_{p'}(C_G(P))$ it follows that the principal block of
$kC_G(P)$ is isomorphic to $kZ(P)$. By Brauer's Third Main Theorem
(see e.g. \cite[Theorem 6.3.14]{LiBookII}),  the principal block idempotent of
$kC_G(P)$ is equal to $\Br_P(1_B)$, whence $\Br_P(i)=$ $\Br_P(1_B)$
and $kC_G(P)\Br_P(1_B)\cong$ $kZ(P)$. 
This implies  that $\Br_P(i(kG)^Pi\cong$ $kZ(P)$ is split local,and hence that
$(ikGi)^P$ is split local. Thus $k'\tenk (ikGi)^P\cong$
$(ik'Gi)^P$ is split local, or equivalently,  $i$ remains primitive in
$(k'G)^P$, whence the result. 
\end{proof}

\section{On stable equivalences of Morita type, and
proof of Theorem \ref{splendid-thm}} \label{stable-Section}

Let $k$ be a field. Let $A$, $B$ be finite-dimensional $k$-algebras. 
Following Brou\'e \cite[\S 5.A]{Broue94}, 
an $A$-$B$-bimodule $M$ and a $B$-$A$-bimodule $N$ are said
to {\em induce  a stable equivalence of Morita type between $A$ and $B$} if
$M$, $N$ a both perfect (see Section \ref{backgroundsection}), 
and if we have bimodule isomorphisms 
$M\tenB N\cong$ $A\oplus U$ and 
$N\tenB M\cong$ $B\oplus V$ for some projective $A$-$A$-bimodule $U$ and
some projective $B$-$B$-bimodule $V$. In that case the exact functors
$M\tenB-$ and $N\tenA-$ induces $k$-linear equivalences between  
$\modbar(A)$ and $\modbar(B)$, and if $A$, $B$ are self-injective, then these
are equivalences of triangulated categories.
 See \cite[Sections 2.13, 4.13]{LiBookI} for standard facts and notation
regarding stable categories and equivalences.

We will in particular need the following observation:  if  $A$, $B$ and $C$  
are finite dimensional $k$-algebras and  if $M$ and  $N$ induce a  stable 
equivalence of Morita type between $A$ and $B$, 
then the functors  $M\tenB-$ and $N\tenA -$  induce an equivalence between 
the stable categories $\perfbar(B,C)$ and $\perfbar(A,C)$ of $\perf(B,C)$ and 
$\perf(A,C)$. This goes back to Brou\'e (see the proof of \cite[5.4]{Broue94}); the 
case where $C=A$ is described in detail in \cite[Proposition 2.17.11]{LiBookI}. 
Thus if $M$, $N$ induce a stable equivalence of
Morita type between $A$ and $B$,  and if $V$ is an indecomposable non-projective 
perfect $B$-$C$-bimodule,  then $M\tenA V$ is a direct sum of an indecomposable 
non-projective  perfect $A$-$C$-bimodule and a projective $A$-$C$-bimodule.

If $A$ and $B$ are symmetric, and if $M$, $N$ induce a stable equivalence of 
Morita type between $A$ and $B$, then $M$ and $N$ can be chosen such that $N$ 
is isomorphic to  the $k$-dual $M^\vee$ of $M$; see e.g. 
\cite[Proposition 2.17.5]{LiBookI}.
If the algebras $A$, $B$ are symmetric indecomposable and non-simple as algebras,
then the bimodules $M$ and $N$ can be chosen to be indecomposable 
non-projective and dual to each other. In that case we simply say that $M$ 
induces a stable equivalence of Morita type.

\begin{Theorem} \label{simpleminded-Thm}
Let $A$, $B$ be finite-dimensional self-injective indecomposable non-simple
$k$-algebras such that $A/J(A)$ and $B/J(B)$ are separable.
Let $M$ be an indecomposable $A$-$B$-bimodule and $N$ an indecomposable
$B$-$A$-bimodule such that $M$ and $N$ induce a stable equivalence
of Morita type between $A$ and $B$. Let $V$ be a finitely generated semisimple
$B$-module such that every simple $B$-module is isomorphic to a direct summand
of $V$. Set $U=$ $M\tenB V$.
\begin{itemize}
\item[{\rm (i)}]
Every simple $A$-module is isomorphic to a submodule and a quotient of $U$.
\item[{\rm (ii)}]
The $A$-module  $U$ has $\ell(B)$ isomorphism classes of indecomposable 
direct summands, none of which is projective.
\item[{\rm (iii)}]
If $\End_A(U)$ is self-injective, then $\ell(A)=\ell(B)$.
\end{itemize}
\end{Theorem}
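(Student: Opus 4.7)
The plan is to prove (ii) first, then (i), and deduce (iii) from both via Green's theorem. A preliminary observation used throughout: since $B$ is indecomposable and non-simple, $B$ is non-semisimple, so every simple $B$-module is non-projective; moreover $M$ and $N$ are non-projective as bimodules, since otherwise $N \otimes_A M \cong B \oplus (\mathrm{proj})$ would realize $B$ as a projective $B \otimes_k B^\op$-module, forcing $B$ to be separable and hence semisimple, contrary to hypothesis. The symmetric remark holds with $A$ in place of $B$.

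For (ii), Proposition \ref{no-proj-summands-Prop} applied to the indecomposable non-projective perfect bimodule $M$ shows $M \otimes_B T$ has no nonzero projective summand for each simple $B$-module $T$. Because $M \otimes_B -$ and $N \otimes_A -$ are mutually inverse triangle equivalences $\modbar(B) \simeq \modbar(A)$, the object $M \otimes_B T$ is indecomposable in $\modbar(A)$; combined with the absence of projective summands, it is indecomposable in $\mod(A)$, and non-isomorphic simples yield non-isomorphic summands because $N \otimes_A (M \otimes_B T) \cong T$ in $\modbar(B)$. Writing $V = \bigoplus_i T_i^{n_i}$ with each $n_i \geq 1$ then gives $U = \bigoplus_i (M \otimes_B T_i)^{n_i}$ with exactly $\ell(B)$ isomorphism classes of summands, none projective.

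For (i), let $S$ be a simple (hence non-projective) $A$-module; the argument symmetric to (ii) makes $X := N \otimes_A S$ indecomposable and non-projective in $\mod(B)$. The natural adjunctions arising from the stable equivalence give isomorphisms
\[
\Hombar_A(U, S) \cong \Hombar_B(V, X), \qquad \Hombar_A(S, U) \cong \Hombar_B(X, V),
\]
so it suffices to exhibit nonzero classes on the right. Choose a simple quotient $T$ and a simple submodule $T'$ of $X$. The key claim is that the canonical surjection $\pi\colon X \twoheadrightarrow T$ and injection $\iota\colon T' \hookrightarrow X$ are nonzero in $\modbar(B)$: if $\pi$ factored as $X \xrightarrow{g} P \to T$ with $P$ projective, one may reduce to $P$ being the projective cover of $T$, whereupon Nakayama's lemma forces $g$ surjective, hence split, exhibiting $P$ as a summand of $X$ and contradicting indecomposability and non-projectivity of $X$; the argument for $\iota$ is dual, using that over the self-injective algebra $B$ every indecomposable projective has simple socle, so that any factoring $T' \to P \to X$ forces $P \to X$ to be injective and thus to split off $X$. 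Since $T$ and $T'$ are summands of $V$, postcomposing $\pi$ with $T \hookrightarrow V$ and precomposing $\iota$ with $V \twoheadrightarrow T'$ produces maps whose stable classes remain nonzero: any factoring of either composition through a projective would, after composing with the complementary splitting in $V$, realize $\pi$ or $\iota$ itself as factoring through a projective, contradicting the key claim. The corresponding nonzero $A$-homomorphisms $U \to S$ and $S \to U$ are surjective and injective since $S$ is simple.

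For (iii), assuming $\End_A(U)$ is self-injective, Green's theorem \cite[Theorem 2.13]{Geck01} together with (i) yields a bijection between isomorphism classes of indecomposable direct summands of $U$ and simple $A$-modules, via taking tops (equivalently socles); combining with (ii) gives $\ell(A) = \ell(B)$. The main obstacle in this plan is the key claim in (i) — the analysis of when the canonical surjection $X \to T$ or injection $T' \to X$ can factor through a projective — which is precisely where the hypotheses on $X$ being indecomposable non-projective and on $B$ being self-injective combine. Everything else is formal manipulation of the stable-category adjunctions together with Green's theorem.
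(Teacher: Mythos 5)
Your proof is correct and follows essentially the same route as the paper's: Proposition \ref{no-proj-summands-Prop} together with the stable-category adjunctions yields (i) and (ii), and Green's theorem then gives (iii). The only substantive difference is that where the paper simply cites \cite[Corollary 4.13.4]{LiBookI} for the identification $\Hombar_B(Y,V)\cong\Hom_B(Y,V)$ when $Y$ is projective-free and $V$ semisimple, you prove the needed stable non-vanishing by hand (the projective-cover/Nakayama argument and its injective-envelope dual), which is a correct inline proof of the cited fact; you also spell out that $M$, $N$ and the simple $B$-modules are non-projective, points the paper leaves implicit.
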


\begin{proof}
The first statement is well-known; we include a short (and standard) argument 
for convenience. Let $X$ be a simple $A$-module. 
Since $M\tenB-$ and $N\tenA-$ induce
equivalences between the stable catgories $\modbar(A)$ and $\modbar(B)$
we have $\Hombar_A(X,U)\cong$ $\Hombar_B(N\tenA X , V)$.
By  Proposition \ref{no-proj-summands-Prop} the $B$-module $N\tenA X$
has no nonzero projective summand. By \cite[Corollary 4.13.4]{LiBookI}
we have $\Hombar_B(N\tenA X,V)\cong$ $\Hom_B(N\tenA X,V)$, so this
space is nonzero, as every simple quotient of $N\tenA X$ is isomorphic to a 
direct summand of $V$. Thus $\Hom_A(X,U)$ is nonzero.
This shows that $X$ is isomorphic to a submodule of $U$, and a similar
argument shows that $\Hom_A(U,X)$ is nonzero, so $X$ is isomorphic to
a quotient of $U$. This proves (i).

By Proposition \ref{no-proj-summands-Prop} the $A$-module $U$ has
no nonzero projective direct summand. Since $U$ and $V$ correspond to each
other via the stable equivalence induced by $M$ and $N$ they have
the same number of isomorphism classes of indecomposable non-projective
summands. Thus $U$ has $\ell(B)$ isomorphism classes of indecomposable 
direct summands. This proves (ii).

Assume now that $\End_A(U)$ is self-injective. It follows from 
Green's theorem  \cite[Theorem 1]{Green78} (see also \cite[Theorem 2.13]{Geck01}) 
that every indecomposable direct summand of $U$ has a 
simple top and socle, and that the isomorphism class of any such summand is 
determined by the isomorphism class of its top, as well as by that of its socle. 
That is, we have bijections with between the isomorphism classes of 
indecomposable direct summands of $U$ and simple $A$-modules 
obtained by sending such a summand to its top (resp. its socle). 
Thus the number of isomorphism classes of idecomposable direct summands
of $U$ is also equal to $\ell(A)$, whence the result.
\end{proof}

For the rest of this section we assume that  $k$  is a field of  prime characteristic $p$.

\medskip
 A  stable  equivalence  between two block algebras   $B$ and $C$ of finite
group  algebras $kG$, $kH$  is called {\em splendid}  if it is a stable equivalence of 
Morita type given  by an indecomposable  trivial source $B$-$C$-bimodule 
$M$ and its dual.  In particular, a  splendid   Morita equivalence between  $B$ and  
$C$  is a  Morita  equivalence given   by an indecomposable  trivial source 
$B$-$C$-bimodule  $M$ and its dual.  The following well-known  result   identifies 
the  bimodules  which induce splendid   stable equivalences.

\begin{Proposition} \label{splendid-stable-source-Prop}
Let $G$, $H$ be finite groups, $B$ a block of $kG$ with a non-trivial defect
group $P$, and $C$ a block of $kH$ with a non-trivial defect group $Q$.
Let $i\in B^P$ and $j\in  C^Q$ be source idempotents.
Let $M$ be an indecomposable $B$-$C$-bimodule inducing a splendid
stable equivalence. There is a group isomorphism $\varphi : P\to Q$
such that $M$ is isomorphic to a direct summand of $Bi\tenkP ({_\varphi(jC)})$.
\end{Proposition}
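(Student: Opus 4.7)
The plan is to identify the vertex of $M$ as a $k(G\times H)$-module, recognise $M$ as a summand of an explicit induced bimodule, and then refine the formula to involve the specified source idempotents $i$ and $j$. By Lemma~\ref{perfect-vertices-Lemma}(i), a vertex of $M$ has the form $\Delta_\psi(R)=\{(u,\psi(u))\ |\ u\in R\}$ for some $p$-subgroups $R\leq G$, $R'\leq H$ and an isomorphism $\psi\colon R\to R'$. Since $M$ belongs to the block $B\tenk C^\op$ of $k(G\times H)$, whose defect groups are conjugates of $P\times Q$, we may assume $\Delta_\psi(R)\leq P\times Q$, so $R\leq P$ and $\psi(R)\leq Q$, giving $|R|\leq|P|$. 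The stable equivalence hypothesis yields $M\tenC M^\vee\cong B\oplus V$ for some projective $B$-bimodule $V$, and since $B$ has vertex $\Delta(P)$ as a $B$-bimodule by Lemma~\ref{source-bimod-Lemma}(i), Lemma~\ref{perfect-vertices-Lemma}(ii) forces $|R|\geq|P|$. Thus $R=P$ and $\psi$ is an isomorphism $P\to Q$. Applying Lemma~\ref{bimod-Lemma} and then truncating by the central idempotents $1_B$ and $1_C$ exhibits $M$ as a direct summand of $B\tenkP ({_\psi C})$ as a $(B,C)$-bimodule.

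Next, choose primitive decompositions $1_B=\sum_\alpha i_\alpha$ in $B^P$ with $i$ among the $i_\alpha$, and $1_C=\sum_\beta j_\beta$ in $C^Q$ with $j$ among the $j_\beta$. The resulting splitting $B\tenkP ({_\psi C})=\bigoplus_{\alpha,\beta}\ Bi_\alpha\tenkP ({_\psi(j_\beta C)})$ realises $M$ as a summand of some $Bi_\alpha\tenkP ({_\psi(j_\beta C)})$. Repeating the vertex-order count on this tensor product---now using Lemma~\ref{bimod-idempotent-Lemma}(iv) to identify the vertices of $Bi_\alpha$ and of ${_\psi(j_\beta C)}$ with (twisted) diagonals of the defect groups of the points of $P$ on $B$ and of $Q$ on $C$ containing $i_\alpha$ and $j_\beta$---forces both of these defect groups to equal $P$ and $Q$ respectively, so that $i_\alpha$ and $j_\beta$ are source idempotents.

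Finally, all local points of $P$ on $B$ are $N_G(P)$-conjugate, so we may write $i_\alpha=z^{-1}iz$ for some $z\in N_G(P)$; similarly $j_\beta=w^{-1}jw$ for some $w\in N_H(Q)$. Define $\varphi\colon P\to Q$ by $\varphi(u)=w\psi(z^{-1}uz)w^{-1}$; this is a group isomorphism. A direct check shows that $ai\ten jc\mapsto aiz\ten w^{-1}jc$ extends to a well-defined $(B,C)$-bimodule isomorphism $Bi\tenkP ({_\varphi(jC)})\to Bi_\alpha\tenkP ({_\psi(j_\beta C)})$, the identity $\varphi(u)=w\psi(z^{-1}uz)w^{-1}$ being exactly what is needed for compatibility with the tensor over $kP$; hence $M$ is isomorphic to a direct summand of $Bi\tenkP ({_\varphi(jC)})$. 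The main obstacle is this last step: the bimodules $Bi_\alpha$ and $Bi$ are in general non-isomorphic as $(B,kP)$-bimodules even when both idempotents lie in local points of $P$ on $B$, and only after pre- and post-composing $\psi$ with the inner automorphisms of $P$ and $Q$ induced by $z$ and $w$ does the required isomorphism materialise.
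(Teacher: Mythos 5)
The proposal reaches the same conclusion by essentially the same route as the paper, with one genuine difference at the start: the paper simply cites \cite[Proposition 9.7.1]{LiBookII} to assert that $M$, as a $k(G\times H)$-module, has trivial source and vertex of the form $\Delta_\psi(P)$ with $\psi:P\to Q$ an isomorphism, whereas you reconstruct this entirely from results internal to the paper. Concretely, you combine Lemma \ref{perfect-vertices-Lemma}(i), the fact that the block $B\tenk C^\op$ of $k(G\times H)$ has defect group $P\times Q$, and the identity $M\tenC M^\vee\cong B\oplus(\mathrm{projective})$ together with Lemma \ref{source-bimod-Lemma}(i) and Lemma \ref{perfect-vertices-Lemma}(ii), to bound the vertex of $M$ from below by $|P|$. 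This is a valid, self-contained derivation and is arguably more transparent for a reader working only within this paper. The remaining steps — decomposing $B\tenkP({_\psi C})$ along primitive decompositions of $1_B$ and $1_C$, forcing the resulting idempotents to be source idempotents by another vertex-order count via Lemma \ref{bimod-idempotent-Lemma}(iv), and then adjusting $\psi$ by inner automorphisms from $N_G(P)$ and $N_H(Q)$ to reach the prescribed $i$ and $j$ — match the paper's argument, with your write-up spelling out the final bimodule isomorphism explicitly where the paper only asserts it.

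One small point is left implicit: from $R\leq P$, $\psi(R)\leq Q$ and $|R|\geq|P|$ you conclude ``Thus $R=P$ and $\psi$ is an isomorphism $P\to Q$,'' but surjectivity of $\psi$ needs $\psi(P)=Q$, equivalently $|P|=|Q|$. This follows by running the symmetric count on $M^\vee\tenB M\cong C\oplus(\mathrm{projective})$, which gives the lower bound $|Q|$ on the common vertex order of $M$ and $M^\vee$; combined with $\psi(R)\leq Q$ this forces $\psi(P)=Q$. The same observation is what guarantees, in your second vertex-order count, that the defect group of the point containing $j_\beta$ equals $Q$. Once this is added the argument is complete.
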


\begin{proof}
By \cite[Proposition 9.7.1]{LiBookII}, when regarded as a $k(G\times H)$-module,
$M$ has trivial source and a vertex of the form $\{(u, \psi(u))\ |\ u\in P\}$. Thus
$M$ is isomorphic to a direct summand of $\Ind^{G\times H}_{R}(k)\cong$
$kG\tenkP {_\varphi{kH}}$, and since $M$ is a $B$-$C$-bimodule, it is
isomorphic to a direct summand of $B\tenkP {_\varphi{C}}$. 
Since $M$ is indecomposable, it follows  that $M$ is isomorphic to a direct
summand of $Bi'\tenkP {_\varphi{j'C}}$ for some primitive idempotents
$i'\in B^P$ and $j'\in C^Q$. Since $M$ has vertices of order $|P|$ it follows
that $i'$, $j'$ are source idempotents. The local points of $P$ (resp. $Q$)
on $B$ (resp. $C$) are $N_G(P)$-conjugate (resp. $N_H(Q)$-conjugate, and
hence, possibly after replacing $\varphi$, we may assume that $M$ is
isomorphic to a direct summand of $Bi\tenkP {_\varphi{jC}}$ as stated.
\end{proof}

Assuming that $k$ is a splitting field for $B$, $C$ and their Brauer pairs, the 
isomorphism  $\varphi$ in this Lemma 
induces an isomorphism between the fusion systems of  $B$ and $C$ 
on $P$ and $Q$ determined by $i$ and $j$, respectively.  (See  e.g.
\cite[Theorem 9.8.2]{LiBookII}). 

\medskip

In general we do not know what a stable equivalence of Morita type between
two blocks $B$, $C$ does to the source permutation modules. The following 
result shows that a splendid stable equivalence preserves source permutation 
modules, up to projective summands.  

\begin{Theorem} \label{splendid-stable-source}
Let $G$, $H$ be finite groups, $B$ a block of $kG$ with a non-trivial defect
group $P$, and $C$ a block of $kH$ with a non-trivial defect group $Q$.
Let $i\in B^P$ and $j\in C^Q$ be source idempotents. Suppose that $M$ is 
a $B$-$C$-bimodule inducing a splendid stable equivalence.
Then $Bi\tenkP k$ is isomorphic to the non-projective part
of $M\tenC (Cj\tenkQ k)\cong$ $Mj\tenkQ k$. In particular, if $M$ induces a  
splendid Morita equivalence, then  $Bi\tenkP k$ is isomorphic to 
$M\tenC (Cj\tenkQ k)$.
\end{Theorem}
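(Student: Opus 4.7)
The plan is to reduce the computation to the source algebras $A=iBi$ and $A'=jCj$, where Proposition \ref{splendid-stable-source-Prop} provides explicit control over the bimodule $M$.

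First, the isomorphism $M\tenC(Cj\tenkQ k)\cong Mj\tenkQ k$ follows from associativity of tensor product and the identification $M\tenC Cj\cong Mj$ via right multiplication by $j$. By Proposition \ref{splendid-stable-source-Prop}, there is a group isomorphism $\varphi\colon P\to Q$ such that $M$ is a direct summand of $Bi\tenkP {}_\varphi jC$ as a $B$-$C$-bimodule, and hence $Mj$ is a direct summand of $Bi\tenkP {}_\varphi jCj$ as a $B$-$kQ$-bimodule. Via the canonical Morita equivalences $B\sim A$ and $C\sim A'$ induced by $Bi$ and $Cj$, applying $iB\tenB -$ identifies $Bi\tenkP k$ with the $A$-module $A\tenkP k$ and $Mj\tenkQ k$ with the $A$-module $iMj\tenkQ k$, where $iMj$ is a direct summand of $A\tenkP {}_\varphi A'$ as an $A$-$A'$-bimodule. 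Moreover $iMj$ together with $jM^\vee i$ induces a stable equivalence of Morita type between the symmetric algebras $A$ and $A'$, which reduces to a Morita equivalence when $M$ induces a splendid Morita equivalence.

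To analyse $iMj\tenkQ k$, I would invoke Lemma \ref{source-bimod-Lemma}(iii) to decompose $A'$ as a $k(Q\times Q)$-module into indecomposable permutation summands with vertices of the form $\Delta_\psi(R)$ for subgroups $R\leq Q$ and injective homomorphisms $\psi\colon R\to Q$, including at least one summand isomorphic to $kQ$ of vertex $\Delta(Q)$. By Lemma \ref{bimod-Lemma}, a summand of vertex $\Delta_\psi(R)$, twisted by $\varphi$ on the left and tensored with $k$ over $kQ$, becomes the $kP$-module $kP/\varphi^{-1}(R)$. Further tensoring with $A\tenkP -$ then shows that the full-vertex summands of $A'$ contribute copies of $A\tenkP k$, while the smaller-vertex summands contribute modules of the form $A\tenkR k$ for proper subgroups $R<P$.

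The main obstacle is to identify exactly the non-projective part of $iMj\tenkQ k$ with $A\tenkP k$. A candidate split monomorphism $A\tenkP k\hookrightarrow iMj\tenkQ k$ can be constructed from the inclusion of the diagonal $kQ$-summand of $A'$ (from Lemma \ref{source-bimod-Lemma}(iii)) via the $\varphi$-twist and tensoring, followed by projection onto the direct summand $iMj\tenkQ k$ of $A\tenkP {}_\varphi A'\tenkQ k$. The proof that its complement in $iMj\tenkQ k$ is projective rests on two ingredients: on the one hand, Proposition \ref{no-proj-summands-Prop} ensures that $iMj\ten_{A'}(A'\tenkQ k)\cong iMj\tenkQ k$ receives no projective summand from the tensor with $A'\tenkQ k$, since $iMj$ has no projective direct summand as an $A\tenk (A')^\op$-module and $A'\tenkQ k$ has no nonzero projective summand by Lemma \ref{proj-summands-Lemma}(ii); on the other hand, a vertex count of the indecomposable summands of $iMj\tenkQ k$, based on the decomposition of $A\tenkP{}_\varphi A'\tenkQ k$ above, forces the non-projective part to coincide with $A\tenkP k$. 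For the Morita case, the isomorphism $iMj\ten_{A'}jM^\vee i\cong A$ shows there is no projective error, giving $M\tenC(Cj\tenkQ k)\cong Bi\tenkP k$ exactly.
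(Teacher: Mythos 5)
The core gap in your proposal is that you never establish the key fact on which the paper's proof hinges: that $Mj$ has, up to isomorphism, a \emph{unique} non-projective indecomposable direct summand as a $B$-$kQ$-bimodule, namely $(Bi)_\psi$ for some isomorphism $\psi\colon Q\to P$. This is Lemma~\ref{splendid-source-Lemma} in the paper, and it is proved by applying the stable equivalence $M\tenC -$ to the indecomposable non-projective perfect bimodule $Cj$ and invoking the general fact that a stable equivalence of Morita type induces an equivalence of stable categories of perfect one-sided bimodules. Once you have $Mj\cong (Bi)_\psi\oplus Y$ with $Y$ a projective $B$-$kQ$-bimodule, the conclusion is immediate: $Mj\tenkQ k\cong Bi\tenkP k\oplus(Y\tenkQ k)$, the first summand has no nonzero projective direct summand, and the second is projective. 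Your route through the decomposition of $jCj$ as a $k(Q\times Q)$-module only gives vertex information about the summands of $Bi\tenkP {}_\varphi jCj$, not about which summands land in $Mj$ nor how many non-projective summands $Mj$ actually has; vertex constraints alone cannot isolate the non-projective part.

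Moreover, your two proposed "ingredients" for closing the gap do not do the work you ask of them. You want to show that the complement of $A\tenkP k$ inside $iMj\tenkQ k$ is projective, but you invoke Proposition~\ref{no-proj-summands-Prop} to argue that $iMj\tenkQ k$ receives \emph{no} projective summand from the tensor with $A'\tenkQ k$ --- that proposition is stated for a non-projective \emph{simple} module $T$, and $A'\tenkQ k$ is not simple, so it does not apply; and even if it did, its conclusion is the opposite of what you need (the module $Mj\tenkQ k$ generally \emph{does} have a nonzero projective part, which you must characterise, not exclude). The "vertex count" is likewise insufficient: even knowing that only one indecomposable summand of $iMj\tenkQ k$ could have a full vertex would not identify its isomorphism class, nor rule out non-projective summands with smaller vertices. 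In short, your reduction to source algebras and the split monomorphism $A\tenkP k\hookrightarrow iMj\tenkQ k$ are fine starting points, but the argument stalls exactly where you flag the "main obstacle," and the missing step is precisely the bimodule-level uniqueness statement that drives the paper's proof.
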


The proof of Theorem \ref{splendid-stable-source} requires the following Lemma.

\begin{Lemma} \label{splendid-source-Lemma}
Let $G$, $H$ be finite groups, $B$ a block of $kG$ with a non-trivial defect group 
$P$ and $C$ a block of $kH$ with a non-trivial defect group $Q$. Let $M$ be an 
indecomposable $B$-$C$-bimodule which induces a stable equivalence of Morita
type. Let $j$ be a source idempotent in $C^Q$.  
\begin{itemize}
\item[{\rm (i)}]
The $B$-$kQ$-bimodule $Mj$ has, up to isomorphism, a unique non-projective 
indecomposable direct summand.
\item[{\rm (ii)}]
 If $M$ induces a splendid stable equivalence, then the up to isomorphism
 unique non-projective indecomposable direct summand of $Mj$ is isomorphic 
 to $(Bi)_{\psi}$ for some source idempotent $i\in$ $B^P$ and some group 
 isomorphism  $\psi : Q\to P$. 
 \end{itemize}
\end{Lemma}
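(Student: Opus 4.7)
The plan is to handle (i) via the stable-equivalence formalism for perfect bimodules, and then to derive (ii) by combining Proposition~\ref{splendid-stable-source-Prop} with a vertex-tracking identification of the unique non-projective summand produced in (i).

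For (i), the hypothesis that $M$ induces a stable equivalence of Morita type between the symmetric algebras $B$ and $C$ implies, by the discussion at the start of this section, that $M\otimes_C -$ restricts to an equivalence of triangulated categories $\perfbar(C,kQ)\to\perfbar(B,kQ)$. By Lemma~\ref{source-bimod-Lemma}(ii) applied to $C$, the $C$-$kQ$-bimodule $Cj$ is perfect, indecomposable, and of trivial source with vertex $\Delta(Q)$ on $k(H\times Q)$; since $Q\ne 1$ it is non-projective. Hence $Mj=M\otimes_C Cj$ is indecomposable in $\perfbar(B,kQ)$, which means precisely that $Mj$ decomposes in $\perf(B,kQ)$ as the direct sum of a uniquely determined indecomposable non-projective bimodule and a projective bimodule.

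For (ii), I would start by invoking Proposition~\ref{splendid-stable-source-Prop} to obtain a source idempotent $i\in B^P$ and a group isomorphism $\varphi\colon P\to Q$ such that $M$ is isomorphic to a direct summand of $Bi\otimes_{kP}{}_\varphi(jC)$. The source idempotents of $C$ in $C^Q$ form a single $N_H(Q)$-orbit, so any conjugation needed to match the prescribed $j$ can be absorbed into $\varphi$. Writing $Bi\otimes_{kP}{}_\varphi(jC) = M \oplus M'$ and tensoring on the right with $Cj$ yields
\[
Bi\otimes_{kP}{}_\varphi(jCj) \;\cong\; Mj \oplus M'j.
\]
By Lemma~\ref{source-bimod-Lemma}(iii), $jCj$ contains $kQ$ as a $k(Q\times Q)$-direct summand, and so $Bi\otimes_{kP}{}_\varphi kQ$ is a direct summand of the left-hand side. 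A straightforward computation in the spirit of Lemma~\ref{bimod-Lemma} identifies this summand with the twisted bimodule $(Bi)_\psi$, where $\psi=\varphi^{-1}\colon Q\to P$; it is indecomposable (by Lemma~\ref{bimod-idempotent-Lemma}(i)) and non-projective, since its vertex $\Delta_\psi(Q)$ on $k(G\times Q)$ has order $|Q|>1$.

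The remaining task is to show that $(Bi)_\psi$ is a direct summand of $Mj$ rather than of $M'j$. By (i), $Mj$ has a unique non-projective indecomposable summand, so it suffices to produce any nonzero split $B$-$kQ$-homomorphism from $(Bi)_\psi$ into $Mj$. My approach is to descend to source algebras: setting $A=iBi$ and $D=jCj$, the bimodule $iMj$ is a trivial-source $A$-$D$-bimodule whose vertex on $k(G\times H)$ is $\Delta_\varphi(P)$, a subgroup of $P\times Q$ of full order $|P|=|Q|$. A vertex analysis inside $A\otimes_{kP}{}_\varphi D$ should show that $iMj$ contains ${}_\varphi kQ$ as a $kP$-$kQ$-bimodule direct summand; tensoring on the left with $Bi$ over $A$ then produces $(Bi)_\psi$ as a direct summand of $Bi\otimes_A iMj \cong Mj$. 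The main obstacle is precisely this vertex-matching step, since indecomposable summands of $Bi\otimes_{kP}{}_\varphi(jCj)$ of full vertex order $|Q|$ may a priori differ from $(Bi)_\psi$ by a fusion automorphism of $Q$, and one must use the vertex $\Delta_\varphi(P)$ of $M$ itself (not merely that of the enveloping bimodule) to pin down $(Bi)_\psi$ as the non-projective summand that comes from $M$.
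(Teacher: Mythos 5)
Your proof of part (i) is correct and follows the paper's route (the bimodule $Cj$ is indecomposable, non-projective, and perfect, and the stable equivalence takes it to an indecomposable object in $\perfbar(B,kQ)$).

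For part (ii), you have a genuine gap, which you yourself flag as the ``vertex-matching step.'' The issue is not, as you worry, that the summand in $Mj$ might differ from $(Bi)_{\varphi^{-1}}$ by a fusion automorphism of $Q$ — the statement only claims $(Bi)_\psi$ for \emph{some} group isomorphism $\psi: Q\to P$, so any full-vertex twist is acceptable. The real gap is that you never establish that the unique non-projective indecomposable summand of $Mj$ has a vertex of full order $|Q|$. Exhibiting $(Bi)_{\varphi^{-1}}$ as a summand of $Bi\otimes_{kP}{}_\varphi(jCj)\cong Mj\oplus M'j$ does not by itself tell you which of $Mj$ or $M'j$ it lives in, and your proposed ``vertex analysis inside $A\otimes_{kP}{}_\varphi D$'' is left as an intention rather than an argument. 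The paper closes this gap by a different and more efficient move: it considers $M^\vee\otimes_B Mj\cong (M^\vee\otimes_B M)j\cong Cj\oplus(\text{projective})$, notes that $Cj$ has vertex $\Delta(Q)$ of order $|Q|$, and then invokes Lemma~\ref{perfect-vertices-Lemma}\,(ii): since the vertex order of any indecomposable summand of a tensor product of perfect trivial-source bimodules is bounded above by the vertex orders of the factors, the non-projective summand of $Mj$ must have vertex of order at least $|Q|$ (and it is at most $|Q|$ because $Mj$ sits inside $Bi\otimes_{kP}{}_\varphi(jCj)$). Once this is known, Krull--Schmidt forces the non-projective summand to come from a full-vertex summand $(kQ)_\tau$ of $jCj$ (by Lemma~\ref{source-bimod-Lemma}\,(iii)), giving $(Bi)_\psi$. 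Without an argument pinning down the full vertex of the non-projective summand of $Mj$ — and the one you sketch is not completed — the proof of (ii) does not go through.
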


\begin{proof}
The $C$-$kP$-bimodule $Cj$ is indecomposable since $j$ is primitive in $C^P$,
and it is non-projective since $Q$ is non-trivial. As $M$ induces a stable
equivalence of Morita type, it follows from an  observation  at the beginning of
this Section  that the $B$-$kQ$-bimodule $Mj\cong$ 
$M\tenC Cj$ is a direct sum of an
indecomposable non-projective $B$-$kQ$-module and a projective 
$B$-$kQ$-module. This shows the first statement. 
Suppose now that $M$ is a  $p$-permutation module.
By  Proposition \ref{splendid-stable-source-Prop}, $M$ is isomorphic to a direct 
summand of 
$Bi\tenkP (_\varphi{(jC)})$ for some source idempotent $i\in$ $B^P$ and some
isomorphism $\varphi : P\to Q$. Thus the $B$-$kQ$-bimodule $Mj$ is isomorphic 
to a direct summand of $Bi\tenkP (_\varphi(jCj))$. By Lemma 
\ref{source-bimod-Lemma}, in the decomposition  of  $jCj$ as a 
$kQ$-$kQ$-bimodule  the summands (now regarded as  $k(Q\times Q)$-modules) 
with a vertex of order $|Q|$  are isomorphic to $(kQ)_\tau$, with $\tau$ an 
automorphisms of $Q$ and there is at least one such summand. 
Note that $M^\vee\tenB Mj$ has a summand isomorphic 
to $Cj$ as a $C$-$kQ$-module, which has a vertex $\Delta(Q)$.
Using Lemma \ref{perfect-vertices-Lemma} and Lemma 
\ref{bimod-idempotent-Lemma} shows  that $Mj$ has vertices of order $|P|=|Q|$.
It follows that the  non-projective summand of $Mj$ is isomorphic to a direct 
summand of $Bi\tenkP (_\mu kQ)$ for some isomorphism $\mu : P\to Q$ (obtained 
from composing $\varphi$ with an automorphism of $Q$). This last bimodule
is isomorphic to $(Bi)_\psi$, where $\psi=\mu^{-1}$, whence  the  second statement.  
The last statement is immediate  from the second  since  if $M$ induces a  Morita 
equivalence, then the  fact that   $(Cj\tenkQ k)$ has no nonzero projective 
summands implies  that  $M\tenC (Cj\tenkQ k)$  has no  nonzero projective summand.
\end{proof}

\begin{proof}[{Proof of Theorem \ref{splendid-stable-source}}]
By  Lemma \ref {isomorphic-summands-Lemma},
 the isomorphism class of $Bi\tenkP k$ is independent
of the choice of a defect group $P$ and source idempotent $i$.
Since $M$ induces a splendid stable equivalence, we may assume, by 
Lemma \ref{splendid-source-Lemma}, that $Mj\cong$ $(Bi)_\psi \oplus Y$ 
for some projective $B$-$kQ$-bimodule $Y$ and some
group isomorphism $\psi : Q\to P$. Now $(Bi)_\psi \tenkQ k\cong$
$Bi\tenkP k$, and this $B$-module has no nonzero projective direct
summand (cf. Proposition \ref{no-proj-summands-Prop}) 
while $Y\tenkQ k$ is a projective $B$-module.
Thus $Bi\tenkP k$ is isomorphic to the non-projective part of $Mj\tenkQ k$,
whence the result. 
\end{proof}

For $C$ a Brauer correspondent of $B$ this yields the following more  
precise  result.

\begin{Corollary}\label{splendid-source-Cor}
Let $G$ be a finite group, $B$  a block of $kG$ with a non-trivial defect group 
$P$. Let $C$ be the block of $kN_G(P)$ which is the Brauer correspondent of 
$B$. Let $j\in C^P$ and $i\in B^P$ be source idempotents. Suppose that $k$ is
a splitting field  for $B$, $C$ and their Brauer pairs.
Let $M$ be an indecomposable perfect $B$-$C$-bimodule inducing a splendid 
stable equivalence. Then there is an automorphism $\psi$ of $P$ such that
$Mj\cong$ $Bi_\psi$ as $B$-$kP$-bimodules. In particular, the functor
$M\tenC-$ sends the soure permutation $C$-module $Cj\tenkP k$ to a
module isomorphic to the source permutation $B$-module $Bi\tenkP k$.
\end{Corollary}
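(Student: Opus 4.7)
The plan is to build directly on Theorem \ref{splendid-stable-source}, which already yields a decomposition $Mj \cong (Bi)_\psi \oplus Y$, where $\psi$ is a group isomorphism from the defect group of $C$ to that of $B$ --- hence an element of $\Aut(P)$ since both defect groups equal $P$ --- and $Y$ is a projective $B$-$kP$-bimodule. The entire content of the corollary then reduces to showing $Y = 0$. Once that is established, the bimodule isomorphism $Mj \cong (Bi)_\psi$ is immediate, and the ``in particular'' statement follows by tensoring with $-\tenkP k$: for any $\psi \in \Aut(P)$ the map $u \mapsto \psi(u)$ merely permutes $P$, so the defining relations of $(Bi)_\psi \tenkP k$ coincide with those of $Bi \tenkP k$, yielding $(Bi)_\psi \tenkP k \cong Bi \tenkP k$ as $B$-modules.

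To force $Y = 0$, I would exploit the Brauer-correspondent hypothesis through Proposition \ref{normal-Prop}: since $P$ is normal in $N_G(P)$, the source permutation $C$-module $T := Cj \tenkP k$ is semisimple and every simple $C$-module appears as a direct summand. This is precisely the hypothesis required by Theorem \ref{simpleminded-Thm}(ii), which I would then apply to the splendid stable equivalence given by $M$ and its $k$-dual $M^\vee$ between $B$ and $C$. The hypotheses of that theorem are readily verified: $B$ and $C$ are indecomposable, non-simple (as the common defect group $P$ is non-trivial) and symmetric, with separable semisimple quotients as blocks of finite group algebras over a splitting field, and both $M$ and $M^\vee$ are indecomposable. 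The conclusion is that the $B$-module $M \tenC T$ has no nonzero projective direct summand.

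On the other hand, $M \tenC T = Mj \tenkP k \cong ((Bi)_\psi \tenkP k) \oplus (Y \tenkP k)$. Since $kP$ is local, any projective $B$-$kP$-bimodule $Y$ decomposes as a direct sum of indecomposable projective bimodules of the form $Be' \tenk kP$ with $e'$ a primitive idempotent of $B$; applying $-\tenkP k$ to such a summand gives $Be'$, a nonzero projective $B$-module. Hence $Y \tenkP k$ is a projective $B$-module that vanishes precisely when $Y$ does, and comparison with the previous paragraph forces $Y = 0$. The conceptual heart of the argument --- and the only place where the Brauer-correspondent assumption is essential --- is recognising that the semisimplicity supplied by Proposition \ref{normal-Prop} is exactly the input needed to activate Theorem \ref{simpleminded-Thm}, thereby eliminating the projective noise left undetermined in the general splendid-stable statement.
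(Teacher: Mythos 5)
Your proof is correct and takes essentially the same route as the paper: Proposition \ref{normal-Prop} gives semisimplicity of $Cj\tenkP k$, the ``no projective summand'' machinery (the paper cites Proposition \ref{no-proj-summands-Prop} directly rather than its packaged form in Theorem \ref{simpleminded-Thm}(ii), but these are the same tool) shows $Mj\tenkP k$ has no projective direct summand, and this kills the projective error term $Y$ in the decomposition $Mj\cong (Bi)_\psi\oplus Y$ from Theorem \ref{splendid-stable-source}. Your explicit observation that $Y\tenkP k=0$ forces $Y=0$ because projective $B$-$kP$-bimodules are direct sums of $Be'\tenk kP$ is a detail the paper leaves implicit, but it is the right justification.
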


\begin{proof}
By Proposition \ref{normal-Prop} and the assumptions on $k$,
the source permutation $C$-module $Cj\tenkP k$ is split semisimple, 
with $\ell(C)=w(C)$ isomorphism classes of simple summands.
 Proposition \ref{no-proj-summands-Prop} implies that $M\tenC Cj\tenkP k$
 has no nonzero projective summand as a left $B$-module. 
 The result follows from Theorem \ref{splendid-stable-source}.
 \end{proof}

Our next task is to  give a proof of Theorem  \ref{splendid-thm}.

\begin{Theorem} \label{stable-Brauer-corr}
Let $G$ be a finite group, $B$ a block of $kG$ with a non-trivial defect group $P$,
and let $C$ be  the block of $kN_G(P)$ which is  the Brauer correspondent of $B$.
Suppose that $k$ is a splitting field  for the blocks $B$, $C$ and their Brauer pairs.
Let $j\in$ $C^P$ be a source idempotent.
Let $M$ be an indecomposable $B$-$C$-bimodule which induces a stable
equivalence of Morita type. Assume that the algebra $\End_B(Mj\tenkP k)$ is 
self-injective. The following hold.
\begin{itemize}
\item[{\rm (i)}] 
We have  $\ell(B)=\ell(C)$. 
\item[{\rm (ii)}]
If $M$ induces a splendid stable  equivalence  between $B$ and $C$, then 
$\ell(B)=w(B)$; that is,  Alperin's weight conjecture holds for $B$.
\end{itemize}
\end{Theorem}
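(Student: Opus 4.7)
For part (i), the plan is to invoke Theorem~\ref{simpleminded-Thm} directly. Since $P$ is normal in $N_G(P)$ and $k$ is a splitting field for $C$, Proposition~\ref{normal-Prop} tells us that $V=Cj\tenkP k$ is a semisimple $C$-module containing every isomorphism class of simple $C$-module as a direct summand. Setting $U=M\tenC V\cong Mj\tenkP k$, observe that $B$ and $C$ are symmetric indecomposable finite-dimensional $k$-algebras, non-simple because their defect groups are non-trivial, and the splitting hypotheses on $k$ make $B/J(B)$ and $C/J(C)$ separable. Since $M$ together with its $k$-dual induces the stable equivalence of Morita type, and $\End_B(U)$ is self-injective by assumption, Theorem~\ref{simpleminded-Thm}(iii) yields $\ell(B)=\ell(C)$.

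For part (ii), assume in addition that $M$ is splendid. I would first apply Corollary~\ref{splendid-source-Cor}: there is a source idempotent $i\in B^P$ and an automorphism $\psi$ of $P$ such that $Mj\cong (Bi)_\psi$ as $B$-$kP$-bimodules. The twist by $\psi$ disappears on tensoring with the trivial $kP$-module, giving $Mj\tenkP k\cong Bi\tenkP k$, so $\End_B(Bi\tenkP k)$ is self-injective. Corollary~\ref{Bi-selfinjective-Cor} then reduces the equality $\ell(B)=w(B)$ to verifying that every indecomposable direct summand of $Bi\tenkP k$ is isomorphic to the Green correspondent of a $B$-weight.

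To finish part (ii), I would transport simple $C$-modules through the stable equivalence. Each simple $C$-module $X$, viewed as a $kN_G(P)$-module, is a trivial source module with vertex $P$ because $C$ has normal defect group $P$. The splendid hypothesis on $M$ ensures that $M\tenC X$ splits as a projective $B$-module plus a single indecomposable non-projective trivial source summand with vertex $P$, and by uniqueness of the Green correspondence this summand must be the Green correspondent of $X$ in $kG$, hence the Green correspondent of a $B$-weight with weight subgroup $P$. Since $Cj\tenkP k$ contains every simple $C$-module as a summand, and $Bi\tenkP k$ is the non-projective part of $M\tenC(Cj\tenkP k)$ by Theorem~\ref{splendid-stable-source}, every indecomposable summand of $Bi\tenkP k$ is of the required form.

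I expect the main technical point will be the cleanly argued passage from the splendid stable equivalence to the Green correspondence: namely, that the non-projective part of $M\tenC X$ has trivial source and full vertex $P$, so that applying $M^\vee$ back (modulo projectives) returns $X$ and forces the identification with the Green correspondent in $G$. This relies on the $p$-permutation nature of $M$ and on vertex and source control for tensor products of trivial source bimodules via Lemma~\ref{perfect-vertices-Lemma}, together with Proposition~\ref{no-proj-summands-Prop} to control projective summands on both sides.
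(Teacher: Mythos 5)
Your part (i) is exactly the paper's argument: combine Proposition~\ref{normal-Prop} with Theorem~\ref{simpleminded-Thm}(iii).

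For part (ii) you take a genuinely different route. The paper argues via fusion systems: a maximal $B$-Brauer pair $(P,e)$ is also a maximal $C$-Brauer pair, $\mathcal{G}=N_{\mathcal F}(P)$ for the two fusion systems, and because $M$ is splendid the source-algebra invariance of fusion systems (\cite[Theorem 9.8.2]{LiBookII}) forces $\mathcal F=\mathcal G$, so $P$ is the unique weight subgroup and $w(B)=w(C)=\ell(C)=\ell(B)$. Your route instead reduces, via Corollary~\ref{splendid-source-Cor} and Corollary~\ref{Bi-selfinjective-Cor}, to showing that every indecomposable summand of $Bi\tenkP k$ is the Green correspondent of a $B$-weight. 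That reduction is sound, but the step you flag as ``the main technical point'' is in fact a genuine gap, for two reasons. First, Lemma~\ref{perfect-vertices-Lemma} cannot be invoked for $M\tenC X$: the lemma needs both bimodules to be perfect, and $X$, viewed as a $C$-$k$-bimodule, is not projective as a left $C$-module, so it is not perfect. Second, even granted that the non-projective part $U_X$ of $M\tenC X$ has vertex $P$ and trivial source, this does not identify $U_X$ as the Green correspondent of $X$: there are $\ell(C)$ indecomposable trivial source $B$-modules with vertex $P$, and the Green correspondence requires knowing that $X$ is a summand of $\Res^G_{N_G(P)}(U_X)$, which does not follow formally from the stable equivalence $M^\vee\tenB U_X\equiv X$ (a priori a splendid stable equivalence can be twisted by a splendid self-equivalence of $C$ permuting its simple modules, in which case $U_X$ would be the Green correspondent of a different simple $C$-module). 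What closes the gap is a counting argument rather than an identification: Theorem~\ref{simpleminded-Thm}(ii) together with Corollary~\ref{splendid-source-Cor} gives that $Bi\tenkP k$ has exactly $\ell(C)$ isomorphism classes of indecomposable summands; Theorem~\ref{Bi-weights} exhibits $\ell(C)$ pairwise non-isomorphic such summands, namely the Green correspondents of the $B$-weights with weight subgroup $P$; hence these exhaust all summands and, again by Theorem~\ref{Bi-weights}, every $B$-weight has weight subgroup conjugate to $P$, so $w(B)=\ell(C)$, which together with part (i) gives $\ell(B)=w(B)$. With that repair your route works and avoids the fusion-system input from \cite[Theorem 9.8.2]{LiBookII}, at the cost of using Theorem~\ref{Bi-weights} and Corollary~\ref{Bi-selfinjective-Cor}, which the paper's proof of this particular theorem does not need.
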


The reason why  we are not stating in (i)   that $B$ satisfies Alperin's weight conjecture
is that for an arbitrary stable equivalence of Morita type between $B$ and its Brauer 
correspondent $C$ we do not know whether  this is sufficient for $B$ 
and $C$ to have  the same fusion systems - we only have 
that $\ell(C)=w(C)$. If $P$ is abelian, or more generally, if $N_G(P)$ controls 
$B$-fusion, then we do get the equality $\ell(C)=w(B)$. The second statement 
of the above Theorem makes use of the fact  that this is the  case if $M$ induces a 
splendid stable  equivalence.

\begin{proof}[{Proof of Theorem \ref{stable-Brauer-corr}}]
By Proposition \ref{normal-Prop} the $C$-module $Cj\tenkP k$ is semisimple,
with $\ell(C)$ isomorphism classes of simple summands. We have
$M\ten_C (Cj\tenkP k)\cong$ $Mj\tenkP k$. 
It follows from Theorem \ref{simpleminded-Thm}, applied
to $B$, $C$, the $B$-$C$-bimodule  $M$ and the semisimple $C$-module 
$V=Cj\tenkP k$ that if $\End_B(Mj\tenkP k)$ is self-injective, then $\ell(B)=$ 
$\ell(C)$. This shows (i).

A maximal $B$-Brauer pair of the form $(P,e)$ for some block $e$ of $kC_G(P)$
 is also a  maximal $C$-Brauer pair.
Thus any $C$-weight is a $B$-weight. If we denote by $\CF$, $\CG$ the
fusion systems of $B$, $C$, respectively, associated with $(P,e)$,
then $\CG = N_\CF(P)$ (see e.g. \cite[Proposition 8.5.4]{LiBookII}).
If $M$ is splendid, or equivalently, if $M$ is a trivial source module  when
regarded as a $k(G\times H)$-module, then it follows from 
\cite[Theorem 9.8.2]{LiBookII} that $\CF={^\varphi{\CG}}$ for some
automorphism $\varphi$ of $P$. Since $\CF$ contains $\CG$ this implies 
$\CF=\CG$. 
In particular, $P$ is the only weight subgroup for both $B$ and $C$, and so 
$\ell(C)=w(C)=w(B)$ in that case, which implies the second statement.
\end{proof}

\begin{proof}[{Proof of \ref{splendid-thm}}]
Theorem \ref{splendid-thm} follows from combining the two statements
in Theorem \ref{stable-Brauer-corr}.
\end{proof}

A stable equivalence of Morita type between a block $B$ and a Brauer
correspondent $C$ is frequently given by truncated induction/restriction.
This is a situation which arises, for instance, for principal blocks with
cyclic defect, for blocks of defect $1$, and for blocks with Klein four
defect groups - cases that appear in the main results in the Introduction.
We include some characterisations of this situation.

\begin{Proposition}  \label{splendid-stable-Prop}
Let $G$ be a finite group and $B$ a block of $kG$ with a non-trivial defect group $P$.
Let $C$ be the block of $kN_G(P)$ with $P$ as defect group which is the Brauer 
correspondent of $B$.  Write $B=kGb$ for $b$ a primitive idempotent in $Z(kG)$
and $C=kN_G(P)c$ for some primitive idempotent $c$ in $Z(kN_G(P))$.
The following are equivalent.
\begin{itemize}
\item[{\rm (i)}]
The $B$-$C$-bimodule $Bc$ and its dual, which is isomorphic to $cB$,
induce a stable equivalence of Morita type between $B$ and $C$.
\item[{\rm (ii)}]
As a $C$-$C$-bimodule we have $cBc\cong$ $D\oplus Y$ for some projective
$D$-$D$-bimodule.
\item[{\rm (iii)}]
For any source idempotents $i\in B^P$ and $j\in $ $C^P$, 
the $iBi$-$jDj$-bimodule $iBj$ and its dual $jBi$ 
induce a stable equivalence of Morita type between $iBi$ and $jDj$.
\item[{\rm (iv)}]
For some source idempotents $i\in B^P$ and $j\in D^P$, 
the $iBi$-$jDj$-bimodule $iBj$ and its dual $jBi$ 
induce a stable equivalence of Morita type between $iBi$ and $jDj$.
\item[{\rm (v)}]
For any source idempotent $j\in C^P$ there is a source idempotent
$i\in B^P$ which is contained in $jBj$ and which commutes with $jCj$.
For any such choice of $i$ and $j$, multiplication by $i$ induces an
algebra homomorphism $jCj\to iBi$ which is split injective as
a $jCj$-$jCj$-bimodule homomorphism such that
we have an isomorphism of $jCj$-$jCj$-bimodules
$iBi \cong$ $jCj\oplus Z$ for some projective $jCj$-$jCj$-bimodule $Z$. 
\end{itemize}
If these equivalent conditions hold, then the $B$-$C$-bimodule $Bc$ has,
up to isomorphism, a unique indecomposable non-projective direct 
summand $M$, and then $M$ induces a splendid stable equivalence
between $B$ and $C$, and if $k$ is large enough, then
we have $Bi\tenkP k\cong$ $Mj\tenkP k\cong$ $M\tenC Cj\tenkP k$.
\end{Proposition}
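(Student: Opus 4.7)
The plan is to establish the equivalence of (i)--(v) along the circuit (i) $\Leftrightarrow$ (ii), (i) $\Leftrightarrow$ (iii) $\Leftrightarrow$ (iv), (iv) $\Leftrightarrow$ (v), and then to derive the three concluding claims using Corollary \ref{splendid-source-Cor} together with the trivial source structure of $Bc$. The main tools are the Morita equivalence between $B$ and its source algebra $iBi$ from Proposition \ref{BiMorita}, the symmetry of $B$ and $C$, and Lemma \ref{splendid-source-Lemma}.

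For (i) $\Leftrightarrow$ (ii), the forward direction is formal: the natural identity $cB \tenB Bc \cong cBc$ of $C$-$C$-bimodules (via multiplication in $kG$, valid since $c$ is idempotent) turns one half of the defining stable-equivalence isomorphism directly into the splitting of $cBc$.  The reverse direction is the main technical point.  Assuming $cBc \cong C \oplus Y$ with $Y$ projective, one must also produce a splitting $Bc \tenC cB \cong B \oplus U$ with $U$ projective as a $B$-$B$-bimodule.  Here I would exploit that $Bc$ is perfect (it is a direct summand of $B$ on the left via right multiplication by $c$, and $kGc$ is free as a right $C$-module) together with the self-duality $cB \cong (Bc)^\vee$ coming from the symmetry of $B$ and $C$; the map $Bc \tenC cB \to BcB$ induced by multiplication is an isomorphism onto its image, and Brauer correspondence for blocks guarantees that $BcB$ contains $1_B$ modulo a relatively $P$-projective ideal, yielding the required splitting.

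For the source-algebra reformulations, the Morita equivalence of Proposition \ref{BiMorita} transports $B$-$C$-bimodules to $iBi$-$jCj$-bimodules by tensoring with $iB$ on the left and $Cj$ on the right, sending projective bimodules to projective bimodules and the specific bimodule $Bc$ to $iB \tenB Bc \tenC Cj \cong iBj$ (using $ic = i$ and $cj = j$).  This identification directly yields (i) $\Leftrightarrow$ (iii), and (iii) $\Leftrightarrow$ (iv) follows from Lemma \ref{isomorphic-summands-Lemma}, since different choices of source idempotents are conjugate.  For (iv) $\Leftrightarrow$ (v), given a source idempotent $j \in C^P$ one chooses $i \in B^P$ in $jBj$ commuting with $jCj$ by selecting $i$ primitive in $(jBj)^{jCj}$ and lying in the unique local point of $P$ on $B$ associated with the Brauer pair determined by $j$; then $x \mapsto ix = xi$ is an algebra homomorphism $jCj \to iBi$, and the product $iBj \otimes_{jCj} jBi$ identifies canonically with a sub-bimodule of $iBi$ via multiplication.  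Condition (iv), which says $iBj \otimes_{jCj} jBi \cong iBi \oplus (\text{proj})$ as $iBi$-$iBi$-bimodules, restricts to the splitting $iBi \cong jCj \oplus Z$ with $Z$ projective as a $jCj$-$jCj$-bimodule asserted in (v), and conversely.

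Assuming the equivalent conditions, I would conclude the final claims as follows. Since $kG \cong \Ind^{G \times G}_{\Delta G}(k)$ is a trivial source $k(G \times G)$-module, its summand $Bc$ is a trivial source $k(G \times N_G(P))$-module. Decompose $Bc = M_1 \oplus \cdots \oplus M_r \oplus Q$ with $M_i$ indecomposable non-projective and $Q$ projective. For any simple $C$-module $T$ (all of which are non-projective since $P \neq 1$), Proposition \ref{no-proj-summands-Prop} shows each nonzero $M_i \tenC T$ has no projective summand; since $\overline{Bc \tenC -}$ is an equivalence of stable categories and hence is indecomposable as an additive functor, there can be only one $M_i$, so $r = 1$ and the non-projective indecomposable summand $M$ of $Bc$ is unique up to isomorphism. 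Being a summand of a trivial source module, $M$ is itself trivial source, so $M$ induces a splendid stable equivalence. Corollary \ref{splendid-source-Cor} applied to $M$ then yields $Mj \cong (Bi)_\psi$ as $B$-$kP$-bimodules for some automorphism $\psi$ of $P$, and tensoring with $k$ over $kP$ gives $Mj \tenkP k \cong Bi \tenkP k$, while $M \tenC Cj = Mj$ holds tautologically. The main obstacle is the reverse implication (ii) $\Rightarrow$ (i), which is not purely formal and relies on the interplay between the symmetric-algebra duality and the Brauer correspondence; a secondary subtlety is the uniqueness of $M$, which requires the indecomposability of the functor $\overline{Bc \tenC -}$ rather than merely its being an equivalence.
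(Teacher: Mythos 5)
Your plan follows the same circuit of implications as the paper, and the paper's own proof is largely a chain of citations (to \cite[Propositions 9.8.3, 9.8.4, Theorems 6.15.1, 9.8.2]{LiBookII}, \cite[2.4]{Listable}), so at the level of structure you are aligned. However, the places where you attempt to replace a citation by an argument are exactly where gaps appear.

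The direction (ii) $\Rightarrow$ (i) is not fixed by what you write. You argue via a multiplication map $Bc\tenC cB\to BcB$, which ``is an isomorphism onto its image.'' In general the multiplication map $Bc\tenC cB\to B$ is \emph{not} injective, and the vague appeal to ``Brauer correspondence guaranteeing $1_B$ modulo a relatively $P$-projective ideal'' does not give a bimodule splitting $Bc\tenC cB\cong B\oplus(\text{proj})$. This is a genuine missing step that the paper handles by citing \cite[Proposition 9.8.3]{LiBookII}; to make your argument work you would at minimum need the fact that $Bc$, although decomposable, has a unique non-projective indecomposable summand $N$ with $N^\vee\tenB N\cong C\oplus(\text{proj})$, and then invoke the standard symmetric-algebra fact (\cite[Theorem 4.14.2]{LiBookI}) that an \emph{indecomposable} perfect bimodule over symmetric algebras inducing a split surjection onto $C$ after one tensoring automatically induces a stable equivalence. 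You acknowledge this is ``the main obstacle'' but do not close it.

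The step (iv) $\Leftrightarrow$ (v) is also misstated. You claim that $iBj\otimes_{jCj}jBi$ ``identifies canonically with a sub-bimodule of $iBi$ via multiplication.'' That is false in general; the multiplication map $iBj\otimes_{jCj}jBi\to iBi$ need not be injective. The correct argument, as in the paper, goes through \cite[Theorem 6.15.1]{LiBookII} for the existence and properties of the compatible source idempotent $i\in jBj$ commuting with $jCj$, and \cite[Proposition 9.8.4]{LiBookII}, which says precisely that $iBj$ and its dual induce a stable equivalence of Morita type if and only if the image of $jCj\hookrightarrow iBi$ has a projective complement as a $jCj$-$jCj$-bimodule. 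What you would need is the adjunction unit $jCj\to jBi\otimes_{iBi}iBj$ and counit, not an embedding of the tensor product into $iBi$.

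Two smaller points. First, for (iii) $\Leftrightarrow$ (iv) you cite Lemma \ref{isomorphic-summands-Lemma}, but that lemma concerns left modules $kGi\tenkQ k$, not bimodules $iBj$; the underlying fact you want (source idempotents lie in a single $N_G(P)$-orbit of local points) is correct, but it is used directly in the paper, not through that lemma. Second, your derivation that $iB\tenB Bc\tenC Cj\cong iBj$ ``using $ic=i$ and $cj=j$'' needs the specific compatible choice of $i$ as in (v); for an arbitrary source idempotent $i$, the relation $ic=i$ is not automatic. Finally, your uniqueness-of-$M$ argument via ``indecomposability of the stable equivalence functor'' is in the right spirit, but the clean reference is \cite[2.4]{Listable} or \cite[Theorem 4.14.2]{LiBookI}, which the paper uses.
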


\begin{proof}
The implication (i) $\Rightarrow$ (ii) is proved in \cite[Proposition 9.8.3]{LiBookII}.
Noting that $cBc\cong$ $cB\tenB Bc$, it follows that the converse implication 
(ii) $\Rightarrow$ (i) is just part of the definition of
a stable equivalence of Morita type. The equivalence of (i), (iii), and (iv) follows
from the standard Morita equivalence between a block and any of its source
algebras; this is spelled out in \cite[Theorem 9.8.2]{LiBookII}.
Since the local points of $P$ on $B$ (resp. $c$) are $N_G(P)$-conjugate it follows that
if statement (iii) holds, then statement (iii) holds with $i$, $j$ replaced by any source 
idempotents.  
By \cite[Theorem 6.15.1]{LiBookII}, for any source idempotent
$j\in C^P$ there is a source idempotent $i\in B^P$ contained in $jBj$, such that
$i$ commutes with $jCj$ and such that multiplication by $i$ induces an
algebra homomorphism $jCj\to iBi$ which splits as a homomorphism of
$jCj$-bimodules. It follows from \cite[Proposition 9.8.4]{LiBookII} that
$iBj$ and its dual induce a stable equivalence of Morita type if and only
if the image of $jCj$ in $iBi$ has a complement as a $jCj$-$jCj$-bimodule
which is projective. This proves the equivalence of (iv) and (v).
It follows from \cite[2.4]{Listable} (or \cite[Theorem 4.14.2]{LiBookI}) 
that $Bc$ has up to isomorphism a unique indecomposable
non-projective  direct summand $M$ as a $B$-$C$-bimodule, and that this 
summand induces a stable equivalence of Morita type. Since $M$ is a
summand of the $p$-permutation bimodule  $Bc$, it follows that $M$ induces
a splendid stable equivalence. The last statement follows from
Corollary \ref{splendid-source-Cor}.
\end{proof}

\section{Brauer tree algebras.} 
\label{Brauertree-section} 

The first result in this Section illustrates that Brauer tree algebras provide 
examples for the scenario considered in Theorem \ref{simpleminded-Thm}.
Let $k$ be a field.

\begin{Theorem} \label{Brauertree-thm}
Let $A$ be a non-simple Brauer tree algebra over a field $k$. Let $\{U_i\}_{i\in I}$ 
be a  family of indecomposable non-projective $A$-modules which correspond
to a set of representatives of the isomorphism classes of simple $B$-modules
under a stable equivalence between $A$ and a symmetric Nakayama algebra $B$.
Suppose that $k$ is a splitting field for $A$ and $B$.
Then the algebra $\End_A(\oplus_{i\in I}\ U_i)$ is a direct product of
self-injective Nakayama algebras; in particular, $\End_A(\oplus_{i\in I}\ U_i)$
is self-injective.
\end{Theorem}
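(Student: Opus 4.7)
The plan is to combine the classical structure theory of Brauer tree algebras with the combinatorics of walks on the underlying tree. Since $A$ is a non-simple Brauer tree algebra, it is stably equivalent (indeed, derived equivalent by Rickard) to a symmetric Nakayama algebra $B$ with the same numerical invariants (number of edges and exceptional multiplicity). First I would identify the modules $U_i$ explicitly: under the stable equivalence, the simple $B$-modules correspond to a canonical family of indecomposable uniserial $A$-modules whose composition series traces walks on the Brauer tree $T$ of $A$. This identification uses the explicit description of the tilting/stable equivalence given by successive ``flips'' of the tree (a procedure which is well-documented in the literature on Brauer tree algebras), and reduces the statement to the computation of $\End_A(\bigoplus_i U_i)$ for these specific uniserial modules.

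Next I would establish the product decomposition. Partition the family $\{U_i\}_{i \in I}$ into subsets $\CU_v$ indexed by the vertices $v$ of $T$, grouping each $U_i$ with the vertex around whose cyclic ordering its composition walk rotates. The key claim is that for distinct vertices $v \neq w$, and $U_i \in \CU_v$, $U_j \in \CU_w$, one has $\Hom_A(U_i, U_j) = 0$; this is because, when lifted to the universal cover (or unfolded along $T$), the walks defining $U_i$ and $U_j$ at distinct vertices cannot be realised as consecutive Loewy layers of a common uniserial object, so there is no nonzero $A$-homomorphism between them. Consequently
\[
\End_A\Big(\bigoplus_i U_i\Big) \ \cong\ \prod_{v\in T}\ \End_A\Big(\bigoplus_{U \in \CU_v} U\Big).
\]

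Finally, I would check that each factor $\End_A(\bigoplus_{U \in \CU_v} U)$ is a self-injective Nakayama algebra. For a vertex $v$ with $n_v$ incident edges and multiplicity $m_v$, the members of $\CU_v$ correspond to the $n_v$ edges at $v$ (or to compositions of the cyclic permutation among them), and direct Hom-computations with uniserial modules yield a basic algebra whose quiver is the cyclic quiver on $n_v$ vertices with all relations of length $n_v m_v$, that is, a symmetric (hence self-injective) Nakayama algebra. Taking the product over vertices gives the desired decomposition.

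The main obstacle is Step 1: the isomorphism class of each $U_i$ depends on the choice of stable equivalence, so care is needed to specify them coherently and verify the partition $\CU_v$ in Step 2 is intrinsic up to a relabelling that preserves the product decomposition. An alternative, and possibly cleaner, route is to invoke Rickard's derived equivalence $D^b(A) \simeq D^b(B)$ directly: the simple $B$-modules are mapped to a simple-minded collection in $D^b(A)$ whose endomorphism ring in the stable category is $\prod_i k$, and then to recover the module-theoretic endomorphism ring via the projective-morphism ideal using the symmetric structure of $A$. Either way, the combinatorics of the Brauer tree controls the answer, and the Nakayama output reflects the local cyclic symmetry at each vertex.
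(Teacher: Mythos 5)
Your overall strategy is the same as the paper's: partition the $U_i$ by the vertex of the Brauer tree around which their composition walk cycles (in the paper's language, by $\rho$-orbits), observe that modules attached to distinct vertices admit no nonzero homomorphisms, and identify each factor as a cyclic Nakayama algebra. Two points deserve comment. First, your justification for the orthogonality $\Hom_A(U_i,U_j)=0$ across distinct vertices via a universal-cover / ``unfolding'' argument is much heavier than needed and not fully spelled out; the paper's argument is the one-line observation that $U_i$ and $U_j$ have \emph{disjoint sets of composition factors} (since distinct $\rho$-orbits are disjoint), which immediately kills all Homs in both directions. You should replace the universal-cover heuristic with this.

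Second, and more seriously, your final step contains a genuine error: you assert that each factor is a \emph{symmetric} Nakayama algebra, ``hence self-injective.'' This is false in general. For a cyclic quiver on $n_v$ vertices with uniform zero relations of length $d=n_v m_v$, the algebra is self-injective Nakayama, but it is symmetric only if $d-1$ is a multiple of $n_v$, i.e.\ only if $n_v=1$ (a leaf vertex of the tree). For any $\rho$-orbit of size $>1$ the socle of a projective indecomposable is not isomorphic to its top, so the factor is self-injective but \emph{not} symmetric. The paper makes this explicit right after Theorem~\ref{Brauertree-thm-2}, and it is not a technicality: Theorem~\ref{cyclic-Sp}(a) records that for the principal block of $kS_p$ with $p\ge 5$ the resulting endomorphism algebra is self-injective but not symmetric. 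Your conclusion (self-injectivity) is still correct, because a cyclic quiver with uniform-length zero relations is automatically self-injective Nakayama regardless of symmetry — but the route ``symmetric $\Rightarrow$ self-injective'' as written does not hold, and would, if taken seriously, contradict the paper's own examples.
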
 

The Heller operator $\Omega$ on the stable cateogry $\modbar(A)$
is an auto-equivalence, and hence the above Theorem applies to the Heller 
translates of the $U_i$ as well.  Thus the algebra
$$\End_A(\oplus_{i\in I}\ \Omega_A(U_i))$$ 
is also a direct product of
self-injective Nakayama algebras, hence self-injective. The two endomorphism
algebras of $\oplus_{i\in I} \ U_i$ and $\oplus_{i\in I}\ \Omega_A(U_i)$
need not be Morita equivalent, however, and need not even have the same
number of blocks, as we will see in examples in Section \ref{Example-section}.

\medskip
The  material needed for the proof of Theorem
\ref{Brauertree-thm} goes back to \cite{GaRie}, \cite{Greenwalk}; we follow
the exposition and notation from \cite[Sections 11.6, 11.7]{LiBookII}.

\medskip
A finite-dimensional $k$-algebra $B$ is called a {\em Nakayama algebra} if all
projective indecomposable $B$-module are uniserial (that is, have a unique
composition series).  An indecomposable self-injective split 
Nakayama algebra $B$ has a cycle as quiver such that each path of a fixed length 
$d\geq 1$ starting at any vertex is a zero relation; that is, 
all projective indecomposable modules have the same composition length $d$.
In that case, if $\{T_j\}_{j\in J}$ is a set of representatives of the isomorphism classes of
simple $B$-modules, there is a unique cyclic transitive permutation $\rho$ of $J$
such that the unique composition series of a projective cover of $T_j$ has 
composition factors, from top to bottom, isomorphic to 
$$T_j, T_{\rho(j)}, T_{\rho^2(j)},\dots, T_{\rho^{d-1}}(j).$$
For $B$ to be moreover symmetric, we need that the top and bottom composition 
factors are isomorphic; that is, $T_j\cong$ $T_{\rho^{d-1}(j)}$,
and hence $d-1$ is  a multiple of the number $\ell(B)=|J|$ of isomorphism classes
of simple $B$-modules. If $B$ is an indecomposable split symmetric Nakayama
algebra, then $B$ is a Brauer tree algebra, with a star having $\ell(B)$ edges all 
emanating from a vertex to which the exceptional multiplicity $m=\frac{d-1}{|J|}$ 
is attached, where $d$ is as before  the common length of the projective indecomposable
$B$-modules. 

\medskip
Let $A$, $B$ be finite-dimensional indecomposable split symmetric non-simple 
$k$-algebras  such that $B$ is a Nakayama algebra and such that there is a 
$k$-linear stable equivalence $\CF : \modbar(A)\cong\modbar(B)$.
We index simple modules and projective indecomposable modules by  primitive 
idempotents: choose sets of representatives $I$ and $J$ of the conjugacy classes of 
primitive idempotents in $A$ and $B$, respectively. 
Thus, setting $S_i=Ai/J(A)i$ and $T_j=Bj/J(B)j$, the sets $\{S_i\}_{i\in I}$ and
$\{T_j\}_{j\in J}$ are sets of representatives of the isomorphism classes of
simple $A$-modules and simple $B$-modules, respectively. By 
\cite[Proposition 11.6.2]{LiBookII}, $A$ is split if and only if $B$ is split.

\medskip
By \cite[Theorem 11.6.3]{LiBookII}, each projective indecomposable $A$-module
$Ai$, where $i\in I$, has two unique uniserial submodules $U_i$ and $V_i$ such that
$\CF(U_i)$ and $\Omega_B(\CF(V_i))$ are simple $B$-modules. More precisely,
{\em any} $k$-linear stable equivalence between $A$ and $B$ sends the family 
$(T_j)_{j\in J}$ to either the family $(U_i)_{i\in I}$ or the family $(V_i)_{i\in I}$, 
up to isomorphism. The two submodules $U_i$, $V_i$ of $Ai$ satisfy 
$U_i\cap V_i=\soc(Ai)$ and $U_i+V_i=J(A)i$.
Moreover, $U_i/\soc(Ai)$ and $V_i/\soc(Ai)$ have no common composition factors.
By \cite[Theorem 11.6.4]{LiBookII}, there are unique permutations $\rho$ and $\sigma$
such that the composition series of $U_i$ and $V_i$,  from top to bottom, are
$$S_{\rho(i)}, S_{\rho^2(i)},...., S_{\rho^a(i)}=S_i$$
$$S_{\sigma(i)}, S_{\sigma^2(i)},...., S_{\sigma^b(i)}=S_i . $$
The integers $a$ and $b$ are equal to the composition lengths of $U_i$ and $V_i$,
respectively, and because the bottom composition factor of either module
is $S_i$, the integer $a$ is a multiple of the length of the 
$\langle\rho\rangle$-orbit $i^\rho$ of $i$,and the integer $b$ is a multiple of the 
length of the $\langle\sigma\rangle$-orbit $i^\sigma$ of $i$. 
The integers $a$ and $b$ depend only on the orbits $i^\rho$ and $i^\sigma$ of $i$,
respectively. Since $U_i/\soc(Ai)$ and $V_i/\soc(Ai)$ have no common
composition factors, it follows that at least one of $a$ or $b$ is in fact equal to
the length $|i^\rho$| of $i^\rho$  and the length $|i^\sigma|$ of $i^\sigma$,
respectively, and that $i$ is
the unique element of $I$ in the intersection $i^\rho\cap i^\sigma$
of the $\langle\rho\rangle$-orbit and the
$\langle\sigma\rangle$-orbit of $i$. 

\medskip
By \cite[Theorem 11.7.2]{LiBookII}, except for possibly one of these orbits,
the integers $a$ and $b$ are equal to the lengths of the orbits of $i$ under
$\langle\rho\rangle$ and $\langle\sigma\rangle$, respectively. 
That is, there is at most one orbit of either
$\rho$ or $\sigma$ for which there is an exceptional multiplicity $m>1$
such that $U_i$ or $V_i$ has composition factors with multiplicity $m$.

\medskip
The permutations $\rho$ and $\sigma$ determine the Brauer tree as follows:
the vertices of the Brauer tree are the orbits $i^\rho$ and $i^\sigma$,with $i\in I$,
and we draw an edge labelled $i$
between $i^\rho$ and $i^\sigma$. Since $i^\rho\cap i^\sigma=$ $\{i\}$, there are
no multiple edges between any two such orbits, and more precisely, this yields a tree,
with possibly one exceptional orbit to which we attach the exceptional multiplicity
$m$,  if this case arises.  The composition $\rho\circ\sigma$ is a transitive cycle on $I$.
If $A$ is itself a Nakayama algebra, then one of the two 
permutations is the identity and the other is a transitive cycle on $I$.
We prove the following slightly more precise version of
Theorem \ref{Brauertree-thm}.

\begin{Theorem} \label{Brauertree-thm-2}
With the notation above,  the algebra $\End_A(\oplus_{i\in I}\ U_i)$ is a direct 
product of algebras $N_R$, where $R$ runs over the set of
$\langle\rho\rangle$-orbits in $I$, and where $N_R$ is a self-injective Nakayama algebra
such that $\ell(N_R)=$ $|R|$ and such that the projective indecomposable
$N_R$-modules have composition length $|R|$ if $R$ is non-exceptional and
composition length $m|R|$ if $R$ is exceptional.
\end{Theorem}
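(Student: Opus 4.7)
The plan is to decompose $\End_A(\oplus_{i\in I}U_i)$ as a product indexed by the $\langle\rho\rangle$-orbits in $I$ and then identify each factor as a basic cyclic Nakayama algebra via an explicit quiver-with-relations description. Since the composition factors of $U_i$ lie entirely in $\{S_{i'}:i'\in i^\rho\}$, any nonzero homomorphism $U_i\to U_j$ forces the orbits $i^\rho$ and $j^\rho$ to share a simple and hence to coincide. This gives $\End_A(\oplus_{i\in I}U_i)\cong\prod_R N_R$ with $N_R=\End_A(\oplus_{i\in R}U_i)$. I fix an orbit $R$, index $R=\{i_0,\dots,i_{r-1}\}$ with $i_{s+1}=\rho(i_s)$ (indices mod $r$), and let $a$ be the common composition length of the $U_{i_s}$, so $a=r$ in the non-exceptional case and $a=mr$ in the exceptional case.

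The uniserial modules $U_{i_s}/\soc(U_{i_s})$ and $\rad(U_{i_{s-1}})$ share the same composition series $S_{\rho(i_s)},\dots,S_{\rho^{a-1}(i_s)}$ from top to bottom and are isomorphic as $A$-modules (the main technical point, discussed below); their common isomorphism class defines, for each $s$, an arrow $\alpha_s:U_{i_s}\to U_{i_{s-1}}$ with kernel $\soc(U_{i_s})$ and image $\rad(U_{i_{s-1}})$. Using that the submodules of a uniserial module are totally ordered, a direct computation shows that the composition of maps $\phi:U_{i_s}\to U_{i_t}$ and $\psi:U_{i_t}\to U_{i_u}$ of image lengths $k_1,k_2$ has image length $\max\{0,k_1+k_2-a\}$. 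Iterating, the length-$k$ path $\alpha_{s-k+1}\cdots\alpha_s$ has image length $a-k$, and is therefore nonzero for $1\le k<a$ and zero for $k=a$. These paths have pairwise distinct image lengths, hence are linearly independent within each $\Hom$-space they meet.

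An upper bound for $\dim_k\Hom_A(U_{i_s},U_{i_t})$ comes from the projective cover $Ai_{s+1}\twoheadrightarrow U_{i_s}$: precomposition embeds $\Hom_A(U_{i_s},U_{i_t})$ into $\Hom_A(Ai_{s+1},U_{i_t})\cong i_{s+1}U_{i_t}$, whose dimension equals the multiplicity of $S_{i_{s+1}}$ as a composition factor of $U_{i_t}$, namely $m$. For each pair $(s,t)$ there are exactly $m$ paths among $\{e_s,\alpha_s,\alpha_{s-1}\alpha_s,\dots,\alpha_{s-a+2}\cdots\alpha_s\}$ ending at $U_{i_t}$, so the matching bounds give $\dim_k\Hom_A(U_{i_s},U_{i_t})=m$, and the paths form a $k$-basis of $N_R$ of total dimension $mr^2$. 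Consequently $N_R$ is presented by the idempotents $e_s$ and the arrows $\alpha_s$ modulo the cyclic quiver relations and the relations $\alpha_{s-a+1}\cdots\alpha_s=0$ at each vertex, which identifies $N_R$ as the basic self-injective cyclic Nakayama algebra on $|R|$ vertices with projective indecomposables of composition length $a$, as claimed.

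The main obstacle is the isomorphism $U_{i_s}/\soc(U_{i_s})\cong\rad(U_{i_{s-1}})$ needed to define the arrow $\alpha_s$: matching composition series of two uniserial modules does not in general suffice for them to be isomorphic. Here the required isomorphism rests on the fact that uniserial subquotients of the projective indecomposables $Ai$ are determined up to isomorphism by their composition series in the Brauer-tree context underlying $A$, a fact built into the classification of the uniserial submodules $U_i,V_i\subseteq Ai$ recalled from \cite[Sections 11.6, 11.7]{LiBookII} before the statement of the theorem.
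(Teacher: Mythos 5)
Your argument is correct and follows essentially the same strategy as the paper: split the endomorphism algebra over the $\langle\rho\rangle$-orbits using that $U_i$ and $U_{i'}$ share no composition factors across orbits, build arrows between consecutive $U_{i_s}$'s that kill the socle and land in the radical, and then read off the cyclic Nakayama structure. The one place where you take a slightly less direct path is the construction of the arrows: you obtain $\alpha_s$ by composing the quotient $U_{i_s}\to U_{i_s}/\soc(U_{i_s})$ with an abstract isomorphism onto $\rad(U_{i_{s-1}})$, and you correctly flag that two uniserial modules with matching composition series need not be isomorphic in general, so you have to invoke the Brauer-tree classification of uniserial modules. The paper avoids this entirely by producing the arrow directly: restrict the projective cover $A\rho(i)\twoheadrightarrow U_i$ to the uniserial submodule $U_{\rho(i)}\subseteq A\rho(i)$ and compare composition factors, which yields the map with simple kernel and cokernel without appealing to any uniqueness statement for uniserial modules. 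Your dimension count (via $\Hom_A(U_{i_s},U_{i_t})\hookrightarrow i_{s+1}U_{i_t}$ together with the $m$ independent paths of distinct image length) is more explicit than the paper's and is a nice way to pin down that the listed relations are complete.
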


\begin{proof}
We keep the notation above. 
If $i$ and $i'$ in $I$ belong to two different $\rho$-orbits, then $U_i$ and $U_{i'}$ have 
no common composition factor, so $\Hom_A(U_i,U_{i'})$ and $\Hom_A(U_{i'}, U_i)$ are
both zero. Thus the endomorphism algebra $\End_A(\oplus_{i\in I} U_i)$ is a direct
product of the algebras $\End_A(\oplus_{s=1}^{a_\rho} U_{\rho^s(i)})$, where
$a_\rho$ is the length of the $\rho$-orbit in $I$, and where $i$ runs over a set
of representatives of the $\rho$-orbits in $I$.

We will show next that there is an $A$-homomorphism $U_{\rho(i)}\to U_i$
with simple kernel and cokernel.
Since $S_{\rho(i)}$ is the top composition factor of $U_i$, there is a surjective
$A$-homomorphism $A_{\rho(i)}\to U_i$. Precomposing this with the
inclusion $U_{\rho(i)}\to A\rho(i)$ yields an $A$-homomorphism
$$\alpha_i : U_{\rho(i)} \to U_i$$
and by comparing composition factors, the image of $\alpha_i$ is the unique
maximal submodule of $U_i$ and the kernel is the unique simple submodule
$\soc(U_{\rho(i)})=$ $\soc(A\rho(i))$ of $U_{\rho(i)}$.

All homomorphisms between the $U_{i'}$ with $i'$ running over the 
$\rho$-orbit of $i$ are linear combinations of compositions of the above.
Thus the quiver of $\End_A(\oplus_{i}\ U_i)$, with $i$ running over a $\rho$-orbit
$R$, is a cycle with $a_\rho=$ $|R|$ edges, and and any full cycle becomes zero if the
$\rho$-orbit is non-exceptional, or $m$-full cycles become zero if the $\rho$-orbit
is exceptional with exceptional multiplicity $m$.
 
It follows that the algebra $\End_A(\oplus_{i}\ U_i)$, with $i$ running over a
$\rho$-orbit $R$, is a split basic  self-injective Nakayama algebra with $a_\rho=$
$|R|$  isomorphism 
classes of simple modules and projective indecomposable modules of 
length $|R| m$ or $|R| $, depending on whether the $\rho$-orbit $R$ of $i$
is exceptional or not.
This proves Theorem \ref{Brauertree-thm-2}.
\end{proof}

Theorem \ref{Brauertree-thm} follows obviously from Theorem \ref{Brauertree-thm-2},
with the added information that if $\rho$ is not the identity, then 
$\End_A(\oplus_{i\in I} U_i)$ is not symmetric, since the Nakayama algebras $N_R$
for $R$ a non-trivial orbit cannot be symmetric as the socles of some projective
indecomposable modules are not  isomorphic to the top composition factors
of these. 

\begin{Remark} \label{Omega-remark}
With the notation above, any self stable equivalence of $\modbar(B)$ coincides
on modules with some power of the Heller operator $\Omega_B$.
 Moreover,
$\Omega^2_B$ permutes the isomorphism classes of simple $B$-modules.
Since Heller operators commute with stable equivalences on modules, it follows
that $\Omega_A^2$ permutes the isomorphism classes of the modules $U_i$, 
$i\in I$. As briefly mentioned in the Introduction,
Theorem \ref{Brauertree-thm} using the stable equivalence 
$\CF\circ \Omega_B$ implies that the algebra $\End_A(\oplus_{i\in I}\ \Omega_A(U_i))$
is also self-injective. This algebra is in fact isomorphic to $\End_A(\oplus_{i\in I}\ V_i)$
because $\Omega_A(U_i)\cong$ $V_{\rho(i)}$. 
The passage from $\CF$ to $\CF\circ\Omega_B$ amounts to
exchanging the roles of the modules $U_i$, $V_i$ and of the permutations 
$\rho$, $\sigma$.
Since the numbers of $\rho$-orbits and of $\sigma$-orbits may differ  (by at most $1$),
the numbers of blocks of $\End_A(\oplus_{i\in I}\ U_i)$ and of
 $\End_A(\oplus_{i\in I}\ V_i)$ may differ as well by at most $1$.
 Since the $U_i$ and $V_i$ correspond to pairwise non-isomorphic simple modules
 under some stable equivalence, both $\Endbar_A(\oplus_{i\in I}\ U_i)$ and
 $\Endbar_A(\oplus_{i\in I}\ V_i)$ are isomorphic to the direct product $k^{|I|}$
 of $|I|$ copies of $k$. Thus the ideal $\End_A^\pr(\oplus_{i\in I}\ U_i)$ of 
 endomorphisms which factor through a projective $A$-module  is equal to the
 radical of $\End_A(\oplus_{i\in I}\ U_i)$, and the analogous statement holds for
 $\End_A(\oplus_{i\in I}\ V_i)$.
 \end{Remark} 
 
 \section{Cyclic blocks, and proof of Theorem \ref{cyclic-thm}} \label{cyclic-Section}
 
 Let $k$ be a field of prime characteristic $p$.
 Let $G$ be a finite group and $B$ a block of $kG$ with a non-trivial cyclic defect
 group $P$. Denote by $Z$ the unique subgroup of order $p$ of $P$.
 Via the Brauer correspondence, the block $B$ determines unique
 blocks $C$ of $kN_G(Z)$ and $D$ of $kN_G(P)$ (for the uniqueness of $C$, see
 \cite[Theorem 9.8.6]{LiBookII}). Induction and restriction
 between $kG$ and $kN_G(Z)$ followed by the truncation with the block
 idempotents $b$ of $B$ and $c$ of $C$ yields a stable equivalence of Morita type
 between $B$ and $C$. If $k$ is a splitting field for $B$ (and hence, by
 \cite[Proposition 11.6.2]{LiBookII}, for $C$), then, by \cite[Theorem 11.2.2]{LiBookI}, 
 the algebras $C$ and $D$ are Morita equivalent to
 the symmetric Nakayama algebra $k(P\rtimes E)$, where $E$ is a cyclic
 subgroup of $\Aut(P)$ of order dividing $p-1$ (this is the inertial quotient of $B$).
 Thus $B$ is Morita equivalent to a Brauer tree algebra. Although the blocks
 $C$ and $D$ are Morita equivalent, a Morita equivalence between these two 
 algebras is not, in general, induced by induction/restriction functors between 
 $N_G(Z)$ and $N_G(P)$.
 This is the reason for the extra hypotheses in Theorem \ref{cyclic-thm}, which
 ensure that  induction/restriction between $G$ and $N_G(P)$, truncated at the 
 block idempotents,  
 induces a stable equivalence of Morita type between $B$ and $D$.
 
 \begin{Proposition} \label{splendid-stable-cyclic-Prop}
 Let $G$ be a finite group, $B$ a block of $kG$ with a non-trivial cyclic defect
 group $P$, and let $D$ be the block of $kN_G(P)$ with defect group $P$
 which is the Brauer correspondent of $B$. Assume that $k$ is a splitting field
 for $B$ and $D$ and their Brauer pairs. 
 Let $d$ be the block idempotent of $D$.   Let $Z$ be subgroup of order $p$
 of $P$ and let $C$ be the unique block of $kN_G(Z)$ corresponding to $B$.
 Denote by $c$ the block idempotent of $C$ in $Z(kN_G(Z))$.
 The following are equivalent.
 \begin{itemize}
 \item[{\rm (i)}] The $B$-$D$-bimodule $Bd$ and its dual induce a stable 
 equivalence of Morita type between $B$ and $D$.
 \item[{\rm (ii)}] The $C$-$D$-bimodule $Cd$ and its dual induce a stable 
 equivalence of Morita type between $C$ and $D$.
 \item[{\rm (iii)}] The simple $C$-modules have endotrivial sources.
 \item[{\rm (iv)}] The simple $kC_G(Z)c$-modules have endotrivial sources.
 \item[{\rm (v)}] The $kC_G(Z)c$-$kP$-bimodule $kC_G(Z)j$ and its dual induce
 a stable equivalence of Morita type between $kC_G(Z)f$ and $kP$, 
 where $j$ is a source idempotent in $(kC_G(Z)c)^P$.
 \end{itemize}
 These equivalent statements hold in particular if $P$ has order $p$,  or
 if $B$ is the principal block of $kG$.
 \end{Proposition}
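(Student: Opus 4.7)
The plan is to establish the chain (i) $\Leftrightarrow$ (ii) $\Leftrightarrow$ (iv) $\Leftrightarrow$ (v) together with (iii) $\Leftrightarrow$ (iv), and then handle the two special cases. I would begin with (i) $\Leftrightarrow$ (ii): by a classical result in cyclic defect theory, the $B$-$C$-bimodule $Bc$ and its dual $cB$ induce a splendid stable equivalence of Morita type between $B$ and $C$. Since $d$ is a block idempotent of $kN_G(P)$ whose Brauer correspondent in $N_G(Z)$ is $c$, we have $cd=dc=d$, and hence $Bd=(Bc)d\cong Bc\otimes_C Cd$ as $B$-$D$-bimodules (with the analogous statement for $dB$). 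Composition and cancellation of stable equivalences of Morita type, using that $Bc$ is already one, then yield (i) $\Leftrightarrow$ (ii).

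The main structural input for the remaining equivalences is that $kC_G(Z)c$ is a nilpotent block with defect group $P$: since $P$ is cyclic, $Z\leq Z(P)\leq C_G(Z)$, and its inertial quotient, being a $p'$-subgroup of $\Aut(P)$ realised inside $C_G(Z)$, must fix $Z=\Omega_1(P)$ pointwise, whence by faithfulness of the $p'$-part of $\Aut(P)$ on $Z$ it is trivial. By Puig's theorem on nilpotent blocks, for $j\in (kC_G(Z)c)^P$ a source idempotent there is an isomorphism of interior $P$-algebras $j(kC_G(Z)c)j\cong \End_k(V)\otimes_k kP$, where $V$ is an indecomposable endopermutation $kP$-module with vertex $P$ that is the source of the unique simple $kC_G(Z)c$-module. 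Under this source-algebra Morita equivalence the bimodule $kC_G(Z)j$ corresponds to $V\otimes_k kP$, which induces a stable equivalence of Morita type $\End_k(V)\otimes kP\sim kP$ iff $V\otimes_k V^{\vee}\cong k\oplus(\textrm{projective})$ as $kP$-modules, that is, iff $V$ is endotrivial; this gives (iv) $\Leftrightarrow$ (v). Equivalence (iii) $\Leftrightarrow$ (iv) follows via Fong--Reynolds reduction and Clifford theory: the simple $C$-modules are $E$-twists of the unique simple $kC_G(Z)c$-module, all sharing an $E$-conjugate of $V$ as source, and endotriviality is preserved under automorphisms of $P$. For (ii) $\Leftrightarrow$ (iv), I would apply Proposition \ref{splendid-stable-Prop} to the pair $(C,D)$ to rephrase (ii) as a source-algebra condition, then compare source algebras: the nilpotent K\"ulshammer cover $kC_G(P)e$ of $D$ has $P$ central in $C_G(P)$, hence is a matrix algebra over $kP$ with trivial $P$-action on the multiplicity space, and using $H^2(E,k^{\times})=0$ (since $E$ is cyclic) the source algebra of $D$ is $k(P\rtimes E)$ while that of $C$ is $\End_k(V)\otimes k(P\rtimes E)$; the source-algebra bimodule induced by $Cd$ is then $V\otimes k(P\rtimes E)$, which induces a stable equivalence iff $V$ is endotrivial.

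For the two special cases: if $|P|=p$, every indecomposable endopermutation $kP$-module with vertex $P$ is endotrivial (for $p=2$ only the trivial module occurs; for $p$ odd these are exactly the Heller translates $\Omega^i k$ with $0\leq i\leq p-2$), so (iv) holds automatically. If $B$ is the principal block, then $c$ is the principal block idempotent of $kN_G(Z)$ and $kC_G(Z)c$ is the principal block of $kC_G(Z)$, whose unique simple module is the trivial module $k$ with trivial (hence endotrivial) source. The main obstacle is the precise tracking of the bimodule $Cd$ through the two independent source-algebra reductions of $C$ and of $D$ in the step (ii) $\Leftrightarrow$ (iv), confirming that the resulting bimodule between the source algebras factors, up to projective summands, as $V\otimes k(P\rtimes E)$ and not with an additional twist.
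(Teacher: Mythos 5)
Your proposal follows essentially the same route as the paper's proof: reduce (i)\,$\Leftrightarrow$\,(ii) through the known stable equivalence given by $Bc$, then pass to source algebras via Proposition~\ref{splendid-stable-Prop} and use the structure $\End_k(V)\tenk k(P\rtimes E)$ and $k(P\rtimes E)$ for the source algebras of $C$ and $D$, reducing everything to the endotriviality of $V$.

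Two of your supporting justifications are off, though neither affects the final conclusion. First, the claim that $H^2(E,k^\times)=0$ ``since $E$ is cyclic'' is false in general: for a finite field $k$ with $|E|\mid |k^\times|$ one has $H^2(E,k^\times)\cong k^\times/(k^\times)^{|E|}\cong E\neq 0$. The fact that the source algebra of $D$ is the untwisted $k(P\rtimes E)$ is a nontrivial theorem about cyclic blocks (the reference the paper cites, \cite[Theorem~11.2.2]{LiBookII}, establishes it); it does not follow from a vanishing cohomology group. Second, for $|P|=p$ odd, the indecomposable endopermutation $kP$-modules with vertex $P$ are just $k$ and $\Omega(k)$ (since $\Omega^2(k)\cong k$), not the $p-1$ modules $\Omega^i(k)$, $0\leq i\leq p-2$, that you list; the modules $V_i$ of dimension $2\leq i\leq p-2$ are not endopermutation. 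The conclusion you need --- that every indecomposable endopermutation module with vertex $P$ of order $p$ is endotrivial --- is still correct, as it reduces to checking $k$ and $\Omega(k)$.

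Beyond those two points, your argument for (i)\,$\Leftrightarrow$\,(ii) via composition with $Bc\otimes_C-$, your deduction of nilpotence of $kC_G(Z)f$ from faithfulness of the $p'$-part of $\Aut(P)$ on $\Omega_1(P)$, the passage from (iv) to (v) through $S\tenk kP\cong\Ind^{P\times P}_{\Delta P}(V\tenk V^*)$, and the treatment of (ii)\,$\Leftrightarrow$\,(iii)/(iv) by embedding $k(P\rtimes E)$ in $S\tenk k(P\rtimes E)$ all match the paper's proof in substance. The ``obstacle'' you flag at the end --- tracking $Cd$ through two independent source-algebra reductions --- is resolved in the paper by the compatible choice of source idempotents guaranteed by Proposition~\ref{splendid-stable-Prop}\,(v) and \cite[Theorem~6.15.1]{LiBookII}, which ensures that the bimodule between the source algebras is induced precisely by the embedding $k(P\rtimes E)\hookrightarrow S\tenk k(P\rtimes E)$ with no additional twist.
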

 
 \begin{proof}
 Note that $N_G(P)\subseteq $ $N_G(Z)$, and hence
 $d$ commutes with $c$. 
 By standard facts (e. g. \cite[Theorem 9.8.6]{LiBookII}) 
 the $B$-$C$-bimodule $bkGc=$ $Bc$ and its dual induce a stable equivalence of
 Morita type between $B$ and $C$.  Note further that $c$ is contained in 
 $(kC_G(Z))^{N_G(Z)}$, hence  a sum of block idempotents of $kC_G(Z)$; choose
 a block $f$ of $kC_G(Z)$ satisfying $fc=f$.
 
 Since stable equivalences of Morita type
 can be composed,  the equivalence of (i) and (ii) follows. 
 Note that $B$ and $C$ have the same Brauer correspondent.
 The blocks $C$ and $D$ have source algebras of the form
 $S\tenk k(P\rtimes E)$ and $k(P\rtimes E)$, where $E$ is the inertial
 quotient of $B$, $C$, $D$ (which is a cyclic group of order dividing $p-1$),
 and where $S=\End_k(V)$ for some indecomposable endopermutation 
 $kP$-module with vertex $P$.
 Note that $V$ is the source of all simple $C$-modules as well as the
 unique (up to isomorphism) simple $kC_G(Z)f$-module, where $f$ is a block
 of $kC_G(Z)f$ appearing in the decomposition of $c$ in $Z(kC_G(Z))$.
 This shows the equivalence of (iii) and (iv). 
 
The source algebras for $kC_G(Z)f$ have the form $S\tenk kP$.
As a $k(P\times P)$-module, we have $S\tenk kP\cong$ 
$S\tenk \Ind^{P\times P}_{\Delta P}(k)$, where $\Delta P=\{(u,u)\ |\ u\in P\}$.
By standard reciprocity statements (e. g. \cite[Proposition 2.4.12]{LiBookI}),
this module is isomorphic to $\Ind^{P\times P}_{\Delta P}(S)$. The action
of $\Delta P$ on $S$ is the conjugation action on $S$, so this is isomorphic to
$\Ind^{P\times P}_{\Delta P}(V\tenk V^*)$, where $V^*$ is the $k$-dual of $V$. 
Since $V$ is an indecomposable
endopermutation $kP$-module with vertex $P$, we have an isomorphism
of $k\Delta P$-modules $V\tenk V^*\cong$ $k \oplus T$ for some
permutation $k\Delta P$-module $T$ with no trivial summand. Moreover,
$V$ is endotrivial if and only if $T$ is projective. Thus $S\tenkP kP$ is
isomorphic to $kP$ plus a projective $kP$-$kP$-bimodule if and only
if $V$ is endotrivial.
Using Proposition \ref{splendid-stable-Prop}, this  implies the equivalence of 
(iv) and (v).
 
By \cite[Theorem 11.2.2]{LiBookII}, a source idempotent in $(kC_G(Z)f)^P$ 
remains a source idempotent in  $C^P$.
The $kP$-module structure on $V$ extends to $k(P\rtimes E)$, through which
we obtain an injective algebra homomorphism $k(P\rtimes E)\to$
$S\tenk k(P\rtimes E)$. By Proposition \ref{splendid-stable-Prop}, the
bimodule $Cd$ induces a stable equivalence of Morita type between $C$ and $D$
if and only if the image of this homomorphism  has a projective complement as
a bimodule for $k(P\rtimes E)$. Using arguments as before (with $P\rtimes E$ 
instead of $P$) one sees that this happens if and only if $V$ is endotrivial.
This shows the equivalence of (ii) and (iii).
For the last statement we note that if $B$ is the principal block, then
the $kP$-module $V$ as defined before is the trivial $kP$-module, so 
in particular endotrivial. If $P$ has order $p$, then $V$ is a Heller translate of
the trivial $kP$-module, so also endotrivial. The last statement follows.
\end{proof}

 \begin{proof}[{Proof of Theorem \ref{cyclic-thm}}]
 With the notation of Theorem \ref{cyclic-thm}, 
 let $f$ be a primitive idempotent in $B^{N_G(P)}$ such that in a decomposition
 of $f$ in $B^P$ there is a source idempotent of $B$. Let $j\in D^P$ be a source
 idempotent. Then $i=fj$ is a source idempotent in $B^P$, and we have
 $jDj\cong k(P\rtimes E)$. Multiplication by $f$ embeds $k(P\rtimes E)$ into
 the source algebra $A=iBi$ in such a way, that the induction/restriction functors
 between $A$ and $k(P\rtimes E)$ induce a stable equivalence of Morita type
 (and this is where we use the extra hypotheses on $B$, which ensure this.)
 Note that $A\tenkP k\cong$ $A\ten_{k(P\rtimes E)} kE$ and that $kE$ is the
 direct sum of a set of representatives of the isomorphism classes of simple
 $k(P\rtimes E)$-modules. Thus $A\tenkP k=$  $\oplus_{i\in I}\ U_i$, where
 the modules $U_i$ are now as in Theorem \ref{Brauertree-thm}.
 Since $\End_{kG}(B i \tenkP k)\cong$ $\End_A(A\tenkP k)\cong$ 
 $\End_A(A\ten_{k(P\rtimes E)} kE)$, Theorem \ref{cyclic-thm} is a 
 consequence of Theorem \ref{Brauertree-thm}.
 \end{proof}
  
  We record  the  following result for future reference.  
  Recall that if a finite group $G$   has  a cyclic Sylow $p$-subgroup $P$, then 
  $N_G(P)/C_G(P)$ acts freely on $P\setminus \{1\}$.
  
 \begin{Proposition} \label{proj-free-Frob-Prop}
 Let $G$ be a finite group with a non-trivial abelian Sylow $p$-subgroup $P$, let
 $B$ be the principal block of $kG$, and let $i$ be a source idempotent in $B^P$.
 Suppose that $N_G(P)/C_G(P)$ acts freely on $P\setminus \{1\}$.
 Set $i' = b-i$. Then $Bi'$ is projective as a $B$-$kP$-bimodule, and $Bi'\tenkP k$
 is projective as a left $B$-module. Equivalently, $Bi\tenkP k$ is the non-projective
 part  of the left $B$-module $B\tenkP k$.   Further,  if  $P$  is  cyclic, then  $k$ is  a 
 splitting field  for  $B$, its  Brauer correspondent in $N_G(P)$ and  their Brauer pairs.
 \end{Proposition}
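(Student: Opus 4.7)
The plan is to show that each primitive summand of $Bi'$ as a $B$-$kP$-bimodule has trivial vertex, hence is projective; the projectivity of $Bi'\tenkP k$ then follows immediately by tensoring. Write $i' = \sum_j i'_j$ as a sum of orthogonal primitives in $B^P$, and let $Q_j$ be the defect group of the point of $P$ on $B$ containing $i'_j$; by Lemma~\ref{bimod-idempotent-Lemma}(iii)--(iv) it suffices to show $Q_j = 1$ for every $j$. Since $P$ is a normal abelian Sylow subgroup of $C_G(P)$, we have $C_G(P) = P \times L$ with $L = O_{p'}(C_G(P))$, and Brauer's Third Main Theorem identifies $\Br_P(b)$ with the principal block idempotent of $kC_G(P)$, making $kC_G(P)\Br_P(b) \cong kP$ local. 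Thus $\Br_P(i) = \Br_P(b)$ and $\Br_P(i') = 0$, forcing $Q_j \lneq P$ for all $j$.

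The key step exploits the free action hypothesis on subgroups $1 < Q \leq P$: any $g \in N_G(P)$ centralising $Q$ fixes some $1 \neq u \in Q$ and hence lies in $C_G(P)$. Consequently $N_{C_G(Q)}(P) = C_{C_G(Q)}(P) = C_G(P)$, and since $P$ is an abelian Sylow of $C_G(Q)$, Burnside's normal $p$-complement theorem yields $C_G(Q) = K \rtimes P$ with $K = O_{p'}(C_G(Q))$. Setting $e_Q = |K|^{-1}\sum_{x \in K} x$, the identities $xe_Q = e_Q = e_Qx$ for $x \in K$ give an algebra isomorphism $B(Q) = kC_G(Q)e_Q \cong kP$ via $ge_Q \leftrightarrow p_g$, where $p_g$ denotes the $P$-component of $g \in K \rtimes P$.

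By the standard theory of $p$-permutation modules (cf.\ \cite[Proposition 5.10.3]{LiBookI}), indecomposable summands of $B$ as a $k(G \times P)$-module with vertex $\Delta Q$ correspond bijectively to projective indecomposable summands of $B(\Delta Q) = B(Q)$ as a module over $N := N_{G \times P}(\Delta Q)/\Delta Q$. Since $P$ is abelian, $N_{G \times P}(\Delta Q) = C_G(Q) \times P$, and under the identification $B(Q) \cong kP$ the $N$-action is $p \mapsto p_g p h^{-1} = (p_g h^{-1})p$, which factors through the surjection $\phi\colon N \to P$, $(g,h)\Delta Q \mapsto p_g h^{-1}$. Thus $B(\Delta Q) \cong \Inf_\phi(kP)$ (left regular $kP$-module) has $k$-dimension $|P|$. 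A Sylow $p$-subgroup of $N$ has order $|P|^2/|Q|$ (from $|N| = |K||P|^2/|Q|$ with $K$ a $p'$-group), and since restriction preserves projectivity, every projective indecomposable $kN$-module has dimension at least $|P|^2/|Q|$. For $Q \lneq P$ this strictly exceeds $|P|$, so $B(\Delta Q)$ admits no projective $kN$-summand, and hence no summand of $B$ has vertex $\Delta Q$ for $1 < Q < P$.

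Combined with $\Br_P(i') = 0$, this forces $Q_j = 1$ for every $j$; hence $Bi'$ is projective as a $B$-$kP$-bimodule, $Bi'\tenkP k$ is a projective $B$-module, and the non-projective part reformulation follows from Lemma~\ref{proj-summands-Lemma}(iii). For the splitting statement with $P$ cyclic, the free action forces $|N_G(P)/C_G(P)|$ to be coprime to $p$, since every non-trivial $p$-element of $\Aut(P)$ pointwise fixes the unique subgroup of order $p$; hence this order divides $p-1$. The Brauer tree algebra description of $B$ and of its Brauer correspondent in $N_G(P)$ (cf.\ \cite[Theorem 11.2.2]{LiBookII}), together with the local Brauer pair blocks $kC_G(Q)e_Q \cong kP$, are then all Morita equivalent to basic algebras defined over $\Fp$ with one-dimensional simple modules, giving the splitting claim. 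The central difficulty is establishing the algebra isomorphism $B(Q) \cong kP$ together with the resulting description of the $N$-action; once these are in hand the dimension comparison is immediate.
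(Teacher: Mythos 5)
Your proof is correct, but you take a longer route than necessary in the central step, despite already having all the ingredients of the paper's shorter argument in hand. Having established (via Burnside's normal $p$-complement theorem and Brauer's Third Main Theorem) that $C_G(Q)=K\rtimes P$ and hence $B(Q)=kC_G(Q)\Br_Q(b)\cong kP$ for every $1<Q\leq P$, you could simply have observed that $kP$ is a local algebra, so $1$ is its only nonzero idempotent; since $\Br_Q(i)$ is a nonzero idempotent in $B(Q)$ (as $i$ is a source idempotent), it must equal $\Br_Q(b)$, whence $\Br_Q(i')=0$ for all such $Q$. This places $i'$ directly in $(kG)^P_1$ and gives projectivity of $Bi'$ by Lemma~\ref{bimod-idempotent-Lemma}(iii), which is exactly the paper's argument. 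Instead, you first extract only $\Br_P(i')=0$, then rule out intermediate vertices $\Delta Q$ for $1<Q<P$ by invoking Brou\'e's correspondence between summands of $B$ with vertex $\Delta Q$ and projective summands of $B(\Delta Q)$ over $N_{G\times P}(\Delta Q)/\Delta Q$, followed by a dimension count ($\dim B(\Delta Q)=|P|<|P|^2/|Q|$, the minimal dimension of a projective $kN$-module). That dimension argument is valid and is an interesting alternative, but it is heavier machinery than the situation requires, and the detailed description of the inflation map $\phi$ is not actually needed once you note $\dim B(\Delta Q)=|P|$. Your argument for the final splitting statement when $P$ is cyclic is essentially the same as the paper's, which just cites that $\mathbb{F}_p$ is a splitting field for principal blocks with cyclic defect group together with Brauer's Third Main Theorem.
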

 
\begin{proof}
Denote by $b$ the block idempotent of the principal block.  
By Brauer's Third Main Theorem (see e.g. 
\cite[Theorem 6.3.14]{LiBookII}),  for any subgroup $Q$ of $P$, $\Br_Q(b)$ is 
the principal block idempotent of  $kC_G(Q)$.  For $Q\neq 1$ 
the block  $\Br_Q(b)$ is  nilpotent  (cf. \cite[Lemma 10.5.2]{LiBookII}), and hence
$kC_G(Q)\Br_Q(b)\cong kP$, by \cite[Corollary 8.11.6]{LiBookII}. 
Since $\Br_Q(i)\neq 0$ for any subgroup $Q$
of $P$, and since $1$ is the unique idempotent in $kP$, it follows that
$\Br_Q(i)=1$ and hence that $\Br_Q(i')=0$ for all non-trivial subgroups $Q$ of 
$P$. But then $i'$ is contained in the space $B_1^P$ of traces from $1$ to $P$. 
Thus $Bi'$ is projective by  Lemma  \ref{bimod-idempotent-Lemma} (ii).  
The  first assertion  follows since  by  Lemma~\ref{no-proj-summands-Lemma} 
$Bi\tenkP k$  contains no nonzero projective summands.   Since ${\mathbb F}_p$ 
is a splitting field for any principal  block  with cyclic defect groups, the  second 
assertion is  also immediate from Brauer's third main theorem.
 \end{proof}

\section{Klein four blocks, and proof of Theorem \ref{Kleinfour-thm}} 
\label{Kleinfour-section} 

We assume in this section that $p=2$ and that $k$ is a field of characteristic $2$.
Let $G$ be a finite
group, $B$ a block of $kG$ with a Klein four defect group $P$, and let $i\in B^P$
be a source idempotent. Assume that $k$ is a splitting field for $B$, its Brauer
correspondent, and their Brauer pairs.
By Erdmann \cite{Erdmann82}, $B$ is Morita equivalent
to either $kP$, $kA_4$, or $kA_5b_0$, where $b_0$ is the principal block idempotent
of $kA_5$.
The main result of \cite{CEKL}, which requires the classification of finite simple groups,
shows that in fact the source algebra $A=iBi$  of $B$ is isomorphic to either 
$kP$, $kA_4$, or $kA_5b_0$. 
The proof of Theorem \ref{Kleinfour-thm} proceeds by investigating these three
possible cases for the source algebras of $B$. Behind the scene is the fact there
is a stable equivalence between $B$ and its Brauer correspondent given by
truncated induction/restriction (see \cite[Section 12.2]{LiBookII}), so this is, 
just as in the cyclic block case, another special case of the scenario considered in 
Theorem \ref{simpleminded-Thm}.

\begin{proof}[Proof of Theorem \ref{Kleinfour-thm}]
If $A\cong kP$, then $A\tenkP k\cong$ $k$, which has trivially the endomorphism 
algebra $k$ as stated.
If $A\cong kA_4$, then $A\tenkP k\cong kC_3$, which is the direct sum of the
three pairwise non-isomorphic $1$-dimensional $A$-modules, and hence
the endomorphism algebra of $A\tenkP k$ is isomorphic to $k\times k\times k$.

\medskip
Suppose finally that  $A\cong kA_5b_0$. The algebra $kA_5b_0$ has three isomorphism 
classes of pairwise 
non-isomorphic simple modules $S_1$, $S_2$, $S_3$ of dimensions $1$, $2$, $2$,
respectively. Since $A_4\cong$ $P\rtimes C_3$ it follows
that the algebra $kA_4$ has three $1$-dimensional
modules $T_1$, $T_1$, $T_3$, with $T_1$ the trivial module, and we have
$kA_4\tenkP k \cong$ $kC_3\cong$ $T_1\oplus T_2\oplus T_3$.
Thus we have
$$b_0\Ind_{A_4}^{A_5}(k)= kA_5b_0\tenkP k \cong 
kA_5b_0 \ten_{kA_4} kC_3 = $$
$$=b_0\Ind^{A_5}_{A_4}(T_1) \oplus
b_0\Ind^{A_5}_{A_4}(T_2) \oplus b_0\Ind^{A_5}_{A_4}(T_3).$$
The induced modules $\Ind^{A_5}_{A_4}(T_i)$ are all $5$-dimensional; the truncation
by $b_0$ discards the defect zero simple modules, if any. 
As a very special case
of \cite[Theorem 3]{Erdmann82}, we may choose notation such that
 the restrictions to $kA_4$ of the simple
$kA_5b_0$-modules $S_1$, $S_2$, $S_3$ are isomorphic to $T_1$, $T_{2,3}$, $T_{3,2}$,
respectively, where $T_{i,j}$ denotes a uniserial module with composition factors
$T_i$, $T_j$, from top to bottom. Since the trivial $kA_5$-module is the Green
correspondent of the trivial $A_4$-module, it follows that
$\Ind^{A_5}_{A_4}(T_1)\cong$ $S_1\oplus Q$ for some necessarily $4$-dimensional
projective module $Q$ (corresponding to the irreducible character of degree $4$
of $A_5$). By Theorem \cite[Theorem 4]{Erdmann82}, all
projective indecomposable modules in the principal block of $A_5$ have dimension
strictly greater than $4$, so we get that 
$$b_0\Ind^{A_5}_{A_4}(T_1)\cong  S_1.$$
By Frobenius reciprocity, any nonzero $kA_5$-homomorphism 
$\Ind^{A_5}_{A_4}(T_2)\to S_i$ corresponds to a nonzero $kA_4$-homomorphism
$T_2\to \Res^{A_5}_{A_4}(S_i)$. The composition factors of the restrictions 
$\Res^{A_5}_{A_4}(S_i)$ imply that there is such a nonzero homomorphism only
when $i=3$, so the top of $\Ind^{A_5}_{A_4}(T_2)$ is the simple $kA_5$-module
$S_3$, or in other words, $\Ind^{A_5}_{A_4}(T_2)$ is a quotient of a projective
cover  of $S_3$. By \cite[Theorem 4]{Erdmann82}, a projective cover of
$S_3$  is uniserial of length five, 
with composition factors $S_3$, $S_1$, $S_2$, $S_1$, $S_3$. Comparing dimensions
shows that $\Ind^{A_5}_{A_4}(T_2)$ is the $5$-dimensional uniserial module $U_{3,1,2}$
of length $3$ with composition factors $S_3$, $S_1$, $S_2$. This module belongs to
the principal block, so is equal to $b_0\Ind^{A_5}_{A_4}(T_2)$.
A similar argument, exchanging the roles of $T_2$ and $T_3$ shows that
$\Ind^{A_5}_{A_4}(T_3)=$ $b_0\Ind^{A_5}_{A_4}(T_3)$ is a uniserial
module $U_{2,1,3}$ with composition factors $S_2$, $S_1$, $S_3$.
It follows that
$$\End_A(A\tenkP k) \cong \End_{kA_5}(S_1 \oplus U_{3,1,2} \oplus U_{2,3,1}).$$
Since $S_1$ appears neither in the top nor in the socle of the two modules
$U_{3,1,2}$, $U_{2,1,3}$, this summand gives rise to a direct algebra factor
isomorphic to $k$, and by considering the composition series of $U_{3,1,2}$
and $U_{2,1,3}$ one sees easily that $\End_{kA_5}(U_{3,1,2}\oplus U_{2,1,3})$ is
a $4$-dimensional Nakayama algebra with two isomorphism classes of
simple modules and projective modules of length $2$.
Theorem \ref{Kleinfour-thm} follows.
\end{proof}

\begin{Remark} \label{Kleinfour-remark}
Without the classification of finite simple groups the source algebras of blocks 
with Klein four defect groups are determined in \cite{LiKleinfour} only up to
a power of the Heller operator (see \cite[Theorem 1.1]{LiKleinfour} for a precise
statement). This 
is closely related to \cite[Theorem 3]{Erdmann82}, where the Green correspondents
of simple modules for the normaliser of a defect group are determined up to 
a power of the Heller operator. If we replace the above Green correspondents
by a Heller translate, then it turns out that 
the corresponding endomorphism algebra is no
longer self-ijective in general, as we illustrate below. Thus the self-injectivity
of $\End_B(Bi\tenkP k)$ in Theorem \ref{Kleinfour-thm}  seems to be related
to Puig's finiteness conjecture on source algebras with a fixed  defect group.
Puig's finiteness conjecture for blocks with a Klein four group
 is at present only accessible using  the classification of finite simple groups
(see \cite{CEKL}). 

In order to describe this in more detail, we use 
the fact that the projective indecomposable modules for any of the
three algebras $kP$, $kA_4$, $kA_5b_0$ have hearts which are the direct
sum of at most two nonzero  uniserial modules, and so their structures are
determined by their Loewy layers. We also note that since these algebras 
are symmetric, hence self-injective, 
any endomorphism of the unique  maximal submodule $\rad(X)$  of a
projective indecomposable module $X$ is the restriction to $\rad(X)$ of 
an endomorphism of $X$. 

With the notation above, the Loewy layers of the regular $kP$-module $kP$ are 
$$\begin{matrix} & k & \\ k & & k \\ & k & \end{matrix}  $$
and hence the Loewy layers of its unique maximal submodule $\Omega_P(k)$
are
$$\begin{matrix}  k & & k \\ & k & \end{matrix}  $$
Thus the endomorphism algebra of $\Omega_P(k)$ is
a $3$-dimensional commutative algebra with radical square zero, so not
self-injective. The quiver of this algebra is
$$\xymatrix{\bullet \ar@(ld,lu) \ar@(rd,ru) }$$

The three simple $kA_4$-modules $T_1$, $T_2$, $T_3$ have projective
covers with Loewy layers
$$
\begin{matrix} & T_1 & \\ T_2 & & T_3 \\ & T_1 & \end{matrix} \hskip2cm
\begin{matrix} & T_2 & \\ T_1 & & T_3 \\ & T_2 & \end{matrix} \hskip2cm
\begin{matrix} & T_3 & \\ T_2 & & T_4 \\ & T_3 & \end{matrix} 
$$
The Heller translates $\Omega_{A_4}(T_i)$ of the simple modules $T_i$
are the maximal submodules of
their projective covers, hence have Loewy layers
$$
\begin{matrix} T_2 & & T_3 \\ & T_1 & \end{matrix} \hskip2cm
\begin{matrix}  T_1 & & T_3 \\ & T_2 & \end{matrix} \hskip2cm
\begin{matrix}  T_2 & & T_4 \\ & T_3 & \end{matrix} 
$$
The composition factors in these modules show that there is a nonzero
homomorphism between any two different of these Heller translates, and any two of
these homomorphisms compose to zero.
A straightforward calculation shows that
the algebra 
$$\End_{kA_5}(\Omega_{A_4}(T_1)\oplus \Omega_{A_4}(T_2)\oplus 
\Omega_{A_4}(T_1))$$ 
is a radical square zero algebra with quiver
$$\xymatrix{\bullet \ar@<+.8ex>[rr] \ar@<-.8ex>[dr] & & \bullet \ar[dl] \ar[ll] \\
 & \bullet \ar[ul] \ar@<-.8ex>[ur] }$$
 which is a $9$-dimensional algebra with three projective indecomposable
 modules whose socles are the direct sum of two simple modules, so also
 not self-injective.
 
 The three $kA_5b_0$-modules $S_1$, $U_{3,1,2}$, $U_{2,1,3}$
 have, by \cite[Theorem 4]{Erdmann82},  projective covers with Loewy layers
 $$
 \begin{matrix} & S_1 & \\ S_2 & & S_3 \\ S_1 & & S_1 \\ S_3 & &  S_2 \\ & S_1 
 \end{matrix} \hskip2cm
 \begin{matrix} S_2 \\ S_1 \\ S_3 \\ S_1 \\ S_2 \end{matrix} \hskip2cm
 \begin{matrix} S_3 \\ S_1 \\ S_2 \\ S_1 \\ S_3 \end{matrix}
 $$
 Thus the Heller translates of $S_1$, $U_{3,1,2}$ and $U_{2,1,3}$ have
 Loewy layers
 $$
 \begin{matrix} S_2 & & S_3 \\ S_1 & & S_1 \\ S_3 & &  S_2 \\ & S_1 
 \end{matrix} \hskip2cm
 \begin{matrix} S_1 \\ S_2  \end{matrix} \hskip2cm
 \begin{matrix} S_1 \\ S_3  \end{matrix}
 $$
 The first of these three modules has two endomorphisms with images
 of length two and composition factors either $S_3$, $S_1$ or $S_2$, $S_1$.
 There are nonzero homomorphisms to this module from the other two modules
 but not from the first module to the other two modules. 
 It follows that the resulting endomorphism algebra
 $$\End_{kA_5b_0}(\Omega_{A_5}(S_1) \oplus \Omega_{A_5}(U_{3,1,2})
 \oplus \Omega_{A_5}(U_{2,1,3}))$$
 is a $7$-dimensional radical square zero algebra with quiver
 $$\xymatrix{ & & \\ 
 \bullet \ar[r] & \bullet \ar@(ul, ur) \ar@(dl,dr) & \ar[l] \bullet \\
 & & }$$
This algebra has two projective simple modules which are not injective, so this
algebra is yet again not self-injective. It would be interesting to calculate this for all
Heller translates $\Omega^n$ of these modules, with $n$ a nonzero integer.
If it were the case that all endomorphism algebras as above with $\Omega^n$
instead of $\Omega$ for some nonzero integer $n$ are not self-injective, then the 
self-injectivity of $\End_B(Bi\tenkP k)$ would imply  Puig's finiteness conjecture 
for Klein four defect blocks.
\end{Remark}

\section{Nilpotent blocks, and proof of Theorem \ref{nilpotent-thm} }
\label{nilpotent-Section}   

Let $k$ be  a field of prime characteristic $p$. Let $G$ be a finite group and $B$ a 
block of $kG$ with a defect group $P$.
We assume that $k$ is large enough for $B$ and its Brauer pairs.
Following \cite{BrPunil}, the block $B$ is {\em nilpotent} if any fusion system
of $B$ on $P$ is equal to the trivial fusion system $\CF_P(P)$ of $P$ on itself.
See \cite[Section 8.11]{LiBookII} for an expository account on nilpotent blocks.
Puig has determined in \cite{Punil} the structure of the source algebras of
nilpotent blocks. (The field $k$ in \cite{Punil} is assumed to be algebraically closed,
but the above weaker hypotheses on $k$ are sufficient - see 
\cite[Remark 1.14]{Punil}; see also \cite[Theorem 1.22]{FanPuig} for more precise 
results.)  

Assume that the block $B$ of $kG$ is nilpotent. Let $i\in B^P$ be a source idempotent.
Then there is an indecomposable endopermutation $kP$-module with vertex $P$
such that, setting $S=\End_k(V)$, we have an isomorphism of interior $P$-algebras
$$iBi \cong S \tenkP kP.$$
The interior $P$-algebra structure is here given by the diagonal map sending $u\in P$
so $\sigma(u)\ten u$, where $\sigma(u)$ is the linear endomorphism of $V$ given
by the action of $u$ on $V$. The source algebra, and hence $B$ itself, is Morita
equivalent to $kP$. Thus the  block $B$ has a unique isomorphism class of
simple modules. Moreover, the $kP$-module $V$ is a source of any simple 
$B$-module.  The isomorphism class of $V$ is unique up to conjugation by $N_G(P)$.

\begin{Lemma} \label{nilpotent-Lemma}
With the notation above, the map sending $s\in S$ to $(s\ten 1_P)\ten 1_k$
induces an isomorphism of $S\tenk kP$-modules
$$S \cong (S\tenk kP) \tenkP k.$$
Here $S$ is regarded as a left $S\tenk kP$-module with $s\ten u$ acting on $t$
by $st\sigma(u^{-1})$, where $s$, $t\in S$ and $u\in P$, and the right
$kP$-module structure on $S\tenk kP$ is given by right multiplication with
$\sigma(u)\ten u$, where $u\in P$. In particular, we have
$$\End_{S\tenk kP}((S\tenk kP) \tenkP k) \cong \End_{S\tenk kP}(S)\cong 
(S^\op)^P \cong \End_{kP}(V)^\op.$$
\end{Lemma}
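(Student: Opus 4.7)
The plan is to verify three things in sequence: that the given map $\phi: S \to (S\tenk kP)\tenkP k$, $s\mapsto (s\ten 1_P)\ten 1_k$, is $S\tenk kP$-linear; that it is bijective; and then to compute $\End_{S\tenk kP}(S)$ directly, using $\phi$ to transport the endomorphism ring along.

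The key observation I would exploit throughout is the following identity in $(S\tenk kP)\tenkP k$: because the right $kP$-action on $S\tenk kP$ is via $u\mapsto \sigma(u)\ten u$, and because $k$ is the trivial left $kP$-module, for any $s\in S$ and $u\in P$ one has
\[
(s\ten u)\ten 1 \ =\ \bigl((s\ten u)\cdot u^{-1}\bigr)\ten 1 \ =\ (s\sigma(u^{-1})\ten 1)\ten 1.
\]
First, I would use this to check linearity: applying $\phi$ to $(s\ten u)\cdot t = st\sigma(u^{-1})$ gives $(st\sigma(u^{-1})\ten 1)\ten 1 = (st\ten u)\ten 1 = (s\ten u)\cdot \phi(t)$. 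Second, the same identity shows that every generator $(s\ten v)\ten 1$ equals $\phi(s\sigma(v^{-1}))$, giving surjectivity. For injectivity, rather than arguing abstractly I would match dimensions: the $k$-linear map $S\tenk kP\to S\tenk kP$ sending $s\ten v\mapsto s\sigma(v^{-1})\ten v$ intertwines the diagonal right $kP$-action on the source with the ``second factor only'' right action on the target, and under the latter action $S\tenk kP$ is free of rank $\dim_k S$ over $kP$. Hence $\dim_k (S\tenk kP)\tenkP k = \dim_k S$, and a surjection between $k$-spaces of equal finite dimension is an isomorphism.

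For the endomorphism computation I would transport through $\phi$ and analyse $\End_{S\tenk kP}(S)$ directly. Any $f$ in this endomorphism ring is, by left $S$-linearity (take $u=1$ in the action formula), right multiplication by $a:=f(1)\in S$. The remaining compatibility with $(1\ten u)\in S\tenk kP$ then reads $\sigma(u^{-1})a = a\sigma(u^{-1})$ for every $u\in P$, i.e.\ $a$ lies in the fixed points $S^P$ under the conjugation action of $P$ induced by $\sigma$. Since composition of such endomorphisms corresponds to multiplying the elements $a$ in the opposite order, we obtain an algebra isomorphism $\End_{S\tenk kP}(S)\cong (S^P)^\op = (S^\op)^P$, and $S^P = \End_{kP}(V)$ by definition of the $P$-action on $S=\End_k(V)$.

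I do not expect any genuine obstacle; the only point that requires care is keeping the twisting in the diagonal $kP$-action straight, both when rewriting elementary tensors in the quotient and when verifying that the map $s\ten v\mapsto s\sigma(v^{-1})\ten v$ actually trivialises the diagonal action. Once that bookkeeping is done the rest is formal.
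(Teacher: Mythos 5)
Your proof is correct and follows essentially the same route as the paper's: verify the map is an $S\tenk kP$-module homomorphism, establish bijectivity via a dimension count, and identify $\End_{S\tenk kP}(S)$ with $(S^P)^\op$ by observing that endomorphisms are right multiplications by elements commuting with $\sigma(P)$. You add a worthwhile detail the paper leaves implicit: the paper asserts ``both sides have the same dimension'' without justification, whereas you make the freeness of $S\tenk kP$ as a right $kP$-module under the twisted diagonal action explicit by exhibiting the untwisting isomorphism $s\ten v\mapsto s\sigma(v^{-1})\ten v$, which pins down $\dim_k\bigl((S\tenk kP)\tenkP k\bigr)=\dim_k S$ cleanly.
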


\begin{proof}
The map sending $s\in S$ to $(s\ten 1_P)\ten 1_k$ is clearly surjective, and since
both sides have the same dimension this map is a linear isomorphism. It is
obviously a left $S$-module homomorphism as well. We need to check that
this is also a homomorphism for the action of $kP$. On the left side, $u\in P$ 
acts as right multiplication with $\sigma(u)$. On the right side, $u\in P$ acts
as multiplication with $1_S\ten u$.  We need to check that these actions commute
with the map $s\to$ $(s\ten 1_P)\ten 1_k$. Now this map sends $s\sigma(u^{-1})$ to
$(s\sigma(u^{-1}) \ten 1_P)\ten 1_k=$ $(s\sigma(u^{-1}) \ten 1_P)\ten u\cdot 1_k=$
$s\sigma(u^{-1})\sigma(u)\ten u)\ten 1_k=$ $(s\ten u)\ten 1_k$, whence the first
isomorphism. Thus we have $\End_{S\tenk kP}((S\tenk kP) \tenkP k) \cong $
$\End_{S\tenk kP}(S)$. Every endomorphism of $S$ as a left $S$-module is
induced by right multiplication with an element in $S$, and for this to be also
a $kP$-endomorphism we need that this element is in $S^P$. The result follows.
\end{proof} 

\begin{Remark}
For future reference we note that the arguments in the proof of Lemma
\ref{nilpotent-Lemma} hold in much greater generality: if $H$ is a finite group
and $B$ an interior $H$-algebra over some Noetherian local principal ideal domain
$R$ such that $B$ is free of finite rank as an $R$-module, 
then the map sending $b\in B$ to $(b\ten 1)\ten 1\in (B\tenR RH)\ten_{RH} R$
is an isomorphism of $B\tenR RH$-modules $B\cong$ $(B\tenR RH)\ten_{RH} R$,
and hence $\End_{B\tenR RH}((B\tenR RH)\ten_{RH} R)\cong (B^\op)^H$.
\end{Remark}

\begin{proof}[Proof of Theorem \ref{nilpotent-thm}]
We use the notation above. As mentioned in \ref{EndBi-isom}, by
the standard Morita equivalence between
$B$ and its source algebra $A=iBi$, we have an isomorphism
$\End_{kG}(Bi\tenkP k)\cong$ $\End_A(A\tenkP k)$. It follows from
Lemma \ref{nilpotent-Lemma} that this algebra is isomorphic to
$\End_{kP}(V)^\op$, where the notation is as above. 
If this  endomorphism  algebra is self-injective, then by a result of Green 
(as presented in \cite[Theorem 2.13]{Geck01}) has a simple top and socle, 
whence the Theorem.
\end{proof}

\begin{Remark}
Except for the case where $P$ is cyclic, the property in Theorem \ref{nilpotent-thm}
 that an indecomposable
endopermutation $kP$-module $V$ with vertex $P$ has a simple top and socle is
very restrictive. In the case where $p=2$ and $P$ a Klein four group, this property
forces  $V$ to be  trivial (cf. the Remark \ref{Kleinfour-remark} above).  More generally,
for any prime $p$, if 
$P$ is a finite abelian $p$-group, then the Heller translates of $k$ are, up to
isomorphism, all endotrivial $kP$-modules, and if in addition $P$ is non-cyclic, then
the trivial $kP$-module is the only endotrivial $kP$-module with a simple top and 
socle. 
\end{Remark}

\section{Symmetric groups, and proof of Theorem \ref{cyclic-Sp}} 
\label{Example-section} 

Let $k$ be  a field of prime characteristic $p$.  The   normaliser in $S_p$ of a Sylow 
$p$-subgroup  $S\cong C_p$   is isomorphic to  $S\rtimes E$, where $E$ is  a cyclic 
group of  order $p-1$. Thus  
$\Ind_{S}^{S_p} (k) = $ $\oplus_X  \Ind_{S\rtimes E}^{S_p} (X)$, 
where $X$ runs  over a set of  representatives of  isomorphism classes of 
simple  $k(S\rtimes E)$-modules.  Further, for any simple $kS\rtimes E$-module 
$X$,   $ \Ind_{S \rtimes  E}^{S_p} (X) $  is the direct sum  of the Green 
correspondent of  $X$ and    a projective  $kS_p$-module.   Thus, the non-projective 
part of $\Ind_S^{S_p} (k)$  is the sum of   the  Green correspondents of  the  simple 
$ kS\rtimes E$-modules each occurring with multiplicity $1$. Since the principal 
block of  $kS_p$ is the only block with non-trivial  defect,  and $S$ is a  
vertex of all simple $S\rtimes  E$-modules, their  Green correspondents   
all lie in the principal block.

Let $B$  be  the  principal block of $kS_p $. The Brauer tree of  $B$  is a 
straight line with $p-1$ edges, labelled by simple $B$-modules $T_1$, $T_2$,.., 
$T_{p-1}$ and  vertices, none exceptional, labelled by  irreducible characters 
$\chi_s$, $1\leq s\leq p$, 
$$\xymatrix{\bullet^{\chi_1} \ar@{-}[r]_{T_1} & 
\bullet^{\chi_2} \ar@{-}[r]_{T_2} & \bullet^{\chi_3} \ar@{-}[r]_{T_3} & \cdots \ar@{-}[r] &
\bullet^{\chi_{p-1}} \ar@{-}[r]_{T_{p-1}} & \bullet^{\chi_p}}$$
such that $\chi_s+\chi_{s+1}$ is the character of a projective cover of $T_s$,
for $1\leq s\leq p-1$. Note that $p$ divides therefore $\chi_s(1)+\chi_{s+1}(1)$. 
Thus the characters $\chi_s$ with $s$ odd have degrees congruent to $1$ 
modulo $p$, while the characters $\chi_s$ with $s$ even have degrees congruent 
to $-1$ modulo $p$. Note that by  Proposition \ref{proj-free-Frob-Prop} the 
choice of field  $k$  is irrelevant.

Since the simple $k(S\rtimes E)$-modules are $1$-dimensional, their Green
correspondents have dimensions congruent to $1$ modulo $p$. 
The Green correspondent of the trivial $k(S\rtimes E)$-module is the
trivial $kS_p$-module $T_1$, and the Green correspondent of the $1$-dimensional
module which sends a generator of $S$ to $-1$ is the sign representation $T_{p-1}$.
The Green correspondents of the remaining simple $k(  S\rtimes E)  $-modules
are uniserial of length $2$, with composition factors $T_{2m}$, $T_{2m+1}$ and
$T_{2m+1}$, $T_{2m}$, for $1\leq m\leq \frac{p-3}{2}$, and both of these lift 
canonically to modules with character $\chi_{2m+1}$. The characters $\chi_{2m}$ 
do not appear in lifts of the Green correspondents of the simple 
$k(S\rtimes E)$-modules  and the  contribution of any irreducible character of $B$ 
to the non-projective part of   $\Ind_{S}^{S_p}(k)$ is  at most $2$.

\begin{Lemma}  \label{lem:proj non-simple}   
Let   $S$  be a Sylow $p$-subgroup of  $S_p$ and  $B$  the principal block of 
$kS_p$. If $p \geq 7$, then $B \tenkS k $ contains  a  projective  
non-simple indecomposable   summand.  
\end{Lemma}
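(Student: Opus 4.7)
The plan is to compare the $k$-dimension of $B\otimes_{kS}k$ with the dimension of its non-projective part $N$ described immediately before the Lemma, and to show that for $p\geq 7$ the former strictly exceeds the latter. Since the principal block of $kS_p$ has defect one, none of its projective indecomposable modules are simple; so any nonzero projective summand of $B\otimes_{kS}k$ is automatically non-simple, which is what we want.

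First I would compute $\dim_k(B\otimes_{kS}k)$. Since $B$ is a direct summand of $kS_p$ as a right $kS$-module and $kS_p$ is $kS$-free on the right, $B$ is $kS$-free of rank $\dim_k(B)/p$, so $\dim_k(B\otimes_{kS}k)=\dim_k(B)/p$. The ordinary characters of $B$ are the hook characters of $S_p$, indexed compatibly with the Brauer tree by $\chi_s(1)=\binom{p-1}{s-1}$ for $1\leq s\leq p$; Vandermonde's identity then yields
$$\dim_k(B\otimes_{kS}k)\;=\;\tfrac{1}{p}\sum_{s=1}^{p}\binom{p-1}{s-1}^{2}\;=\;\tfrac{1}{p}\binom{2(p-1)}{p-1}.$$
Next I would compute $\dim_k N$ from the decomposition
$$N\;\cong\;T_{1}\oplus T_{p-1}\oplus\bigoplus_{m=1}^{(p-3)/2}\bigl(U_{2m,\,2m+1}\oplus U_{2m+1,\,2m}\bigr)$$
recalled in the paragraphs preceding the Lemma, using that $\dim_k T_1=\dim_k T_{p-1}=1$ and that each $U_{i,j}$ lifts to a module with character $\chi_{2m+1}$, hence has dimension $\binom{p-1}{2m}$. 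The identity $\sum_{j\text{ even}}\binom{p-1}{j}=2^{p-2}$ then gives $\dim_k N=2+2(2^{p-2}-2)=2^{p-1}-2$.

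The main, and only non-formal, step is the inequality $\tfrac{1}{p}\binom{2(p-1)}{p-1}>2^{p-1}-2$ for every prime $p\geq 7$. Applying Cauchy--Schwarz to $\sum_{k=0}^{n}\binom{n}{k}^{2}=\binom{2n}{n}$ gives $\binom{2n}{n}\geq 4^{n}/(n+1)$, so $\tfrac{1}{p}\binom{2(p-1)}{p-1}\geq 4^{p-1}/p^{2}$; this exceeds $2^{p-1}$ exactly when $2^{p-1}>p^{2}$, which holds at $p=7$ (since $64>49$) and propagates via the elementary step $2p^{2}>(p+1)^{2}$, valid for all $p\geq 3$. Consequently the projective part of $B\otimes_{kS}k$ has positive dimension and must contain an indecomposable projective direct summand $P(T_j)$, which is non-simple by the initial remark.
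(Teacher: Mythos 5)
Your proof is correct, but it takes a genuinely different route from the paper's. The paper picks out one specific ordinary character of $B$, namely the one labelled by the hook $(p-2,1^{2})$, and computes its multiplicity $\langle \Res^{S_p}_{C_p}\chi, k\rangle=\frac{p-1}{2}$ in $\Ind_{C_p}^{S_p}(k)$ by the Murnaghan--Nakayama rule; since $\frac{p-1}{2}\geq 3$ for $p\geq 7$ while the description of the non-projective part $N$ shows that any irreducible character can contribute at most $2$ to it, a projective summand must occur. You instead make a global dimension count: $\dim_k(B\tenkS k)=\dim_k(B)/p=\tfrac{1}{p}\binom{2(p-1)}{p-1}$ by Vandermonde and $kS$-freeness, $\dim_k N=2^{p-1}-2$ by the binomial parity identity, and the inequality $\tfrac{1}{p}\binom{2(p-1)}{p-1}>2^{p-1}-2$ for $p\geq 7$ via $\binom{2n}{n}\geq 4^{n}/(n+1)$ and $2^{p-1}>p^{2}$. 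Both arguments rely on the explicit description of $N$ recalled just before the lemma and on the fact that a block of positive defect has no simple projective module. The paper's approach is more localized and avoids the asymptotic inequality; yours is more elementary combinatorially and yields the exact dimension of the projective part rather than just its non-vanishing. Either is acceptable.
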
  

\begin{proof}  
The  characters in the  principal block  of $kS_p$ are those labelled by hook 
partitions of $p$, and   by an easy application of Murnaghan-Nakayama rule  
it follows that if $\chi  \in \Irr (S_p)$  is  labelled by   the hook partition   
$(p-i,  1^i )$, $ 1\leq  i \leq p-1) $, then 
$$\langle  \chi,  \Ind_{C_p} ^{S_p} k \rangle =  
\langle  \Res^{S_p}_{C_p}  \chi,  k  \rangle  = 
\frac{1}{p} \big( \binom{p-1}{i}  +(-1)^i \big) .$$
In particular, if $ p\geq 7 $ and $i=2$, then  
$\langle  \chi,  \Ind_{C_p} ^{S_p} k \rangle= $  $\frac{p-1}{2} \geq 3 $. 
 The result follows  since  as  observed above the  contribution of   $\chi $ to   
 the non-projective part of  $\Ind^{S_p}_{C_p}(k)$ is  at most $2$.
 \end{proof} 

\begin{Lemma} \label{lem:cliff} 
Suppose that $H$ is a normal subgroup of  a finite group $G$.
Let $X$ be an indecomposable  $kG$-module  and  $Y$ 
a projective  indecomposable   summand of $\Res^{G}_{H}  X$. Suppose that   
the  $kH$-modules    $Y $ and  $xY$   are non-isomorphic  for any 
$ x \in G\setminus  H$. Then   $ X \cong \Ind_H^G (Y)$  and in particular, 
$X$ is projective.   Moreover, $X$ is simple if and only if $Y$ is  simple.
\end{Lemma}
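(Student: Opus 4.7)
The approach is to construct an injection $X \hookrightarrow \Ind^G_H Y$ via Frobenius reciprocity, use self-injectivity of $kG$ to split it off, and then exploit indecomposability of $X$ together with a transitivity argument on $G/H$ to force this injection to be an isomorphism.

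Since $Y$ is a direct summand of $\Res^G_H X$, I would fix a $kH$-linear projection $\pi: \Res^G_H X \to Y$. Via the Frobenius adjunction $\Hom_{kG}(X,\Ind^G_H Y) \cong \Hom_{kH}(\Res^G_H X, Y)$, this corresponds to a $kG$-homomorphism $\tilde{\pi}: X \to \Ind^G_H Y$. Its kernel is a $kG$-submodule of $X$; by indecomposability $\ker\tilde{\pi}$ equals $0$ or $X$, and the latter would force $\pi = 0$ through the adjunction, a contradiction. So $\tilde{\pi}$ is injective. Because $Y$ is projective, so is $\Ind^G_H Y$; since $kG$ is self-injective, $\Ind^G_H Y$ is also injective in $\mod(kG)$. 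Therefore $\tilde{\pi}$ splits, yielding $\Ind^G_H Y \cong X \oplus X'$ for some $kG$-module $X'$. In particular $X$ is projective.

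It remains to show $X' = 0$. The Mackey formula for the normal subgroup $H$ gives
\begin{equation*}
\Res^G_H \Ind^G_H Y \;\cong\; \bigoplus_{x \in [G/H]} xY,
\end{equation*}
and by hypothesis the summands are pairwise non-isomorphic indecomposable $kH$-modules. By Krull--Schmidt, each iso class $[xY]$ appears with multiplicity exactly $1$ in precisely one of $\Res^G_H X$ or $\Res^G_H X'$, and $[Y]$ appears in $\Res^G_H X$ by assumption. On the other hand, a standard computation shows that the $G$-action on $\Ind^G_H Y$ permutes the iso classes of indecomposable $kH$-summands transitively as left multiplication on $G/H$ and preserves multiplicities. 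Since $X$ is $kG$-stable, the set of $[xY]$ appearing in $\Res^G_H X$ is $G$-invariant under this transitive action; being nonempty, it must exhaust $\{[xY] : x \in [G/H]\}$. Hence $\Res^G_H X' = 0$, so $X' = 0$ and $X \cong \Ind^G_H Y$.

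For the simplicity statement: if $Y$ has a proper nonzero $kH$-submodule $Y'$, exactness of induction yields $\Ind^G_H Y' \subsetneq \Ind^G_H Y \cong X$ as a proper nonzero $kG$-submodule, so $X$ is not simple; conversely, simplicity of $Y$ implies simplicity of $X \cong \Ind^G_H Y$ by Clifford's theorem, since the non-conjugacy hypothesis forces the inertia group of $[Y]$ to be $H$. The main technical point I expect is the $G$-invariance-plus-transitivity argument producing $X' = 0$, where the non-conjugacy hypothesis enters essentially on two fronts: to ensure the multiplicity-free decomposition $\bigoplus_{x \in [G/H]} xY$ and to make the $G$-action on these iso classes genuinely of order $|G/H|$.
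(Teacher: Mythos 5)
Your overall plan is structurally dual to the paper's proof (they build a map $\Ind^G_H Y \to X$, you build one $X \to \Ind^G_H Y$), but two of the intermediate steps as written are not valid, and together they leave a real gap.

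First, the claim that ``by indecomposability $\ker\tilde\pi$ equals $0$ or $X$'' is false. Indecomposability means $X$ has no nontrivial \emph{direct summands}; it says nothing about arbitrary submodules. A nonzero homomorphism out of an indecomposable module can perfectly well have a proper nonzero kernel (e.g.\ the quotient map from a non-simple projective indecomposable onto its head). So $\tilde\pi \neq 0$ does not give $\tilde\pi$ injective. Second, even granting injectivity of $\tilde\pi$, the inference ``$\Ind^G_H Y$ is injective, therefore $\tilde\pi$ splits'' is also false: an injection into an injective module need not split. What splits an injection $X \hookrightarrow M$ is injectivity of the \emph{source} $X$, not of the target $M$ (a submodule of an injective module over a self-injective algebra is generally not a direct summand, e.g.\ $\rad(kP) \subset kP$). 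At the point where you invoke this, you have no way of knowing $X$ is injective --- indeed that is essentially what you are trying to prove.

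The paper avoids both problems by going in the opposite direction. It sets $X_0 = \sum_{x \in I} xY \subseteq X$, uses that $Y$ is projective indecomposable over the symmetric algebra $kH$ (hence has simple socle and is the injective hull of its socle) together with the non-isomorphism hypothesis to see $\soc(Y) \cap \soc(xY) = 0$, hence $Y \cap xY = 0$, so $X_0 = \bigoplus_{x} xY$; then the Frobenius map $\Ind^G_H Y \to X_0$ is surjective by minimality and an isomorphism by a dimension count. Now $X_0$ is a projective, hence injective, \emph{submodule} of $X$, so it splits off, and indecomposability of $X$ finishes. Here splitting is legitimately obtained from injectivity of the submodule $X_0$, which is exactly the point your argument needs but cannot yet supply. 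Your Mackey/transitivity counting in the second half is correct and would be a fine way to conclude once you have $X$ as a genuine direct summand of $\Ind^G_H Y$, but as things stand you have not established that.
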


\begin{proof} 
Let  $I$  be a set of coset representatives of $H$ in $G$, and let $X_0 =$
$\sum_{x\in I }  xY $, the smallest $kG$-submodule of $X$ containing $Y$.  Since  
 $Y$ is   projective  and indecomposable and $kG$ is symmetric,  $Y$ has a simple 
 socle  and  $Y$  is the  injective envelope of its  socle.  So,  if  $Y$ and $xY $ are 
 non-isomorphic, then   $\soc(Y) \cap \soc(xY)=0 $  and consequently, $Y \cap xY =0 $. 
 Thus,   $X_0$ is  a  direct sum of the $xY$'s,  $x\in I$.      By  Frobenius reciprocity, 
 the inclusion of $Y$ into $ \Res^{G}_{H}  (X_0)  $  gives  rise  to  a  $kG$-homomorphism, 
 say $\varphi$,    from $\Ind_H^G (Y) $  to $X_0$  satisfying    $ 1 \otimes y =  y  $ for 
 all   $ y\in Y$.   Since $X_0 $ is the smallest $kG$-submodule   of $X$ containing $Y$, 
 the homomorphism   $\varphi $  is clearly surjective.   But $X_0 $ has the same 
 $k$-dimension as  $Y$. Thus $\varphi $ is an isomorphism  and in particular $X_0$ 
 is  a projective and hence an  injective  $kG$-module. Thus $ X_0$ is a direct 
 summand of $X$. Since $X$ is indecomposable,  $X_0=X$ and 
and  the  first  two  assertions follow.  
The  last assertion is  immediate from the first one.
\end{proof}

\begin{Lemma} \label{lem:wreath}   
Let $H$  be  a finite group, $Q$ a Sylow $p$-subgroup  of  $H$,  $G= H\wr C_p$ and 
$P$  a Sylow $p$-subgroup of $G$.  
Suppose  that  $\Ind_Q^H (k) $  has  a projective   and 
non-simple indecomposable  summand  and that   $kH$ also has a  simple projective 
module.  Then $\Ind_{P}^G(k) $     has  a  projective   and non-simple indecomposable 
summand.
\end{Lemma}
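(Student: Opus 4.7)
The plan is to exploit the wreath product structure of $G$. Let $B = H^p$ be the base group; then $B \trianglelefteq G$ with $G/B \cong C_p$, and I may choose $P = Q \wr C_p$ so that $B \cap P = Q^p$ and $G = BP$. The first step is to apply Mackey's formula: as there is only one $B$-$P$-double coset,
\[
\Res^G_B \Ind^G_P(k) \cong \Ind^B_{Q^p}(k) \cong \bigl(\Ind^H_Q(k)\bigr)^{\otimes p},
\]
where the last isomorphism uses the identification $kB = (kH)^{\otimes p}$ together with the outer tensor product decomposition of induction.

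The heart of the argument is to exhibit an indecomposable summand $M$ of this restriction which is projective, non-simple, and whose $p$ translates under the $G/B$-action are pairwise non-isomorphic. Write $Y$ for the given projective non-simple indecomposable summand of $\Ind^H_Q(k)$, and $T$ for the given simple projective $kH$-module. I would first observe that $T$ is itself a summand of $\Ind^H_Q(k)$: since $T$ has $p$-defect zero, $\Res^H_Q T$ is a free $kQ$-module, whence $\Hom_{kH}(T, \Ind^H_Q(k)) \cong \Hom_{kQ}(\Res^H_Q T, k) \neq 0$, and the resulting nonzero map $T \to \Ind^H_Q(k)$ splits because $T$ is simple projective and hence injective ($kH$ being symmetric). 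I then set $M := Y \otimes T^{\otimes (p-1)}$, with $Y$ in the first tensor slot; this is projective (outer tensor of projectives), non-simple (since $\dim M > \dim(\mathrm{top}(Y)) \cdot (\dim T)^{p-1}$, $Y$ being non-simple), and indecomposable as an outer tensor product of absolutely indecomposables.

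To finish, note that a generator $\sigma$ of $C_p = G/B$ acts on $B$ by cyclic permutation of its $p$ copies of $H$, hence cyclically permutes the tensor slots of any outer tensor module. The translates ${}^{\sigma^j}M$, $0 \leq j \leq p-1$, therefore place $Y$ in distinct slots, and since $Y \not\cong T$ (the former is non-simple, the latter simple) they are pairwise non-isomorphic as $kB$-modules. Taking $X$ to be an indecomposable summand of $\Ind^G_P(k)$ for which $M$ appears in $\Res^G_B X$, Lemma~\ref{lem:cliff} applied to the normal subgroup $B \trianglelefteq G$ then yields $X \cong \Ind^G_B(M)$, which is projective and non-simple along with $M$, as required.

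The main delicate point is the indecomposability of $M$: it rests on the standard fact that an outer tensor product of absolutely indecomposable modules is absolutely indecomposable, valid once $k$ is a splitting field (the ambient setting, in particular for the intended application to symmetric groups, for which $\mathbb{F}_p$ is already a splitting field). All other ingredients are formal combinations of Mackey's formula, Frobenius reciprocity, and Lemma~\ref{lem:cliff}.
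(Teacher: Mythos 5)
Your proposal is correct and follows essentially the same approach as the paper: restrict to the base group $L=H^p$ via Mackey, form the outer tensor product of the projective non-simple summand $Y$ with $p-1$ copies of the simple projective $T$, observe that cyclic permutation of tensor slots sends this module to pairwise non-isomorphic ones, and apply Lemma~\ref{lem:cliff}. You fill in two small details the paper leaves implicit (that $T$ is a summand of $\Ind^H_Q(k)$, and the splitting-field caveat for absolute indecomposability of outer tensor products), but the strategy is identical.
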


\begin{proof}  
Let $L = H \times \cdots\times H \cong H^p$ be the  base subgroup of $G$. Let  $Q$ 
be  a  Sylow $p$-subgroup  of $H$, $ R=  Q \times\cdots   \times  Q $,  a Sylow 
$p$-subgroup of $L$. Without loss  we may assume that  $ P$  contains  $R$.  
Then $ G =  LP$  and the Mackey formula gives
$$ \Res^G_L \Ind_P^G  (k)  = \Ind_R^L (k).$$
Let $M$ be  a  projective indecomposable  and non-simple summand  of     
$\Ind_{Q}^H(k) $ and let $N$ be a projective simple  $kH$-module. Then  
$N$ is  also a  summand of  $\Ind_{Q}^H(k) $.   Consider the $kL$-module
$$ Y :=  M  \otimes  N \otimes  \cdots \otimes N.   $$  Then  $ Y$ is an indecomposable 
projective and non-simple  summand of  $\Ind_R^L (k) $.   By  the above displayed 
equation, there exists an indecomposable factor, say $X$ of $ \Ind_P^G  (k) $ such 
that  $Y$ is a summand of $\Res^G_L (X) $.

Since    elements of $G\setminus L$ cyclically permute the   $p$-factors  of $L$, 
it follows that $Y $  and $xY$ are non-isomorphic  for any $x \in G\setminus L $.  
Lemma~\ref{lem:cliff}  applied with $L$ in place of $H$  gives that 
$X \cong \Ind_L^G (Y) $ is  projective  and non-simple.
\end{proof}  

We need the following consequence of a result of Kn\"orr 
\cite[2.13 Corollary]{Knoerr77}.

\begin{Lemma} \label{primetop}  
 Let $G$ be  a finite group, $P$ a Sylow $p$-subgroup of $G$ and $H$  a  subgroup 
 of $G$ containing $P$.   If  $\Ind_P^H(k)$  has  a projective  and non-simple  
 indecomposable  summand, then so does $\Ind_P^G (k) $.
\end{Lemma}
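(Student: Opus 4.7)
The plan is first to reformulate the statement in terms of simple modules with nonzero $P$-fixed points, and then to transfer a witness from $kH$ to $kG$ using Frobenius reciprocity and Mackey's formula. Every projective indecomposable $kH$-module has the form $P_H(T)$ for some simple $kH$-module $T$; by Frobenius reciprocity $\Hom_H(\Ind_P^H k,T)\cong T^P$, so $P_H(T)$ is isomorphic to a direct summand of $\Ind_P^H(k)$ if and only if $T^P\neq 0$, while $P_H(T)$ is non-simple if and only if $T$ is non-projective. Hence the hypothesis becomes: there exists a non-projective simple $kH$-module $T$ with $T^P\neq 0$, and the desired conclusion is the analogous statement for $kG$.

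Given such a $T$, I would form $\Ind_H^G(T)$. The Mackey formula shows that $T$ appears as a direct summand of $\Res_H^G\Ind_H^G(T)$ via the identity double coset, and since $\Res_H^G$ preserves projectivity ($kG$ being free over $kH$), the module $\Ind_H^G(T)$ cannot be projective. Hence it has an indecomposable non-projective direct summand $U$ with simple head $W=U/\rad(U)$, and $W$ must itself be non-projective: otherwise the surjection $U\twoheadrightarrow W$ would split, realising the projective $W$ as a direct summand of the indecomposable $U$ and forcing $U\cong W$, contradicting the non-projectivity of $U$. Finally, the quotient map $\Ind_H^G T\twoheadrightarrow W$ corresponds under Frobenius reciprocity to a nonzero $kH$-map $T\to\Res_H^G W$ which is injective by simplicity of $T$; passing to $P$-invariants yields $0\neq T^P\subseteq W^P$. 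Therefore $P_G(W)$ is a non-simple projective indecomposable direct summand of $\Ind_P^G(k)$, as required.

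I expect the main delicate point to be confirming that the simple head $W$ is non-projective, which rests on the indecomposability of the chosen summand $U$ together with the splitting of surjections onto projective targets; this is the content of Kn\"orr's Corollary in the form needed here, which lets us lift the $P$-invariance condition from $kH$ to $kG$ without losing non-projectivity.
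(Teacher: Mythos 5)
The reformulation in your first paragraph is false, and the error is fatal to the whole plan. Since $P$ is a $p$-group, \emph{every} nonzero $kP$-module has nonzero $P$-fixed points, so the condition $T^P\neq 0$ is satisfied by every simple $kH$-module $T$. Your claimed equivalence ``$P_H(T)$ is a direct summand of $\Ind_P^H(k)$ iff $T^P\neq 0$'' would therefore assert that every projective indecomposable $kH$-module is a direct summand of $\Ind_P^H(k)$. This fails already when $H=P$ is a nontrivial $p$-group: there $\Ind_P^H(k)=k$ has no projective summand at all, yet $k^P=k\neq 0$. What Frobenius reciprocity gives you, $\Hom_H(\Ind_P^H k, T)\cong T^P$, says only that $T$ is a quotient of the permutation module; it does not say that the projective cover $P_H(T)$ splits off. (If your equivalence held, the lemma's hypothesis would be equivalent to ``$p$ divides $|H|$'', which it is not.) Since the rest of the argument is built on transporting a witness simple module $T$ through this false equivalence on both sides, it does not establish the lemma. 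There is also a secondary imprecision in the middle: an indecomposable module need not have simple head, so one should say ``pick a simple quotient $W$ of $U$'' rather than $W=U/\rad(U)$; but that is repairable, whereas the reformulation is not.

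The paper's proof is shorter and avoids all of this: if $Y$ is a projective non-simple indecomposable summand of $\Ind_P^H(k)$, then $\Ind_H^G(Y)$ is a projective direct summand of $\Ind_P^G(k)=\Ind_H^G\bigl(\Ind_P^H(k)\bigr)$, and Kn\"orr's result \cite[2.13 Corollary]{Knoerr77} says $\Ind_H^G(Y)$ is not semisimple; any non-simple indecomposable summand of it is then projective and non-simple. You cite Kn\"orr only in passing as optional bookkeeping, but in the paper it is the load-bearing step, applied directly to the induced projective module rather than used to ``lift a $P$-invariance condition''.
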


\begin{proof} 
Let $Y$ be a projective non-simple indecomposable summand of $\Ind^H_P(k)$.
Then $\Ind^G_H(Y)$ is a projective summand of $\Ind^G_P(k)$, and by
\cite[2.13 Corollary]{Knoerr77} it is not semisimple.
\end{proof}

\begin{Lemma} \label{ngreaterp}  Suppose that $n \geq  p \geq 7 $ and let  $P$ be a 
Sylow $p$-subgroup of $S_n $. Then $\Ind_P^{S_n} (k)$ contains an 
indecomposable  projective  and non-simple summand.    
\end{Lemma}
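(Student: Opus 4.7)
The plan is to prove the result in two stages. First, I would establish by induction on $i \geq 1$ the analogous statement for the iterated-wreath-product Sylow $p$-subgroup $W_i \cong C_p \wr \cdots \wr C_p$ ($i$ factors) of $S_{p^i}$, namely that $\Ind_{W_i}^{S_{p^i}}(k)$ contains a projective non-simple indecomposable summand. Second, I would use the base-$p$ expansion of $n$, together with a Young subgroup of $S_n$ and Lemma~\ref{primetop}, to deduce the statement for arbitrary $n \geq p$.

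For the induction in the first stage, the case $i = 1$ is exactly Lemma~\ref{lem:proj non-simple}, since the principal block of $kS_p$ is a direct summand of the full group algebra. For $i \geq 2$ one has $W_i \cong W_{i-1} \wr C_p$, and a direct check on $p$-parts of orders shows that $W_i$ is a Sylow $p$-subgroup both of $S_{p^i}$ and of the intermediate subgroup $S_{p^{i-1}} \wr C_p \leq S_{p^i}$. The inductive hypothesis supplies a projective non-simple indecomposable summand of $\Ind_{W_{i-1}}^{S_{p^{i-1}}}(k)$, and assuming there is a simple projective $kS_{p^{i-1}}$-module, Lemma~\ref{lem:wreath} (applied with $H = S_{p^{i-1}}$) yields such a summand of $\Ind_{W_i}^{S_{p^{i-1}} \wr C_p}(k)$; a final application of Lemma~\ref{primetop} transports it to $\Ind_{W_i}^{S_{p^i}}(k)$.

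In the second stage, write $n = \sum_{j \geq 0} a_j p^j$ in base $p$; since $n \geq p$, at least one $a_j$ with $j \geq 1$ is nonzero. A Sylow $p$-subgroup of $S_n$ is $P = \prod_j W_j^{a_j}$, and it lies as a Sylow $p$-subgroup inside the Young subgroup $H = \prod_j (S_{p^j})^{a_j} \leq S_n$. Upon identifying $\Ind_P^H(k)$ with the external tensor product of the modules $\Ind_{W_j}^{S_{p^j}}(k)$ each taken $a_j$ times, the $a_j$-fold external tensor powers of projective non-simple indecomposable summands $M_j$ supplied by the first stage combine into a summand of $\Ind_P^H(k)$ that is projective, non-simple, and absolutely indecomposable; the last follows because $\Fp$ is a splitting field for every symmetric group algebra, so external tensor products of absolutely indecomposable modules remain absolutely indecomposable. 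Lemma~\ref{primetop} applied to $G = S_n$ and this $H$ then yields the required summand of $\Ind_P^{S_n}(k)$.

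The main point to verify is the existence of a simple projective $kS_{p^{i-1}}$-module needed at every step of the first stage, equivalently the existence of a $p$-core partition of $p^{i-1}$. For $i = 2$ this is elementary, since many non-hook partitions of $p$ are $p$-cores for $p \geq 5$; for $i \geq 3$ it is a consequence of Granville and Ono's theorem guaranteeing $t$-core partitions of every positive integer for all $t \geq 4$.
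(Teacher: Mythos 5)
Your proof is correct and follows essentially the same route as the paper: both rely on Lemmas~\ref{lem:proj non-simple}, \ref{lem:wreath}, and \ref{primetop} together with the Granville--Ono theorem, and the crucial prime-power case is handled identically via the overgroup $S_{p^{a-1}}\wr C_p$. The only difference is organizational --- you settle all prime powers first by induction on the exponent and then reduce general $n$ to that case via the base-$p$ expansion and tensor products, whereas the paper runs a single induction on $n$ with a three-way case split that absorbs the non-prime-power cases directly.
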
 

\begin{proof}
We  use induction on $n$.   If $n=p $,  we are done by  Lemma \ref{lem:proj non-simple}.    
Now suppose that $ n >p $ and the  property   holds  for all $ m $, $ p\leq m <n $.    
If $P \leq S_m $ for some $m < n $, then the property  holds for $S_n$  by 
Lemma  \ref{primetop}.
Suppose next that   $ P \leq  S_{n_1} \times S_{n_2} $ with $n=n_1 +n_2  $  
and $n_1 \geq p,  n_2 \geq p $.  Since the property  clearly passes to direct 
products, again we are done by Lemma \ref{primetop}.
Thus, we may assume that  $n=p^a $  for some $a \geq 2 $.   Then there exists  a 
subgroup  $G$ of $S_n $ containing $P$   and  isomorphic to  $ S_{p^{a-1}} \wr C_p $.  
By   a result of    Granville and Ono   \cite[Theorem 1]{GO},   $kS_{p^{a-1}} $    has a 
projective  simple summand.   Hence  by Lemma \ref{lem:wreath}    we have that  
$\Ind_P^G (k) $  contains  an indecomposable, projective, non-simple summand. 
By Lemma  \ref{primetop}, so does $ \Ind^{S_n}_P(k) $.
\end{proof} 

\begin{proof}[{Proof of Theorem \ref{cyclic-Sp}}]   
(a)   If $ n=p $  and  $B$ is the principal block, the  statement is  trivially true  for 
$p=2 $ and for odd $p$ it  follows from the proofs  of  Theorem \ref{cyclic-thm}   
and Theorem \ref{Brauertree-thm-2}  and from  the description of  the Brauer 
tree of  $B$ given  in the beginning of this section.   Here we note    that  an
indecomposable  
Nakayama algebra is symmetric if and only if it  is  a star Brauer tree algebra 
(with exceptional vertex, if any, in the center; see  \cite[Sections 11.6, 11.7]{LiBookII}). 
 By a result of Puig \cite{PuScopes},   any block of    $kS_n $ with non-trivial cyclic 
 defect groups  is  source algebra 
 equivalent   to the  principal  block of $kS_p$,   hence   the general case of (a) follows 
 from the special   case  and \ref{EndBi-isom}.  Note that the field $k$ is assumed 
 algebraically closed in  
 \cite{PuScopes},  but it can be checked that this assumption is not needed.

(b)    The first   assertion  follows from
Proposition \ref{proj-free-Frob-Prop}.   The remaining assertions are  trivially true  
if   $p=2$    or $3$  and    they  follow   from     \cite[5.6.4,  5.6.4]{NaehrigPhD}  and 
Corollary \ref{noprojective-cor}   if $p=5 $  or $p =7 $  and  from  Lemma 
\ref{lem:proj non-simple}   and  Corollary \ref{noprojective-cor}  if $p>7 $.

(c)  This is immediate  from  Lemma \ref{ngreaterp}  and  Corollary 
\ref{noprojective-cor}.
\end{proof}

\begin{Example} \label{S7-example}
Suppose that $k$ is a field of characteristic $7$. The symmetric group algebra
$kS_7$ has a principal block $B$ with a cyclic defect group $S=C_7$ of order $7$, and
all other blocks have defect zero.  Although $\End_B(B\tenkS k)$ is not
self-injective, it is a quotient of a  Brauer tree algebra after
removing trivial factors - this curious
fact is the reason  for including the detailed calculations below.

The block  $B$  has six pairwise non-isomorphic
simple modules $\{T_i\}_{1\leq i\leq 6}$, of dimensions $1$, $5$, $10$, $10$, $5$, $1$, 
and seven irreducible characters $\{\chi_i\}_{1\leq i\leq 7}$ of degrees
$1$, $6$, $15$, $20$, $15$, $6$, $1$, respectively. We choose notation such that
$T_1$ is the trivial $kS_7$-module and $T_6$ is the sign representation. 
The Brauer tree of $B$ is a straight line with six edges and no exceptional multiplicity
$$\xymatrix{\bullet^{\chi_1} \ar@{-}[r]_{T_1} & 
\bullet^{\chi_2} \ar@{-}[r]_{T_2} & \bullet^{\chi_3} \ar@{-}[r]_{T_3} & 
\bullet^{\chi_4} \ar@{-}[r]_{T_4} & \bullet^{\chi_5} \ar@{-}[r]_{T_5}  & 
\bullet^{\chi_6} \ar@{-}[r]_{T_6} & \bullet^{\chi_7} }$$
The projective indecomposable $B$-modules $P_i$ corresponding to the simple modules
$T_i$, $1\leq i\leq 6$, have Loewy layers
$$\begin{matrix} T_1 \\ T_2 \\ T_1 \end{matrix}\hskip12mm
\begin{matrix} & T_2 & \\ T_1 & & T_3 \\ & T_2 & \end{matrix} \hskip12mm
\begin{matrix} & T_3 & \\ T_2 & & T_4 \\ & T_3 & \end{matrix} \hskip12mm
\begin{matrix} & T_4 & \\ T_3 & & T_5 \\ & T_4 & \end{matrix} \hskip12mm
\begin{matrix} & T_5 & \\ T_4 & & T_6 \\ & T_5 & \end{matrix} \hskip12mm
\begin{matrix} T_6 \\ T_5 \\ T_6 \end{matrix}
$$
and the dimensions of the $\{P_i\}_{1\leq i\leq 6}$ are $7$, $21$, $35$, $35$, $21$, $7$,
respectively,from which we get as in the previous example  that 
$\dim_k(B)=$ $2\cdot(7+5\cdot 21 + 10\cdot 35)=$ $924$.
The character of the trivial source module lifting $B\tenkP k$ is
$\chi_1+3\chi_3+2\chi_4+3\chi_5+\chi_7$, 
which we can rewite as 
$$\chi_1 + \chi_3 + \chi_5 + \chi_7 + (\chi_3+\chi_4) + (\chi_4+\chi_5)$$
The characters  $\chi_1$, $\chi_3$, $\chi_5$, $\chi_7$ are the characters of
the canonical lifts of the uniserial $B$-modules corresponding to the simple 
$k(P\rtimes E)$-modules, and the characters $\chi_3+\chi_4$ and $\chi_4+\chi_5$
are the characters of the projective indecomposable modules lifting $P_3$ and $P_4$.
Thus $B\tenkS k$ is the direct sum of the modules
$$\begin{matrix} T_1\end{matrix} \hskip13mm
\begin{matrix} T_2\\T3\end{matrix} \hskip13mm
\begin{matrix} T_3\\T_2\end{matrix} \hskip13mm
\begin{matrix} P_3\end{matrix} \hskip13mm
\begin{matrix} P_4\end{matrix} \hskip13mm
\begin{matrix} T_4\\T_5\end{matrix} \hskip13mm
\begin{matrix} T_5\\T_4\end{matrix} \hskip13mm
\begin{matrix} T_6\end{matrix} $$

The algebra $\End_{B}(B\tenkS k)$  is isomorphic to $k \times \End_{B}(M') \times k$, 
where
$$M'=
\begin{matrix} T_2\\T3\end{matrix} \oplus
\begin{matrix} T_3\\T_2\end{matrix} \oplus
\begin{matrix} P_3\end{matrix} \oplus
\begin{matrix} P_4\end{matrix} \oplus
\begin{matrix} T_4\\T_5\end{matrix} \oplus
\begin{matrix} T_5\\T_4\end{matrix} .$$
It is easy to write down all homomorphisms between these summands, and one 
obtains that the quiver and relations of $\End_{B}(M')$ are as follows (where we
label the vertices by the indecomposable summands of $M'$, denoting by $T_{i,j}$ the
length $2$ uniserial modules with composition factors $T_i$, $T_j$ from top to 
bottom):

$$\xymatrix{ T_{3,2}  \ar[dd]_{\gamma} & & & & & & 
T_{4,5}  \ar[dd]^\mu \\
          & & P_3 \ar@<+.8ex>[rr]^{\tau} \ar[llu]_{\beta}& &
                P_4 \ar@<+.8ex>[ll]^{\pi} \ar[rru]^{\eta} & &    \\
        T_{2,3} \ar[rru]_{\delta} & & & & & & 
          T_{5,4}  \ar[llu]^{\nu}   }
$$
We compose arrows left to right.
The arrow $\tau$ corresponds to a homomorphism with image $T_{3,4}$ and kernel
$T_{2,3}$; the arrow $\pi$ corresponds to a homomorphism with image $T_{4,3}$ and
kernel $T_{5,4}$. Thus $\tau\pi$ corresponds to an endomorphism of $P_3$ with
image $\soc(P_3)$, and $\pi\tau$ corresponds to an endomorphism of $P_4$ with
image $\soc(P_4)$. The arrow $\beta$ corresponds to a surjective map $P_3\to T_{3,2}$,
the arrow $\gamma$ corresponds to a homomorphism $T_{3,2}\to T_{2,3}$ with
image $T_3$, the arrow $\delta$ corresponds to an injective map $T_{2,3}\to P_3$.
The arrows $\eta$, $\mu$, $\tau$ correspond to the analogous homomorphisms
at the other end of the quiver. The relations are easily read off the composition
series of the involved modules: after possibly adjusting the homomorphisms
corresponding to  $\gamma$ and $\mu$ by a nonzero scalar multiple,  we have
$$\beta\gamma\delta = \tau\pi, \ \ \ \eta\mu\nu = \pi\tau$$
which imply that $\pi\beta=0$ and $\tau\eta=0$. Further relations are
$$\gamma\delta\beta = \delta\beta\gamma = \mu\nu\eta=\nu\eta\mu=0.$$
The above quiver is obtained by attaching two $3$-cycles and one $2$-cycle 
to each other in such a way that to each vertex there are at most two cycles attached.
Thus this is the quiver of a Brauer tree algebra, namely  of the Brauer tree
$$\xymatrix{ \bullet \ar@{-}[dr]^1 & & & & & &\bullet \\
 & \bullet \ar@{-}[rr]^3 & & \bullet \ar@{-}[rr]^4 & & \bullet \ar@{-}[ur]^5 \ar@{-}[dr]_6 &  & \\
 \bullet \ar@{-}[ur]_2& & & & & & \bullet  
 }$$
 This Brauer tree algebra has six isomorphism classes of simple modules, and (denoting
 these by the integers used to label the edges of the tree) their projective covers
 have Loewy series
 $$
 \begin{matrix} 1 \\ 2 \\ 3 \\ 1 \end{matrix} \hskip1cm
  \begin{matrix} 2 \\ 3 \\ 1 \\ 2 \end{matrix} \hskip1cm
  \begin{matrix} & 3 & \\ 1 & & 4 \\ 2 & & \\ & 3 & \end{matrix} \hskip1cm
   \begin{matrix} & 4 & \\ 3 & & 5 \\  & & 6 \\ & 4 & \end{matrix} \hskip1cm
   \begin{matrix} 5 \\ 4 \\ 6 \\ 5 \end{matrix} \hskip1cm
    \begin{matrix} 6 \\ 5 \\ 4 \\ 6 \end{matrix} 
 $$
 By checking the relations in this Brauer tree algebra one sees that the algebra
 $\End_{B}(M')$ is a quotient of this Brauer tree algebra by the $4$-dimensional
 ideal consisting of the simple modules in the socle labelled $1$, $2$, $5$, $6$.
 This quotient is no longer  self-injective, as it does no longer give rise to 
 a bijection between the isomorphism classes of top and bottom composition
 factors. The structure of the projective indecomposable $\End_{B}(M')$-modules
 is thus as follows:
 $$
 \begin{matrix} 1 \\ 2 \\ 3 \end{matrix} \hskip1cm
  \begin{matrix} 2 \\ 3 \\ 1  \end{matrix} \hskip1cm
  \begin{matrix} & 3 & \\ 1 & & 4 \\ 2 & & \\ & 3 & \end{matrix} \hskip1cm
   \begin{matrix} & 4 & \\ 3 & & 5 \\  & & 6 \\ & 4 & \end{matrix} \hskip1cm
   \begin{matrix} 5 \\ 4 \\ 6  \end{matrix} \hskip1cm
    \begin{matrix} 6 \\ 5 \\ 4  \end{matrix} 
 $$
 
\end{Example}

\section{Further remarks}
 Let $\CO$ be a complete discrete valuation ring with
residue field $k$ of prime characteristic $p$ and field of fractions $K$ of
characteristic zero.   Let $G$ be a finite group and let $S$ be a Sylow $p$-subgroup 
of $G$.

\begin{Remark} \label{O-Remark}
Let $B$ be a block of $\OG$ with defect group $P$.
Finitely generated  trivial source $kG$-modules lift uniquely, up to isomorphism, 
to trivial source  $\OG$-modules (see e.g. \cite[Theorem 5.10.2]{LiBookI}),
and hence finitely generated $p$-permutation $kG$-modules lift uniquely,
up to isomorphism, to $p$-permutation $\OG$-modules.
If $i$ is a source idempotent in $B^P$, then the image of $i$ in $kG$ is a source
idempotent of the corresponding block $\bar B\cong$ $k\tenO B$ of $kG$.
More generally, the canonical map $\OG\to$ $kG$ induces a bijection between
(local) pointed groups on $\OG$ and on $kG$.
The Sylow permutation module, its block components, the defect
permutation module and source permutation module of $B$ are all defined
with $k$ replaced by $\CO$, and some character of $B$ appears in 
the character of the Sylow permutation module $\Ind^G_S(\CO)$ over $\CO$,
and for the same reason, weight modules and their Green correspondents lift 
over $\CO$ as well. Thus the Theorems \ref{Bi-summand}, \ref{Bi-weight-thm},
\ref{Bi-no-projective-summand-thm} (i), 
 and \ref{princ-block-thm}
hold with $k$ replaced by $\CO$.  The first isomorphism in Theorem
\ref{nilpotent-thm} has an analogue over $\CO$.
The following results in Section \ref{backgroundsection}
lift  over $\CO$: 
the Lemmas \ref{ij-primitive-Lemma}, \ref{perf-Lemma}, 
\ref{bimod-Lemma}, \ref{perfect-vertices-Lemma},
 \ref{bimod-idempotent-Lemma},
Proposition \ref{iU-vertices}, Corollary \ref{iU-permutation-summands},
and Proposition \ref{BiMorita}.
All results in Section \ref{Sylow-section} have obvious analogues over $\CO$, except
the results involving simple modules  in Lemma \ref{no-proj-summands-Lemma} (i), 
Proposition \ref{Bi-top-socle},  and Proposition \ref{normal-Prop}. 
Since weight modules lift over $\CO$, so do the statements of 
Theorem \ref{Bi-weights}, Lemma \ref{Bi-summands-Lemma}, 
Corollary \ref{ellC-Cor}, Theorem \ref{Bi-princ-block},
and Lemma \ref{non-local-points-2}.
Proposition \ref{splendid-stable-source-Prop},
Theorem \ref{splendid-stable-source}, Lemma \ref{splendid-source-Lemma},
Corollary \ref{splendid-source-Cor}, 
and Proposition  \ref{splendid-stable-Prop} hold over $\CO$.
The statements (i), (ii), and (v) in Proposition \ref{splendid-stable-cyclic-Prop} 
are equivalent to their versions over $\CO$,
and Proposition \ref{proj-free-Frob-Prop} holds over $\CO$.
Following \cite{Punil}, the structure of the source algebras of nilpotent blocks 
as briefly reviewed at the beginning of Section \ref{nilpotent-Section} lifts
to $\CO$, hence so does  Lemma \ref{nilpotent-Lemma}. 
\end{Remark}

\begin{Remark} \label{double-coset-Remark}
If $g\in G$, and $ P, Q \leq G $ are  $p$-subgroups,  then the $P$-$Q$-biset 
$PgQ$ is transitive, and hence the 
$kP$-$kQ$-permutation bimodule $k[PgQ]$ is indecomposable with vertex 
$\{ (x, g^{-1}x^{-1} g):  x\in  P\cap gQg^{-1} \} $.
By the Krull-Schmidt Theorem,  $k[PgQ]$ is isomorphic to a direct summand
of $B$ regarded as $kP$-$kQ$-bimodule, for some (not necessarily unique) block $B$.
Thus the set $\Gamma=P\backslash G/Q$ of $P$-$Q$-double cosets in $G$
admits a partition $\Gamma=$
$\cup_{B} \Gamma_B$ such that for each block $B$ of $kG$ we have an
isomorphism of $kP$-$kQ$-bimodules $B\cong \oplus_{C\in \Gamma_B} k[C]$.
Here $k[C]$ is the $kP$-$kQ$-permutation bimodule with $k$-basis $C$.  
More generally, for $e$ an idempotent in $(kG)^P$ and
$f$ an idempotent in $(kG)^Q$
the space  $ekG f$ is a $kP$-$kQ$-bimodule summand of $kG$, and thus there
is a subset $\Gamma_{e,f}$ of $\Gamma$ such that 
$ekGf\cong \oplus_{C\in \Gamma_{e,f} } k[C]$.  Note that for any $P$-$Q$-double 
coset $C$ in $G$ the space 
$k\tenkP k[C] \tenkQ k$ is $1$-dimensional. By standard facts on tensor
products (see e. g. \cite[Proposition 2.9.3]{LiBookI} applied to $kGe\tenkP k$ and 
$kGf\tenkQ k)$  and using the symmetry of $kG$, which implies that $ekG$ and $kGe$ 
are dual to each other,   there is a linear isomorphism
\begin{Statement} \label{double-coset-isom}
$$\Hom_{kG}(kGe \tenkP k, kGf\tenkQ k) \cong k\tenkP ekG f\tenkQ k, $$
\end{Statement}
so in particular the dimension of this homomorphism space is equal to the number of
double cosets $|\Gamma_{e,f}|$ showing up in a decomposition of $ekGf$ as a
direct sum of $kP$-$kQ$-bimodules. The above isomorphism depends on the
choice of a symmetrising form of $kG$. In the situations mainly 
considered in this paper, $e$, $f$ are  either block idempotents of blocks of $kG$ 
or belong to points of $P$ or $Q$ on $kG$. 
\end{Remark}

\begin{Remark} \label{double-cosets-in-A}
Let $B$ a block of $kG$ with defect group $P$, and let
$i\in$ $B^P$ be a source idempotent. Set $A=iBi$.
The isomorphism \ref{double-coset-isom} in Remark \ref{double-coset-Remark}
applied with $e=f=i$ implies that  $\dim_k(\End_B(Bi\tenkP k))$ is equal to the 
number of $P$-$P$-orbits in a $P$-$P$-stable $k$-basis of $A$. 
Assuming that $k$ is algebraically closed,
Puig's finiteness conjecture predicts that for a fixed finite $p$-group $P$ there 
should only be finitely many isomorphism classes, as interior $P$-algebras,
of source algebras of blocks with defect groups isomorphic to $P$. If true,
Puig's conjecture would  imply  that the number of $P$-$P$-orbits in a stable 
basis of $A$ is  bounded in terms of $P$. More precisely, this would imply
that there are only finitely many isomorphism 
classes of algebras that arise as endomorphism algebras of source permutation 
modules of blocks with defect groups isomorphic to $P$. \end{Remark}

\begin{Remark}  
Let  $X$  be the set of  ordinary  irreducible characters of   $G$ of $p$-defect $0$. 
 For an  irreducible character   $\chi$  of $G$,  denote by $ \chi(1)' $ the $ p'$-part of  
 $\chi(1)$. Then the number of   $(S,S)$-double cosets  of  $G$   of  size  $ |S|^2 $
is greater than or equal to   $ \sum_{\chi \in X} (\chi (1)')^2$.
This is true simply because a defect zero block  $B$  is projective  (and hence free) as a  
$kS$-$kS$-bimodule. Therefore   the set  $\Gamma_B $   from  Remark
\ref{double-coset-Remark} 
consists   of  $\frac{\dim_k(B)}{|S|^2} $ double   cosets of the form  $k[Sg S] $, where  
$S \cap {^g\!S}=1 $. On the other hand, if $\chi $ is the unique  ordinary irreducible 
character  of $B$ (assuming $B$ is  split), then  $ \chi(1)^2  =  \dim_k B  $  and  
$\chi(1)=|S| \chi(1)'$. 
\end{Remark}

\begin{Remark}   
Suppose that $X$ is an indecomposable  $kG$-module summand of   $\Ind_S^G (k) $  
with vertex $Q \leq S$.  Then   $\Res^G_S (X)$    contains an 
indecomposable direct summand, say $Y$, with vertex $Q$ 
(see e.g. \cite[Proposition 5.1.6]{LiBookI}).  Now   $Y$ is also a 
summand of  $\Res^G_S  \Ind_S^G(k)   =$
$\oplus_{x\in S\backslash G/S} \Ind_{{^x\!{S}} \cap  S}^S(k) $. 
By Green's indecomposability theorem,  for any $ \in G$,   
$\Ind_{{^x\!{S}} \cap S }^S(k) $  is  indecomposable with  vertex $ {^x\!{S}} \cap S  $.  So  
$Q$ is $S$-conjugate to  
$ {^x\!{S}} \cap S  $ for some $x \in G$.  Thus,   by \cite[Lemma 1]{Alp87} (see 
Theorem \ref{Bi-weight-thm})  and  \cite[Theorem 1.2]{Rob88} (see Theorem 
\ref{princ-block-thm})  if $Q  \leq  S$ is a $p$-subgroup  of $G$  which contributes  
a $G$-weight  or if $Q$  essential in the fusion system $\CF_S(G)$,  then $Q$ is  
the intersection  of two Sylow subgroups of  $G$.   This generalises  Green's  
result that block  defect groups  are Sylow intersections.
\end{Remark}

\bigskip\noindent
{\bf Acknowledgements.} 
The second author  acknowledges support from EPSRC grant EP/X035328/1.


\end{document}